\newtheorem{theorem}{Theorem}[section]
\newtheorem{lemma}[theorem]{Lemma}
\newtheorem{corollary}[theorem]{Corollary}
\newtheorem{definition}[theorem]{Definition}
\newtheorem{remark}[theorem]{\it Remark}
\newtheorem{proposition}[theorem]{Proposition}
\numberwithin{equation}{subsection}
\def\tree{\mathcal{T}}
\def\C{\mathbb{C}}
\def\R{\mathbb{R}}
\def\Z{\mathbb{Z}}
\def\N{\mathbb{N}}
\def\T{\mathbb{T}}
\def\br{\mathbf{r}}
\def\bg{\mathbf{g}}
\def\q{/\!/}
\def\ql{\backslash \! \backslash}
\def\cq{/\!/}
\def \calE {\mathcal{E}}
\begin{document}

\title{The toric geometry of triangulated polygons in Euclidean space}

\author{Benjamin Howard, Christopher Manon and John Millson}
\thanks{B. Howard was supported by NSF grant DMS-0405606 and NSF fellowship DMS-0703674.
C. Manon was supported by NSF FRG grant DMS-0554254, and J. Millson  
was supported by NSF grant DMS-0405606, NSF FRG grant DMS-0554254, and The Simons Foundation.}
\date{\today}

\begin{abstract}
Speyer and Sturmfels \cite{SpeyerSturmfels} associated Gr\"obner toric
degenerations $\mathrm{Gr}_2(\C^n)^{\tree}$ of $\mathrm{Gr}_2(\C^n)$
to each
trivalent tree $\tree$ with $n$ leaves. These degenerations
induce toric
degenerations  $M_{\br}^{\tree}$ of $M_{\br}$, the
space of $n$ ordered, weighted (by $\br$) points on the projective line. 
Our goal in this paper is to give a
geometric (Euclidean polygon) description of the toric fibers as
stratified symplectic spaces and describe the action of the 
compact part of the torus
as ``bendings of polygons.''
We prove the conjecture of Foth and Hu \cite{FothHu} that
the toric fibers are homeomorphic
to the spaces defined by Kamiyama and Yoshida \cite{KamiyamaYoshida}.

\end{abstract}

\maketitle

\tableofcontents

\

\section{Introduction}\label{introduction}
In \cite{SpeyerSturmfels} the authors associated Gr\"obner toric
degenerations $\mathrm{Gr}_2(\C^n)^{\tree}$ of $\mathrm{Gr}_2(\C^n)$
to each
trivalent tree $\tree$ with $n$ leaves. These degenerations
induce toric
degenerations  $M_{\br}^{\tree}$ of $M_{\br}$, the
space of $n$ ordered, weighted (by $\br$) points on the projective line.  
We denote the  corresponding toric fibers by
$\mathrm{Gr}_2(\C^n)_0^{\tree}$ and $(M_{\br})_0^{\tree}$ respectively.
Our goal in this paper is to give a
geometric (Euclidean polygon) description of the toric fibers as
stratified symplectic spaces (see \cite{SjamaarLerman} for this
notion) and describe the action of the compact part of the torus
as ``bendings of polygons.''


\subsection{The Grassmannian and imploded spin-framed polygons}
We start by identifying the Grassmannian $\mathrm{Gr}_2(\C^n)$
with the moduli space of
``imploded spin-framed'' $n$-gons in $\R^3$.
We define the space of \emph{imploded framed vectors}, which is
topologically the cone $C \mathrm{SO}(3,\R)$ of $\mathrm{SO}(3,\R)$, as the space
$$\Big\{(F,e) \in \mathrm{SO}(3,\R) \times \R^3 \mid
\text{ $e = tF(\epsilon_1)$ for some $t \in \R_{\geq 0}$}\Big\}$$
modulo the equivalence relation
$(F_1,0) \sim (F_2,0)$ for all $F_1,F_2 \in \mathrm{SO}(3,\R)$,
where $\epsilon_1 = (1,0,0)$ is the first standard basis vector of
$\R^3$.
We will see later that this equivalence relation is ``implosion'' in
the sense of \cite{GuilleminJeffreySjamaar}.
We call the equivalence class of $(F,e)$ an ``imploded framed vector''.  Note that the isotropy $T_{\mathrm{SO}(3,\R)}$ of $\epsilon_1$ in $\mathrm{SO}(3,\R)$ has a natural right action on the space of imploded framed vectors. 

Now fix a covering homomorphism $\pi : \mathrm{SU}(2) \to \mathrm{SO}(3,\R)$ such that the Cartan subgroup of diagonal matrices $T_{\mathrm{SU}(2)}$ in $\mathrm{SU}(2)$
maps onto $T_{\mathrm{SO}(3,\R)}$.
We define the space of
imploded \emph{spin}-framed vectors, which is topologically
the cone $C \mathrm{SU}(2) \cong \C^2$, as the space
$$\Big\{(F,e) \in \mathrm{SU}(2) \times \R^3 \mid
\text{ $e = t \pi(F)(\epsilon_1)$ for some $t \in \R_{\geq 0}$}\Big\}$$
modulo the equivalence relation
$(F_1,0) \sim (F_2,0)$ for all
$F_1,F_2 \in \mathrm{SU}(2)$.
We call the equivalence class of $(F,e)$ an ``imploded spin-framed vector''.
An \emph{imploded spin-framed $n$-gon} is an $n$-tuple
 $((F_1,e_1),\ldots,(F_n,e_n))$ of imploded spin-framed vectors
 such that $e_1 + e_2 + \cdots + e_n = 0$.
Let $\widetilde{P}_n(\mathrm{SU}(2))$
denote the space of imploded spin-framed $n$-gons.
There is an
action of $\mathrm{SU}(2)$ on $\widetilde{P}_n(\mathrm{SU}(2))$ given by
$$F \cdot ((F_1,e_1),\ldots,(F_n,e_n)) =
((F F_1, \pi(F)(e_1)),\ldots,(F F_n, \pi(F)(e_n))).$$
We let $P_n(\mathrm{SU}(2))$ denote the quotient
space. Note that since we may scale the edges of an $n$-gon the
space $P_n(\mathrm{SU}(2))$ is a cone with vertex the zero $n$-gon (all the
edges are the zero vector in $\R^3$).  Finally, note that 
there is a natural right action of $T_{\mathrm{SU}(2)^n}$ on 
$\widetilde{P}_n(\mathrm{SU}(2))$, which rotates frames but fixes 
the vectors:
$$(t_1,\ldots,t_n) \cdot ((F_1,e_1),\ldots,(F_n,e_n)) = 
((F_1 t_1, e_1),\ldots,(F_n t_n, e_n)).$$

\begin{remark}
We will see later that $P_n(\mathrm{SU}(2))$
occurs naturally in equivariant symplectic
geometry, \cite{GuilleminJeffreySjamaar}: it is the symplectic
quotient by the left diagonal action of $\mathrm{SU}(2)$ on the right
imploded cotangent bundle of $\mathrm{SU}(2)^n$. The space
$\widetilde{P}_n(\mathrm{SU}(2))$ is the zero level set of the momentum map
for the left diagonal action on the right imploded product of
cotangent bundles. Hence we find that the space $P_n(\mathrm{SU}(2))$ has a
(residual) right action of an $n$--torus $T_{\mathrm{SU}(2)^n}$, the
maximal torus in $\mathrm{SU}(2)^n$ which rotates the imploded spin-frames.
\end{remark}

Let $Q_n(\mathrm{SU}(2))$ be the quotient of the subspace of imploded spin-framed $n$-gons
of perimeter $1$ by the action of the diagonal embedded circle in
$T_{\mathrm{SU}(2)^n}$. In what follows $\mathrm{Aff} \mathrm{Gr}_2(\C^n)$ denotes the affine cone over
the Grassmannian $\mathrm{Gr}_2(\C^n)$ for the Pl\"ucker embedding. Thus $\mathrm{Aff} \mathrm{Gr}_2(\C^n)$ is the
subcone of $\bigwedge^2(\C^n)$ consisting of the decomposable
bivectors (the zero locus of the Pl\"ucker equations). The reason
why we consider $Q_n(\mathrm{SU}(2))$ here is the following theorem (proved in \S \ref{HJconstruction})
which gives a polygonal interpretation of $\mathrm{Gr}_2(\C^n)$. It is the
starting point of our work.

\begin{theorem}\label{polygonGrass}\hfill
\begin{enumerate}
\item $P_n(\mathrm{SU}(2))$ and
$\mathrm{Aff} \mathrm{Gr}_2(\C^n)$ are homeomorphic.
\item This homeomorphism induces a homeomorphism between  $Q_n(\mathrm{SU}(2))$ and $\mathrm{Gr}_2(\C^n).$
\end{enumerate}
\end{theorem}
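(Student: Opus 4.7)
The plan is to encode each imploded spin-framed vector as a single vector in $\C^{2}$, to rewrite the polygon closing condition as a matrix equation of the form $AA^{*}=cI_{2}$, and then to identify the resulting quotient by $\mathrm{SU}(2)$ with the GIT quotient $M_{2\times n}(\C)/\!/\mathrm{SL}_{2}(\C)$ via the Kempf--Ness theorem. The first fundamental theorem of classical invariant theory will then realize this GIT quotient as $\mathrm{Aff}\,\mathrm{Gr}_{2}(\C^{n})$, giving part (1); imposing perimeter $1$ and quotienting by the diagonal circle will perform the further symplectic reduction that projectivizes the affine cone, giving part (2).

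To make the homeomorphism $C\,\mathrm{SU}(2)\cong\C^{2}$ concrete, I would write $F=\bigl(\begin{smallmatrix}a & -\bar b\\ b & \bar a\end{smallmatrix}\bigr)\in\mathrm{SU}(2)$ and send the class of $(F,\,t\pi(F)\epsilon_{1})$ to $v=\sqrt{t}\,(a,b)^{T}\in\C^{2}$. The equivalence relation collapsing all $(F,0)$ to the vertex corresponds exactly to $v=0$. Under this identification the left $\mathrm{SU}(2)$-action on $F$ becomes the standard action on $\C^{2}$, and the explicit formula for the Hopf map yields $e=\bigl(2\,\mathrm{Re}(z\bar{w}),\ \pm 2\,\mathrm{Im}(z\bar{w}),\ |z|^{2}-|w|^{2}\bigr)$ when $v=(z,w)$. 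An $n$-tuple of imploded spin-framed vectors is then a $2\times n$ complex matrix $A$ whose columns are the $v_{i}$, and a direct calculation shows that the closing condition $\sum e_{i}=0$ is equivalent to $AA^{*}=cI_{2}$ for some scalar $c\ge 0$: the off-diagonal entry of $AA^{*}$ encodes the first two coordinates of $\sum e_{i}$, and the difference of its diagonal entries encodes the third.

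Next I would recognize the locus $\{A\in M_{2\times n}(\C):AA^{*}=cI_{2}\}$ as $\mu^{-1}(0)$ for the moment map of the left $\mathrm{SU}(2)$-action on $M_{2\times n}(\C)\cong\C^{2n}$, after the usual traceless projection. The Kempf--Ness theorem then identifies $P_{n}(\mathrm{SU}(2))=\mu^{-1}(0)/\mathrm{SU}(2)$ with the affine GIT quotient $M_{2\times n}(\C)/\!/\mathrm{SL}_{2}(\C)$, and the first fundamental theorem identifies $\C[M_{2\times n}(\C)]^{\mathrm{SL}_{2}(\C)}$ with the Pl\"ucker algebra, so this GIT quotient is exactly $\mathrm{Aff}\,\mathrm{Gr}_{2}(\C^{n})$. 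For (2), imposing $\sum|v_{i}|^{2}=\mathrm{tr}(AA^{*})=1$ fixes the value of the moment map for the diagonal $S^{1}$-action by scalars on $\C^{2n}$, and further quotienting by this circle is the symplectic reduction that corresponds to projectivizing the affine cone; hence $Q_{n}(\mathrm{SU}(2))\cong\mathrm{Gr}_{2}(\C^{n})$.

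I expect the main technical obstacle to arise at the singular vertex of the cone and at the rank-$\le 1$ stratum, where $\mathrm{SL}_{2}(\C)$-orbits fail to be closed and the GIT quotient collapses them onto the vertex. One must verify that the ad hoc equivalence $(F_{1},0)\sim(F_{2},0)$ used to define imploded framed vectors matches exactly the collapse prescribed by Kempf--Ness, so that the bijection produced above is a genuine homeomorphism of stratified spaces and not merely a continuous bijection; a properness argument (the map lands in a Hausdorff quotient and the source is compact after restricting to bounded perimeter) should then upgrade continuity to homeomorphism.
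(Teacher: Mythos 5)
Your proposal is correct and follows essentially the same route as the paper: encode imploded spin-framed vectors as columns of a $2\times n$ matrix $A$, identify the closing condition with $AA^*$ being scalar (the zero level of the $\mathrm{SU}(2)$ momentum map), invoke Kempf--Ness and the first fundamental theorem to recognize the quotient as $\mathrm{Aff}\,\mathrm{Gr}_2(\C^n)$, and then obtain part (2) by fixing the perimeter and reducing by the diagonal circle. The paper additionally records the explicit decomposable-bivector map $A\mapsto Z\wedge W$ and the square-root subtlety in matching the circle actions, but these are refinements of the same argument.
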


\subsection{Triangulations, trivalent trees, and the construction of Kamiyama-Yoshida}
Let $P$ denote a fixed convex
planar $n$-gon.
Throughout the paper we will use the
symbol $\tree$ to denote either a triangulation of $P$ or its dual
trivalent tree. Accordingly we fix a triangulation $\tree$ of $P$.
The points of
$\mathrm{Gr}_2(\C^n)_0^{\tree}$ are ``imploded spin-framed $n$--gons'' with a
fixed perimeter in $\R^3$
modulo an equivalence relation  called $\tree$-congruence and denoted
$\sim_{\tree}$ that depends on the triangulation $\tree$.

\begin{figure}
\centering
\includegraphics[scale = 0.2]{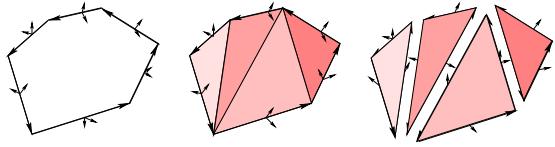}
\caption{A framed spatial polygon with chosen triangulation.}\label{fig:TriangulatedPolygon}
\end{figure}

Now that we have a polygonal interpretation of $\mathrm{Gr}_2(\C^n)$  we
will impose the equivalence relation of $\tree$-congruence (to be
described below) on our space of framed polygons and obtain a polygonal
interpretation of $\mathrm{Gr}_2(\C^n)_0^{\tree}$
corresponding to the triangulation (tree) $\tree$. We now describe
the equivalence relation of $\tree$-congruence.
Here we will discuss only the case
of the {\it the standard triangulation} $\tree_0$, that is the
triangulation of $P$ given by drawing the diagonals from the first
vertex to the remaining nonadjacent vertices. The dual tree to the
standard triangulation will be called the caterpillar or fan
(see Figure \ref{fig:CaterpillarTree7Leaves}). However we will state 
our theorems in the generality in which they are proved in the paper 
namely for all trivalent trees $\tree$ with $n$ leaves.


\begin{figure}
\centering
\includegraphics[scale = 0.2]{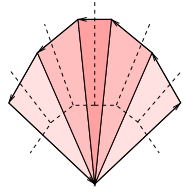}
\caption{The standard triangulation of the model $7$-gon with dual
caterpillar tree.}\label{fig:CaterpillarTree7Leaves}
\end{figure}

The reader is urged to refer to the pictures below to
understand the following description. The  equivalence relation
for the standard triangulation (in the case of $n$-gon linkages)
described below was first introduced in \cite{KamiyamaYoshida}. We
have extended their definition to all triangulations and to
$n$-gons equipped with imploded spin-frames. Label the diagonals
of the triangulation counterclockwise by $1$ through $n-3$. For each $S
\subset \{1,2,\cdots,n-3\}$ let $\widetilde{P}_n(\mathrm{SU}(2))^{[S]}$ denote the
subspace of $\widetilde{P}_n(\mathrm{SU}(2))$ where the diagonals
corresponding to the elements in $S$ are zero and all other diagonals are nonzero.
Let $\mathbf{F}=
((F_1,e_1),(F_2,e_2),\cdots,(F_n,e_n))$ be a point
in $\widetilde{P}_n(\mathrm{SU}(2))^{[S]}$.  Suppose that $|S| = i$.
Since $i$ diagonals are zero there will be $i+1$ sums of the form
$e_j + e_{j+1}+ \cdots + e_{j+k_j}$ that are zero, and the
$n$-gon underlying $\mathbf{F}$ will be the wedge of $i+1$ closed
subpolygons corresponding to the $i+1$ closed subpolygons in the
collapsed reference polygon. Thus we can divide $\mathbf{F}$ into
$i+1$ imploded spin-framed closed subpolygons ( in terms of
formulas we can break up the above $n$-tuple into $i+1$ sub
$k_j$-tuples of edges for $1 \leq j \leq i+1$). We may act on
each imploded spin-framed subpolygon ($k_j$-tuple) by a copy of
$\mathrm{SU}(2)$. We pass to the quotient $\widetilde{P}_n(\mathrm{SU}(2))/\sim_{\tree _0}$
by dividing out $\widetilde{P}_n(\mathrm{SU}(2))^{[S]}$ by the resulting action
of $\mathrm{SU}(2)^{i+1}$. By definition $\tree$-congruence is
the equivalence relation induced by the above quotient operation,
see Figure \ref{fig:TreeEquivalence}.

\begin{figure}[htp]
\centering
\includegraphics[scale = 0.2]{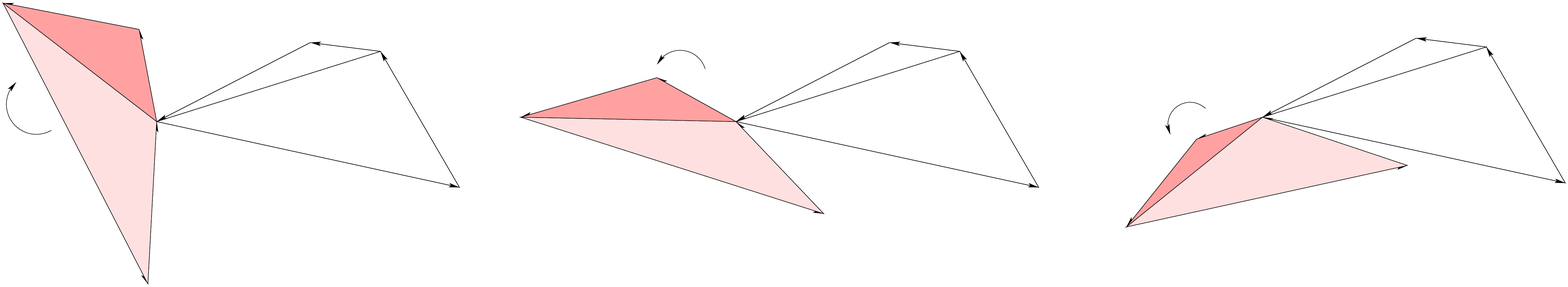}
\caption{A picture of equivalent polygons in $V_n^\tree$ which
are not equivalent in $P_n(\mathrm{SU}(2))$.}\label{fig:TreeEquivalence}
\end{figure}

We let $V_n^{\tree_0}$ denote the quotient space
$\widetilde{P}_n(\mathrm{SU}(2))/\sim_{\tree_0}$. Thus $V_n^{\tree_0}$ is
decomposed into the pieces $(V_n^{\tree_0})^{[S]} =
\widetilde{P}_n(\mathrm{SU}(2))^{[S]}/\sim_{\tree_0}$. We will call the
resulting decomposition in the special case of $\tree_0$
the Kamiyama-Yoshida decomposition (or
KY-decomposition).  In this paper we use a
weakened definition of the term {\it decomposition}; we will refer
to decompositions of spaces where the pieces are products
of spaces with isolated singularities.

\begin{remark}
The equivalence relation of $\tree$-congruence can be defined 
analogously for
any triangulation of $P$ (equivalently any trivalent tree with
$n$-leaves) and induces an equivalence relation on $Q_n(\mathrm{SU}(2))$
(and many other spaces associated to spaces of $n$-gons in $\R^3$,
for example, the space of $n$-gons itself or the space of
$n$-gon linkages). We will use the symbol $\sim_{\tree}$ to denote
all such equivalence relations.
\end{remark}

We will use $W_n^{\tree}$ to denote the quotient of
$Q_n(\mathrm{SU}(2))$ by
the equivalence relation $\sim_{\tree}$. We can now state our
first main result (this is proved in \S \ref{homeomorphismVtoP}).

\begin{theorem} \label{firstmaintheorem}\hfill
\begin{enumerate}
\item The toric fiber of the toric degeneration of
$\mathrm{Aff} \mathrm{Gr}_2(\C^n)$ corresponding to
the trivalent tree $\tree$ is homeomorphic to  $V_n^{\tree}$.
\item The toric fiber of the toric degeneration of
$\mathrm{Gr}_2(\C^n)$
corresponding to the trivalent tree $\tree$ is homeomorphic to
$W_n^{\tree}$.
\end{enumerate}
\end{theorem}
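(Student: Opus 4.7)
The plan is to lift the homeomorphism of Theorem \ref{polygonGrass} to the level of the Speyer--Sturmfels degeneration, showing that the equivalence relation $\sim_{\tree}$ is precisely the extra identification that converts $\mathrm{Aff}\Gr_2(\C^n)$ into its toric fiber $\mathrm{Aff}\Gr_2(\C^n)_0^{\tree}$. The central observation is that this degeneration is a SAGBI/valuation degeneration controlled by $\tree$, and the associated valuation has a clean geometric meaning on the polygon side: the internal edges of $\tree$ correspond to diagonals of the $n$-gon, and the lengths of these diagonals (together with their conjugate angles) form a toric system whose circle actions are the bending flows of Kapovich--Millson. The action of the compact part of the toric fiber's torus is then exactly bending, and the moment map coordinates are exactly the diagonal lengths.

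For part (1), I would work stratum-by-stratum along the partition of $\widetilde{P}_n(\SU(2))$ by the vanishing sets $S \subset \{1,\ldots,n-3\}$ of diagonals. On the open stratum $S = \emptyset$ all bending coordinates are positive, the toric fiber and $\mathrm{Aff}\Gr_2(\C^n)$ have identical $T_{\SU(2)^n}$-orbit structure, and Theorem \ref{polygonGrass} supplies a canonical dense homeomorphism between $\widetilde{P}_n(\SU(2))^{[\emptyset]}$ and the corresponding open subset of the toric fiber. As one passes to strata with $S \neq \emptyset$, the initial-ideal Plücker relations of Speyer--Sturmfels drop exactly those rigidity constraints that would force sub-polygons meeting at a vanished diagonal to share a common frame; this enlarges the kinematic symmetry acting on $\widetilde{P}_n(\SU(2))^{[S]}$ from a single copy of $\SU(2)$ to $\SU(2)^{|S|+1}$, which is by definition $\tree$-congruence. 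Part (2) then follows from part (1) by restricting to perimeter $1$ and quotienting by the diagonal circle in $T_{\SU(2)^n}$ on both sides; this corresponds on the algebraic side to passing from the affine cone to the projective variety $\Gr_2(\C^n)_0^{\tree}$, and compactness plus continuity transport the homeomorphism of (1) to the quotients.

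The main obstacle will be the continuous matching of stratifications across the walls where diagonals vanish. Concretely, on a stratum with vanishing set $S$ the piece $(V_n^{\tree})^{[S]}$ is a product of lower-rank polygon spaces, one for each closed sub-polygon of the collapsed $n$-gon, quotiented by $\SU(2)^{|S|+1}$; on the algebraic side, the toric fiber decomposes along its invariant boundary divisors into a corresponding product of lower toric fibers indexed by the sub-trees of $\tree$ obtained by cutting along the edges dual to $S$. Establishing this recursive parallel, and then gluing the resulting stratum-wise homeomorphisms continuously into a global map on $V_n^{\tree}$, is the heart of the argument. I expect this to require an induction on $n$ together with a careful analysis of the limit behavior of bending flows as diagonal lengths tend to zero, using that the toric fiber is normal so that a continuous bijection defined generically and compatible with boundary limits extends uniquely to the whole space.
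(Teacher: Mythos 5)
There is a genuine gap here, and it sits exactly at the two places you flag as "central observations." First, Theorem \ref{polygonGrass} gives a homeomorphism $\mathrm{Aff}\,\mathrm{Gr}_2(\C^n) \cong P_n(\mathrm{SU}(2))$ with the \emph{generic} fiber of the degeneration; it does not supply any map, canonical or otherwise, to an open subset of the \emph{special} fiber $\mathrm{Aff}\,\mathrm{Gr}_2(\C^n)_0^{\tree}$. The special fiber is $\mathrm{Spec}$ of a different ring (the semigroup algebra of Kempe graphs / admissible edge-weightings of $\tree$), and identifying even its dense torus orbit with the open stratum $\widetilde{P}_n(\mathrm{SU}(2))^{[\emptyset]}/\mathrm{SU}(2)$ requires a choice that must then be shown to extend continuously over the walls where diagonals vanish. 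Your proposed remedy -- extend a generically defined continuous bijection to the whole space using normality of the toric fiber -- does not work: normality gives extension of \emph{regular functions} across codimension-two sets, not continuous extension of set-theoretic maps, and in any case $V_n^{\tree}$ is not an algebraic variety and the quotient $P_n(\mathrm{SU}(2)) \to V_n^{\tree}$ is not algebraic (its fibers can be odd-dimensional). So neither the stratum-wise maps nor the gluing is actually produced by your argument; the assertion that the initial-ideal relations "drop exactly the rigidity constraints" enforcing a common frame on sub-polygons is the conclusion to be proved, not a step.

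The missing idea in the paper is an interpolating affine toric variety $P_n^{\tree}(\mathrm{SU}(2)) = (\bigwedge^2\C^3)^{n-2}\cq_0\,\mathbb{T}_d^-$, built by cutting the triangulated polygon into triangles, attaching a copy of $\bigwedge^2\C^3 \cong P_3(\mathrm{SU}(2))$ to each, and regluing along diagonals by an antidiagonal torus quotient. On the algebraic side one checks by an explicit semigroup computation (invariant monomials $\leftrightarrow$ admissible weightings $\leftrightarrow$ Kempe graphs) that this variety is isomorphic to $\mathrm{Aff}\,\mathrm{Gr}_2(\C^n)_0^{\tree}$. On the geometric side one constructs a single global map $\Psi_n^{\tree}: V_n^{\tree} \to P_n^{\tree}(\mathrm{SU}(2))$ by \emph{creating} imploded spin-frames on the diagonals (unique up to the $\mathbb{T}_d^-$-action one quotients by), with inverse given by forgetting the diagonal frames on suitably normalized representatives; continuity of the inverse follows from properness, and $\tree$-congruence is absorbed precisely because a frame on a zero-length diagonal carries no information. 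Your stratification-matching picture is a correct description of what the final homeomorphism does, but without the intermediate space and the frame-extension construction there is no map to which that description applies.
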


\begin{remark}
The quotient map from
$Q_n(\mathrm{SU}(2))$ to
$W_n^{\tree}$ given by passing to $\tree$-congruence classes
maps the generic fiber of the toric degeneration onto the special
(toric) fiber.
\end{remark}

We next describe the space $V_{\br}^{\tree_0}$ that will be proved
later to be homeomorphic to the toric fiber $(M_{\br})_0^{\tree_0}$.
Again we will restrict ourselves to the standard triangulation
in our description. Starting with the space $\widetilde{M}_{\br} =
\{(e_1,\ldots,e_n) \in (\R^3)^n \mid \sum_i e_i = 0, \; \|e_i\| = r_i\}$
of closed $n$-gon linkages with side-lengths $\br$, we define
$$V_{\br}^{\tree_0} = \widetilde{M}_{\br}/\sim_{\tree_0}.$$
This is the construction of \cite{KamiyamaYoshida} - see below
for some pictures.

\begin{theorem} \label{secondmaintheorem}
The toric fiber $(M_{\br})^{\tree}_0$ of the toric degeneration
of $M_{\br}$ corresponding to
the trivalent tree $\tree$ is homeomorphic to
$V_{\br}^{\tree}$.
\end{theorem}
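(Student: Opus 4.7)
The plan is to deduce Theorem \ref{secondmaintheorem} from Theorem \ref{firstmaintheorem} by reducing both sides ``at level $\br$'' with respect to the torus action that rotates the spin-frames on the polygon side and acts on Pl\"ucker coordinates on the algebraic side. That is, I would realize $V_{\br}^{\tree}$ as the ``fixed-edge-lengths'' subquotient of $V_n^{\tree}$, realize $(M_{\br})_0^{\tree}$ as the corresponding GIT (equivalently symplectic) reduction of the toric fiber $\mathrm{Gr}_2(\C^n)_0^{\tree}$, and then transport the identification using Theorem \ref{firstmaintheorem}.

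On the polygon side, the space $\widetilde{M}_{\br}$ is naturally the right $T_{\mathrm{SU}(2)^n}$-quotient of the subspace of $\widetilde{P}_n(\mathrm{SU}(2))$ cut out by the equations $\|e_i\| = r_i$. Once $r_i > 0$ is fixed, the spin-frame $F_i$ is determined by $e_i$ up to its $T_{\mathrm{SU}(2)}$-isotropy at $\epsilon_1$, so the right $T_{\mathrm{SU}(2)^n}$-quotient simply drops the frame data and returns the polygon edges. The equivalence $\sim_{\tree}$ acts by $\mathrm{SU}(2)$-copies on the subpolygons cut out by the collapsed diagonals; this preserves every edge length and commutes with the right frame rotation, so it descends to realize $V_{\br}^{\tree} = \widetilde{M}_{\br}/\sim_{\tree}$ as the preimage of $\br$ in $V_n^{\tree}$ under the vector-of-edge-lengths map, further quotiented by the frame torus.

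On the algebraic side, $M_{\br}$ is the GIT quotient of $\mathrm{Gr}_2(\C^n)$ by the maximal torus $T \subset \mathrm{PGL}_n$ acting on Pl\"ucker coordinates with linearization $\br$, equivalently the symplectic reduction of $\mathrm{Gr}_2(\C^n)$ by $T$ at level $\br$. Since the Speyer--Sturmfels degeneration is $T$-equivariant and compatible with this linearization, the GIT quotient and the degeneration commute, so $(M_{\br})_0^{\tree}$ is the GIT quotient at weight $\br$ of the toric fiber $\mathrm{Gr}_2(\C^n)_0^{\tree}$. The desired homeomorphism then follows from Theorem \ref{firstmaintheorem} by taking level-$\br$ reductions on both sides.

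The main obstacle is to verify the compatibility that makes the last step rigorous: the homeomorphism $W_n^{\tree} \cong \mathrm{Gr}_2(\C^n)_0^{\tree}$ of Theorem \ref{firstmaintheorem} must intertwine the right $T_{\mathrm{SU}(2)^n}$-action (modulo its diagonal circle) with the Pl\"ucker torus action, and must send the vector-of-edge-lengths map on the polygon side to the torus weight on the algebraic side. This will require a careful tracking of the torus actions through the construction used to prove Theorem \ref{polygonGrass} and its extension to the $\tree$-degenerate setting. Once in hand, level-$\br$ reductions on both sides produce the homeomorphism $V_{\br}^{\tree} \cong (M_{\br})_0^{\tree}$.
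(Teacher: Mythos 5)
Your proposal is correct and is essentially the paper's own strategy: the paper proves Theorem \ref{secondmaintheorem} exactly by reducing both sides at level $\br$ by the edge/Pl\"ucker torus, using that restriction to torus invariants is exact (so the degeneration commutes with the GIT quotient), that $\Psi_n^{\tree}$ is equivariant for $\rho: T_{\mathrm{SU}(2)^n} \to \mathbb{T}_e$, and that the pullback of the $\mathbb{T}_e$-Hamiltonians computes the edge lengths (Proposition \ref{pullback}) — precisely the compatibilities you flag as the ``main obstacle.'' The only substantive point your outline omits is that, since the toric fiber is singular, matching the symplectic reduction at level $\br$ with the $\chi_{\br}$-linearized GIT quotient is not automatic; the paper handles this via properness of the momentum map and the normalization result (Theorem \ref{twistandshift}) in the appendix.
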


\begin{remark} The quotient map from $M_{\br}$ to
$V_{\br}^{\tree}$ maps the generic fiber of the toric degeneration onto the special
(toric) fiber.
\end{remark}

The result in the previous theorem was
conjectured by Philip Foth and Yi Hu in \cite{FothHu}.

\subsection{Bending flows, edge rotations, and the toric structure of $W_{n}^{\tree_0}$}
The motivation for the above construction becomes more clear once we introduce the bending flows of
\cite{KapovichMillson} and \cite{Klyachko}. The lengths of the $n-3$ diagonals created are continuous
functions on $M_{\br}$ and are smooth where they are not zero.
They give rise to Hamiltonian flows which were called {\em bending
flows} in \cite{KapovichMillson}. The bending flow associated to a
given diagonal has the following description. The part of the
$n$-gon to one side of the diagonal does not move, while the other
part rotates around the diagonal at constant speed. The lengths of
the diagonals are action variables which generate the bending
flows, the conjugate angle variables are the dihedral angles
between the fixed and moving parts. However {\em the bending flow
along the $i$-th diagonal is not defined at those $n$-gons where
the $i$-th diagonal is zero}. If the bending flows are everywhere
defined (for example if one of the side-lengths is much larger
than the rest) then we may apply the theorem of Delzant,
\cite{Delzant} to conclude that $M_{\br}$ is toric. However for
many $\br$ (including the case of regular $n$
-gons) the bending
flows are not everywhere defined. The point of
\cite{KamiyamaYoshida} was to make the bending flows well-defined
by dividing out the subspaces of $\widetilde{M}_{\br}$ where a
collection of $i$ diagonals vanish by $\mathrm{SO}(3,\R)^{i+1}$. We
illustrate their construction with two examples.\vspace {1 mm}

First, let $\br = (1,1,1,1,1,1)$ so $M_{\br}$ is the space of
regular hexagons with side-lengths all equal to $1$. Let
$M_{\br}^{(2)}$
be the subspace of $M_{\br}$
where the middle (second) diagonal vanishes. Thus $M_{\br}^{(2)}$
is the space of ``bowties'' (see Figure \ref{fig:Bowties}) modulo the diagonal action of $\mathrm{SO}(3,\R)$
on the two equilateral triangles. We can no longer bend on the
second diagonal because we have no axis of rotation. Passing to
$\tree$-congruence classes
collapses the
space $M_{\br}^{(2)}$ to a point by dividing by the action
of $\mathrm{SO}(3,\R) \times \mathrm{SO}(3,\R)$. Bending along the second diagonal fixes 
this point by definition.


\begin{figure}[htbp]
\centering
\includegraphics[scale = 0.2]{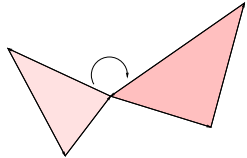}
\caption{The bowties are all $\tree_0$-congruent
and so define a single point in $V_\br^{\tree_0}$.
However in $M_\br$ the subspace of bowties is homeomorphic to $\mathrm{SO}(3,\R)$.}\label{fig:Bowties}
\end{figure}

For our second example,  we consider the space of regular octagons
with all side-lengths equal to $1$ and  the subspace $M_{\br}^{(3)}$ where
the middle diagonal vanishes. Thus $M_{\br}^{(3)}$ is the space of
wedges of rhombi modulo the diagonal action of $\mathrm{SO}(3,\R)$ on the two
rhombi, see Figure \ref{fig:TwoRhombi}.  Passing to $\tree$-congruence classes
amounts to dividing by the action of $\mathrm{SO}(3,\R) \times \mathrm{SO}(3,\R)$ on the
two rhombi.


\begin{figure}[htbp]
\centering
\includegraphics[scale = 0.2]{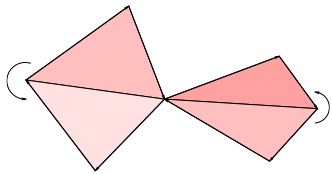}
\caption{Here
$n=8$ and $S= \{3 \}$.
The middle three components
of $T^5$ act trivially, and the
quotient $2$-torus acts by
bending along the first and fifth diagonals.}\label{fig:TwoRhombi}
\end{figure}

 We obtain the action of the bending flows on $V_n^{\tree_0}$
as follows. In this case we will be given a lift of the
one-parameter group bending along a diagonal to $\mathrm{SU}(2)$. The one
parameter group acts through its quotient in $\mathrm{SO}(3,\R)$ by bending
along the diagonal. If an edge moves under this bending then the
imploded spin-frame is moved by the one-parameter group in $\mathrm{SU}(2)$
in the same way. Hence, one part of the imploded spin-framed
polygon is fixed and the other moves by a ``rigid motion'' --i.e. all
the spin-framed edges of the second part are moved by the same
one-parameter group in $\mathrm{SU}(2)$. The bendings give rise to an
action of an $n-3$ torus $T_{bend}$ on $V_n^{\tree}$. There are also ''edge-rotations''
that apply a one-parameter group to the imploded spin-frame but do
not move the edge. This action on frames is the action of the torus
$T_{\mathrm{SU}(2)^n}$ coming from the theory of the imploded cotangent bundle
of $\mathrm{SU}(2)^n$. This action is not faithful, the diagonal subtorus acts
trivially. The bendings together with the edge rotations give rise to
an action of a compact $2n-4$ torus $T = T_{bend} \times T_{\mathrm{SU}(2)^n}$.

\subsection{A sketch of the proofs.}

The main step in proving Theorems \ref{firstmaintheorem} and \ref{secondmaintheorem} is to
produce a space $P_n^{\tree_0}(\mathrm{SU}(2))$ that ``interpolates''
between $V_n^{\tree_0}$ and $\mathrm{Gr}_2(\C^n)^{\tree_0}_0$.
Our construction  of $P_n^{\tree_0}(\mathrm{SU}(2))$
was motivated by the construction of the toric
degeneration of $\mathrm{SU}(2)$-character varieties of fundamental groups
of surfaces given by Hurtubise and Jeffrey in
\cite{HurtubiseJeffrey}. The connection is that the  space
$M_{\br}$ can be interpreted as the (relative) character variety
of the fundamental group of the $n$-punctured two-sphere with
values in the translation subgroup of the Euclidean group
$\mathbb{E}_3$ - a small loop around the $i$-th puncture maps to
translations by the $i$-th edge of the polygon (considered as a
vector in $\R^3$).

Take the triangulated model (convex planar) $n$-gon $P$ and break
it apart into $n-2$ triangles $T_1,\cdots,T_{n-2}$.
Equivalently we break apart the dual tree $\tree$ into a forest
$\tree^D$ consisting of $n-2$ tripods.

\begin{figure}[htbp]
\centering
\includegraphics[scale = 0.2]{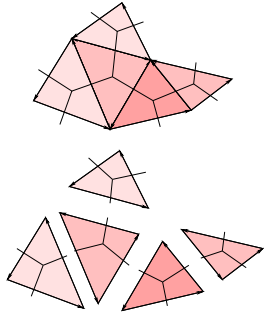}
\caption{The decomposed polygon and the decomposed dual tree $\tree^D$.}\label{fig:decomposed_ngon}
\end{figure}

Attach to each of the $3(n-2)$ edges  of the $n-2$ triangles
(or each edge of the forest $\tree^D$) a copy
of $T^*(\mathrm{SU}(2))$. Now right-implode each copy of $T^*(\mathrm{SU}(2))$ so that  a
copy of $\mathcal{E} T^\ast(\mathrm{SU}(2))) \cong \C^2$, the imploded cotangent bundle of
$\mathrm{SU}(2)$, is attached to each edge. Since
$\mathcal{E} T^\ast(\mathrm{SU}(2))$ admits
an action of the circle (from the right) and $\mathrm{SU}(2)$ from the left
the resulting space admits an action of a torus $\mathbb{T}$ of
dimension $3(n-2)$ and a
commuting action of $\mathrm{SU}(2)^{3(n-2)}$. The torus has a product
decomposition $\mathbb{T} = \mathbb{T}_e \times\mathbb{T}_d$
where $\mathbb {T}_e$ is the product of factors corresponding to
the $n$ edges of the polygon $P$ and $\mathbb{T}_d$ is the
product of factors corresponding to the $n-3$ diagonals of $P$.
Note that each diagonal of $P$ occurs in two triangles so corresponds to
two edges of $\tree^D$. Hence each diagonal gives
rise to a two-torus $S^1 \times S^1$ in $
\mathbb{T}$ which we will
refer to as the two-torus corresponding to that diagonal. Define a
subtorus $\mathbb{T}_d^-$ of $\mathbb{T}_d$ of dimension $n-3$ by
taking an antidiagonal embedding of $S^1$ in each two-torus
corresponding to a diagonal.

Now take the symplectic quotient (at level zero) of (the product
of) the three copies of $\C^2$ associated to the three sides of
each triangle by $\mathrm{SU}(2)$ acting diagonally. For each triangle
(or each tripod) we
obtain a resulting copy of $\bigwedge^2(\C^3)$.  The resulting
product $(\bigwedge^2(\C^3))^{n-2}$ has an induced action of the
torus $\mathbb{T}$. Glue the copies $\bigwedge^2(\C ^3)$
associated to the triangles together along the edges of the
triangles associated to diagonals by taking the symplectic quotient at level zero by the
torus $\mathbb{T}_d^- \subset \mathbb{T}_d$ described above. Each of
the two previous symplectic quotients has a corresponding GIT quotient.
 Taking both GIT quotients we obtain
a space which is an affine torus quotient of affine space. Hence the combined
symplectic quotient is the space underlying the
affine toric variety
$$P_n^{\tree_0}(\mathrm{SU}(2)) =(\mathrm{\bigwedge}
\space^2(\C^3))^{n-2}\cq
_0 \mathbb{T}_d^-.$$
Let $\mathbf{t}_e(\lambda)$ be the element in complexified torus $\underline{\mathbb{T}} \cong (\C^\ast)^{3n-6}$ of $\mathbb{T}$ 
such that all the edge
components coincide with $\lambda \in \C^{\ast}$ and
all the components corresponding to
diagonals are $1$. Then Theorem \ref{firstmaintheorem} follows by
putting together items 2 and 4 in the next theorem, which is proved in
\S \ref{homeomorphismVtoP}.

\begin{theorem} \label{firstauxiliarytheorem}
\hfill
\begin{enumerate}
\item The toric varieties $\mathrm{Aff} \mathrm{Gr}_2(\C^n)_0^{\tree}$ 
and $P_n^{\tree}(\mathrm{SU}(2))$ are isomorphic as
affine toric varieties.
\item The grading action of $\lambda \in
\C^*$ on $\mathrm{Aff} \mathrm{Gr}_2(\C^n)_0^{\tree}$ corresponds to
the action of $\mathbf{t}_e(\sqrt{\lambda}^{-1})$ on
$P_n^{\tree }(\mathrm{SU}(2))$ (this is well-defined).
Consequently $ \mathrm{Gr}_2(\C^n)_0^{\tree}$
is projectively isomorphic to the quotient of
$P_n^{\tree}(\mathrm{SU}(2))$ by this $\C^{\ast }$ action - we will
denote this quotient by $Q_n^{\tree}(\mathrm{SU}(2))$.
\item There is a
homeomorphism (that creates imploded spin-frames along the
diagonals of the triangulation)
$$\Psi_n^{\tree}:V_n^{\tree} \to P_n^{\tree}(\mathrm{SU}(2)).$$
\item The homeomorphism
$\Psi_n^{\tree}$ induces a
homeomorphism from $W_n^{\tree}$ to the projective toric variety
$Q_n^{\tree}(\mathrm{SU}(2))$.
\end{enumerate}
\end{theorem}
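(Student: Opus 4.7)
I will establish parts (1)--(4) in sequence: (1) and (3) are the substantive steps, while (2) is an equivariance matching and (4) is a projectivization consequence.

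\emph{Part (1): the toric isomorphism.} My approach is to compute the affine coordinate ring of $P_n^{\tree}(\mathrm{SU}(2))$ as a semigroup algebra and match it with the Speyer--Sturmfels presentation of $\mathrm{Aff}\,\mathrm{Gr}_2(\C^n)_0^{\tree}$. For one triangle, the symplectic/GIT quotient $(\C^2)^{\times 3}\cq \mathrm{SU}(2)$ by the diagonal action at level zero is the affine cone $\mathrm{Aff}\,\mathrm{Gr}_2(\C^3)$, whose coordinate ring is generated by the three $\mathrm{SU}(2)$-invariant wedge (equivalently symplectic) pairings $\omega_{ab}=v_a\wedge v_b$ between the triangle's three edge spinors; each $\omega_{ab}$ carries the natural $\mathbb{T}$-weight from right-implosion (nonzero on the two edges $a,b$, zero on the third). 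Taking the further GIT quotient by the antidiagonal circle $\mathbb{T}_d^-$ at each internal diagonal identifies the two weights lying on either side of that diagonal, yielding as coordinate ring the semigroup algebra of nonnegative integer labelings of the edges of $\tree$ satisfying, at each trivalent vertex, the classical ``triangle'' conditions (parity and each label bounded by the sum of the other two). This is exactly the Speyer--Sturmfels semigroup for $\mathrm{Aff}\,\mathrm{Gr}_2(\C^n)_0^{\tree}$.

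\emph{Part (2): the grading.} Under the identification of (1), the total Pl\"ucker degree corresponds to the sum of the leaf labels, which is precisely the weight of the $\C^*$-action that uniformly scales each edge copy of $\C^2$ by one scalar. Since each $\omega_{ab}$ is bilinear in edge coordinates, the scalar $\lambda$ on the Pl\"ucker side matches $\sqrt{\lambda}^{-1}$ on the edge side, and the sign/square root is well-defined because only the invariants of degree two appear. Taking $\Proj$ of the graded coordinate ring is equivalent to quotienting $\mathrm{Spec}$ by this $\C^*$, yielding the projective toric variety $Q_n^{\tree}(\mathrm{SU}(2))$.

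\emph{Part (3): the homeomorphism $\Psi_n^{\tree}$.} Given a class in $V_n^{\tree}$ represented by $\mathbf{F}\in\widetilde{P}_n(\mathrm{SU}(2))^{[S]}$, I split the underlying polygon along the nonzero diagonals into maximal triangle-blocks of $\tree$ that are not separated by a vanishing diagonal. Along every nonzero diagonal with edge vector $e_d$ I choose $F_d\in \mathrm{SU}(2)$ with $\pi(F_d)(\epsilon_1)$ pointing along $e_d$; the residual ambiguity is exactly the right $T_{\mathrm{SU}(2)}$-stabilizer of $\epsilon_1$. Each triangle of $\tree$ now carries three imploded spin-framed edges whose vectors sum to zero, hence a point in $(\C^2)^{\times 3}$ at the zero level of the diagonal $\mathrm{SU}(2)$-moment map, descending to a point in $\bigwedge^2(\C^3)$. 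Collecting these over all $n-2$ triangles, the diagonal frame-ambiguities collapse to precisely the $\mathbb{T}_d^-$-quotient, yielding a well-defined point of $P_n^{\tree}(\mathrm{SU}(2))$. On the degenerate stratum indexed by $S$, the $\tree$-congruence by $\mathrm{SU}(2)^{|S|+1}$ corresponds exactly to the locus of $P_n^{\tree}(\mathrm{SU}(2))$ where both wedge pairings along each diagonal in $S$ vanish, since two copies of $\C^2$ summing to zero in momentum are either both zero or lie in a single $\mathrm{SU}(2)$-orbit. The inverse map reads off the three wedge pairings from each triangle factor, reconstructs an $\mathrm{SU}(2)$-orbit of framed triples, and glues along diagonals using the left $\mathrm{SU}(2)^{3(n-2)}$-action to rigidify.

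\emph{Part (4): projectivization, and the main obstacle.} Since $\Psi_n^{\tree}$ is equivariant for the $\R_{>0}$-subgroup of perimeter-scaling on the left and the corresponding subgroup of $\mathbf{t}_e(\lambda)$ on the right (as matched in (2)), restriction to the perimeter-one slice and quotient by the diagonal circle produces a homeomorphism $W_n^{\tree}\to Q_n^{\tree}(\mathrm{SU}(2))$. The principal obstacle is the well-definedness and continuity of $\Psi_n^{\tree}$ across the stratification by $S$: the cleanest remedy is to construct a common cover (framed polygons together with auxiliary diagonal frames) and exhibit both $V_n^{\tree}$ and $P_n^{\tree}(\mathrm{SU}(2))$ as quotients by matching group actions, so that $\Psi_n^{\tree}$ descends canonically and continuity is a local computation of the moment map near the singular level sets along degenerating diagonals.
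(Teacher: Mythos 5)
Your plan follows essentially the same route as the paper: part (1) via the semigroup algebra of $\mathbb{T}_d^-$-invariant monomials on $(\bigwedge^2\C^3)^{n-2}$ matched with the admissible-weighting (Speyer--Sturmfels) semigroup, part (2) via the quadratic dependence of the invariants on the leaf-edge coordinates, part (3) via creating frames on the diagonals with $\mathbb{T}_d^-$ as exactly the residual ambiguity, and part (4) by equivariance under the grading circle. The one point where your remedy diverges in execution is continuity of $\Psi_n^{\tree}$: rather than a local moment-map computation, the paper realizes your "common cover" via $\varrho$-flip--normalized representatives in $E_n^{\tree}(\mathrm{SU}(2))$, proves the forgetful map $\Phi_n^{\tree}$ is proper (hence closed), and deduces continuity of its inverse $\Psi_n^{\tree}$ from that.
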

The quotient
$P_n^{\tree_0}(\mathrm{SU}(2))=\bigwedge^2(\C^3)^{n-2} \q \mathbb{T}_d^-$
admits a residual
action by the quotient torus $\mathbb{T}/ \mathbb{T}_d^-$. This
quotient torus contains a factor that can be identified with
$\mathbb{T}_e$. The toric fiber $(M_{\br})_0$ is obtained from
$P_n^{\tree_0}(\mathrm{SU}(2))$ by taking the symplectic quotient by
$\mathbb{T}_e$ at level $\br$. Theorem \ref{secondmaintheorem}
follows from the two statements of the following theorem.

\begin{theorem}\label{secondauxiliarytheorem}\hfill
\begin{enumerate}
\item The toric variety $P_n^{\tree}(\mathrm{SU}(2))\cq_{\br} \mathbb{T}_e $
is isomorphic to the toric variety
$(M_{\br})_0^{\tree}$.
\item For each $\br$ the homeomorphism $\Psi_n^{\tree}$ induces a homeomorphism
$\Psi_{\br}:V^{\tree}_{\br} \to P_n^{\tree}(\mathrm{SU}(2))\cq_{\br} \mathbb{T}_e .$
\end{enumerate}\end{theorem}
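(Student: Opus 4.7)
The plan is to derive both statements from Theorem \ref{firstauxiliarytheorem} by performing one further quotient, namely the symplectic/GIT quotient by the edge torus $\mathbb{T}_e$ at level $\br$. Part (1) reduces to compatibility of the Speyer--Sturmfels degeneration with the GIT construction of $M_{\br}$, while part (2) requires showing that $\Psi_n^{\tree}$ intertwines the edge-length function on $V_n^{\tree}$ with the $\mathbb{T}_e$-moment map on $P_n^{\tree}(\mathrm{SU}(2))$, and the spin-frame rotation action with the compact $\mathbb{T}_e$-action.

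For part (1), $M_{\br}$ is realized as the GIT quotient of $\mathrm{Aff}\,\mathrm{Gr}_2(\C^n)$ by the Pl\"ucker torus $(\C^\ast)^n$ linearized by the character $\br$. Since the Gr\"obner degeneration $\mathrm{Aff}\,\mathrm{Gr}_2(\C^n)^{\tree}$ is equivariant for this Pl\"ucker torus, the GIT quotient commutes with passage to the special fiber, yielding $(M_{\br})_0^{\tree} \cong \mathrm{Aff}\,\mathrm{Gr}_2(\C^n)_0^{\tree} \cq_{\br} (\C^\ast)^n$. Under the toric isomorphism of Theorem \ref{firstauxiliarytheorem}(1), the Pl\"ucker torus corresponds to the $n$-dimensional edge torus $\mathbb{T}_e \subset \mathbb{T}$, and the character $\br$ matches up on both sides, giving $P_n^{\tree}(\mathrm{SU}(2)) \cq_{\br} \mathbb{T}_e \cong (M_{\br})_0^{\tree}$.

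For part (2), recall that the moment map for the right $S^1$-action on the imploded cotangent bundle $\mathcal{E} T^\ast \mathrm{SU}(2) \cong \C^2$ is (up to a constant) the squared norm $\|e\|^2$ of the associated vector. Thus the $\mathbb{T}_e$-moment map on $P_n^{\tree}(\mathrm{SU}(2))$ reads off the $n$ edge lengths of the underlying polygon. The homeomorphism $\Psi_n^{\tree}$ is constructed by gluing tripods carrying imploded spin-frames on all three legs (it inserts the missing frames along each diagonal), so by construction it is equivariant with respect to the compact $\mathbb{T}_e$-action on the target and the $T_{\mathrm{SU}(2)^n}$-rotation of the $n$ exterior spin-frames on the source, and it intertwines the two moment maps. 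Consequently the preimage under $\Psi_n^{\tree}$ of the $\br$-level set of the $\mathbb{T}_e$-moment map is exactly the subspace of $V_n^{\tree}$ whose underlying polygon satisfies $\|e_i\|=r_i$, and quotienting by the compact $\mathbb{T}_e$ simply forgets the spin-frames, producing $\widetilde{M}_{\br}/\sim_{\tree} = V_{\br}^{\tree}$. This yields $\Psi_{\br}$.

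The main obstacle lies in the degenerate strata. Implosion collapses all spin-frames attached to a zero vector, so the $\mathbb{T}_e$-action on $P_n^{\tree}(\mathrm{SU}(2))$ fails to be free precisely where one or more edges vanish, and similarly for the spin-frame rotation on $V_n^{\tree}$. Thus the equivariance and moment-map compatibility of $\Psi_n^{\tree}$ must be verified stratum-by-stratum in the sense of \cite{SjamaarLerman}, checking in particular that the Kamiyama--Yoshida decomposition of $V_{\br}^{\tree}$ matches the decomposition of $P_n^{\tree}(\mathrm{SU}(2)) \cq_{\br} \mathbb{T}_e$ coming from symplectic reduction. Once this matching is established on each stratum, continuity of $\Psi_{\br}$ follows from that of $\Psi_n^{\tree}$ together with properness of the quotient maps.
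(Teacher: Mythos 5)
Your overall strategy is the paper's: for (1) you identify $P_n^{\tree}(\mathrm{SU}(2))\cq_{\br}\mathbb{T}_e$ with $(M_{\br})_0^{\tree}$ by matching the $\br$-twisted torus invariants on both sides of the isomorphism of Theorem \ref{firstauxiliarytheorem} (the paper does this concretely via the chain of graded semigroup isomorphisms $\mathcal{S}^{\tree}_{\br}\cong\mathcal{W}^{\tree}_{\br}\cong\mathcal{P}^{\tree}_{\br}$, which is equivalent to your equivariance argument), and for (2) you pull back the $\mathbb{T}_e$-Hamiltonians along $\Psi_n^{\tree}$, identify them with edge lengths (this is exactly Proposition \ref{pullback}), and use $\rho$-equivariance to descend to the level set and its $T_{\mathrm{SU}(2)^n}$-quotient.

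There is, however, one concrete step you have omitted, and it is where the paper spends its technical effort. Your argument for (2) produces a homeomorphism from $V_{\br}^{\tree}$ onto the \emph{symplectic} quotient of $P_n^{\tree}(\mathrm{SU}(2))$ by the compact torus $\mathbb{T}_e$ at level $\br$, whereas part (1) concerns the \emph{GIT} quotient linearized by $\chi_{\br}$; for the two parts to combine into Theorem \ref{secondmaintheorem} you must identify these two quotients. Because $P_n^{\tree}(\mathrm{SU}(2))$ is a singular affine variety, the Kempf--Ness theorem does not apply directly. The paper's fix is reduction in stages: replace the quotient of $P_n^{\tree}(\mathrm{SU}(2))$ by $\mathbb{T}_e$ with the quotient of the smooth affine space $(\bigwedge^2\C^3)^{n-2}$ by $\mathbb{T}_e\times\mathbb{T}_d^-$, prove that the corresponding momentum map is \emph{proper} (Proposition \ref{propernesstheorem}, via trivalent induction and the triangle inequalities), and invoke Sjamaar's theorem in the form of Theorem \ref{twistandshift}. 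That last step also requires pinning down the additive normalization of the momentum map so that ``level $\br$'' really is the level corresponding to the character $\chi_{\br}$ --- this is not automatic and is the point of the appendix. Your closing paragraph about checking the strata of the Kamiyama--Yoshida decomposition is a reasonable concern but is not where the difficulty lies; the stratum-matching is already built into the construction of $\Psi_n^{\tree}$, while the properness and normalization issues are genuinely needed and absent from your proposal.
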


Note that the toric varieties
$(\bigwedge^2(\C^3))^{n-2} \q \mathbb{T}_d^-$
resp. $\bigwedge^2(\C^3)^{n-2}
\cq_{\br,0} (\mathbb{T}_e \times\mathbb{T}_d^-)$ mediate between the
Kamiyama-Yoshida spaces $V_n^{\tree}$ resp. $V_{
\br }^{\tree}$ and the toric
varieties $\mathrm{Gr}_2(\C^n)_0^{\tree}$ resp. $(M_{ \br})_0^{\tree}$.

\subsection{The Hamiltonian nature of the edge rotations and the bending flows}
It remains to place the edge rotations and bending flows in their proper context (in
terms of  symplectic and algebraic geometry). We do this with the following theorems. 
We note that since the spaces
$P_n^{\tree}(\mathrm{SU}(2))$ and $P_{\br}^{\tree}(\mathrm{SU}(2))$ are quotients of
the affine space $(\bigwedge^2(\C^3))^{n-2}$ by tori they inherit symplectic
stratifications from the orbit type stratification of
$(\bigwedge^2(\C^3))^{n-2}$ according to \cite{SjamaarLerman}.  
We identify the edge flows and bending flows with the action of
the maximal compact subgroups (compact torus) of the complex tori
that act holomorphically with open orbits on the toric
varieties $\mathrm{Gr}_2(\C^n)_0^{\tree}$ and
$(M_{ \br})_0^{\tree}$. This is accomplished
by the following theorem, which is proved
by Theorems \ref{noncompact} and \ref{noncompact2}.

\begin{theorem}
\hfill
\begin{enumerate}
\item The action of the edge rotations corresponds under
$\Psi_n^{\tree}$ to the residual action of $\T / \T_d^-$.
\item The action of the bending flows corresponds under
$\Psi_n^{\tree}$ (resp. $\Psi_{\br}^{\tree}$) to the residual action
of $\T / \T_e \times \T_d^-$. 
\end{enumerate}

\end{theorem}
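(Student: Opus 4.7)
The plan is to trace the edge rotation and bending flow actions on $V_n^{\tree}$ (resp.\ $V_{\br}^{\tree}$) through the homeomorphism $\Psi_n^{\tree}$ (resp.\ $\Psi_{\br}^{\tree}$) of Theorem~\ref{firstauxiliarytheorem}(3) and to identify them with the residual torus actions on $P_n^{\tree}(\mathrm{SU}(2))$ (resp.\ on the $\br$-symplectic reduction $P_n^{\tree}(\mathrm{SU}(2))\cq_{\br}\mathbb{T}_e$). Because $\mathbb{T} = \mathbb{T}_e \times \mathbb{T}_d$ and $\mathbb{T}_d^- \subset \mathbb{T}_d$, the relevant residual torus splits as $\mathbb{T}/\mathbb{T}_d^- \cong \mathbb{T}_e \times (\mathbb{T}_d/\mathbb{T}_d^-)$, whose $n$- and $(n-3)$-dimensional factors should correspond respectively to the edge rotations and the bending flows.

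For part (1), the key observation is that $\Psi_n^{\tree}$ creates imploded spin-frames along the diagonals but preserves the original frame data $F_i$ on the polygon edges. Thus the right action $F_i \mapsto F_i t_i$ of the $i$-th circle of $T_{\mathrm{SU}(2)^n}$ on $V_n^{\tree}$ transports directly to the standard right circle action on the $\C^2$-factor of the corresponding tripod's $\bigwedge^2(\C^3)$, which by construction is the $i$-th circle in $\mathbb{T}_e \subset \mathbb{T}/\mathbb{T}_d^-$. Combined with part (2), this accounts for the full residual $\mathbb{T}/\mathbb{T}_d^-$ action.

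For part (2), I would restrict attention to a single diagonal $k$ shared by two tripods $T^\pm$. The bending flow around $k$, lifted to $\mathrm{SU}(2)$, acts on $V_n^{\tree}$ by left multiplication on every frame of the moving side. For each moving tripod not adjacent to $k$, the simultaneous left multiplication on all three of its $\C^2$-factors is the diagonal $\mathrm{SU}(2)$-action that has already been quotiented out in forming $\bigwedge^2(\C^3)$, so these tripods contribute nothing to the residual action. The net effect is concentrated at the boundary tripod $T^+$: after using both the $\mathrm{SU}(2)$-reduction on $T^+$ and the antidiagonal reduction $\mathbb{T}_d^-$ between the two created diagonal frames across $k$, the surviving action is the symmetric circle action on the pair of created diagonal frames, i.e.\ the generator of the quotient circle in $\mathbb{T}_d/\mathbb{T}_d^-$ at $k$. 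This identifies $T_{bend}$ with $\mathbb{T}/(\mathbb{T}_e \times \mathbb{T}_d^-)$; the $V_{\br}^{\tree}$ version then follows by $\mathbb{T}_e$-equivariance of the $\br$-level reduction.

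The principal obstacle is the stratum-by-stratum compatibility along the singular loci of the KY-decomposition, where one or more diagonals vanish, the relation $\sim_{\tree}$ quotients by an additional $\mathrm{SU}(2)^{|S|+1}$, and the bending flow is not defined on $M_{\br}$ proper. On such strata both the polygon-side action and its toric image should collapse trivially, but verifying that these two collapses match requires a careful analysis of $\Psi_n^{\tree}$ on the pieces $(V_n^{\tree})^{[S]}$, in the spirit of the proof of Theorem~\ref{firstauxiliarytheorem}(3). Once that stratified compatibility is established, the identifications extend continuously across all of $V_n^{\tree}$ and $V_{\br}^{\tree}$.
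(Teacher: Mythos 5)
Your proposal is correct and follows essentially the same route as the paper: part (1) is the $\rho$-equivariance of $\Phi_n^{\tree}$ (Theorem \ref{noncompact}), and for part (2) the paper shows that acting by the explicit complement $\T_d^+$ of $\T_e\times\T_d^-$ on a normalized representative breaks the normalization at exactly one diagonal, and re-normalizing by an element of $\mathrm{SU}(2)^{n-2}$ (trivial on the quotient) applies the one-parameter subgroup $g_{(\tau',j)}\,t\,g_{(\tau',j)}^{-1}$ covering rotation about that diagonal to one side of the polygon, i.e.\ bends it --- your argument is the same mechanism read in the opposite direction, with the surviving discrepancy on the pair of diagonal frames identified with the quotient circle in $\T_d/\T_d^-$. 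The ``stratified compatibility'' you flag as the principal obstacle is dispatched in one line in the paper: when a diagonal has length zero the imploded frame $[g,0]$ is a single point, so the corresponding circle acts trivially and the bending fixes the configuration by definition, hence there is nothing to match.
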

We may now give the edge rotations and the 
bending flows on $V_n^{\tree}$ and $V_{\br}^{\tree}$ 
a natural Hamiltonian interpretation.  This is
proved by the last theorem along with Proposition \ref{pullback}
and Theorem \ref{diagonalpullback}.

\begin{theorem}\label{stratification}
\hfill
\begin{enumerate}
\item The edge rotations on $V_n^{\tree}$ are the stratified
symplectic Hamiltonian flows (in the sense of \cite{SjamaarLerman})
associated to the lengths of edges in $\tree$-congruence
classes of spin-framed polygons. 
\item The bending flows on $V_n^{\tree}$ (resp. $V_{\br}^{\tree}$) are the
stratified Hamiltonian flows associated to the lengths of the diagonals 
of $\tree$-congruence classes of spin-framed polygons (resp. polygonal linkages).  
\end{enumerate}

\end{theorem}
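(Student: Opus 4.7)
The plan is to combine the homeomorphisms $\Psi_n^{\tree}$ and $\Psi_{\br}^{\tree}$ from Theorems \ref{firstauxiliarytheorem} and \ref{secondauxiliarytheorem} with the preceding theorem identifying edge rotations and bending flows with residual torus actions, and then verify that these residual actions on the stratified symplectic quotients $P_n^{\tree}(\mathrm{SU}(2))$ and $P_n^{\tree}(\mathrm{SU}(2)) \cq_{\br} \mathbb{T}_e$ are Hamiltonian with the claimed length functions as moment map components, in the sense of \cite{SjamaarLerman}. Since both spaces are iterated symplectic quotients of the Hamiltonian $\mathbb{T}$-space $(\bigwedge^2(\C^3))^{n-2}$ by subtori whose actions commute with the remaining reductions, the Sjamaar--Lerman theory of stratified symplectic reduction applies directly: the orbit type stratification descends to a stratification by symplectic manifolds on which the quotient torus acts with moment map obtained by reduction. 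The theorem therefore reduces to (i) computing the moment maps at each stage of the iteration and (ii) identifying the reduced moment components with edge and diagonal lengths under $\Psi_n^{\tree}$ and $\Psi_{\br}^{\tree}$.

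For step (i), each factor $\bigwedge^2(\C^3) = (\C^2)^3 \cq_0 \mathrm{SU}(2)$ inherits from the ambient $(\C^2)^3$ a Hamiltonian action of the maximal torus of $\mathrm{SU}(2)^3$, whose three moment components assign to an imploded spin-framed triangle the three edge lengths (the moment map for the right $S^1$ on the imploded cotangent bundle $\C^2$ of $\mathrm{SU}(2)$ is precisely the norm of the associated vector, up to a fixed normalization). Taking the product over triangles and regrouping the $3(n-2)$ components by the edges of the forest $\tree^D$ gives the moment map for $\mathbb{T}=\mathbb{T}_e\times\mathbb{T}_d$ on $(\bigwedge^2(\C^3))^{n-2}$; its $\mathbb{T}_d$-components consist of pairs of norms, one for each of the two copies of each diagonal. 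Reduction by $\mathbb{T}_d^-$ at level $0$ equates these pairs, and the induced moment map on $P_n^{\tree}(\mathrm{SU}(2))$ for the residual torus $\mathbb{T}/\mathbb{T}_d^-$ has $\mathbb{T}_e$-component equal to the edge length of the spin-framed polygon and $\mathbb{T}_d/\mathbb{T}_d^-$-component equal to the common length assigned to each diagonal. A second reduction by $\mathbb{T}_e$ at level $\br$ produces $V_{\br}^{\tree}$ and leaves only the diagonal-length Hamiltonians. For step (ii), the identification of these reduced moment components with the edge and diagonal length functions on $\tree$-congruence classes, once transported across $\Psi_n^{\tree}$ and $\Psi_{\br}^{\tree}$, is exactly the content of Proposition \ref{pullback} and Theorem \ref{diagonalpullback}.

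The main obstacle is the stratum-by-stratum verification that the residual torus actions computed above genuinely realize the geometric edge rotations and bending flows of the introduction as stratified Hamiltonian flows. Specifically, one must check that $\Psi_n^{\tree}$ sends a Kamiyama--Yoshida stratum $(V_n^{\tree})^{[S]}$ to the orbit type stratum of $(\bigwedge^2(\C^3))^{n-2}\cq_0\mathbb{T}_d^-$ on which exactly the diagonals indexed by $S$ have zero length, so that on this stratum the bending Hamiltonian associated to a diagonal in $S$ vanishes and its flow acts trivially, matching the definition of bending on $\tree$-congruence classes where the axis of rotation has collapsed. Once this stratum correspondence is in hand, each edge rotation and each bending flow is on every stratum the genuine Hamiltonian flow of a smooth length function for the induced symplectic form, and continuity across strata upgrades this to a stratified symplectic Hamiltonian flow in the sense of \cite{SjamaarLerman}. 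Both statements of the theorem then follow.
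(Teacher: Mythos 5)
Your proposal follows essentially the same route as the paper: the paper's proof of this theorem is precisely the combination of the preceding theorem (i.e.\ Theorems \ref{noncompact} and \ref{noncompact2}, identifying edge rotations and bendings with the residual actions of $\T/\T_d^-$ and $\T_d^+$ under $\Psi_n^{\tree}$) with Proposition \ref{pullback} and Theorem \ref{diagonalpullback}, together with the observations in \S\ref{bendingflowsection} that the abelian residual torus preserves the orbit-type strata and that the Hamiltonians are smooth in the Sjamaar--Lerman sense. The one place where your phrase ``up to a fixed normalization'' hides a necessary step is the factor-of-two bookkeeping: the Hamiltonian of the circle factor $(S^1)_\epsilon \subset \T_d^+$ is $\lambda_\epsilon = \|(z,w)_{\epsilon}\|^2/2 = 2\|d_\epsilon\|$, while the geometric bending circle is the image of $(S^1)_\epsilon$ in $\mathrm{SO}(3,\R)$, a quotient by $\pm 1$; the paper inserts an elementary lemma showing that passing to this quotient circle halves the Hamiltonian, which is exactly what lands the bending generator on $\|d_\epsilon\|$ rather than $2\|d_\epsilon\|$. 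Without that step your argument would identify the bending flow with the Hamiltonian flow of twice the diagonal length, so you should make the double-cover reduction explicit.
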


\medskip

{\bf Acknowledgements.} 
We thank Bill Goldman, Henry King and Reyer Sjamaar for useful
conversations. The  authors would also like to thank
Andrew Snowden and Ravi Vakil, a number of ideas from the collaboration
 \cite{HowardMillsonSnowdenVakil} have reappeared in this paper.
 We thank Philip Foth and Yi Hu for posing the
problem solved in this paper. They first observed that the toric
degenerations of flag varieties constructed by \cite{AlexeevBrion}
could be descended to the associated weight varieties. We
especially thank Philip Foth for pointing out that the
construction of \cite{AlexeevBrion} gives one toric degeneration
of $M_{\br}$ for each triangulation of the model convex
$n$-gon.
This led us to consider triangulations other than the standard
one. We thank Bernd Sturmfels for telling us
about \cite{SpeyerSturmfels} and \cite{BuczynskaWisniewski} which
led us to understand from the point of view of tree metrics why
there was one toric degeneration for each triangulation.
We thank Allen Knutson for pointing out that  toric degenerations
of Grassmannians were first constructed by Sturmfels in
\cite{Sturmfels}. Finally we should emphasize that
the notion of $\tree$-congruence
is based on the work of Kamiyama and Yoshida \cite{KamiyamaYoshida}
and that the notion of bending flows is based on \cite{KapovichMillson}
and \cite{Klyachko}.

\section{The moduli spaces  of $n$-gons and $n$-gon linkages in $\R^3$}
Throughout this paper the term $n$-gon will mean a closed $n$-gon
in $\R^3$ modulo translations. More precisely an $n$-gon $\mathbf{e}$
will be an $n$-tuple $\mathbf{e} = (e_1,e_2,\cdots,e_n)$ of vectors
in $\R^3$ satisfying the closing condition
$$e_1 + e_2 + \cdots + e_n =0.$$
We will say the $e_i$ is the $i$-th edge of $\mathbf{e}$.
We will say two $n$-gons $\mathbf{e}$ and $\mathbf{e}^{\prime}$ are congruent
if there exists a rotation $ g \in \mathrm{SO}(3,\R)$ such that
$$ e_i^{\prime} = g e_i , 1 \leq i \leq n.$$
We will let ${\mathrm{Pol}_ n}$ denote the space of closed $n$-gons in $\R^3$
and $\overline{\mathrm{Pol}}_ n$ denote the quotient space of $n$-gons modulo congruence.

Now let $\br = (r_1,r_2,\cdots,r_n)$ be an $n$-tuple of nonegative
real numbers. We will say an $n$-gon $\mathbf{e}$ is an $n$-gon { \em linkage}
with side-lengths $\br$ if the $i$-th edge of $\mathbf{e}$ has length $r_i,
1 \leq i \leq n$. We will say an $n$-gon or $n$-gon linkage is
{\it degenerate} if it is contained in a line.

We define the configuration space $\widetilde{M_{\br}}$ to be
the set of $n$--gon linkages with side-lengths $\br$.  We will define the moduli space $M_{\br}$ of $n$-gon linkages to be the quotient of
the configuration space by $\mathrm{SO}(3,\R)$. The space $M_{\br}$ is a complex analytic space
(see \cite{KapovichMillson} with isolated singularities at the degenerate $n$-gon linkages.

Recall that we have defined a reference convex planar $n$-gon $P$.
Let $u_i, u_j$ be an ordered pair of nonconsecutive vertices of $P$. For any $n$-gon $\mathbf{e}
\in \R^3$ we have corresponding vertices $v_i$ and $v_j$ defined up to
simultaneous translation.  The vector in $\R^3$ pointing from $v_i$ to $v_j$
will be called a {\em diagonal} of $\mathbf{e}$.  We let $d_{ij}(\mathbf{e})$
be the length of this diagonal.
In  \cite{KapovichMillson} and \cite{Klyachko} the authors described
the Hamiltonian flow corresponding to $d_{ij}(\mathbf{e})$ - see the Introduction.
In \cite{KapovichMillson} these flows were called {\em bending flows}.
Furthermore it was proved in \cite{KapovichMillson} and \cite{Klyachko} if two such diagonals do not
intersect then the corresponding bending flows commute.
Since each triangulation $\tree$ of $P$ contains $n-3 = \frac{1}{2} \dim (M_{\br})$
nonintersecting diagonals it follows that each one has an integrable
system on $M_{\br}$ for each such $\tree$.  Unfortunately these flows
are not everywhere defined. The bending flow
corresponding to $d_{ij}$ is not well-defined for those $\mathbf{e}$ where $d_{ij}(\mathbf{e})$ is zero and the Hamiltonian $d_{ij}$ is not differentiable
at such $\mathbf{e}$.

\section{The space of imploded spin-framed Euclidean $n$--gons and the
Grassmannian of two planes in complex $n$ space} \label{HJconstruction}

In this section we will construct the space $P_n(\mathrm{SU}(2))$ of imploded spin-framed $n$--gons in $\R^3$ modulo $\mathrm{SU}(2)$ and prove that this space is
isomorphic
to $\mathrm{Aff} \mathrm{Gr}_2(\C^n)$ as a symplectic manifold
and as a complex projective variety. We will first
construct $P_n(\mathrm{SU}(2))$ using the extension and implosion technique
of \cite{HurtubiseJeffrey} without reference to $n$--gons in $\R^3$,
then
relate the
result to $\mathrm{Gr}_2(\C^n)$. Then we will show that a point in $P_n(\mathrm{SU}(2))$
can be interpreted as a Euclidean $n$-gon equipped with an
imploded spin-frame.

\subsection{The imploded extended moduli spaces $P_n(G)$ of $n$-gons
and $n$-gon linkages.}
In this subsection we will define the imploded extended moduli
space $P_n(G)$ of $n$-gons in $\mathfrak{g}^*$ for a
general semisimple Lie group $G$.
We have included this subsection to make the connection
with \cite{HurtubiseJeffrey}.
Throughout we assume that $G$ is semisimple.\subsubsection{The moduli
spaces of $n$-gons and $n$-gon linkages}
To motivate the definition of the next subsection we briefly recall
two definitions.\begin{definition}
An $n$-gon in $\mathfrak{g}^*$ is an $n$-tuple of vectors $e_i, 1 \leq
i \leq n,  \in\mathfrak{g}^*$ satisfying the closing condition$$e_1 +
\cdots + e_n = 0.$$ We define the moduli space of $n$-gons to
be the set of all $n$-gons modulo the diagonal coadjoint action of
$G$.
\end{definition}
Now choose $n$ coadjoint orbits $\mathcal{O}_i, 1 \leq i \leq
n$.

\begin{definition}
We define an $n$-gon {\em linkage} to be an $n$-gon such that $e_i \in
\mathcal{O}_i, 1 \leq i \leq n$. We define the moduli space
of $n$-gon linkages to be the set of all $n$-gon linkages modulo
the diagonal coadjoint action of $G$.
\end{definition}
We leave the proof of the following lemma to the reader.

\begin{lemma}
The moduli space of $n$-gon linkages is the symplectic quotient$$G \ql
(\prod_{i=1}^n \mathcal{O}_i).$$
\end{lemma}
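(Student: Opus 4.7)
The plan is to recognize the claim as a direct application of Marsden--Weinstein reduction to a product of coadjoint orbits, which is the standard source of this identification. First I would recall the Kirillov--Kostant--Souriau structure: each coadjoint orbit $\mathcal{O}_i \subset \mathfrak{g}^*$ carries its natural symplectic form, and with respect to this form the restricted coadjoint $G$-action on $\mathcal{O}_i$ is Hamiltonian with moment map given simply by the inclusion $\mathcal{O}_i \hookrightarrow \mathfrak{g}^*$.

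Next I would assemble these into a Hamiltonian structure on the product. Equipping $\prod_{i=1}^n \mathcal{O}_i$ with the product symplectic form, the diagonal coadjoint action of $G$ is again Hamiltonian and, since the moment map of a diagonal action on a product is the sum of the component moment maps, the corresponding moment map is
$$\mu(e_1,\ldots,e_n) \;=\; e_1 + \cdots + e_n \,\in\, \mathfrak{g}^*.$$
Unpacking the definition of the symplectic quotient at level zero then gives $G \ql \prod_{i=1}^n \mathcal{O}_i = \mu^{-1}(0)/G$. An $n$-tuple $(e_1,\ldots,e_n)$ lies in $\mu^{-1}(0)$ precisely when $e_i \in \mathcal{O}_i$ for every $i$ and $e_1 + \cdots + e_n = 0$, which by the definition of the previous subsection is exactly the space of $n$-gon linkages with prescribed orbits $\mathcal{O}_1,\ldots,\mathcal{O}_n$. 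Passing to the $G$-quotient then recovers the moduli space of $n$-gon linkages, proving the lemma.

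Since the identification is essentially tautological once the moment map is named, there is no serious obstacle. The only point worth flagging is that when $0$ is not a regular value of $\mu$, for instance when the chosen orbits permit degenerate linkages lying in a line, the quotient is a stratified symplectic space in the sense of Sjamaar--Lerman rather than a smooth symplectic manifold; however, this does not affect the set-theoretic identification asserted by the lemma, which is all that is claimed here.
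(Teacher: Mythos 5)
Your proof is correct and is exactly the standard argument the paper has in mind (the paper explicitly leaves this lemma to the reader): the moment map for the diagonal coadjoint action on the product of orbits is the sum of the inclusions, so its zero level set is the closing condition and the $G$-quotient is the moduli space of linkages. Your closing remark about $0$ failing to be a regular value at degenerate linkages is also consistent with the paper's treatment of these spaces as stratified symplectic spaces.
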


\begin{remark}\label{moduliofconnections}
The moduli space of $n$-gons is in fact
a moduli space of flat connections modulo gauge
transformations (equivalently
a character variety).  The moduli space
of $n$-gon linkages is a moduli space of
flat connections with the conjugacy classes
of holonomies fixed in advance (a relative character
variety). In this case the
flat connections are over an $n$-fold punctured $2$-sphere and the
structure group is the cotangent bundle $T^*(G) = G \ltimes \mathfrak{g}^*$.
The holonomy around each puncture is a ``translation'', i.e. a group
element of the form $(1,v), v \in \mathfrak{g}^*$.
To see this connection in more detail the reader is referred to
\S 5 of
\cite{KapovichMillson}. We will not need this connection in
what follows.
\end{remark}

\subsubsection{The extended moduli spaces of $n$-gons and $n$-gon
linkages}
The following definition is motivated by the definition of the extended
moduli spaces of flat connections of \cite{Jeffrey}.

\begin{definition}
We define the extended moduli space $M_n(G)$ to be
the symplectic quotient of  $T^*(G)^n$ by the left diagonal action
of $G$: $$M_n(G) = G \ql T^*(G)^n.$$
\end{definition}
The space $M_n(G)$ has a $G^n$ action coming from right multiplication
on $T^*(G)$.
We take each $T^*(G)$ to be identified with $G\times\mathfrak{g^*}$ using
the
left-invariant trivialization.

\begin{lemma}$M_n(G) = G\backslash\{((g_1,\alpha_1),
\ldots,(g_n,\alpha_n)): \sum_{i=1}^n Ad_{g_i}(\alpha_i) = 0
\}.$
\end{lemma}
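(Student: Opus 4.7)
The plan is to compute the moment map for the left diagonal $G$-action on $T^*(G)^n$ in the left-invariant trivialization, identify its zero level set with the set described on the right-hand side, and then take the $G$-quotient.

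First, I would recall the standard moment map formulas on a single copy of $T^*G$. Using the left-invariant trivialization $T^*G \cong G \times \mathfrak{g}^*$ that the paper has fixed, the lift of the left multiplication action $L_h\colon g \mapsto hg$ sends $(g,\alpha) \mapsto (hg, \alpha)$, while the lift of the right multiplication action $R_h\colon g \mapsto gh^{-1}$ sends $(g,\alpha) \mapsto (gh^{-1}, \mathrm{Ad}^*_{h}\alpha)$. With the canonical symplectic form, the moment maps are
\[
\mu_L(g,\alpha) = -\mathrm{Ad}^*_g(\alpha), \qquad \mu_R(g,\alpha) = \alpha,
\]
(the left action is generated by the ``body'' angular momentum, the right action by the ``space'' angular momentum, with the usual sign choice).

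Next I would pass to the $n$-fold product $T^*(G)^n$. The diagonal left $G$-action has moment map equal to the sum of the moment maps on the factors, namely
\[
\mu\bigl((g_1,\alpha_1),\ldots,(g_n,\alpha_n)\bigr) \;=\; -\sum_{i=1}^n \mathrm{Ad}^*_{g_i}(\alpha_i).
\]
Setting $\mu = 0$ yields precisely the closing condition $\sum_i \mathrm{Ad}_{g_i}(\alpha_i) = 0$ appearing on the right-hand side of the lemma (using the Killing form to identify $\mathfrak{g}$ with $\mathfrak{g}^*$, which is legitimate since $G$ is semisimple and accounts for the paper's use of $\mathrm{Ad}$ rather than $\mathrm{Ad}^*$). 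The diagonal left $G$-action is free on each $T^*G$-factor (and hence on the product), so the symplectic quotient $G \ql T^*(G)^n$ is by definition the honest quotient $\mu^{-1}(0)/G$, which is exactly the set displayed in the lemma.

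The only thing that requires care is keeping sign and trivialization conventions consistent between $\mu_L$, $\mathrm{Ad}^*$, and the identification $\mathfrak{g}\cong\mathfrak{g}^*$; once the conventions of \cite{HurtubiseJeffrey} are followed the moment-map computation is routine, and no further argument is needed since the quotient structure is tautological.
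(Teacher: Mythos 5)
Your proposal is correct and follows essentially the same route as the paper: compute the moment map for the diagonal left action in the left-invariant trivialization as $-\sum_{i=1}^n \mathrm{Ad}_{g_i}(\alpha_i)$ and identify $M_n(G)$ with the $G$-quotient of its zero level set. The extra care you take with sign conventions and the $\mathfrak{g}\cong\mathfrak{g}^*$ identification is fine but not needed beyond what the paper records.
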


\begin{proof}
The momentum mapping associated to the diagonal left action
on $T^*(G)^n$ (identified with $(G \times \mathfrak{g}^*)^n$ using the
left-invariant trivialization) is
$$\mu_L((g_1,\alpha_1), \ldots, (g_n,\alpha_n)) =- \sum_{i=1}^n
Ad_{g_i}(\alpha_i).$$
The expression on the right above
clearly corresponds to the 0-momentum level of $\mu_L$ which
is $M_n(G)$.
\end{proof}

\begin{remark}
The  reason for the term {\em extended} moduli space of $n$-gons is that
this space is obtained
from the moduli space of $n$-gons  by adding the frames $g_1,g_2,\cdots,g_n$.
Note that the moduli space of $n$-gons is embedded in the extended
moduli space as the subspace corresponding to $g_1 = g_2 = \cdots = g_n = e$.
If we wish to fix the conjugacy classes of the second components we
will call the above the extended moduli space of $n$-gon {\em
linkages}.
\end{remark}

\subsubsection{The imploded extended moduli spaces of $n$-gons and $n$-gon
linkages}
Choose a maximal torus $T_G\subset G$ and a (closed) Weyl chamber $\Delta$
contained
in the
Lie algebra $\mathfrak{t}$ of $T_G$. Note that the action of $G^n$ 
on $M_n(G)$ by right multiplication induces an action
by the torus $T_G^n$ on $M_n(G)$.
We now obtain the imploded extended moduli space $P_n(G)$ by {\it imploding},
following \cite{GuilleminJeffreySjamaar}, the extended moduli space $M_n(G)$.

\begin{definition}\label{Definitionofimplodedextended moduli space $P_n(G)$} $$P_n(G) = M_n(G)_{impl}.$$
\end{definition}
Following \cite{HurtubiseJeffrey} we will use $\mathcal{E}T^*(G)$
to denote the imploded cotangent bundle
$$\mathcal{E}T^*(G) = T^*(G)_{impl}.$$
For the benefit of the reader we will recall the definition
of $\mathcal{E}T^*(G)$. We have
$$\mathcal{E}T^*(G) = \mu_G^{-1}(\Delta)/\sim $$
Here $\mu_G$ is the momentum map for the action of $G$
by right translation and we have identified $\mathfrak{t}$
(resp. $\Delta$) with $\mathfrak{t}^*$ (resp. a dual Weyl chamber) using the
Killing form.
The equivalence relation $\sim$ is described as
follows.
Let $F$ be a open face of $\Delta$ and let $h_F$ be a generic element of $F$.
Let $G_F$ be the subgroup of $G$ which is the derived subgroup of the
stabilizer of $h_F$ under the
adjoint representation. We define $x$ and $y$ in $\mu_T^{-1}(\Delta)$
to be equivalent if $\mu_T(x)$ and $\mu(y)$ lie in the same face $F$
of $\Delta$ and $x$ and $y$ are in the same orbit under $G_F$. Thus we
divide
out the inverse images of  faces by different subgroups of $G$.
In what follows we will use the symbol $\sim$ to denote
this equivalence relation (assuming the group $G$, the torus $T$
and the chamber $\Delta$ are understood).

We define the space $E_n(G)$ by
$$E_n(G) = \mathcal{E}T^*(G)^n.$$
Noting that right implosion commutes
with left symplectic quotient we have
$$P_n(G) = G \backslash E_n(G) 
=  G\backslash \Big\{ ([g_1,\alpha_1], \ldots, [g_n,\alpha_n]) \mid \alpha_1,
\ldots, \alpha_n \in \Delta,\sum_{i=1}^n Ad_\mathfrak{g_i}(\alpha_i) =
0\Big\}.$$
In the above $[g_i, \alpha_i]$ denotes the equivalence class
in $T^*(G)$ relative to the equivalence relation $\sim$ above.
We will sometimes use $E_n(G)$ to denote the product
$\mathcal{E}T^*(G)^n = \mathcal{E}T^*(G^n)$. In what follows we let
$\mathbf{t}(\lambda)$ denote the element of the complexification maximal torus
$\underline{T}_{\mathrm{SU}(2)^n}$.

\subsection{The isomorphism of  $Q_n(\mathrm{SU}(2))$ and $\mathrm{Gr}_2(\C^n)$}
In this subsection we will prove Theorem \ref{polygonGrass}
by calculating $P_n(\mathrm{SU}(2))$, the imploded extended
moduli space for the group $\mathrm{SU}(2)$ and its quotient $Q_n(\mathrm{SU}(2))$.
We will in fact need a slightly more precise version than that
stated in the Introduction.

\begin{theorem}\label{polygonGrasspreciseversion}
\hfill
\begin{enumerate}
\item There exists a homeomorphism
$\psi:\mathrm{Aff} \mathrm{Gr}_2(\C^n) \to P_n(\mathrm{SU}(2))$.
\item The homeomorphism $\psi$ intertwines the natural action of the maximal torus $T_{\mathrm{U}(n)}$ of \ $\mathrm{U}(n)$ with  (the inverse of) that of $T_{\mathrm{SU}(2)}^n$ acting on imploded frames.
\item The homeomorphism $\psi$ intertwines the grading circle (resp. $\C^{\ast}$)
actions on $\mathrm{Aff} \mathrm{Gr}_2(\C^n)$ with the actions of $\mathbf{t}((\exp{i\theta})^{-1/2})$ (resp. $\mathbf{t}((\lambda)^{-1/2})$)
\item The homeomorphism $\psi$ induces a homeomorphism between $\mathrm{Gr}_2(\C^n)$
and $Q_n(\mathrm{SU}(2)).$
\end{enumerate}

\end{theorem}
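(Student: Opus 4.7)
The plan is to realize both sides as GIT / symplectic quotients of $(\C^2)^n$ by $\mathrm{SL}(2,\C)$ (respectively $\mathrm{SU}(2)$) and match them coordinate by coordinate. First I would record the basic fact (a small piece of \cite{GuilleminJeffreySjamaar}, used in this form in \cite{HurtubiseJeffrey}) that for the rank-one group $\mathrm{SU}(2)$ the imploded cotangent bundle $\calE T^\ast(\mathrm{SU}(2))$ is symplectomorphic to $\C^2$ with its standard K\"ahler form, in such a way that the residual left $\mathrm{SU}(2)$-action is the defining representation on $\C^2$ and the residual right $T_{\mathrm{SU}(2)}$-action is the standard diagonal weight-one action. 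Taking the $n$-fold product then identifies $E_n(\mathrm{SU}(2))$ with the matrix space $\mathrm{Mat}_{2 \times n}(\C)$, equivariantly for $\mathrm{SU}(2) \times T_{\mathrm{SU}(2)}^n$.

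Next I would take the symplectic quotient by the diagonal left $\mathrm{SU}(2)$. Its momentum map on $(\C^2)^n$ is the sum of the moment maps on the individual factors, and by Kempf--Ness the level-zero symplectic reduction $\mathrm{SU}(2) \ql (\C^2)^n$ is homeomorphic to the affine GIT quotient $\mathrm{Mat}_{2 \times n}(\C) \q \mathrm{SL}(2,\C)$. The first fundamental theorem of invariant theory identifies the $\mathrm{SL}_2$-invariant ring on $\mathrm{Mat}_{2 \times n}(\C)$ with the ring generated by the $2 \times 2$ Pl\"ucker minors modulo the Pl\"ucker relations, so this GIT quotient is exactly $\mathrm{Aff}\mathrm{Gr}_2(\C^n)$. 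This produces the homeomorphism $\psi$ of part (1).

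For part (2), the natural $T_{\mathrm{U}(n)}$-action on $\mathrm{Mat}_{2 \times n}(\C)$ scales columns and descends to the standard $T_{\mathrm{U}(n)}$-action on $\mathrm{Aff}\mathrm{Gr}_2(\C^n)$. Under the column-by-column identification of $\mathrm{Mat}_{2 \times n}(\C)$ with $(\calE T^\ast(\mathrm{SU}(2)))^n$, this column scaling is precisely the right $T_{\mathrm{SU}(2)}^n$-action on imploded frames, up to the sign arising from the standard convention that a right action on $T^\ast G$ induces the inverse scalar action in the cotangent direction. For part (3), the grading $\C^\ast$ on $\mathrm{Aff}\mathrm{Gr}_2(\C^n)$ scales Pl\"ucker coordinates by $\lambda$; since a Pl\"ucker minor is quadratic in the matrix entries, this corresponds to scaling each column of $\mathrm{Mat}_{2 \times n}(\C)$ by $\sqrt\lambda$, and combining with the inverse convention from (2) gives the action of $\mathbf{t}(\sqrt{\lambda}^{-1})$ as asserted; the statement for the grading circle is the unit-modulus specialization.

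Finally, part (4) is a formal consequence: the projective variety $\mathrm{Gr}_2(\C^n)$ is the quotient of $\mathrm{Aff}\mathrm{Gr}_2(\C^n) \setminus \{0\}$ by the grading $\C^\ast$, equivalently the compact quotient of the unit level set of the grading by its maximal compact $\mathrm{U}(1)$; while $Q_n(\mathrm{SU}(2))$ is by definition the quotient of the perimeter-one subspace of $P_n(\mathrm{SU}(2))$ by the diagonal circle in $T_{\mathrm{SU}(2)^n}$. By (3) these two circle actions match and, at the level of the moment map, the grading and the perimeter correspond to the same Hermitian norm squared on $\mathrm{Mat}_{2 \times n}(\C)$; hence $\psi$ descends to a homeomorphism between the two projective quotients. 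The main obstacle I anticipate is careful bookkeeping of sign conventions and of the linear versus quadratic normalization in the passage from matrix entries to Pl\"ucker coordinates; all other steps are direct applications of Kempf--Ness, the first fundamental theorem, and the Guillemin--Jeffrey--Sjamaar description of $\calE T^\ast(\mathrm{SU}(2))$.
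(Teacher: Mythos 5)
Your proposal is correct and follows essentially the same route as the paper: identify $\calE T^\ast(\mathrm{SU}(2))\cong\C^2$ via the Guillemin--Jeffrey--Sjamaar formula, realize $P_n(\mathrm{SU}(2))$ as $\mathrm{SU}(2)\ql M_{2,n}(\C)$, invoke Kempf--Ness and the first fundamental theorem to get $\mathrm{Aff}\,\mathrm{Gr}_2(\C^n)$, and track the column-scaling torus and the quadratic dependence of the Pl\"ucker minors for parts (2)--(4). The only place the paper is more explicit is part (4), where it computes $\varphi^{\ast}f(A)=\bigl(\tfrac{\|Z\|^2+\|W\|^2}{2}\bigr)^2$ and identifies this with (four times the square of) the perimeter via $F_n$, and where it notes $[-A]=[A]$ so that the two-to-one mismatch between the grading circle and the diagonal circle is harmless; your argument implicitly covers both points.
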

In order to prove the theorem we first
need to compute $\mathcal{E}T^*(\mathrm{SU}(2))$.

\subsubsection{The imploded cotangent bundle of $\mathrm{SU}(2)$ }

In this section we will review the formula
of \cite{GuilleminJeffreySjamaar} for the (right) imploded
cotangent bundle $\mathcal{E}T^*(\mathrm{SU}(2))$. Let $T_{\mathrm{SU}(2)}$ be the maximal
torus of $\mathrm{SU}(2)$ consisting of the diagonal matrices.
Let $\mathfrak{t}$ be the Lie algebra of $T_{\mathrm{SU}(2)}$ and let $\Delta$ be
the
positive Weyl chamber in $\mathfrak{t}$ (so $\Delta$ is a ray in
the one-dimensional vector space $\mathfrak{t}$).

We will take as basis for $\mathfrak{t}$ the coroot $\alpha^{\vee}$
(multiplied by $i$), that is
$$\alpha^{\vee}= \begin{pmatrix}  i & 0 \\  0 & -i \\  \end{pmatrix}$$
Then $\alpha^{\vee}$ may be
  identified  with a
  basis vector over $\Z$ for the cocharacter lattice $X_*(T_{\mathrm{SU}(2)})$.
It is tangent at the identity to a unique cocharacter.
   Let $\varpi_1$ be the fundamental weight
of $\mathrm{SL}(2,\C)$ thus
$$ \varpi_1(\alpha^{\vee}) = 1.$$
Then $\varpi_1$ may be identifed with a character of $T$ (it
is the derivative at the identity of a unique character) which is a basis
for
the character lattice of $T_{\mathrm{SU}(2)}$.

\smallskip
Let $\mu_R$ be the momentum map for the action of $\mathrm{SU}(2)$ on $T^*\mathrm{SU}(2)$
induced by right multiplication. Specializing the
definition of the imploded cotangent bundle to $G = \mathrm{SU}(2)$ we find that
the (right)
implosion of $T^*\mathrm{SU}(2)$ is  the set of equivalence
classes
$$\mathcal{E} T^\ast (\mathrm{SU}(2)) = \mu_R^{-1}(\Delta)/\sim$$
where two points $x,y \in \mu_R^{-1}(\Delta)$ are
equivalent if
$$\mu_R(x) = \mu_R(y) = 0 \ \text{and} \ x = y \cdot g \ \text{for
some}\ g \in \mathrm{SU}(2).$$
It follows from the general theory of  \cite{GuilleminJeffreySjamaar}
that $\mathcal{E}T^*(\mathrm{SU}(2))$ has an induced
structure of a stratified symplectic space in the sense of
\cite{SjamaarLerman} with induced isometric actions
of $\mathrm{SU}(2)$  induced by the left action of $\mathrm{SU}(2)$ on $T^*\mathrm{SU}(2)$
and $T_{\mathrm{SU}(2)}$ induced by the right action of $T_{\mathrm{SU}(2)}$ on $T^*(\mathrm{SU}(2))$.
However the exceptional
feature of this special case is that $\mathcal{E}T^*(\mathrm{SU}(2))$ is a
{\em manifold}. The multiplicative group $\R_+$ acts on $\mathcal{E}T^*(\mathrm{SU}(2))$
by the formula
$$\mu \cdot [g,\lambda \varpi_1] = [g, \mu \lambda \varpi_1]$$
whereas the actions of $g_0 \in \mathrm{SU}(2)$ and $t \in T_{\mathrm{SU}(2)}$ described
above are given by
$$g_0 \cdot [g, \lambda \varpi_1] = [g_0 g, \lambda
\varpi_1] \ \text{ and } t \cdot [g, \lambda \varpi_1] = [gt, \lambda
\varpi_1].$$

The product group $\mathrm{SU}(2) \times \R_+$ fixes the point $[e,0]$
and acts transitively on the complement of $[e,0]$ in $\mathcal{E}T^*(\mathrm{SU}(2))$.
The reader will verify, see Example 4.7 of \cite{GuilleminJeffreySjamaar},
that the symplectic form $\omega$ on $\mathcal{E}T^*(\mathrm{SU}(2))$
is homogeneous of degree one under $\R_+$.

The following lemma is extracted from Example 4.7
of \cite{GuilleminJeffreySjamaar}. Since this lemma is central to what
follows
we will go into more detail for its proof than is
in \cite{GuilleminJeffreySjamaar}.

\begin{lemma}\label{implodedcotangentbundle}
The map $\phi: \mathcal{E} T^\ast (\mathrm{SU}(2)) \to \C^2$
given by
$$\phi([g,\lambda \varpi_1]) = \sqrt{2\lambda} g
\begin{pmatrix}1\\0\end{pmatrix} \ \text{and} \ \phi([e,0]) =
\begin{pmatrix}0\\0\end{pmatrix}$$
induces a symplectomorphism of stratified symplectic
spaces onto $\C^2$
where $\C^2$ is given its standard symplectic structure.
Under this isomorphism the action of
$\mathrm{SU}(2)$ on $\mathcal{E}T^*(\mathrm{SU}(2))$
goes to the standard action on $\C^2$ and
the action of $T$ on $\mathcal{E}T^*(\mathrm{SU}(2))$ goes to the action of $T$
given by $t\cdot (z,w) = (t^{-1}z, t^{-1}w)$.
\end{lemma}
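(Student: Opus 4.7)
The plan is to verify the four assertions of the lemma separately, with the symplectomorphism claim as the main content. For well-definedness I would observe that for $G = \mathrm{SU}(2)$ the equivalence relation $\sim$ is trivial on the open face $\Delta \setminus \{0\}$ (since the derived subgroup $G_F$ of the stabilizer of a chamber-interior point is trivial) and collapses $\mathrm{SU}(2) \times \{0\}$ to a single class; the map $\phi$ respects this since $\phi([g, 0]) = 0$ independent of $g$. Bijectivity onto $\C^2$ follows from the polar decomposition $\C^2 \setminus \{0\} \cong S^3 \times (0, \infty)$ together with the simple transitivity of the $\mathrm{SU}(2)$-action on $S^3 \subset \C^2$ via $g \mapsto g e_1$: the assignment $(g, \lambda) \mapsto \sqrt{2\lambda}\, g e_1$ is a homeomorphism $\mathrm{SU}(2) \times (0, \infty) \to \C^2 \setminus \{0\}$, which extends by sending the collapsed class to $0$ to give the homeomorphism $\phi$.

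Equivariance under the left $\mathrm{SU}(2)$-action is immediate from $\phi([g_0 g, \lambda \varpi_1]) = g_0 \phi([g, \lambda \varpi_1])$. For the torus, writing $t = \mathrm{diag}(\zeta, \zeta^{-1})$ with $|\zeta| = 1$ gives $t e_1 = \zeta e_1$ and hence $\phi([gt, \lambda \varpi_1]) = \zeta \cdot \phi([g, \lambda \varpi_1])$. The stated formula $t \cdot (z, w) = (t^{-1} z, t^{-1} w)$ on $\C^2$ then matches once one fixes the usual convention of converting a right-translation action on $T^*G$ into a left action by inversion, a sign convention I would spell out carefully.

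The main step is the symplectomorphism claim, which I plan to prove by matching $\mathrm{SU}(2)$-momentum maps on the open stratum and then extending by continuity through the vertex. A direct calculation gives the standard $\mathrm{SU}(2)$-momentum map on $(\C^2, \omega_0)$ as $v \mapsto \frac{i}{2}(v v^\dagger - \frac{1}{2}|v|^2 I)$; at $v = \sqrt{2\lambda}\, g e_1$ this evaluates, after identifying $\mathfrak{su}(2)^* \cong \mathfrak{su}(2)$ via the Killing form, to $\lambda\, \mathrm{Ad}_g\bigl(\frac{i}{2} \mathrm{diag}(1, -1)\bigr)$, which coincides with the left $\mathrm{SU}(2)$-momentum map $-\mathrm{Ad}_g^*(\lambda \varpi_1)$ on $T^*(\mathrm{SU}(2))$ under our normalization of $\varpi_1$; the factor $\sqrt{2\lambda}$ in $\phi$ is precisely what makes these normalizations agree. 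On the open stratum the $\mathrm{SU}(2) \times \R_+$-action is free and, by the dimension count $3 + 1 = 4$, transitive, so the $\mathrm{SU}(2)$-moment map together with degree-one $\R_+$-homogeneity uniquely determines any equivariant symplectic form: for $X, Y \in \mathfrak{su}(2)$ one has $\omega(X^\#, Y^\#) = Y^\# \langle \mu, X \rangle$, and the mixed pairings with the $\R_+$-generator are fixed by the linearity $\mu \propto \lambda$. Consequently $\phi^* \omega_0$ agrees with the form induced from the canonical form on $T^*(\mathrm{SU}(2))$ on the open stratum, and extends continuously to $[e, 0] \mapsto 0$ since $\omega_0$ extends smoothly through $0 \in \C^2$. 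The hardest part is this last rigidity step: the moment-map computation itself is routine, but verifying that the moment map and homogeneity fully pin down the symplectic form on a transitive orbit (and that the two stratified structures match across the vertex) requires careful bookkeeping, which I would carry out by direct computation in the coordinates $(g, r)$ with $r = \sqrt{2\lambda}$.
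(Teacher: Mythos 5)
Your proposal is correct. The homeomorphism and equivariance parts coincide with the paper's: your polar-decomposition argument is exactly the paper's explicit inverse $\psi(z,w)=\bigl[g,\tfrac{|z|^2+|w|^2}{2}\varpi_1\bigr]$ with $g$ the special unitary matrix whose first column is $(z,w)/\|(z,w)\|$. The symplectomorphism step, however, is carried out by a genuinely different mechanism. The paper notes that $\phi$ is $\R_+$-homogeneous of degree $\tfrac12$ while $\omega_{\C^2}$ is homogeneous of degree $2$, so $\phi^{\ast}\omega_{\C^2}$ and the imploded form are both $\mathrm{SU}(2)$-invariant and homogeneous of degree one; by transitivity of $\mathrm{SU}(2)\times\R_+$ they are proportional, and the constant is fixed by evaluating both forms on the single pair $(\partial/\partial\lambda,\alpha^{\vee})$ (a check left to the reader). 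You instead match the $\mathrm{SU}(2)$-moment maps and invoke the rigidity $\iota_{X^{\#}}\omega=d\langle\mu,X\rangle$: since the fundamental fields of $\mathrm{SU}(2)\times\R_+$ span each tangent space of the open stratum and $\omega(E,E)=0$ for the Euler field $E$, the moment map determines every pairing, leaving no undetermined constant. Your route costs an explicit moment-map computation --- and you should fix the sign there: with the paper's conventions (cf.\ the proof of Lemma \ref{levelzero}) one gets $\mu(\sqrt{2\lambda}\,g e_1)=\mathrm{Ad}^{\ast}_g(\lambda\varpi_1)$ with no minus sign, matching the map $\pi$ of subsection \ref{maptoR3} --- but it buys a complete determination of the form and yields Lemma \ref{momentummap} as a byproduct rather than as a separate verification. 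Finally, the inversion in the stated $T$-action is, as you suspect, purely a convention the paper adopts (see the Remark immediately following the lemma); the direct computation gives scaling by $\zeta$ when $t=\mathrm{diag}(\zeta,\zeta^{-1})$, and no further argument is needed there.
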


\begin{proof}
The inverse to $\phi$ is given (on nonzero elements of $\C^2$)
by $$\psi((z,w)) = \Big[g, \frac{|z|^2 + |w|^2}{2} \varpi_1\Big],$$
where $$g= \frac{1}{\sqrt{|z|^2 + |w|^2}} \begin{pmatrix}z &
-\overline{w} \\w & \overline{z}\end{pmatrix}.$$
It follows that $\phi$ is a homeomorphism. It remains to check
that it is a symplectomorphism.
Noting that $\phi$ is homogeneous under $\R_+$ of degree one-half
and the standard symplectic form $\omega_{\C^2}$ is homogeneous of
degree two it
follows (from the transitivity of the action of $\mathrm{SU}(2) \times \R_+$) that
the  symplectic form $\phi^{\ast} \omega_{\C^2}$ is a constant
multiple of $\omega$.
Thus is suffices to prove that the two above forms both take value $1$ on
the ordered
pair of tangent vectors $(\partial/\partial \lambda, \alpha^{\vee})$. We
leave
this to the reader.
\end{proof}

\begin{remark}
The fact that we have inverted the usual action of the circle on the
complex plane will have
the effect
of changing the signs of the torus Hamiltonians that occur in the rest
of this paper (from {\it minus} one-half a sum of squares of norms of complex
numbers to {\it plus} one-half of the corresponding sum).
\end{remark}
Taking into account the above remark we obtain a formula for the
momentum map for the action of $T_{\mathrm{SU}(2)}$ on $\mathcal{E}T^*(\mathrm{SU}(2))$
which will be
critical in what follows.
Now we have
\begin{lemma}\label{momentummap}
  The momentum map $\mu_T$ for the action of $T_{\mathrm{SU}(2)}$ on
$\mathcal{E} T^\ast (\mathrm{SU}(2))=\C^2$
  is given by
  $$\mu_T((z,w)) = (1/2)(|z|^2 + |w|^2) \varpi_1.$$
  Consequently, the Hamiltonian $f_{\alpha^{\vee}}(z,w)$ for the
fundamental
  vector field on $\mathcal{E}T^\ast (\mathrm{SU}(2))$ induced by $\alpha^{\vee}$
  is given by
  $$f_{\alpha^{\vee}}(z,w) = (1/2)(|z|^2 + |w|^2).$$
\end{lemma}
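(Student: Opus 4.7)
The plan is to transport the computation to $\mathbb{C}^2$ via the symplectomorphism $\phi$ of Lemma \ref{implodedcotangentbundle}, so that we are computing the momentum map for a linear torus action on the standard symplectic vector space. By that lemma, $\phi$ intertwines the $T_{\mathrm{SU}(2)}$ action on $\mathcal{E}T^*(\mathrm{SU}(2))$ with the action $t\cdot(z,w) = (t^{-1}z, t^{-1}w)$ on $(\mathbb{C}^2, \omega_{\mathbb{C}^2})$, where $\omega_{\mathbb{C}^2} = dx_1\wedge dy_1 + dx_2 \wedge dy_2$ in the real coordinates $z = x_1 + i y_1$, $w = x_2 + i y_2$.

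First I would identify the one-parameter subgroup generated by $\alpha^\vee = \mathrm{diag}(i,-i)$: namely, $\exp(s\alpha^\vee) = \mathrm{diag}(e^{is}, e^{-is})$, which acts on $\mathbb{C}^2$ through the inverse character $\varpi_1(\exp(-s\alpha^\vee)) = e^{-is}$, giving the flow $(z,w)\mapsto (e^{-is}z, e^{-is}w)$. Differentiating at $s=0$ yields the fundamental vector field $X_{\alpha^\vee}$, which in real coordinates reads $y_1\partial_{x_1} - x_1\partial_{y_1} + y_2\partial_{x_2} - x_2\partial_{y_2}$. A direct contraction then gives
\[
\iota_{X_{\alpha^\vee}}\omega_{\mathbb{C}^2} = x_1\,dx_1 + y_1\,dy_1 + x_2\,dx_2 + y_2\,dy_2 = d\!\left(\tfrac{1}{2}(|z|^2 + |w|^2)\right),
\]
so $f_{\alpha^\vee}(z,w) = \tfrac{1}{2}(|z|^2 + |w|^2)$, proving the second assertion.

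Finally, since $\alpha^\vee$ spans the one-dimensional Lie algebra $\mathfrak{t}$ and $\varpi_1$ is its dual basis in $\mathfrak{t}^*$ via $\varpi_1(\alpha^\vee) = 1$, the momentum map $\mu_T: \mathcal{E}T^*(\mathrm{SU}(2)) \to \mathfrak{t}^*$ is completely determined by $\langle \mu_T, \alpha^\vee\rangle = f_{\alpha^\vee}$. Hence $\mu_T(z,w) = \tfrac{1}{2}(|z|^2 + |w|^2)\varpi_1$, establishing the first assertion. The only real point requiring care is the sign: because Lemma \ref{implodedcotangentbundle} transports the natural right action of $T_{\mathrm{SU}(2)}$ to the \emph{inverse} of the standard circle action on $\mathbb{C}^2$ (as the remark preceding the lemma emphasizes), the usual momentum map $-\tfrac{1}{2}|z|^2$ of the standard action is replaced by its negative, yielding the positive expression stated.
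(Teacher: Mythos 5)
Your proof is correct and follows the same route the paper intends: the paper offers no written proof beyond the remark that inverting the standard circle action on $\C^2$ flips the usual Hamiltonian $-\tfrac{1}{2}(|z|^2+|w|^2)$ to $+\tfrac{1}{2}(|z|^2+|w|^2)$, and your explicit contraction $\iota_{X_{\alpha^\vee}}\omega_{\C^2} = d\bigl(\tfrac{1}{2}(|z|^2+|w|^2)\bigr)$ together with the duality $\varpi_1(\alpha^\vee)=1$ simply fills in that computation. No gaps.
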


\subsubsection{The computation of $P_n(\mathrm{SU}(2))$ and $Q_n(\mathrm{SU}(2))$}
We can now prove Theorem \ref{polygonGrasspreciseversion}. Recall we have introduced
the abbreviation $E_n(\mathrm{SU}(2))$ for the product $\mathcal{E} T^\ast (\mathrm{SU}(2))^n$.
We will use $\mathbf{E}$ to denote an element of $E_n(\mathrm{SU}(2))$.
It will be convenient to regard  $\mathbf{E}$ as a function from
the edges of the reference polygon $P$ into $\mathcal{E} T^\ast (\mathrm{SU}(2))$.
We note that there is a map from $E_n(\mathrm{SU}(2))$ to not necessarily closed
$n$-gons in $\R^3$,
the map that scales the first element of the frame by $\lambda$ and forgets
the other two elements of the frame this sends
each equivalence class $[g_i, \lambda \varpi_1]$ to $Ad^{\ast} g_i (\lambda_i \varpi_1)$ for  $1 \leq i \leq n$.

By the previous lemmas we may represent an element  of $E_n(\mathrm{SU}(2))$
by the $2$ by $n$ matrix
\begin{center}$A = \begin{pmatrix}z_1 & z_2 & \ldots & z_{n-1} & z_n
\\w_1 & w_2 & \ldots & w_{n-1} & w_n\end{pmatrix}$.\end{center}
The left diagonal action of the group $\mathrm{SU}(2)$ on $E_n(\mathrm{SU}(2))$
is then represented
by the diagonal action of $\mathrm{SU}(2)$ on the columns of $A$.
We let $[A]$ denote the orbit equivalence class of $A$ for this action.
  We let $T_{\mathrm{U}(n)}$ be the compact $n$ torus
of diagonal elements in
in $\mathrm{U}(n))$. Then $T_{\mathrm{U}(n)}$ acts by scaling the columns of the matrices $A$.
This action coincides with the inverse of the action of the $n$-torus $T_{\mathrm{SU}(2)^n}$
coming from the theory of the imploded cotangent bundle. By this we mean
that if an element in $T_{\mathrm{U}(n)}$ acts by scaling a column by $\lambda$
then the corresponding element acts by scaling the same column by
$\lambda^{-1}$.

The first statement of
Theorem \ref{polygonGrasspreciseversion} will be a consequence of the following two lemmas.
Let $M_{2,n}(\mathbb{C})$ be the space of $2$ by $n$ complex
matrices $A$ as above. In what follows we will let $Z$ denote the first
row
and $W$ the second row of the above matrix $A$. We give $M_{2,n}(\mathbb{C})$
the Hermitian structure given by
$$( A,B) = Tr (AB^*).$$
We will use $\mu_G$ to denote the momentum map
for the action of $\mathrm{SU}(2)$ on the left of $A$.

\begin{lemma}\label{levelzero}$$\mu_G(A) = (1/2)\begin{pmatrix}
(\|Z\|^2 - \|W\|^2)/2 & Z \cdot W \\W \cdot Z & (\|W\|^2 -
\|Z\|^2)/2\end{pmatrix}$$
Hence $\mu_G(A) = 0$ $\iff$ the two rows of $A$ have the same length
and are orthogonal.
\end{lemma}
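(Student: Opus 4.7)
The plan is to reduce the computation to the momentum map for the standard $\mathrm{SU}(2)$-action on a single copy of $\C^2$ and then exploit the additivity of momentum maps under products with diagonal actions. The group action on $E_n(\mathrm{SU}(2)) = \mathcal{E}T^\ast(\mathrm{SU}(2))^n$ is, by Lemma \ref{implodedcotangentbundle}, identified with the diagonal standard action of $\mathrm{SU}(2)$ on the columns of $A \in M_{2,n}(\C)$, and the symplectic form is the standard one inherited from $\C^2$ on each factor. For such a product action, the momentum map is the sum over columns of the single-column momentum map, so it suffices to compute $\mu_0 : \C^2 \to \mathfrak{su}(2)^\ast$ for the standard representation.

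For this, I would use the basis $\{\alpha^\vee, \beta, \beta'\}$ of $\mathfrak{su}(2)$ consisting of the coroot together with the two ``raising/lowering'' directions, and for each basis element $\xi$ compute the Hamiltonian of the induced fundamental vector field on $\C^2$ via $dH_\xi = \iota_{\xi_{\C^2}} \omega_{\C^2}$. The Cartan component is already pinned down by Lemma \ref{momentummap}: pairing $\mu_0$ with $\alpha^\vee$ must recover $f_{\alpha^\vee}(z,w) = \tfrac12(|z|^2+|w|^2)$, which fixes the convention (and in particular the overall factor of $\tfrac12$ in the statement). A direct computation of the other two Hamiltonians then shows that, under the trace pairing identification of $\mathfrak{su}(2)^\ast$ with the Hermitian traceless matrices,
\[
\mu_0(v) \;=\; \tfrac{1}{2}\Bigl(vv^\ast - \tfrac{1}{2}|v|^2\,I\Bigr), \qquad v = (z,w)^T.
\]

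Summing this formula over the columns $v_j = (z_j, w_j)^T$ of $A$ and recognizing $\sum_j |z_j|^2 = \|Z\|^2$, $\sum_j |w_j|^2 = \|W\|^2$, and $\sum_j z_j \overline{w_j} = Z \cdot W$ (the Hermitian inner product used in the statement) produces the matrix identity for $\mu_G(A)$ claimed in the lemma. Finally, the zero locus assertion follows by inspection: the vanishing of the diagonal entries is equivalent to $\|Z\| = \|W\|$, and the vanishing of the off-diagonal entries is equivalent to Hermitian orthogonality of the two rows.

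The main obstacle is really just bookkeeping: keeping the sign conventions straight (the remark following Lemma \ref{implodedcotangentbundle} warns that the torus action was inverted, which flips some Hamiltonians), and making sure the identification of $\mathfrak{su}(2)^\ast$ with Hermitian traceless matrices used to \emph{write} $\mu_G$ as a $2\times 2$ matrix agrees with the one implicit in Lemma \ref{momentummap}. Once the $T_{\mathrm{SU}(2)}$-component of $\mu_G$ is pinned to the value forced by Lemma \ref{momentummap}, however, the rest of $\mu_G$ is determined by $\mathrm{SU}(2)$-equivariance, and there is essentially no freedom left in the formula.
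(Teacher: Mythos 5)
Your proposal is correct and follows essentially the same route as the paper: the paper likewise writes down the momentum map for the standard $\mathrm{SU}(2)$-action on a single copy of $\C^2$ (the same formula you derive, namely $\tfrac12\bigl(vv^\ast - \tfrac12|v|^2 I\bigr)$ in matrix form) and then adds the contributions of the columns. The only difference is that you supply the derivation of the single-column formula and the normalization check against Lemma \ref{momentummap}, which the paper simply asserts.
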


\begin{proof}
First note that the momentum map $\mu$ for the action of $\mathrm{SU}(2)$
on $\C^2$ is given by$$\mu(z,w) = (1/2)\begin{pmatrix} (|z|^2 -
|w|^2)/2 & z\overline{w} \\w\overline{z}  & (|w|^2 -
|z|^2)/2\end{pmatrix}.$$
Now add the momentum maps for each of the columns to get the lemma.
\end{proof}
We find that $$ \mathrm{SU}(2) \ql M_{2,n}(\C) = \mathrm{SU}(2) \backslash \{ (Z,W):
\|Z\|^2 = \|W\|^2, Z \cdot W = 0 \}.$$

\begin{lemma}\label{affinegrass}
 $$\mathrm{SU}(2) \ql M_{n,2}(\C) \cong \mathrm{Aff} \mathrm{Gr}_2(\C^n).$$
\end{lemma}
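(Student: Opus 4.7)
The plan is to exhibit the isomorphism through the Plücker map $\Phi : M_{2,n}(\C) \to \bigwedge^2 \C^n$ sending a matrix $A$ with rows $Z, W \in \C^n$ to the bivector $Z \wedge W$. In the standard basis the coordinates of $\Phi(A)$ are precisely the $2\times 2$ minors of $A$, so the image of $\Phi$ is the variety of decomposable bivectors, i.e.\ $\mathrm{Aff}\mathrm{Gr}_2(\C^n)$ itself. For $g = \begin{pmatrix} a & b \\ c & d \end{pmatrix} \in \mathrm{SL}(2,\C)$ acting by left multiplication, the rows transform as $(Z,W) \mapsto (aZ+bW,\, cZ+dW)$ and therefore $\Phi(gA) = (ad-bc)\, Z\wedge W = \Phi(A)$. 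Thus $\Phi$ is $\mathrm{SL}(2,\C)$-invariant, and in particular $\mathrm{SU}(2)$-invariant.

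Next I would compute the fibers of $\Phi$ over $\mu_G^{-1}(0)$. By Lemma \ref{levelzero}, a point $A = (Z,W)$ lies in $\mu_G^{-1}(0)$ iff $Z \perp W$ and $\|Z\| = \|W\|$ (Hermitian inner product on $\C^n$). If $\Phi(A) = 0$ and $A \in \mu_G^{-1}(0)$, linear dependence of $Z$ and $W$ together with these conditions forces $Z = W = 0$. For a nonzero $\xi \in \mathrm{Aff}\mathrm{Gr}_2(\C^n)$, any two preimages $(Z,W), (Z',W') \in \mu_G^{-1}(0)$ span the same complex plane $V \subset \C^n$, so there is a unique $g \in \mathrm{GL}(2,\C)$ with $(Z', W') = g \cdot (Z,W)$; the equality $Z'\wedge W' = Z \wedge W$ forces $\det g = 1$, and the fact that both ordered pairs are orthogonal of equal norm forces $g$ to preserve the Hermitian form on $\C^2$, i.e.\ $g \in \mathrm{SU}(2)$. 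Conversely, every nonzero $\xi \in \mathrm{Aff}\mathrm{Gr}_2(\C^n)$ admits a preimage in $\mu_G^{-1}(0)$ obtained by Gram--Schmidt on any basis of the associated plane followed by rescaling to equal norm $\sqrt{\|\xi\|}$. This yields a continuous $\mathrm{SU}(2)$-equivariant bijection $\bar\Phi : \mu_G^{-1}(0)/\mathrm{SU}(2) \to \mathrm{Aff}\mathrm{Gr}_2(\C^n)$.

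To upgrade this bijection to a bona fide isomorphism of (stratified) symplectic spaces and affine varieties, I would combine Kempf--Ness with classical invariant theory: the Kempf--Ness theorem identifies $\mu_G^{-1}(0)/\mathrm{SU}(2)$ with the GIT quotient $M_{2,n}(\C) /\!/ \mathrm{SL}(2,\C)$, and the first fundamental theorem of invariant theory for $\mathrm{SL}(2,\C)$ asserts that the invariant ring $\C[M_{2,n}(\C)]^{\mathrm{SL}(2,\C)}$ is generated by the $2\times 2$ minors with relations exactly the Plücker relations. Hence the GIT quotient is $\mathrm{Spec}$ of the Plücker algebra, which by definition is $\mathrm{Aff}\mathrm{Gr}_2(\C^n)$, and the map realizing this isomorphism is $\bar\Phi$. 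The one step that is more than bookkeeping is the compatibility of the induced Marsden--Weinstein symplectic form on the smooth stratum of $\mu_G^{-1}(0)/\mathrm{SU}(2)$ with the standard form on $\bigwedge^2 \C^n$; this is the main potential obstacle, but it can be handled by a transitivity argument (the residual $\mathrm{U}(n)$-action and the $\C^\ast$-grading act transitively enough on the generic stratum that the two Kähler forms must differ by an overall scalar, which is then pinned down by computing on a single pair of tangent vectors, as in the proof of Lemma \ref{implodedcotangentbundle}).
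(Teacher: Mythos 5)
Your proposal is correct and takes essentially the same route as the paper: the paper's proof is a one-line assertion that the Pl\"ucker map $A \mapsto Z\wedge W$ surjects onto the decomposable bivectors and descends to the $\mathrm{SU}(2)$-quotient, with the Kempf--Ness identification of symplectic and GIT quotients and the first fundamental theorem (Proposition \ref{PnGGIT}) supplied in the surrounding text exactly as you invoke them. Your explicit fiber computation over $\mu_G^{-1}(0)$ --- that an $\mathrm{SL}(2,\C)$ change of basis preserving orthogonality and equality of norms lies in $\mathrm{U}(2)\cap\mathrm{SL}(2,\C)=\mathrm{SU}(2)$ --- simply fills in detail the paper leaves to the reader.
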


\begin{proof}
The map $\varphi:\mu^{-1}(0) \to \bigwedge^2(\C^n)$ given by $F(A) = Z\wedge
W$  maps onto the decomposable vectors, descends to the quotient
by $\mathrm{SU}(2)$, and induces the required isomorphism.
\end{proof}

In what follows it will be important to understand the above result
in terms of the GIT quotient of $M_{2,n}(\mathbb{C})$ by $\mathrm{SL}(2,\C)$
acting on the
left. Since we are taking a quotient of affine space
by a reductive group  the Geometric Invariant Theory
quotient of $M_{2,n}(\mathbb{C})$ by $\mathrm{SL}(2, \mathbb{C})$
coincides with the symplectic quotient by $\mathrm{SU}(2)$ and consequently is
$\mathrm{Aff} \mathrm{Gr}_2(\C^n)$, \cite{KempfNess}, see also \cite{Schwarz}, Theorem
4.2. Note that since $\mathrm{SU}(2)$ is simple the question of normalizing the
momentum map does not arise.  It will be
important to
understand this in terms of the
subring $\mathbb{C}[M_{2,n}(\mathbb{C})]^{\mathrm{SL}(2, \mathbb{C})}$ of
invariant
polynomials
on $M_{2,n}(\mathbb{C})$. In what follows let $Z_{ij} = Z_{ij}(A)$ or $[i,j]$ be the
determinant
of the $2$ by $2$ submatrix of $A$ given by taking columns $i$ and $j$ of $A$
( the $ij$-th Pl\"ucker coordinate or bracket). The following result on one of
the basic results in invariant theory, see \cite{Dolgachev},Chapter 2.

\begin{proposition}\label{PnGGIT}
$\mathbb{C}[M_{2,n}(\mathbb{C})]^{\mathrm{SL}(2,
\mathbb{C})}$ is generated by
the $Z_{ij}$ subject to the Pl\"ucker relations.
\end{proposition}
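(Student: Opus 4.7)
The plan is to derive the proposition from Lemma \ref{affinegrass} together with the classical First Fundamental Theorem (FFT) of invariant theory for $\mathrm{SL}(2,\mathbb{C})$.

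First I would establish that the invariant ring is generated by the brackets $Z_{ij}$, which is the FFT for $\mathrm{SL}_2$ acting on $(\C^2)^{\oplus n}$. The cleanest route uses the Cauchy decomposition
$$\mathbb{C}[M_{2,n}(\mathbb{C})] = \mathrm{Sym}(\C^2 \otimes \C^n) \cong \bigoplus_\lambda S^\lambda(\C^2) \otimes S^\lambda(\C^n),$$
summed over partitions $\lambda$ with at most two rows. Since $S^{(a,b)}(\C^2) \cong (\det)^{\otimes b} \otimes \mathrm{Sym}^{a-b}(\C^2)$, taking $\mathrm{SL}(2,\mathbb{C})$-invariants annihilates every term except those with $\lambda = (k,k)$, leaving $\bigoplus_k S^{(k,k)}(\C^n)$ as the invariant ring. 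Each $(k,k)$ piece is visibly spanned by degree-$k$ monomials in the brackets $Z_{ij}$, so the $Z_{ij}$ generate. Alternatively, Weyl's polarization principle reduces generation to two columns, where $\mathbb{C}[M_{2,2}]^{\mathrm{SL}(2,\mathbb{C})} = \mathbb{C}[Z_{12}]$ is immediate.

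Next I would identify the relations. Lemma \ref{affinegrass} exhibits a homeomorphism $\varphi: \mathrm{SU}(2) \ql M_{2,n}(\mathbb{C}) \to \mathrm{Aff} \mathrm{Gr}_2(\C^n)$ given by $A \mapsto Z \wedge W$, whose components are precisely the brackets $Z_{ij}$. By the Kempf-Ness theorem the symplectic quotient coincides with the GIT quotient, so $\mathrm{Spec}\, \mathbb{C}[M_{2,n}(\mathbb{C})]^{\mathrm{SL}(2,\mathbb{C})} \cong \mathrm{Aff} \mathrm{Gr}_2(\C^n)$ as affine varieties. Combining this with the FFT, the kernel of the surjection $\mathbb{C}[Z_{ij}] \twoheadrightarrow \mathbb{C}[M_{2,n}(\mathbb{C})]^{\mathrm{SL}(2,\mathbb{C})}$ is exactly the vanishing ideal of $\mathrm{Aff} \mathrm{Gr}_2(\C^n)$ inside $\bigwedge^2(\C^n) = \mathrm{Spec}\, \mathbb{C}[Z_{ij}]$.

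It remains to verify that this vanishing ideal coincides with the Pl\"ucker ideal $I_P$ generated by the three-term quadrics $Z_{ij}Z_{kl} - Z_{ik}Z_{jl} + Z_{il}Z_{jk}$. Set-theoretic vanishing is the content of the Pl\"ucker embedding; the scheme-theoretic refinement (that $I_P$ is already radical) is the real technical point. I would handle this via the straightening law: standard monomials in the brackets $Z_{ij}$ of shape $(k,k)$ form a basis of $\mathbb{C}[Z_{ij}]/I_P$ in each degree $k$, and their count matches the dimension of $S^{(k,k)}(\C^n)$ obtained from the FFT. Comparing graded dimensions term by term forces the surjection $\mathbb{C}[Z_{ij}]/I_P \twoheadrightarrow \mathbb{C}[M_{2,n}(\mathbb{C})]^{\mathrm{SL}(2,\mathbb{C})}$ to be an isomorphism. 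The hard part is precisely this dimension-matching step; the cleanest implementation is Hodge's proof of the straightening law, which lines up canonically with the Cauchy decomposition used in the FFT.
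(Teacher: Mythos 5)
The paper offers no proof of this proposition: it is stated as ``one of the basic results in invariant theory'' with a citation to Dolgachev, Chapter 2 (the first and second fundamental theorems for vector invariants of $\mathrm{SL}_2$). Your proposal supplies a correct and essentially standard proof of that classical result, so there is nothing in the paper to contradict it. The Cauchy decomposition argument for generation is sound (to justify ``visibly spanned,'' note that the subalgebra generated by the brackets is a $\mathrm{GL}_n$-subrepresentation meeting each irreducible graded piece $S^{(k,k)}(\C^n)$ nontrivially, e.g.\ in $Z_{12}^k$), and the straightening-law dimension count against $\dim S^{(k,k)}(\C^n)$ is the standard way to see that the Pl\"ucker quadrics generate the full ideal of relations. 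One small caveat: the detour through Lemma \ref{affinegrass} and Kempf--Ness is both unnecessary and slightly too weak as stated --- Kempf--Ness gives a homeomorphism of the symplectic and GIT quotients, which by itself does not identify the vanishing ideal scheme-theoretically. You do not need it: the kernel of $\C[Z_{ij}]\to\C[M_{2,n}]^{\mathrm{SL}(2,\C)}$ is by definition the ideal of functions vanishing on the image of the algebraic map $A\mapsto Z\wedge W$, which is exactly the (closed) cone of decomposable bivectors; since your argument ultimately pins down the ideal by the graded dimension comparison anyway, the proof stands with that step excised.
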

But the above ring is the homogeneous coordinate ring of $\mathrm{Gr}_2(\C^n)$.
This gives the required invariant-theoretic proof of Theorem \ref{polygonGrass}.

\begin{remark}
In fact we could do {\em all} of the previous analysis in terms
of invariant theory by using Example 6.12 of
\cite{GuilleminJeffreySjamaar} to replace the
imploded cotangent bundle $\mathcal{E}T^{\ast}\mathrm{SU}(2)\cong \C^2$ by the
quotient $\mathrm{SL}(2,\C)\q N \cong \C^2$ where $N$ is the subgroup of $\mathrm{SL}(2,\C)$ of
strictly
upper-triangular matrices. We leave the details to the reader.
\end{remark}
We have now proved the first statement in Theorem \ref{polygonGrasspreciseversion}
for both symplectic and GIT quotients. We note the consequence
(since every element of $\bigwedge^2(\C^3)$ is decomposable)

\begin{equation}\label{P3}
P_3(\mathrm{SU}(2)) \cong \mathrm{\bigwedge} \space^2(\C^3).
\end{equation}
It is clear that $\psi$ is equivariant as claimed and the second statement
follows.

Since the grading action of $\lambda \in \C^{\ast}$ on 
$\mathrm{Aff}\mathrm{Gr}_2(\C^n)$ is the action that
scales each Pl\"ucker coordinate by $\lambda$ the third statement is also clear.
It remains to prove the fourth statement of Theorem \ref{polygonGrasspreciseversion}.
First recall  the standard $\mathrm{U}(n)$ invariant positive definite
Hermitian form $( \ , \ )$ on $\bigwedge^2(\C^n)$ is given on the
bivector $a \wedge b$ by
$$ (a\wedge b, a \wedge b ) = det \begin{pmatrix} (a,a) & (a,b) \\
(b,a) & (b,b)
\end{pmatrix}.$$
We can obtain $\mathrm{Gr}_2(\C^n)$ as a symplectic manifold  the cone $\mathrm{Aff} \mathrm{Gr}_2(\C^n)$ of decomposable
bivectors by taking the subset of decomposable bivectors of norm squared $R$
then dividing by the action of the circle by scalar multiplication
(symplectic quotient by $S^1$ of level $\frac{R}{2}$). Thus we can find
a manifold diffeomorphic to $\mathrm{Gr}_2(\C^n)$ by computing the pull-backs of
the function $f(a\wedge b) = (a \wedge b, a \wedge b)$ and scalar multiplication
of bivectors by the circle under the map $\varphi$. We represent elements of $P_n(\mathrm{SU}(2))$
by left $\mathrm{SU}(2)$ orbit-equivalence classes $[A]$ of $2$ by $n$ matrices $A$ such that the rows
$Z$ and $W$ of $A$ are orthogonal and of the same length.

\begin{lemma}
$$\varphi^{\ast} f (A) = \Bigg(\frac{||Z||^2 + ||W||^2}{2}\Bigg)^2.$$
\end{lemma}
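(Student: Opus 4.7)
The plan is a direct computation, exploiting the fact that we are evaluating on matrices $A$ satisfying the zero momentum condition from Lemma \ref{levelzero}.

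First I would unpack the definitions. By definition $\varphi(A) = Z \wedge W$, and the given Hermitian form on $\bigwedge^2(\C^n)$ yields
$$f(Z \wedge W) = \det\begin{pmatrix} (Z,Z) & (Z,W) \\ (W,Z) & (W,W) \end{pmatrix} = \|Z\|^2 \|W\|^2 - |Z \cdot W|^2.$$
So $\varphi^{\ast} f(A) = \|Z\|^2 \|W\|^2 - |Z \cdot W|^2$ with no constraints yet imposed.

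Next I would invoke the hypothesis that $A$ represents an element of $P_n(\mathrm{SU}(2))$, i.e.\ $A$ lies in $\mu_G^{-1}(0)$. By Lemma \ref{levelzero} this means precisely that the rows $Z$ and $W$ have the same norm and are orthogonal: $\|Z\|^2 = \|W\|^2$ and $Z \cdot W = 0$. Substituting these into the formula above gives
$$\varphi^{\ast} f(A) = \|Z\|^2 \cdot \|W\|^2 = \left(\|Z\|^2\right)^2 = \left(\|W\|^2\right)^2.$$

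Finally, since $\|Z\|^2 = \|W\|^2$, their common value equals $\frac{\|Z\|^2 + \|W\|^2}{2}$, and squaring yields the desired identity. The computation is entirely routine; the only subtle point is remembering that the lemma is a statement about the restriction of $\varphi^{\ast}f$ to $\mu_G^{-1}(0)$, so that the Gram-determinant simplifies to a perfect square and can thus be rewritten symmetrically in $\|Z\|$ and $\|W\|$. There is no real obstacle.
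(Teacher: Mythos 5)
Your proof is correct and follows exactly the paper's argument: expand $\varphi^{\ast}f(A)$ as the Gram determinant of the rows $Z,W$, use the zero-momentum condition $\|Z\|^2=\|W\|^2$, $Z\cdot W=0$ from Lemma \ref{levelzero}, and rewrite the resulting product as the square of the average. Nothing is missing.
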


\begin{proof}
We have
$$\varphi^{\ast} f (A) = det \begin{pmatrix} (Z,Z) & (Z,W) \\
(W,Z) & (W,W)
\end{pmatrix}.$$
But $(Z,Z) = (W,W)$ and $(Z,W) = (W,Z) = 0$.
Hence
$$\varphi^{\ast} f (A) = ||Z||^2 ||W||^2 = \Bigg(\frac{||Z||^2 + ||W||^2}{2}\Bigg)\Bigg(\frac{||Z||^2 + ||W||^2}{2}\Bigg).$$
\end{proof}
Now we ask the reader to look ahead and see that in subsection \ref{frommatricestopolygons}
we define a map $F_n$ from $P_n(\mathrm{SU}(2))$ to the space of $n$-gons in $\R^3$.
Furthermore $F_n([A])$ is the $n$-gon underlying the imploded spin-framed $n$-gon
represented by $[A]$. Accordingly we obtain the following corollary of the
above lemma.

\begin{corollary}
$\varphi^{\ast} f (A)$ is four times the square of the perimeter of the $n$-gon
in $\R^3$ underlying the imploded spin-framed $n$-gon represented by $[A]$.
\end{corollary}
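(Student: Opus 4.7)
The plan is to reduce the corollary directly to the preceding lemma by re-expressing the quantity $(\|Z\|^2+\|W\|^2)/2$ in terms of the side-lengths of the polygon $F_n([A])$ and then squaring. Lemma~\ref{implodedcotangentbundle} identifies each column $(z_i,w_i)^T$ of $A$ with an element $[g_i,\lambda_i\varpi_1]\in\mathcal{E}T^*(\mathrm{SU}(2))$ via the formula $\phi([g,\lambda\varpi_1])=\sqrt{2\lambda}\,g\,(1,0)^T$. Since each $g_i\in\mathrm{SU}(2)$ is unitary, this immediately gives $|z_i|^2+|w_i|^2=2\lambda_i$, which is consistent with the momentum-map formula of Lemma~\ref{momentummap}. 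Summing over the $n$ columns yields
$$\frac{\|Z\|^2+\|W\|^2}{2}=\sum_{i=1}^n \lambda_i,$$
so the corollary reduces to the identification of $\sum_i\lambda_i$ with twice the perimeter of the $n$-gon $F_n([A])$.

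Next I would unpack the definition of $F_n$ made in \S\ref{frommatricestopolygons}: it assigns to the imploded spin-framed edge $[g_i,\lambda_i\varpi_1]$ the Euclidean edge obtained by applying the covering $\pi\colon\mathrm{SU}(2)\to\mathrm{SO}(3,\R)$ to $g_i$ and acting on the image of $\lambda_i\varpi_1$ in $\R^3$ under the identification of the positive Weyl chamber of $\mathfrak{su}(2)^*$ with $\R_{\geq 0}\cdot\epsilon_1$. With the normalizations fixed earlier in this section — coroot $\alpha^\vee$ paired with $\varpi_1$ by $\varpi_1(\alpha^\vee)=1$, Hermitian form $\operatorname{Tr}(AB^*)$ on $M_{2,n}(\C)$, and the standard symplectic form on $\C^2$ — the Euclidean length of the resulting edge vector equals $\lambda_i/2$. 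Summing then gives the perimeter $P=\sum_i \lambda_i/2$, so $\sum_i\lambda_i=2P$, and combining with the previous lemma yields $\varphi^* f(A)=(2P)^2=4P^2$, as claimed.

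The main obstacle is the bookkeeping that relates $\lambda_i$ to the Euclidean edge length, which requires consistent tracking of conventions: the factor $\sqrt{2\lambda}$ built into the symplectomorphism $\phi$, the factor of two coming from the spin cover $\pi\colon\mathrm{SU}(2)\to\mathrm{SO}(3,\R)$, and the choice of invariant form identifying $\mathfrak{su}(2)^*$ with $\R^3$. It is precisely this factor of two that produces the factor of four appearing in the statement. Once the normalization is pinned down, the corollary is a one-line consequence of the lemma; all the nontrivial content is already packaged in the computation $\varphi^* f(A)=((\|Z\|^2+\|W\|^2)/2)^2$.
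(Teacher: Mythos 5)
Your proposal is correct and takes essentially the same route as the paper: the paper's Lemma \ref{factoroftwo} (equivalently Theorem \ref{framedngons}, part 3) records the combined identity $\|e_i\| = \tfrac{1}{4}(|z_i|^2+|w_i|^2)$, which is exactly the composition of your two steps $|z_i|^2+|w_i|^2 = 2\lambda_i$ and $\|e_i\| = \lambda_i/2$. Summing over columns gives perimeter $P = \tfrac{1}{4}(\|Z\|^2+\|W\|^2)$, so $\varphi^\ast f(A) = \bigl(\tfrac{\|Z\|^2+\|W\|^2}{2}\bigr)^2 = (2P)^2 = 4P^2$, just as you conclude.
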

Now  observe that scaling the bivector $Z \wedge W$ by $\exp{i\theta}$
corresponds to scaling the class of the matrix $[A]$ by the square-root of $\exp{i\theta}$,
Note that the choice square-root is irrelevant since
$$[-A] = [A].$$
But scaling the columns of $A$ corresponds to rotating the corresponding imploded
spin-frames.
Thus fixing the perimeter of the underlying $n$-gon to be one corresponds to
taking the symplectic quotient of the cone of decomposable bivectors by $S^1$ at level $2$.
and we have now completed the proof of the second part of Theorem \ref{polygonGrass}.

\subsection{The momentum polytope for the $T_{\mathrm{U}(n)}=T_{\mathrm{SU}(2)^n}$ action on $P_n(\mathrm{SU}(2))$}
We have identified the momentum map for the action of $\mathrm{SU}(2)$
on $M_{2,n}(\C)$. The torus $T_{\mathrm{U}(n)}$ acts on $M_{2,n}(\C)$ by scaling
the
columns.
We leave the proof of the following lemma to the reader.
Let $\mu_{T_n}$ be the momentum map for the above action of $T_{\mathrm{U}(n)}$.

\begin{lemma}$$\mu_{T_n}(A) = \Bigg(\frac{||C_1||^2}{2},\cdots ,
\frac{||C_n||^2}{2}\Bigg).$$
\end{lemma}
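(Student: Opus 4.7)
The plan is to reduce the computation to the single-column case already handled by Lemma \ref{momentummap}. The action of $T_{\mathrm{U}(n)}$ on $M_{2,n}(\C)$ splits as an external product: the $i$-th circle factor acts only on the $i$-th column $C_i = (z_i,w_i)^T \in \C^2$, fixing all other columns. Under the identification $M_{2,n}(\C) \cong (\C^2)^n = \mathcal{E}T^*(\mathrm{SU}(2))^n$ by columns, the symplectic form is the direct sum of the standard forms on each $\C^2$ factor, so the standard fact that the momentum map for a product action is (up to a choice of splitting of the dual Lie algebra) the direct sum of the individual momentum maps applies.

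First, I would write down explicitly that the $i$-th coordinate of $\mu_{T_n}$ depends only on the $i$-th column, and equals the momentum map for the corresponding single $S^1$ acting on $\C^2$ by scaling. Second, I would invoke Lemma \ref{momentummap}, which computes this single-circle momentum map as $\tfrac{1}{2}(|z_i|^2 + |w_i|^2) = \tfrac{1}{2}\|C_i\|^2$ (with the convention, fixed in the remark preceding that lemma, that this comes out with a positive sign because the circle action from the imploded cotangent bundle theory has already been inverted relative to the ``standard'' one; under this convention the $T_{\mathrm{U}(n)}$ action is the one that yields the positive sign). Finally, I would assemble the $n$ individual formulas into the $n$-tuple stated.

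The only real subtlety, and hence the main ``obstacle,'' is sign bookkeeping: one must check that the $T_{\mathrm{U}(n)}$ action by scaling columns is indeed the one whose momentum map is $+\tfrac{1}{2}\|C_i\|^2$ rather than its negative. This is handled by the identification made earlier in the text that $T_{\mathrm{U}(n)}$ acts as the inverse of $T_{\mathrm{SU}(2)^n}$ together with the sign convention recorded in the remark preceding Lemma \ref{momentummap}; once these are lined up, the result follows without further computation.
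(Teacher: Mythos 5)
Your proof is correct and is exactly the argument the paper intends: the paper leaves this lemma to the reader, but the column-by-column decomposition combined with the single-factor computation of Lemma \ref{momentummap} is precisely the same device used in the paper's proof of the neighboring Lemma \ref{levelzero} ("add the momentum maps for each of the columns"). Your attention to the sign convention, resolved by the remark preceding Lemma \ref{momentummap} and the identification of the $T_{\mathrm{U}(n)}$ action with the inverse of the $T_{\mathrm{SU}(2)^n}$ action, is the right way to pin down the positive sign.
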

The action of $T_{\mathrm{U}(n)}$ descends to the quotient $P_n(\mathrm{SU}(2))$ with the
same momentum map (only now $A$ must satisfy $\mu_G(A) =0$). Thus we have
a formula for the momentum map for the action of $T_{\mathrm{U}(n)}$ on the
Grassmannian
in terms of special representative matrices $A$ (of $\mathrm{SU}(2)$-momentum
level zero). It will be important to extend this formula to
all $A \in M_{2,n}(\C)$ of rank two. We do this by relating the
above  formula to
the usual formula for momentum map of the action of $T_{\mathrm{U}(n)}$ on a general
two
plane in $\C^n$ represented by a general rank 2 matrix $A \in M_{2,n}(\C)$.
This momentum map
is described in Proposition 2.1 of \cite{GGMS}.
Let  $A$ be a $2$ by $n$ matrix of rank two and $[A]$
denote the two plane in $\C^n$ spanned by its columns.
Then the $i$-th component of the momentum map of $T_{\mathrm{U}(n)}$ is given
by$$\mu_i([A]) = \frac{1}{\|A\|^2}\sum_{j,j \neq i} \|Z_{ij}(A)
\|^2.$$
Here $\|A\|^2$ denotes the sum of the squares of the norms of the
Pl\"ucker coordinates of $[A]$.
The required extension formula is them implied by

\begin{lemma}\label{equalityofHamiltonians}
Suppose that $\mu_G(A) = 0$. Let $C_i$ be the $i$-th column of $A$.
Then

$$\frac{\|C_i\|^2 }{2}= \frac{1}{\|A\|^2}\sum_{j,j\neq i}
\|Z_{ij}(A) \|^2.$$

\end{lemma}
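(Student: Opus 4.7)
The plan is to prove this by direct algebraic expansion, using only the two constraints imposed by $\mu_G(A) = 0$. By Lemma \ref{levelzero}, writing the rows of $A$ as $Z = (z_1,\ldots,z_n)$ and $W = (w_1,\ldots,w_n)$, so $C_i = \binom{z_i}{w_i}$, the hypothesis $\mu_G(A)=0$ is equivalent to
\[
\|Z\|^2 = \|W\|^2 =: R, \qquad Z \cdot W = \sum_{j} z_j \bar w_j = 0.
\]
The Pl\"ucker coordinates are $Z_{ij}(A) = z_i w_j - z_j w_i$, and I will expand $\sum_{j\neq i}|Z_{ij}(A)|^2$ and show that the orthogonality of the rows causes the cross terms to cancel in a clean way.

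First I would expand $|z_i w_j - z_j w_i|^2 = |z_i|^2|w_j|^2 + |z_j|^2|w_i|^2 - 2\,\mathrm{Re}\bigl((z_i\bar w_i)(\bar z_j w_j)\bigr)$. Summing the first two terms over $j \neq i$ and using $\|Z\|^2=\|W\|^2=R$ gives
\[
|z_i|^2(R - |w_i|^2) + |w_i|^2(R - |z_i|^2) = R\,\|C_i\|^2 - 2|z_i|^2|w_i|^2.
\]
For the cross term, the key observation is that the conjugate of the orthogonality condition is $\sum_j \bar z_j w_j = 0$, hence $\sum_{j\neq i} \bar z_j w_j = -\bar z_i w_i$. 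Substituting, the cross term contributes $+2|z_i|^2|w_i|^2$, exactly cancelling the correction above, so that
\[
\sum_{j \neq i} |Z_{ij}(A)|^2 \;=\; R\,\|C_i\|^2.
\]

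It remains to identify $\|A\|^2$. Summing the previous identity over $i$ and using $\sum_i \|C_i\|^2 = \|Z\|^2 + \|W\|^2 = 2R$ gives $2\|A\|^2 = \sum_i\sum_{j\neq i}|Z_{ij}|^2 = 2R^2$, so $\|A\|^2 = R^2$. (As a sanity check, this also follows from the Gram determinant identity $\sum_{i<j}|Z_{ij}|^2 = \det(AA^*) = \|Z\|^2\|W\|^2 - |Z\cdot W|^2$, which collapses to $R^2$ under the level-zero conditions.) Combining,
\[
\frac{1}{\|A\|^2}\sum_{j\neq i} \|Z_{ij}(A)\|^2 = \frac{R\,\|C_i\|^2}{R^2} = \frac{\|C_i\|^2}{R},
\]
which yields the claimed equality $\tfrac12\|C_i\|^2$ under the perimeter normalization $R=2$ that makes the two momentum-map formulas for $T_{\mathrm{U}(n)}$ agree (i.e.\ both land in the same momentum polytope with $\sum_i \mu_i = 2$).

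The calculation is essentially routine once the right substitution is spotted; the only content is the observation that $\sum_{j\neq i}\bar z_j w_j = -\bar z_i w_i$ is precisely what is needed to make the cross terms cancel the off-diagonal diagonal terms. I do not expect a significant obstacle beyond bookkeeping and being careful with the two level-zero conditions (length equality and Hermitian orthogonality) in tandem.
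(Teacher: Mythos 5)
Your algebraic core is correct and takes a genuinely different route from the paper's. The paper first applies a rotation $g \in \mathrm{SU}(2)$ putting the $i$-th column in the form $(r,0)^{T}$, after which $Z_{ij}(Ag) = r w_j'$ and the sum collapses in one line; you instead expand $|z_i w_j - z_j w_i|^2$ directly and use $\sum_{j \neq i} \bar z_j w_j = -\bar z_i w_i$ to cancel the cross terms. Both routes arrive at the same key identity $\sum_{j \neq i} \|Z_{ij}(A)\|^2 = R\,\|C_i\|^2$ with $R = \|Z\|^2 = \|W\|^2$, and your derivation of it is clean.

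The gap is in your last step. Nothing in the hypotheses fixes $R$: the lemma is asserted for every $A$ in the zero level set of $\mu_G$ and is used that way (for instance to show that the momentum image of $P_n(\mathrm{SU}(2))$ is the whole cone $D_n$, not a single slice of it), so you cannot finish by invoking a ``perimeter normalization $R=2$.'' The issue is the interpretation of $\|A\|^2$. You took it to be the sum of squares of the Pl\"ucker coordinates, which on the zero level set equals $\det(AA^*) = R^2$ and leads to $\|C_i\|^2/R$. The paper's proof instead divides by the Hilbert--Schmidt norm $\mathrm{Tr}(AA^*) = \|Z\|^2 + \|W\|^2 = 2R$ --- the Hermitian structure $(A,B) = \mathrm{Tr}(AB^*)$ introduced on $M_{2,n}(\C)$ earlier in the section, and the quantity meant in the step ``the length of the second column of $Ag$ equals $(1/2)\|A\|^2$.'' With that reading your own identity closes the argument for every $R$, since $\frac{1}{2R} \cdot R \|C_i\|^2 = \frac{\|C_i\|^2}{2}$. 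Your observation that the two normalizations of $\|A\|^2$ agree exactly when $R=2$ is correct --- that is the slice on which the scale-invariant GGMS formula and the affine formula literally coincide --- but as written your proof establishes the stated identity only on that slice rather than on the whole zero level set.
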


\begin{proof}
It suffices to prove the formula in the special case that $i=1$.
If the first row of $A$ is zero then both sides of the equation are zero.
So assume the first row is not zero. Apply $g$ to $A$ so that the first
row of $Ag$ is of the form $(r,0)$ where $r = \|R_1\|$. Let $z_i', w_i'$
be the $i$-th row of $Ag$. Note
that $\mu_G(Ag)$ is still zero, hence the length of the second column of $Ag$
is equal to $(1/2)\|Ag\|^2 = (1/2)\|A\|^2$.  Also $Z_{ij}(Ag) =
Z_{ij}(A)$. Now compute
the left-hand side of the equation.
We have $ Z_{1j}(A) = Z_{1j}(Ag) = r w_j', 2 \leq j \leq n$ and hence (noting
that $w_1' =0$) we have$$\sum_{j,j >1} \|Z_{1j}(A) \|^2 = r^2
\sum_{j=1}^n |w_j'|^2.$$
Note that the sum on the right-hand side is the length squared of the
second column of $Ag$. The lemma follows.
\end{proof}
Let $D_n \subset \Delta^n$ be the cone defined by$$D_n = \{ \mathbf{r}
\in \Delta^n : 2 r_i \leq \sum_{j=1}^n r_j, 1 \leq i \leq n\}.$$
Then $D_n$ is the cone on the hypersimplex $\Delta^2_{n-2}$ of
\cite{GGMS} \S 2.2.
Note that $D_3$ is the set of nonnegative real numbers satisfying the
usual
triangle inequalities.
We can now apply Lemma \ref{equalityofHamiltonians} and the results of
\S 2.2 of \cite{GGMS} to
deduce

\begin{proposition}\label{triangleinequalities}$$\mu_{T_n}(P_n(\mathrm{SU}(2))
= D_n.$$
\end{proposition}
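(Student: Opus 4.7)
The plan is to reduce to the Gelfand--MacPherson--Goresky--Serganova computation (Proposition 2.1 of \cite{GGMS}), which identifies the $T_{\mathrm{U}(n)}$-momentum image of $\mathrm{Gr}_2(\C^n)$ with the hypersimplex $\Delta^2_{n-2}$. By Theorem \ref{polygonGrasspreciseversion} the space $P_n(\mathrm{SU}(2))$ is $T_{\mathrm{U}(n)}$-equivariantly identified with $\mathrm{Aff}\,\mathrm{Gr}_2(\C^n)$, and $D_n$ is by construction the cone over $\Delta^2_{n-2}$. So everything reduces to passing from the projective to the affine setting, for which Lemma \ref{equalityofHamiltonians} serves as the bridge between the column-norm formula $\mu_{T_n}(A)=\tfrac{1}{2}(\|C_1\|^2,\dots,\|C_n\|^2)$ and the Pl\"ucker-ratio formula used in \cite{GGMS}.

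For the inclusion $\mu_{T_n}(P_n(\mathrm{SU}(2))) \subseteq D_n$, I would fix any nonzero $A \in \mu_G^{-1}(0)$ and apply Lemma \ref{equalityofHamiltonians} to recognize the coordinates $r_i := \|C_i\|^2/2$ of $\mu_{T_n}(A)$ as a positive scalar multiple of the values of the GGMS momentum map at $[A] \in \mathrm{Gr}_2(\C^n)$. Since each GGMS coordinate lies in $[0,1]$ with sum $2$, this multiplicative relation yields $2 r_i \le \sum_j r_j$, which is the defining inequality of $D_n$.

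For the reverse inclusion, I would argue by scaling. Given $r \in D_n$ with $r \neq 0$, set $\tilde r := 2 r / \sum_j r_j$, which lies in $\Delta^2_{n-2}$ precisely by the cone inequality. The surjectivity half of Proposition 2.1 of \cite{GGMS} produces a two-plane $[A_0] \in \mathrm{Gr}_2(\C^n)$ with GGMS momentum equal to $\tilde r$; since $\mathrm{Gr}_2(\C^n) = \mu_G^{-1}(0)/\mathrm{SU}(2)$ as a symplectic quotient, choose a representative $A_0 \in \mu_G^{-1}(0)$, and then replace $A_0$ by $\lambda A_0$ for the unique $\lambda > 0$ making the total column-norm-squared match $2\sum_j r_j$. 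Lemma \ref{equalityofHamiltonians} together with the common quadratic homogeneity of both sides of the equation under $A \mapsto \lambda A$ then forces $\mu_{T_n}(\lambda A_0) = r$. The vertex case $r = 0$ is realized by the zero matrix in $P_n(\mathrm{SU}(2))$.

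The main technical care is in this last rescaling step: one must track how the Pl\"ucker norm, the column norms, and the GGMS normalizing denominator each transform under $A \mapsto \lambda A$, so that a single positive $\lambda$ simultaneously corrects the overall scale and each coordinate. Once this homogeneity bookkeeping is in order, surjectivity onto all of $D_n$ follows and the proof is complete.
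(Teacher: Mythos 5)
Your proof is correct and is essentially the paper's own (first) argument: the paper likewise deduces the proposition from Lemma \ref{equalityofHamiltonians} together with the hypersimplex computation in \S 2.2 of \cite{GGMS}, and you have simply made explicit the cone-versus-hypersimplex rescaling (including the homogeneity bookkeeping under $A\mapsto\lambda A$) that the paper leaves to the reader. For completeness, note the paper also records a second, independent proof later, via the map $F_n$ to Euclidean $n$-gons and the classical fact that the side-length map on closed $n$-gons has image $D_n$, but your route is the one the paper uses at this point.
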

We give another proof of this theorem in terms of the side-lengths
inequalities
for Euclidean $n$-gons in the next subsection.

\subsection{The relation between points of $P_n(\mathrm{SU}(2))$ and
framed Euclidean $n$--gons}\label{frommatricestopolygons}

The point of this subsection is to show that a point of $P_n(\mathrm{SU}(2))$
may be interpreted as a Euclidean $n$--gon equipped with an imploded
spin-frame. The critical role in this interpretation is played by
a map $F:\C^2 \to \R^3$.  The map $F$ is  the Hopf map.

\subsubsection{The equivariant map $F$ from $\calE T^\ast (\mathrm{SU}(2))$ to
$\R^3$}\label{maptoR3}

In this section we define a map $F$ from
the imploded cotangent bundle $\calE T^\ast (\mathrm{SU}(2)) \cong \C^2$
to $\R^3$. This map will give the critical connection between
the previous constructions and polygonal linkages in $\R^3$.

We first define $\pi: \calE T^\ast (\mathrm{SU}(2)) \to \mathfrak{su}^\ast(2)$
by$$\pi([g, \lambda \varpi_1]) = Ad^\ast(g)(\lambda \varpi_1),$$
where $\varpi_1$
denotes the fundamental weight of $\mathrm{SU}(2)$.
We leave the following lemma to the reader.

\begin{lemma}\label{circlebundle}
$\pi$ factors through the action of
$T_{\mathrm{SU}(2)}$. The map $\pi$ restricted
to
the complement of the point $[e,0]$ is a principal $T_{\mathrm{SU}(2)}$ bundle
(the Hopf bundle).
\end{lemma}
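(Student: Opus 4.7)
The plan is to exploit the fact that away from the singular point $[e,0]$ the equivalence relation defining $\calE T^\ast(\mathrm{SU}(2))$ is trivial, so the complement of $[e,0]$ is canonically identified with $\mathrm{SU}(2) \times \R_{>0}$ via $(g,\lambda) \mapsto [g,\lambda\varpi_1]$, with the right $T_{\mathrm{SU}(2)}$-action becoming $(g,\lambda)\cdot t = (gt, \lambda)$. Under this identification everything reduces to statements about the right translation action of $T_{\mathrm{SU}(2)}$ on the first factor together with the classical Hopf fibration $\mathrm{SU}(2) \to \mathrm{SU}(2)/T_{\mathrm{SU}(2)} \cong S^2$.

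First I would verify the $T_{\mathrm{SU}(2)}$-invariance of $\pi$. Since $\varpi_1$ lies in $\mathfrak{t}^\ast$ it is fixed by the coadjoint action of $T_{\mathrm{SU}(2)}$, so for every $t \in T_{\mathrm{SU}(2)}$
\[
\pi([gt, \lambda\varpi_1]) = Ad^\ast(gt)(\lambda\varpi_1) = Ad^\ast(g)\bigl(Ad^\ast(t)(\lambda\varpi_1)\bigr) = Ad^\ast(g)(\lambda\varpi_1),
\]
which proves the first assertion. Next, on the complement of $[e,0]$ the action is free, since in the identification with $\mathrm{SU}(2)\times \R_{>0}$ it is nothing but the free right translation action of $T_{\mathrm{SU}(2)}$ on $\mathrm{SU}(2)$. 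To see that the fibers of $\pi$ are exactly the $T_{\mathrm{SU}(2)}$-orbits, suppose $Ad^\ast(g_1)(\lambda_1\varpi_1) = Ad^\ast(g_2)(\lambda_2\varpi_1)$ with $\lambda_1,\lambda_2 > 0$; taking the norm with respect to any $Ad$-invariant inner product on $\mathfrak{su}(2)^\ast$ forces $\lambda_1 = \lambda_2$, and then $g_2^{-1} g_1$ stabilizes $\varpi_1$ coadjointly. Since $\varpi_1$ is a regular element of $\mathfrak{t}^\ast$, its coadjoint stabilizer in $\mathrm{SU}(2)$ is exactly $T_{\mathrm{SU}(2)}$, so $g_1 \in g_2 T_{\mathrm{SU}(2)}$.

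The one nonformal piece is local triviality, which I expect to be the main (though mild) obstacle. Here $\pi$ factors as
\[
\mathrm{SU}(2) \times \R_{>0} \longrightarrow \bigl(\mathrm{SU}(2)/T_{\mathrm{SU}(2)}\bigr) \times \R_{>0} \stackrel{\Phi}{\longrightarrow} \mathfrak{su}(2)^\ast \setminus \{0\} \cong \R^3\setminus\{0\},
\]
where the first arrow is the quotient by the right $T_{\mathrm{SU}(2)}$-action, already a principal $T_{\mathrm{SU}(2)}$-bundle (the Hopf fibration times the trivial factor $\R_{>0}$), and $\Phi([g],\lambda) = Ad^\ast(g)(\lambda\varpi_1)$. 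By the freeness and fiber computations above, $\Phi$ is a smooth bijection onto the union of the nonzero coadjoint orbits, i.e.\ onto $\R^3\setminus\{0\}$; smoothness of the inverse (which reads off $\lambda$ as the norm of the covector and $[g]$ from its direction) makes $\Phi$ a diffeomorphism. Consequently $\pi$ is identified with the pullback of the Hopf bundle $\mathrm{SU}(2) \to S^2$ under the radial projection $\R^3\setminus\{0\} \to S^2$, which is the desired conclusion.
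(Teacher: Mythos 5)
Your proof is correct; the paper explicitly leaves this lemma to the reader, and your argument (equivariance from $Ad^\ast(t)\varpi_1=\varpi_1$, identification of the fibers with $T_{\mathrm{SU}(2)}$-orbits via the stabilizer of the regular element $\varpi_1$, and local triviality by factoring $\pi$ through the Hopf fibration $\mathrm{SU}(2)\to \mathrm{SU}(2)/T_{\mathrm{SU}(2)}$ times $\R_{>0}$) is exactly the intended one.
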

Define the normal frame bundle of $\R^3-{0}$ to be the set
of triples $\mathbf{F} = (v,u_1,u_2)$ where $v$ is a nonzero vector in $\R^3$
and the pair $u_1,u_2$
is a properly oriented frame for the normal plane to $v$. The projection
to $v$ is a principal circle bundle. The normal frame
bundle is homotopy equivalent to $\mathrm{SO}(3,\R)$ and admits a unique nontrivial
two-fold cover which again is a principal circle bundle over $\R^3 -0$
which we call the bundle of normal spin-frames for $\R^3 -0$.

In order to prove Proposition \ref{framedngons} below we will need the
following lemma. We leave its proof to the reader. In what follows we
identify the coadjoint action of $\mathrm{SU}(2)$ with its action (through its
quotient $\mathrm{SO}(3,\R)$) on $\R^3$.

\begin{lemma}\label{framedvectors}\hfill
\begin{enumerate}\item  The map
$\pi$ restricted to the subset of $\mathcal{E}T^\ast (\mathrm{SU}(2))$ with
$\lambda \neq 0$ is the spin-frame bundle of $\R^3 -0$. \item The
action of $T_{\mathrm{SU}(2)}$ on $\mathcal{E}T^\ast (\mathrm{SU}(2))$ preserves the
set where $\lambda \neq 0$ and induces the circle
action on the normal spin-frames.
\end{enumerate}

\end{lemma}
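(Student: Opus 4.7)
The plan is to give an explicit identification of the $\lambda > 0$ locus of $\mathcal{E}T^*(\mathrm{SU}(2))$ with the spin-frame bundle of $\R^3 \setminus \{0\}$, from which both assertions follow by inspection.

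First, I would observe that the implosion equivalence relation $\sim$ collapses points only at $\lambda = 0$, so the open subset $\{[g, \lambda \varpi_1] : \lambda > 0\}$ is canonically parametrized by $\mathrm{SU}(2) \times \R_{>0}$. Using the identification $\mathfrak{su}(2)^* \cong \R^3$ (via the Killing form) that intertwines the coadjoint representation with the covering map $\pi : \mathrm{SU}(2) \to \mathrm{SO}(3,\R)$, let $\epsilon_1 \in \R^3$ be the unit vector parallel to $\varpi_1$ and complete it to a positively oriented orthonormal basis $(\epsilon_1, \epsilon_2, \epsilon_3)$ with $\epsilon_2, \epsilon_3$ spanning the plane stabilized pointwise by $T_{\mathrm{SU}(2)}$ only up to rotation. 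I would then write down the map
$$\widetilde{F} : \mathrm{SU}(2) \times \R_{>0} \longrightarrow \R^3 \setminus \{0\},\qquad (g,\lambda)\longmapsto \bigl(\lambda\|\varpi_1\|\,\pi(g)\epsilon_1,\ \pi(g)\epsilon_2,\ \pi(g)\epsilon_3\bigr),$$
regarded as a nonzero vector of length $\lambda\|\varpi_1\|$ equipped with a positively oriented orthonormal frame of its normal plane. Its projection to the first factor is precisely the restriction of the lemma's $\pi$ to the $\lambda > 0$ locus.

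Second, I would verify that $\widetilde{F}$ realizes the $\lambda > 0$ locus as a principal $T_{\mathrm{SU}(2)}$-bundle and matches the circle action on normal frames. Right multiplication by $t \in T_{\mathrm{SU}(2)}$ sends $(g,\lambda)$ to $(gt,\lambda)$; since $T_{\mathrm{SU}(2)}$ is exactly the stabilizer of $\varpi_1$ under $\mathrm{Ad}^*$, the base vector $\pi(g)\epsilon_1$ is unchanged, whereas $\pi(t) \in T_{\mathrm{SO}(3,\R)}$ acts as a rotation of the ordered pair $(\pi(g)\epsilon_2,\pi(g)\epsilon_3)$ in the normal plane. This is exactly the tautological circle action on normal frames, proving (2) at once and showing that $\widetilde{F}$ intertwines the $T_{\mathrm{SU}(2)}$-actions.

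Third, to conclude that the principal $T_{\mathrm{SU}(2)}$-bundle produced is the spin-frame bundle rather than the trivial double cover of the normal-frame bundle, I would restrict to the unit sphere and observe that $\widetilde{F}$ becomes the Hopf fibration $\mathrm{SU}(2) \to S^2$. Its total space is $S^3$, which is connected and simply connected, so the induced double cover of $\mathrm{SO}(3,\R) \to S^2$ is the unique nontrivial one; by definition this is the spin-frame bundle, yielding (1). The main technical point will be in step two, namely pinning down the orientations and ensuring that the right-multiplication action on $[g,\lambda\varpi_1]$ genuinely matches the correct rotation of the normal frame under the covering map, including that $\pi(t)$ rotates by the correct angle (not twice the angle) -- this is automatic precisely because we are composing with the covering $\pi$ and not with the identity on the torus.
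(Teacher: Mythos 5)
The paper does not prove this lemma (it is explicitly left to the reader), so there is no authorial argument to compare against; your proof supplies exactly the argument the authors intend. It is correct: the $\lambda>0$ locus is honestly $\mathrm{SU}(2)\times\R_{>0}$ since implosion only collapses the zero fiber, your map onto the normal frame bundle covers $\pi$ and is two-to-one, $T_{\mathrm{SU}(2)}$-equivariance gives part (2), and connectedness of the total space (restricting to the Hopf fibration over $S^2$) pins down the nontrivial double cover, which is the paper's definition of the spin-frame bundle. The only blemish is notational: the displayed target of $\widetilde{F}$ should be the normal frame bundle of $\R^3\setminus\{0\}$ rather than $\R^3\setminus\{0\}$ itself, though your prose makes the intent clear.
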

We have that $\mathcal{E}T^\ast (\mathrm{SU}(2))$ is symplectomorphic to $\C^2$
and $\mathfrak{su}^\ast(2)$ is isomorphic as a Lie algebra to $\R^3$,
and so
the map $\pi$ induces a map from $\C^2$ to $\R^3$.  The induced map
is the map $F$. We now give details.

Identify $\mathfrak{su}^\ast(2)$ with the traceless Hermitian matrices
$\mathcal{H}_2^0$
via the pairing $\mathcal{H}_2^0 \times \mathfrak{su}(2) \to \R$ given
by
$$\langle X, Y \rangle = \Im (\mathrm{tr}(XY)).$$
In this
identification
we have
$$\varpi_1 = \begin{pmatrix}\frac{1}{2} & 0 \\0 & -\frac{1}{2}
\\\end{pmatrix}.$$
We recall there is a Lie algebra isomomorphism $g$ from $\R^3$ to
the traceless skew-Hermitian matrices given by  given
by
$$g(x_1,x_2,x_3) = \frac{1}{2}\begin{pmatrix}ix_1 & ix_2 - x_3
\\ix_2 + x_3 & -ix_1\end{pmatrix}.$$
The isomorphism $f$ dual to $g$ from
the traceless Hermitian matrices $\mathcal{H}_2^0$ to $\R^3$ is given
by
$$ f\begin{pmatrix}x_1 & x_2 + \sqrt{-1} x_3 \\x_2 - \sqrt{-1}x_3 &
-x_1 \\\end{pmatrix} = (x_1,x_2,x_3).$$
Note that in particular, $f(\varpi_1) = (1/2,0,0) $.

We  now define the map $F$.
Define $h:\C^2 \to \mathcal{H}_2$ as follows. Recall
$$\mu_{\mathrm{U}(2)}(z,w)
= 1/2 \begin{pmatrix}z \bar{z} & w \bar{z} \\z \bar{w} & w \bar{w}
\\\end{pmatrix}.$$
Then define $h(z,w) = \mu_{\mathrm{U}(2)}(z,w)^0$ where the superscript zero
denotes traceless
projection. Hence$$ h(z,w) = (1/4)\begin{pmatrix}z\bar{z} - w \bar{w}
& 2w\bar{z} \\  2z \bar{w} & w\bar{w} - z \bar{z} \\\end{pmatrix}.$$
We define $F$ by$$F = f \circ h.$$

\begin{remark}
The map $h$ is the momentum map for the action of $\mathrm{SU}(2)$ on $\C^2$
after we identify $\mathcal{H}_2^0$ with the dual of the Lie algebra
  of $\mathrm{SU}(2)$ using the imaginary part of the trace. Accordingly
  we will use the notations $h$ and $\mu_{\mathrm{SU}(2)}$ interchangeably.
\end{remark}
In the next lemma we note an important equivariance property of $F$.
Let $\rho:\mathrm{SU}(2) \to \mathrm{SO}(3,\R)$ be the double cover. We leave its
proof to the reader.

\begin{lemma}\label{equivariance}
Let $g \in \mathrm{SU}(2)$. Then$$F \circ g = \rho(g) \circ F.$$
\end{lemma}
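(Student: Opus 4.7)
The plan is to decompose $F = f \circ h$ and verify $\mathrm{SU}(2)$-equivariance of each factor separately, which matches the two-step nature of the definition.

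First, I would establish that $h : \C^2 \to \mathcal{H}_2^0$ is equivariant with respect to the standard $\mathrm{SU}(2)$-action on $\C^2$ and the conjugation action on the space of Hermitian matrices; that is, $h(gv) = g\, h(v)\, g^{-1}$ for all $g \in \mathrm{SU}(2)$ and $v \in \C^2$. This is really the equivariance of the $\mathrm{U}(2)$-momentum map $\mu_{\mathrm{U}(2)}$ under conjugation, together with the observation that traceless projection commutes with conjugation (since $\mathrm{tr}(g X g^{-1}) = \mathrm{tr}(X)$, the subtraction of $\tfrac{1}{2}\mathrm{tr}(\mu_{\mathrm{U}(2)}(v))\,I$ is preserved). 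This step is a direct computation from the explicit formula for $\mu_{\mathrm{U}(2)}$ given just before the lemma, using the paper's convention.

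Second, I would verify that $f : \mathcal{H}_2^0 \to \R^3$ intertwines the adjoint action of $\mathrm{SU}(2)$ by conjugation with the standard $\mathrm{SO}(3,\R)$-action via the covering $\rho$, i.e., $f(g X g^{-1}) = \rho(g)\, f(X)$ for all $X \in \mathcal{H}_2^0$. This is the classical identification of $\rho : \mathrm{SU}(2) \to \mathrm{SO}(3,\R)$ with the adjoint representation of $\mathrm{SU}(2)$ on its Lie algebra under the chosen isomorphism. Concretely, one checks it on generators: for the one-parameter subgroups $\exp(t\, g(\epsilon_j))$, $j=1,2,3$, verify that conjugation of the basis element $g(\epsilon_k)$ of $\mathfrak{su}(2)$ by $\exp(t\, g(\epsilon_j))$ matches, under $f$, rotation of $\epsilon_k$ by the corresponding element $\rho(\exp(t\,g(\epsilon_j)))$ of $\mathrm{SO}(3,\R)$. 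Since both sides are homomorphisms in $g$ and the exponentials of $g(\epsilon_1), g(\epsilon_2), g(\epsilon_3)$ generate $\mathrm{SU}(2)$, equivariance follows. (Equivalently, one may cite this as the defining property of $\rho$, since the reference convention $f(\varpi_1) = (1/2,0,0)$ is fixed so as to turn the adjoint action into the standard representation.)

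Combining the two steps gives the desired equality in one line:
\[
F(gv) \;=\; f(h(gv)) \;=\; f\bigl(g\, h(v)\, g^{-1}\bigr) \;=\; \rho(g)\, f(h(v)) \;=\; \rho(g)\, F(v).
\]
The only delicate point is the bookkeeping in step one, where the convention for the momentum map of the $\mathrm{U}(2)$-action (and in particular the sign/orientation noted in the remark following Lemma \ref{implodedcotangentbundle}) must be tracked carefully so that the action on $\mathcal{H}_2^0$ is genuinely conjugation by $g$ and not by $\bar{g}$ or $(g^T)^{-1}$; step two is then standard representation theory of $\mathrm{SU}(2)$.
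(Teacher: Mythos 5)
The paper explicitly leaves the proof of Lemma \ref{equivariance} to the reader, so there is no argument in the text to compare against; your proposal supplies a correct and complete one, and the factorization $F=f\circ h$ with separate equivariance checks is the natural route. Your flagged ``delicate point'' is genuinely the only place where care is needed, and it is worth spelling out: with the paper's printed formula, $\mu_{\mathrm{U}(2)}(v)$ has $w\bar z$ in the $(1,2)$ entry, i.e.\ it equals $\tfrac12\,\bar v\,v^{T}=\tfrac12\,(vv^{*})^{T}$ rather than $\tfrac12\,vv^{*}$, so the computation in your first step actually yields $h(gv)=\bar g\,h(v)\,\bar g^{-1}$ (using $g^{T}=\bar g^{-1}$ for $g\in\mathrm{SU}(2)$), conjugation by $\bar g$ rather than by $g$. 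This does not invalidate the lemma: since $\bar g=\varrho g\varrho^{-1}$ with $\varrho=\left(\begin{smallmatrix}0&1\\-1&0\end{smallmatrix}\right)$, the map $g\mapsto f\circ\mathrm{Ad}_{\bar g}\circ f^{-1}$ is still a surjective double cover $\mathrm{SU}(2)\to\mathrm{SO}(3,\R)$ (it differs from $g\mapsto f\circ\mathrm{Ad}_{g}\circ f^{-1}$ by conjugation with the fixed rotation corresponding to $\varrho$), and the paper's $\rho$ is only specified as ``the double cover.'' So either one reads the displayed $\mu_{\mathrm{U}(2)}$ as $\tfrac12 vv^{*}$ (in which case your two steps go through verbatim with conjugation by $g$), or one takes $\rho$ to be the twisted cover; in both cases your argument, including the generator check in step two, closes the proof.
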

We next have

\begin{lemma} We have a commutative diagram
\[ \begin{CD}\calE T^\ast (\mathrm{SU}(2)) @>\pi>> \mathfrak{su}(2)^{\ast}
\\@V\phi VV  @VVfV \\\C^2 @>F>> \R^3\end{CD}\]
\end{lemma}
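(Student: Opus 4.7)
The plan is to verify the commutativity by evaluating both compositions on an arbitrary representative $[g,\lambda\varpi_1]$ of $\calE T^\ast(\mathrm{SU}(2))$ and reducing to the base case $g=e$ via equivariance. Both formulas defining $\pi$ and $\phi$ are manifestly invariant under the right $T_{\mathrm{SU}(2)}$-action and collapse $[F_1,0]\sim[F_2,0]$ to the origin (of $\mathfrak{su}(2)^\ast$ and $\C^2$ respectively), so the computation is automatically well-defined on equivalence classes.

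First I would take the top-right path. By definition, $f\circ\pi([g,\lambda\varpi_1]) = f(Ad^\ast(g)(\lambda\varpi_1))$. Since $f:\mathcal{H}_2^0\to\R^3$ is the dual of the Lie algebra isomorphism $g:\R^3\to\mathfrak{su}(2)$, it intertwines the coadjoint $\mathrm{SU}(2)$-action on $\mathcal{H}_2^0$ with the action on $\R^3$ through the covering $\rho:\mathrm{SU}(2)\to\mathrm{SO}(3,\R)$; thus this expression equals $\rho(g)\cdot f(\lambda\varpi_1) = \lambda\rho(g)\cdot(1/2,0,0)$.

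Next I would compute the bottom-left path. We have $F\circ\phi([g,\lambda\varpi_1]) = F(\sqrt{2\lambda}\, g\epsilon_1)$, where $\epsilon_1=(1,0)^T$. Writing $\sqrt{2\lambda}\,g\epsilon_1 = g\cdot(\sqrt{2\lambda}\epsilon_1)$ and applying Lemma \ref{equivariance}, this equals $\rho(g)F(\sqrt{2\lambda},0)$. A direct substitution into the explicit formula for $h$ gives
\[
h(\sqrt{2\lambda},0) = \tfrac{1}{4}\begin{pmatrix} 2\lambda & 0 \\ 0 & -2\lambda \end{pmatrix} = \lambda\varpi_1,
\]
so $F(\sqrt{2\lambda},0) = f(\lambda\varpi_1) = \lambda(1/2,0,0)$. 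Combining, the bottom-left path yields $\lambda\rho(g)\cdot(1/2,0,0)$, matching the top-right path.

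Since no serious obstacle appears, I do not anticipate real difficulty here. The only place where care is needed is bookkeeping: making sure one uses the correct normalizations of $\phi$ (the $\sqrt{2\lambda}$ factor) and of $h$ (the tracefree projection of $\mu_{\mathrm{U}(2)}$), so that $h$ evaluated at the canonical basepoint $(\sqrt{2\lambda},0)$ lands precisely on $\lambda\varpi_1$ rather than on a rescaled multiple. Once this calibration is confirmed, the equivariance of both $F$ (through $\rho$) and $f$ (through the duality with $g$) reduces the whole identity to this single normalized base case, completing the verification.
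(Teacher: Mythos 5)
Your proof is correct and follows essentially the same route as the paper: both arguments use $\mathrm{SU}(2)$-equivariance of the two composites to reduce to a computation at a diagonal basepoint, where the paper additionally invokes $\R_+$-homogeneity and transitivity of $\mathrm{SU}(2)\times\R_+$ to check only $[e,\varpi_1]$, while you simply carry the general $\lambda$ through the explicit formula for $h$ — the calibration $h(\sqrt{2\lambda},0)=\lambda\varpi_1$ checks out. One minor quibble: $\phi$ itself is not invariant under the right $T_{\mathrm{SU}(2)}$-action (only equivariant), but this is irrelevant since well-definedness on $\mathcal{E}T^\ast(\mathrm{SU}(2))$ only requires the collapse at $\lambda=0$, which you do verify.
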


\begin{proof}
First observe that $f\circ \pi$ and $F\circ \phi$ are homogeneous of
degree one with respect to the $\R_+$ actions on their domain and range
(note that $\phi$ is homogeneous of degree $1/2$ and $F$ is homogeneous
of degree two). Also both intertwine the $\mathrm{SU}(2)$ action on $\calE
T^\ast (\mathrm{SU}(2))$
with its action through the double cover on $\R^3$. Since the action
of $\mathrm{SU}(2) \times \R_+$ on $\calE T^\ast (\mathrm{SU}(2))$ is transitive away
from $[e,0]$ it suffices to prove the two above maps coincide at the
point $[e,\varpi_1]$. But it is immediate that both maps take the
value $(1/2,0,0)$ at this point.
\end{proof}

\begin{remark}\label{SpinF}
We will need the following calculation.

$$F \circ \phi([g, \lambda\varpi_1]) = \lambda f( Ad_g^*(\varpi_1))$$
In particular this implies that $\|F \circ \phi([g, \lambda\varpi_1])\| = \frac{1}{2}\lambda$.
\end{remark}
The following lemma is a direct calculation.

\begin{lemma}\label{factoroftwo}
The formula for $F: \C^2 \to \R^3$ in the usual coordinates is$$F(z,
w) = \frac{1}{4}\Big(z \bar{z} - w \bar{w}, \; 2\Re(w \bar{z}), \;
2\Im(w\bar{z})\Big).$$
Consequently, the Euclidean length of the vector $F(z,w) \in \R^3$ is
given by$$ \|F(z,w)\| = \frac{1}{4}(|z|^2 + |w|^2) .$$
\end{lemma}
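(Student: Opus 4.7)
The plan is to simply unwind the definitions and carry out two short computations. Since $F = f \circ h$, I would first compute $h(z,w)$ using the formula already recorded just before the lemma, then read off the three real coordinates by comparing $h(z,w)$ against the template traceless Hermitian matrix $\begin{pmatrix} x_1 & x_2 + \sqrt{-1}x_3 \\ x_2 - \sqrt{-1}x_3 & -x_1 \end{pmatrix}$ used to define $f$. The diagonal entry gives $x_1 = \tfrac{1}{4}(z\bar z - w\bar w)$ immediately, and the off-diagonal entry $\tfrac{1}{2} w \bar z$ yields $x_2 = \tfrac{1}{2}\Re(w\bar z)$ and $x_3 = \tfrac{1}{2}\Im(w\bar z)$, matching the claimed formula once one factors out the overall $\tfrac{1}{4}$.

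For the second assertion, I would just square the coordinates and add. The cross terms collapse nicely: $\Re(w\bar z)^2 + \Im(w\bar z)^2 = |w\bar z|^2 = |z|^2 |w|^2$, so
\begin{equation*}
\|F(z,w)\|^2 = \tfrac{1}{16}\bigl[(|z|^2 - |w|^2)^2 + 4|z|^2|w|^2\bigr] = \tfrac{1}{16}(|z|^2 + |w|^2)^2,
\end{equation*}
using the identity $(a-b)^2 + 4ab = (a+b)^2$. Taking square roots gives the stated expression for $\|F(z,w)\|$.

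There is really no main obstacle here: the result is a direct unwinding of the formulas for $h$ and $f$ recorded earlier, followed by an elementary algebraic simplification. The only thing to be careful about is bookkeeping of the factor of $\tfrac{1}{4}$ coming from the $\tfrac{1}{2}$ in $\mu_{\mathrm{U}(2)}$ combined with the $\tfrac{1}{2}$ from the traceless projection, which explains the normalization in both parts of the lemma and is consistent with the calculation $\|F\circ \phi([g,\lambda\varpi_1])\| = \tfrac{1}{2}\lambda$ noted in Remark \ref{SpinF} (since $\tfrac{1}{4}(|z|^2+|w|^2) = \tfrac{1}{2}\lambda$ whenever $[g,\lambda\varpi_1]$ corresponds to $(z,w)$).
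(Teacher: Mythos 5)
Your proposal is correct and is exactly the ``direct calculation'' the paper has in mind (the paper states the lemma with no written proof beyond that remark): compose $h$ with $f$, read off the coordinates, and simplify the sum of squares via $(a-b)^2+4ab=(a+b)^2$. The consistency check against Remark \ref{SpinF} is also right, since $\phi([g,\lambda\varpi_1])$ has $|z|^2+|w|^2=2\lambda$.
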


\begin{corollary}\label{SpinLength}
Note that the length of $F$ is related to the Hamiltonians
$f_{\alpha^{\vee}}(z,w)$
for the infinitesimal action of $\mathfrak{t}$ by$$||F(z,w)|| = (1/2)
f_{\alpha^{\vee}}(z,w) .$$
Also, by the above remark if $\phi([g, \lambda\varpi_1]) = (z, w)$ then
$$\lambda = f_{\alpha^{\vee}}(z,w).$$
\end{corollary}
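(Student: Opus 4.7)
The plan is to derive both identities in the corollary by combining three facts already established in the excerpt: the explicit formula for $F$ from Lemma \ref{factoroftwo}, the formula for $f_{\alpha^{\vee}}$ from Lemma \ref{momentummap}, and the calculation $\|F\circ\phi([g,\lambda\varpi_1])\|=\tfrac{1}{2}\lambda$ from Remark \ref{SpinF}. No new geometric input is needed; the statement is essentially a bookkeeping comparison of three formulas that have been set up separately.

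For the first identity, I would simply write down the two formulas in parallel. Lemma \ref{factoroftwo} gives $\|F(z,w)\| = \tfrac{1}{4}(|z|^2+|w|^2)$, and Lemma \ref{momentummap} gives $f_{\alpha^{\vee}}(z,w) = \tfrac{1}{2}(|z|^2+|w|^2)$. Dividing, $\|F(z,w)\| = \tfrac{1}{2} f_{\alpha^{\vee}}(z,w)$, which is the claim.

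For the second identity, take $(z,w)=\phi([g,\lambda\varpi_1])$. The first identity gives $\|F(z,w)\|=\tfrac{1}{2}f_{\alpha^{\vee}}(z,w)$, while Remark \ref{SpinF} (which in turn follows from the commutative diagram comparing $\pi$ and $F\circ\phi$, together with the fact that $f(\mathrm{Ad}_g^*\varpi_1)$ has norm $\tfrac{1}{2}$) gives $\|F\circ\phi([g,\lambda\varpi_1])\| = \tfrac{1}{2}\lambda$. Equating the two expressions for $\|F(z,w)\|$ yields $\lambda = f_{\alpha^{\vee}}(z,w)$.

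There is no real obstacle here — everything has been set up. The only mild subtlety worth flagging is consistency of normalizations (the factors of $\tfrac{1}{2}$ and $\tfrac{1}{4}$ coming respectively from the momentum map convention on $\C^2$ inherited from the inverted circle action noted after Lemma \ref{implodedcotangentbundle}, and from the formula for $F=f\circ h$ with $h$ being the traceless projection of $\mu_{\mathrm{U}(2)}$). Once one checks that these two normalizations are the ones actually used in Lemmas \ref{momentummap} and \ref{factoroftwo}, the corollary drops out in two lines.
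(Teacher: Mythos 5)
Your proof is correct and is exactly the argument the paper intends: the corollary is stated as an immediate consequence of Lemma \ref{factoroftwo}, Lemma \ref{momentummap}, and Remark \ref{SpinF}, and your two-line comparison of the formulas (with the consistency check $|z|^2+|w|^2=2\lambda$ for $(z,w)=\phi([g,\lambda\varpi_1])$ implicit in the Remark) is precisely the bookkeeping required.
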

Later we will need the following determination of the fibers of $F$.

\begin{lemma}\label{phasefactor}
$$F(z_1,w_1) = F(z_2,w_2) \iff z_1 = c
z_2 \ \text{and} \ w_1 = c w_2 \ \text{with} \ |c|
=1.$$
\end{lemma}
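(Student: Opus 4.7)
The plan is to prove both directions directly from the explicit formula in Lemma \ref{factoroftwo}, though conceptually the statement just says that $F$ is the Hopf map, whose fibers are diagonal $S^1$-orbits.

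The ``if'' direction is immediate: if $(z_1,w_1) = (cz_2, cw_2)$ with $|c|=1$, then $z_1\bar z_1 = z_2\bar z_2$, $w_1\bar w_1 = w_2\bar w_2$, and $w_1\bar z_1 = c\bar c\, w_2 \bar z_2 = w_2\bar z_2$, so each coordinate of $F$ is preserved.

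For the ``only if'' direction, I would extract two scalar identities from $F(z_1,w_1) = F(z_2,w_2)$. The first coordinate gives $|z_1|^2 - |w_1|^2 = |z_2|^2 - |w_2|^2$, while Lemma \ref{factoroftwo} applied to the norms of both sides gives $|z_1|^2 + |w_1|^2 = |z_2|^2 + |w_2|^2$. Adding and subtracting yields $|z_1|=|z_2|$ and $|w_1|=|w_2|$. If both pairs are zero there is nothing to prove. Otherwise, combining the second and third coordinates gives the single complex identity $w_1\bar z_1 = w_2\bar z_2$. Suppose without loss of generality $z_2 \ne 0$; then $|z_1|=|z_2|\ne 0$ and we may write $z_1 = cz_2$ with $|c|=1$. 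Substituting into $w_1\bar z_1 = w_2\bar z_2$ yields $w_1\bar c = w_2$, i.e.\ $w_1 = cw_2$, as required. The case $z_1 = z_2 = 0$ with $w_1, w_2 \ne 0$ is handled symmetrically by choosing $c$ from the $w$-coordinates.

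There is no real obstacle; the only mild care needed is the degenerate case where one of the coordinates vanishes on both sides. Alternatively, one could give a one-line conceptual proof by invoking the commutative diagram and Lemma \ref{circlebundle}: since $F = f \circ \pi \circ \phi^{-1}$, $f$ is a linear isomorphism, and $\pi$ is a principal $T_{\mathrm{SU}(2)}$-bundle away from $[e,0]$ whose structure action corresponds under $\phi$ (Lemma \ref{implodedcotangentbundle}) to the diagonal $S^1$ action $(z,w)\mapsto(cz,cw)$, the fibers of $F$ are exactly these $S^1$-orbits; the basepoint $(0,0)$ is the unique fixed point and is its own fiber.
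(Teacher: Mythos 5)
Your proof is correct and follows essentially the same route as the paper: equate the first coordinate and the norms to deduce $|z_1|=|z_2|$ and $|w_1|=|w_2|$, then extract the common phase from $w_1\bar z_1 = w_2\bar z_2$. The only cosmetic differences are that you obtain $|z_1|^2+|w_1|^2=|z_2|^2+|w_2|^2$ from the norm formula of Lemma \ref{factoroftwo} rather than by the squaring-and-adding manipulation, and you carry out the final phase extraction by direct division where the paper invokes the elementary $\mathrm{SO}(2)$ invariant-theory fact.
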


\begin{proof}
The implication $\Leftarrow$ is immediate. We prove the reverse
implication.
Thus we are assuming the equations
\begin{align*}
|z_1|^2 - |w_1|^2 &= |z_2|^2 - |w_2|^2  \\
z_1 \bar{w_1} &= z_2 \bar{w_2}\end{align*}
Square each side of the first equation.
Take four times the norm squared of each side of the second equation and
adding the
resulting equation to the new first equation to obtain$$ (|z_1|^2 +
|w_1|^2)^2 = (|z_2|^2 + |w_2|^2)^2.$$
Hence $|z_1|^2 + |w_1|^2=|z_2|^2 + |w_2|^2$ and consequently$$|z_1|^2
= |z_2|^2 \ \text{and} \ |w_1|^2 = |w_2|^2.$$
Now it is an elementary version of the first fundamental theorem
of invariant theory that if we are given two ordered pairs of vectors
in the plane so that the lengths of corresponding vectors
are equal and the symplectic inner products between the two
vectors in each pair coincide then there is an element in $\mathrm{SO}(2)$
that carries one ordered pair to the other (this is one definition
of the oriented angle between two vectors).
\end{proof}

We now construct the required map to Euclidean $n$-gons.
We define a map $F_n:P_n(\mathrm{SU}(2)) \to (\R^3)^n/\mathrm{SO}(3,\R)$ by defining
$F_n(A)$ to be the orbit of  $(F(C_1),\ldots,F(C_n))$ under the diagonal
action of $\mathrm{SO}(3,\R)$. Here $C_i$ is the $i$-th column of $A$.
Let $\mathrm{Pol}_ n(\R^3)$ denote the space of closed $n$-gons in
$\R^3$.

\begin{theorem}\label{framedngons}\hfill
\begin{enumerate}
\item $F_n$ induces a homeomorphism from $P_n(\mathrm{SU}(2))/T_{\mathrm{SU}(2)^n}$ onto
$\mathrm{Pol}_ n(\R^3)/\mathrm{SO}(3,\R)$.
\item The fiber of $F_n$ over a Euclidean $n$-gon
 is
naturally homeomorphic to the set
of imploded spin framings of the edges of that
$n$-gon.
\item The side-lengths of $F_n(A)$ are related to the norm squared of the columns
of $A$ by
$$||e_i(F_n(A))|| = \frac{||C_i||^2}{4} = \frac{|z_i|^2 + |w_i|^2}{4}.$$
Here $e_i(F_n(A)) = F_n(C_i)$ is the $i$-th edge of any $n$-gon in the
congruence class represented by $F_n(A)$.

\end{enumerate}
\end{theorem}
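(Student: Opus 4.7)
The plan is to prove the three parts in the order $3$, $2$, $1$, with each feeding into the next. Part $3$ is immediate from Lemma \ref{factoroftwo}: the $i$-th edge of $F_n(A)$ is $e_i(F_n(A)) = F(C_i) = F(z_i, w_i)$, whose norm is $(|z_i|^2 + |w_i|^2)/4$ by that lemma.

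For Part $2$, I identify $\widetilde{P}_n(\mathrm{SU}(2))$ with the zero momentum level $\{A \in M_{2,n}(\C) : \mu_G(A) = 0\}$ via Lemmas \ref{implodedcotangentbundle} and \ref{levelzero}, and define $\widetilde{F}_n$ columnwise by $\widetilde{F}_n(A) = (F(C_1), \ldots, F(C_n))$. This lands in $\mathrm{Pol}_n(\R^3)$ because the identity $F = f \circ h$ with $f$ a linear isomorphism converts $\mu_G(A) = \sum_i h(C_i) = 0$ into the closing condition $\sum_i F(C_i) = 0$. Fix a representative polygon $\mathbf{e} = (e_1, \ldots, e_n)$. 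By Lemma \ref{phasefactor}, the preimage $\widetilde{F}_n^{-1}(\mathbf{e})$ is, factor by factor, a $T_{\mathrm{SU}(2)}$-orbit circle above each $e_i \neq 0$ and the single point $0$ above each $e_i = 0$; by Lemmas \ref{circlebundle} and \ref{framedvectors} this is precisely the set of imploded spin-frames above $e_i$ inside $\mathcal{E}T^\ast(\mathrm{SU}(2)) \cong \C^2$. Matching the diagonal $\mathrm{SU}(2)$-action on $A$ with the $\mathrm{SO}(3,\R)$-action on $\mathbf{e}$ via the double cover $\rho$ (Lemma \ref{equivariance}), the fiber of $F_n$ over the class $[\mathbf{e}]$ is identified with the set of imploded spin-framings of the edges of $\mathbf{e}$.

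For Part $1$, I first note that $F$ is invariant under the overall phase $T_{\mathrm{SU}(2)}$-action on $\C^2$ (check directly from Lemma \ref{factoroftwo}, or use Lemma \ref{phasefactor}), so $F_n$ descends to a continuous map $\overline{F}_n : P_n(\mathrm{SU}(2))/T_{\mathrm{SU}(2)^n} \to \mathrm{Pol}_n(\R^3)/\mathrm{SO}(3,\R)$. Surjectivity is clear since every closed polygon admits imploded spin-framings on each edge. For injectivity, suppose $F_n([A]) = F_n([B])$; lift the relating element of $\mathrm{SO}(3,\R)$ to $g_0 \in \mathrm{SU}(2)$ via the surjective $\rho$, replace $A$ by $g_0 A$ so that $\widetilde{F}_n(A) = \widetilde{F}_n(B)$ exactly, and invoke Part $2$ to conclude $B = t \cdot A$ for some $t \in T_{\mathrm{SU}(2)^n}$. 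Continuity of $\overline{F}_n$ is manifest; properness follows from Part $3$, which bounds column norms in terms of edge lengths. Since both quotients are Hausdorff, a continuous proper bijection is a homeomorphism. The most delicate step is the injectivity bookkeeping in Part $1$: one must verify that the $\mathrm{SU}(2)$-adjustment and the subsequent $T_{\mathrm{SU}(2)^n}$-adjustment interact compatibly with the fact that $\ker \rho = \{\pm I\}$ sits inside both the already-quotiented diagonal $\mathrm{SU}(2)$ and (diagonally) inside $T_{\mathrm{SU}(2)^n}$, so that no equivalences are missed or double-counted.
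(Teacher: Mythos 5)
Your proposal is correct and follows essentially the same route as the paper: part (3) is Lemma \ref{factoroftwo}, closedness of $F_n(A)$ comes from $\mu_G(A)=0$ (your $F=f\circ h$ identity is the paper's $(AA^*)^0=0$ computation in disguise), the fibers are identified via Lemmas \ref{phasefactor}, \ref{circlebundle} and \ref{framedvectors}, and injectivity is handled by lifting through the double cover exactly as in the paper. The one point you add that the paper leaves implicit is the properness argument justifying that the continuous bijection of quotients is actually a homeomorphism; that is a worthwhile supplement rather than a deviation.
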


\begin{proof}
We first prove that if $A$ is in the zero level set of $\mu_G$
then $F_n(A)$ is a {\it closed} $n$-gon.
Recall $C_i,1 \leq i \leq n$ be the $i$-th column of $A$. Then we
have$$F_n(A) = (f((C_1C_1^*)^0),\cdots,f((C_nC_n^*)^0)).$$
Hence the sum $s$ of the edges $F_n(A)$ is given by$$s = f((C_1C_1^* +
\cdots + C_nC_n^*)^0).$$
Now it is a formula in elementary matrix multiplication that we
have$$AA^* = C_1C_1^* + \cdots + C_nC_n^*.$$
But by Lemma \ref{levelzero} we find that $AA^*$ is scalar
whence$(AA^*)^0 =0$ and$$s = f((AA^*)^0) = f(0) = 0.$$
We next prove that $F_n$ is onto.
It is clear that $F_n$ maps $M_{n,2}(\C)$ onto $(\R^3)^n$ (this may
be proved one column at a time). Let $\mathbf{e}=(e_1,\cdots,e_n)$
be the edges of a closed $n$-gon in $\R^3$. Choose $A \in M_{2,n}(\C)$
such that $F_n(A) = \mathbf{e}$. But by the above$$f((AA^*)^0) = e_1 +
\cdots + e_n = 0.$$
Hence $(AA^*)^0 =0$ whence $AA^*$ is scalar and $\mu_G(A) = 0$.
We now prove that $F_n$ is injective.
Suppose there exists $g \in \mathrm{SU}(2)$ such that $F_n(A) = \rho(g) F_n(A')$.
Then $F(A) = F(gA')$. Hence by Lemma \ref{phasefactor} we have $A = g A' t$
for some $t \in T_{\mathrm{U}(n)}$.

The second statement follows because the action of $T_{\mathrm{U}(n)}$ corresponds to
a transitive action on the set
of imploded spin framings.  Indeed using the identification above between
between $\R^3$ and $\mathfrak{su}(2)^{\ast}$ we may replace $F_n$ by the
map $\pi_n :\mathcal{E}T^\ast(\mathrm{SU}(2))^n \to (\mathfrak{su}(2)^{\ast})^n$ given
by$$\pi_n([g_1,\lambda_1 \varpi_1],\cdots,[g_n,\lambda_n \varpi_i])=
(Ad^{\ast}g_1(\lambda_1 \varpi_1), \cdots, Ad^{\ast}g_n(\lambda_n
\varpi_1)).$$ If no $\lambda_i$ is zero  by Lemma \ref{framedvectors}
the (classes of ) the $n$-tuple $[g_1, \lambda_1 \varpi_1],\cdots,
[g_n,\lambda_n \varpi_1]$ represent the imploded
spin-frames over the $n$ vectors in the image.
Then by Lemma \ref{circlebundle} and Lemma \ref{framedvectors} two
such $n$--tuples correspond to
frames over the same image $n$-gon if and only the two $n$--tuples are
related
by right multiplication by $t_1,\cdots, t_n$ with $t_i$
fixing $\varpi_1$ for all $i$. Hence  if no $\lambda_i$ is zero then
the two $n$--tuples are related as above if
and only if they are in the same $T_{\mathrm{U}(n)}$ orbit by definition of
the $T_{\mathrm{U}(n)}$ action. Since $T_{\mathrm{U}(n)}$ acts transitively
on the spin-frames over a given $n$-gon we see that in this case the
fiber of $\pi_n$ is the set of spin-frames over the image $n$-gon and the second statement
is proved.  If some subset  of the $\lambda_i$'s is zero then
we replace the corresponding $g_i$'s by the identity (this does not
change the imploded frame) and procede as above with the remaining
components.

The last statement in the theorem is Lemma \ref{factoroftwo}.

\end{proof}

Let $\sigma:\mathrm{Pol}_ n(\R^3) \to \R_+^n$ be the map that assigns
to a closed $n$-gon the lengths of its sides. It is standard (see for
example the Introduction of \cite{KapovichMillson}) that
the image of $\sigma$ is the polyhedral cone $D_n$.
We can now give another proof of Proposition \ref{triangleinequalities}
based on Euclidean geometry.
We restate it for the convenience of the reader.

\begin{proposition}$$\mu_{T_n}(P_n(\mathrm{SU}(2))) = D_n.$$\end{proposition}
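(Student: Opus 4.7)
The plan is to replace the invariant-theoretic argument (which used Lemma~\ref{equalityofHamiltonians} together with results from \cite{GGMS}) by a direct geometric argument routed through the $n$-gon map $F_n$ just constructed.

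First I would recall the formula for the torus momentum map on $P_n(\mathrm{SU}(2))$, namely $\mu_{T_n}(A) = \bigl(\tfrac{1}{2}\|C_1\|^2,\ldots,\tfrac{1}{2}\|C_n\|^2\bigr)$, where $C_i$ is the $i$-th column of the representing matrix $A$. Combining this with part (3) of Theorem~\ref{framedngons}, which identifies the length of the $i$-th edge of the underlying Euclidean $n$-gon as $\|e_i(F_n(A))\| = \tfrac{1}{4}\|C_i\|^2$, gives the key identity
\[
\mu_{T_n}(A) \;=\; 2\,\sigma\bigl(F_n(A)\bigr),
\]
where $\sigma$ is the side-length map on closed $n$-gons.

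Next I would combine surjectivity with the classical Euclidean constraint. By part (1) of Theorem~\ref{framedngons}, $F_n$ descends to a homeomorphism onto $\mathrm{Pol}_n(\R^3)/\mathrm{SO}(3,\R)$; in particular it is surjective onto $\mathrm{SO}(3,\R)$-congruence classes of closed $n$-gons, and $\sigma$ is $\mathrm{SO}(3,\R)$-invariant, so it factors through the quotient. The image of $\sigma$ on $\mathrm{Pol}_n(\R^3)$ equals $D_n$: the inclusion $\sigma(\mathrm{Pol}_n(\R^3)) \subseteq D_n$ is just the generalized triangle inequality (each side length is bounded by the sum of the others, which follows instantly from $e_i = -\sum_{j \neq i} e_j$ and the triangle inequality in $\R^3$), and the reverse inclusion is obtained by realizing any length vector in $D_n$ as a planar (in fact degenerate if one is on the boundary) closed $n$-gon, as reviewed in the introduction of \cite{KapovichMillson}.

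Finally, putting the two steps together yields $\mu_{T_n}(P_n(\mathrm{SU}(2))) = 2\,D_n$, and since $D_n$ is a polyhedral cone invariant under multiplication by positive scalars, $2\,D_n = D_n$. No step is really an obstacle here; the only point that needs a moment of care is checking the factor of $2$ between $\tfrac{1}{2}\|C_i\|^2$ and $\tfrac{1}{4}\|C_i\|^2$, which is harmless precisely because $D_n$ is a cone. The value of this second proof is conceptual: it reinterprets the moment polytope of the $T_{\mathrm{U}(n)}$-action on the Grassmannian, under the homeomorphism $\psi$ of Theorem~\ref{polygonGrasspreciseversion}, literally as the space of side-length vectors of Euclidean $n$-gons.
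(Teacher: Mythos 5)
Your proof is correct and is essentially the paper's own argument for this statement: the paper proves the restated proposition via the commutative diagram relating $\mu_{T_n}$ to the side-length map $\sigma$ through $F_n$, using that $\sigma(\mathrm{Pol}_n(\R^3)) = D_n$. Your treatment of the factor of $2$ (harmless because $D_n$ is a cone) is a welcome clarification of the scaling the paper glosses over with the map labelled $I$ in its diagram.
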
\begin{proof}
The proposition follows from the commutative diagram\[
\begin{CD}P_n(\mathrm{SU}(2)) @>\mu_{T_n}>> \R_+^n \\@VF_n VV  @VVIV
\\ \mathrm{Pol}_n(\R^3) @>\sigma>>\R_+^n\end{CD}\]\end{proof}

\subsection{The space $P_n(\mathrm{SO}(3,\R)$}

We now compute $P_n(\mathrm{SO}(3,\R))$ using the fact that $\mathrm{SO}(3,\R)$ is covered by $\mathrm{SU}(2)$.
Let the semi simple Lie Group $\bar{G}$ be a quotient of the semi simple Lie
Group $G$
by a finite group $\Gamma$

$$\Gamma \rightarrow G \xrightarrow{\pi} \bar{G}$$

In Example 4.7 of \cite{GuilleminJeffreySjamaar}, Guillemin-Jeffrey-Sjamaar, show that
this quotient gives a description nomeomorphism $\phi:\mathcal{E}(T^*G)/\Gamma \to  \mathcal{E}(T^* \overline{G}))$. The following lemma is left to
the reader to prove.

\begin{lemma}\label{quotientimplosion}
The homeomorphism $\phi$ induces a homeomorphism from $\Gamma \ql P_n(G)$
to $P_n(\overline{G})$.
\end{lemma}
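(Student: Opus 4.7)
The plan is to lift the single-factor homeomorphism $\phi$ to the $n$-fold product and then show it is compatible with the symplectic quotient by the diagonal left $G$-action that defines $P_n$.

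First I would form the product $\phi^n : \mathcal{E}T^*(G)^n/\Gamma^n \to \mathcal{E}T^*(\overline{G})^n$. Since $\Gamma$ is central in $G$, the coordinatewise right action of $\Gamma^n$ on $\mathcal{E}T^*(G)^n$ commutes with the diagonal left $G$-action, so the latter descends to $\mathcal{E}T^*(G)^n/\Gamma^n$ and factors through $\overline{G} = G/\Gamma$. By construction the map $\phi^n$ intertwines this descended $\overline{G}$-action with the diagonal $\overline{G}$-action on $\mathcal{E}T^*(\overline{G})^n$.

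Next I would verify momentum map compatibility. The momentum map $\mu_L$ for the diagonal left $G$-action on $\mathcal{E}T^*(G)^n$ is $\Gamma^n$-invariant (again by centrality), so it descends to $\mathcal{E}T^*(G)^n/\Gamma^n$, and under $\phi^n$ together with the canonical identification $\mathfrak{g}^* \cong \overline{\mathfrak{g}}^*$ it coincides with the analogous momentum map for $\overline{G}$. Passing to zero level sets and quotienting by the $\overline{G}$-action then yields a homeomorphism
$$\Phi: \overline{G} \ql (\mathcal{E}T^*(G)^n/\Gamma^n) \xrightarrow{\sim} \overline{G} \ql \mathcal{E}T^*(\overline{G})^n = P_n(\overline{G}).$$

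Finally I would identify $\overline{G} \ql (\mathcal{E}T^*(G)^n/\Gamma^n)$ with the quotient of $P_n(G)$ by the residual $\Gamma$-action by swapping the order of quotients. The combined action of $G$ (diagonal) and $\Gamma^n$ (coordinatewise) on $\mathcal{E}T^*(G)^n$ has as its ineffective kernel the antidiagonal copy $\Delta\Gamma \hookrightarrow G \times \Gamma^n$ sending $\gamma \mapsto (\gamma,\gamma^{-1},\ldots,\gamma^{-1})$. One may therefore first quotient by the diagonal $G$-action to obtain $P_n(G)$ and then divide by the residual action of $\Gamma^n/\Delta\Gamma$, which is the $\Gamma$-action referred to in the statement. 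Composing this identification with $\Phi$ gives the lemma. The main obstacle will be the careful bookkeeping at the singular strata where $\mathcal{E}T^*(G)$ fails to be a manifold: one must check that the homeomorphism $\phi$ of \cite{GuilleminJeffreySjamaar} respects the orbit-type stratification, that this stratification behaves well under taking products and under passing to the zero level of the diagonal momentum map, and that the residual $\Gamma^n/\Delta\Gamma$-action preserves the resulting decomposition. All of this is expected to follow from the general framework of stratified symplectic reduction in \cite{SjamaarLerman}.
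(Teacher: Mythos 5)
The paper offers no proof of this lemma---it is explicitly ``left to the reader''---so there is no argument of the authors to compare against; your proposal supplies the missing content and is correct. The essential points are all in place: $\Gamma$ is central in $G$ (being a finite normal subgroup of a connected group), so the coordinatewise $\Gamma^n$-action commutes with the diagonal left $G$-action and leaves $\mu_L$ invariant (as $Ad_{\gamma}=\mathrm{id}$ for $\gamma$ central), the induced $G$-action on the $\Gamma^n$-quotient factors through $\overline{G}$, and the two quotients may be interchanged. One bookkeeping remark: the lemma's ``$\Gamma \ql P_n(G)$'' should be read as the quotient by the coordinatewise action of $\Gamma^n$, as Theorem \ref{compare} makes explicit with $H = Z(\mathrm{SU}(2))^n$; your $\Gamma^n/\Delta\Gamma$ is the effective form of that action (the diagonal acts trivially on $P_n(G)$, again by centrality), so the two descriptions give the same quotient space and your identification is consistent with the paper's. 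Finally, since the lemma asserts only a homeomorphism, the stratification compatibility you raise at the end is not actually required here; it would matter only if one wanted to upgrade the conclusion to an isomorphism of stratified symplectic spaces.
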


\begin{theorem}\label{compare} The double cover $\phi:\mathrm{SU}(2) \to \mathrm{SO}(3,\R)$
induces a homeomorphism from $H \ql P_n(\mathrm{SU}(2))$ to $P_n(\mathrm{SO}(3,\R))$.
Here  H is the finite 2-group $Z(\mathrm{SU}(2))^n$ and $Z(\mathrm{SU}(2)) \cong \mathbb{Z}/2$
is the center of $\mathrm{SU}(2)$.
\end{theorem}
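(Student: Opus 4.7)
The plan is to lift the map $\phi$ level by level, first to the imploded cotangent bundle, then to its $n$-fold product, and finally to the symplectic quotient by the diagonal group.

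First, I would invoke the computation in Example 4.7 of \cite{GuilleminJeffreySjamaar} (already cited just before Lemma \ref{quotientimplosion}) to identify $\mathcal{E}T^\ast(\mathrm{SU}(2))/Z(\mathrm{SU}(2))$ with $\mathcal{E}T^\ast(\mathrm{SO}(3,\R))$. Since $Z(\mathrm{SU}(2))$ is central it acts on $\mathcal{E}T^\ast(\mathrm{SU}(2))$ as a subgroup of the right torus action (and this coincides with the left action), so passing to the $n$-fold product gives the identification
\[
E_n(\mathrm{SU}(2))/H \;\cong\; E_n(\mathrm{SO}(3,\R)),
\]
where $H = Z(\mathrm{SU}(2))^n$ acts componentwise from the right. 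This identification is equivariant for both the residual torus actions and the diagonal left actions.

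Next I would compare the symplectic quotients. The diagonal left action of $\mathrm{SU}(2)$ on $E_n(\mathrm{SU}(2))$ commutes with the right $H$-action (since $H$ is central), and its moment map $\mu_{\mathrm{SU}(2)}$ is $H$-invariant. Under the identification above, $\mu_{\mathrm{SU}(2)}$ descends to $\mu_{\mathrm{SO}(3,\R)}$ for the residual $\mathrm{SO}(3,\R)$-action on $E_n(\mathrm{SO}(3,\R))$ (using the natural identification $\mathfrak{su}(2)^\ast \cong \mathfrak{so}(3,\R)^\ast$ coming from the double cover). Hence the two zero-level sets are intertwined by the quotient map, giving
\[
\mu_{\mathrm{SU}(2)}^{-1}(0)/H \;\cong\; \mu_{\mathrm{SO}(3,\R)}^{-1}(0).
\]

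Finally, I would interchange the order of the quotients. Since the left diagonal $\mathrm{SU}(2)$ action on $\mu_{\mathrm{SU}(2)}^{-1}(0)/H$ factors through $\mathrm{SO}(3,\R)$ (the diagonal $Z(\mathrm{SU}(2))$ already acts trivially on the quotient), and since left and right actions commute, we get
\[
H \ql P_n(\mathrm{SU}(2)) \;=\; H \ql \bigl(\mathrm{SU}(2) \backslash \mu_{\mathrm{SU}(2)}^{-1}(0)\bigr) \;=\; \mathrm{SU}(2) \backslash \bigl(\mu_{\mathrm{SU}(2)}^{-1}(0)/H\bigr) \;=\; \mathrm{SO}(3,\R) \backslash \mu_{\mathrm{SO}(3,\R)}^{-1}(0) \;=\; P_n(\mathrm{SO}(3,\R)),
\]
which is the desired homeomorphism.

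The main technical point to watch is the behavior of the implosion equivalence relation under the finite cover: one has to verify that the equivalence relation $\sim$ on $\mu_R^{-1}(\Delta) \subset T^\ast(\mathrm{SU}(2))$ descends correctly to the analogous equivalence relation on $\mu_R^{-1}(\Delta) \subset T^\ast(\mathrm{SO}(3,\R))$, i.e.\ that the derived stabilizer subgroups $G_F$ on both sides correspond under $\phi$. For $\mathrm{SU}(2)$ and $\mathrm{SO}(3,\R)$ this is straightforward because there is essentially only one nontrivial face of $\Delta$ to check (the cone point, where the stabilizer is the whole group on either side); everywhere else implosion is the identity, so the rest of the argument reduces to the elementary observation that $H$ is a finite central subgroup.
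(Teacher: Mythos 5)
Your proposal is correct and follows essentially the same route the paper takes: the paper deduces Theorem \ref{compare} directly from Lemma \ref{quotientimplosion} (itself left to the reader and based on Example 4.7 of \cite{GuilleminJeffreySjamaar}), and your argument is precisely a fleshed-out proof of that lemma in the case $G = \mathrm{SU}(2)$, $\Gamma = Z(\mathrm{SU}(2))$ --- identifying $\mathcal{E}T^\ast(\mathrm{SU}(2))/Z(\mathrm{SU}(2))$ with $\mathcal{E}T^\ast(\mathrm{SO}(3,\R))$, passing to the $n$-fold product, and interchanging the finite quotient with the left symplectic quotient. The attention you pay to the compatibility of the implosion equivalence relations and to the diagonal center acting trivially after the $H$-quotient covers exactly the points the paper leaves implicit.
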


We  have the following analogue of Theorem \ref{framedngons} above.
We leave its proof to the reader. We now have orthogonal (imploded) framings
instead of imploded spin framings.  We note that the map
$F_n$ above descends to a map $\overline{F}_n: P_n(\mathrm{SO}(3,\R)) \to
\mathrm{Pol}_ n(\R^3)/\mathrm{SO}(3,\R)$.

\begin{theorem}\label{orthogonalframedngons}\hfill
\begin{enumerate}
\item $\overline{F}_n$ induces a homeomorphism from $P_n(\mathrm{SO}(3,\R))/T_{\mathrm{SO}(3,\R)^n}$ onto the polygon space
$\mathrm{Pol}_ n(\R^3)/\mathrm{SO}(3,\R)$.
\item The fiber of $\overline{F}_n$ over a Euclidean $n$-gon
of the induced map from $P_n(\mathrm{SU}(2))$ onto $\mathrm{Pol}_ n(\R^3)/\mathrm{SO}(3,\R)$ is
naturally homeomorphic to the set
of imploded orthogonal framings of the edges of that
$n$-gon.
\end{enumerate}
\end{theorem}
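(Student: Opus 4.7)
The plan is to deduce this theorem from its spin-framed counterpart, Theorem~\ref{framedngons}, via the double cover machinery of Theorem~\ref{compare} and Lemma~\ref{quotientimplosion}. First I would set up the commutative diagram
\[
\begin{CD}
P_n(\mathrm{SU}(2)) @>F_n>> \mathrm{Pol}_n(\R^3)/\mathrm{SO}(3,\R) \\
@Vq VV  @| \\
P_n(\mathrm{SO}(3,\R)) @>\overline{F}_n>> \mathrm{Pol}_n(\R^3)/\mathrm{SO}(3,\R)
\end{CD}
\]
where $q$ is the quotient by $H = Z(\mathrm{SU}(2))^n$ coming from Theorem~\ref{compare}. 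Commutativity is essentially the definition of $\overline{F}_n$: the subgroup $H$ acts on the frames $g_i$ by the central elements $\pm I$ of $\mathrm{SU}(2)$, and $\pi(\pm I) = I$ in $\mathrm{SO}(3,\R)$, so $H$ acts trivially on each edge $F(C_i) = \rho(g_i)(\lambda_i/2,0,0)$ by Lemma~\ref{equivariance} and Remark~\ref{SpinF}.

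For part~(1), the double cover $\pi$ restricts to a double cover $T_{\mathrm{SU}(2)} \to T_{\mathrm{SO}(3,\R)}$ with kernel $Z(\mathrm{SU}(2))$, so componentwise
\[
T_{\mathrm{SO}(3,\R)^n} \;\cong\; T_{\mathrm{SU}(2)^n}/H.
\]
Combining this with the identification $P_n(\mathrm{SO}(3,\R)) = H \ql P_n(\mathrm{SU}(2))$ of Theorem~\ref{compare} yields
\[
P_n(\mathrm{SO}(3,\R))/T_{\mathrm{SO}(3,\R)^n} \;\cong\; P_n(\mathrm{SU}(2))/(H \cdot T_{\mathrm{SU}(2)^n}) \;=\; P_n(\mathrm{SU}(2))/T_{\mathrm{SU}(2)^n},
\]
and part~(1) of Theorem~\ref{framedngons} identifies the right-hand side with $\mathrm{Pol}_n(\R^3)/\mathrm{SO}(3,\R)$ via $F_n$. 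The induced map is exactly $\overline{F}_n$ by the commutative diagram above.

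For part~(2), I would take the fiber of $\overline{F}_n$ over a congruence class $[\mathbf{e}]$ and use the diagram to write it as $q\bigl(F_n^{-1}([\mathbf{e}])\bigr)$. By Theorem~\ref{framedngons}(2) the fiber $F_n^{-1}([\mathbf{e}])$ is the space of imploded spin-framings of $\mathbf{e}$, and since an imploded orthogonal framing of an edge is by definition an imploded spin-framing modulo the central $\{\pm I\}$-action (with the two cases glued together at the cone point, exactly as in the construction of $\mathcal{E}T^*(\mathrm{SO}(3,\R))$ from $\mathcal{E}T^*(\mathrm{SU}(2))$ by Lemma~\ref{quotientimplosion}), quotienting by $H = \{\pm I\}^n$ converts the spin-framings into imploded orthogonal framings.

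The main delicate point, and the obstacle I expect to require the most care, is compatibility with the implosion equivalence at degenerate edges. When some $\lambda_i = 0$ the corresponding factor of $H$ already acts trivially on $[g_i, 0]$, so one must verify that the two quotients (by the implosion relation and by $H$) commute and that the resulting ``imploded orthogonal framing'' at the cone point is well defined; this is exactly what Lemma~\ref{quotientimplosion} encodes at each factor, and the $n$-fold product version needed here follows by applying it edgewise and then intersecting with the momentum level set that cuts out closed polygons.
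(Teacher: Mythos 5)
Your proposal is correct and is essentially the argument the paper intends: the paper explicitly leaves the proof of Theorem \ref{orthogonalframedngons} to the reader, and the tools it supplies for that purpose --- Theorem \ref{compare} and Lemma \ref{quotientimplosion} --- are exactly the ones you use to descend Theorem \ref{framedngons} through the quotient by $H = Z(\mathrm{SU}(2))^n$. Your identification of the only delicate point (commuting the $H$-quotient with the implosion relation at edges with $\lambda_i = 0$, handled factorwise by Lemma \ref{quotientimplosion}) is also the right one.
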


\section{Toric degenerations associated to trivalent trees}

Suppose one has a planar regular $n$-gon subdivided into
triangles. We call this a triangulation of the $n$-gon.  The dual
graph is a tree with $n$ leaves and $n-2$ internal trivalent nodes
(see Figure \ref{fig:triangulated_hexagon}).  We will see below
how the tree $\tree$ determines a Gr\"obner degeneration of the
Grassmannian $\mathrm{Gr}_2(\C^n)$ to a toric variety. These toric
degenerations first appeared in \cite{SpeyerSturmfels}.

\begin{figure}[htbp]
\centering
\includegraphics[scale = 0.2]{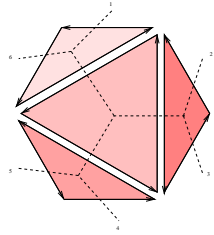}
\caption{ A triangulated hexagon with associated dual tree.}\label{fig:triangulated_hexagon}
\end{figure}

\subsection{Toric degenerations of $\mathrm{Gr}_2(\C^n)$}

Recall that the standard coordinate ring $R = \C[\mathrm{Gr}_2(\C^n)]$ (for
the Pl\"ucker embedding) of the Grassmannian is generated by
$Z_{i,j}$, for $1 \leq i < j \leq n$, subject to the
quadric relations, $Z_{i,j}Z_{k,l} - Z_{i,k}Z_{j,l} + Z_{i,l}Z_{j,k} = 0$ for
$1 \leq i < j < k < l \leq n$. These relations generate the
Pl\"ucker ideal $I_{2,n}$.  For each pair of indices $i,j$ let
$w_{i,j}^\tree$ denote the length of the unique path in $\tree$
joining leaf $i$ to leaf $j$. For example, in Figure
\ref{fig:triangulated_hexagon} we have $w_{1,4}^\tree = 5$. To any
monomial $m = \prod_k Z_{i_k, j_k}$ we assign a weight $w^\tree(m) =
\sum_k w_{i_k,j_k}^\tree.$  Let $I_{2,n}^\tree$ denote the initial
ideal with respect to the weighting $w^\tree$.

It is a standard result in the theory of Gr\"obner degenerations
that one has a flat degeneration of $\C[\mathrm{Gr}_2(\C^n)] =
\C[\{Z_{i,j}\}_{i < j}] / I_{2,n}$ to $\C[\{Z_{i,j}\}_{i<j}] /
I_{2,n}^\tree$. Below we outline how this works using the Rees
algebra.

The weight $w^\tree$ induces an (increasing) filtration on the
ring of the Grassmannian.  Let $F_m^\tree$ be the vector subspace
of $R$ spanned by monomials of weight at most $m$. Then, for any
elements $x \in F_{m_1}^\tree$ and $y \in F_{m_2}^\tree$, the
product $xy$ belongs to $F_{m_1 + m_2}^\tree$. Let $R^\tree$
denote the associated graded ring $R^\tree = \bigoplus_{m=0}^\infty F_m^\tree / F_{m-1}^\tree$,
where $F_{-1}^\tree := 0$.
The Rees algebra $\mathcal{R}^\tree$ is given by,
$\mathcal{R}^\tree = \bigoplus_{m=0}^\infty t^m F_m^\tree$,
where $t$ is an indeterminant. Then (cf. \cite{AlexeevBrion}) 
$\mathcal{R}^\tree$ is flat over $\C[t]$, 
$\mathcal{R}^\tree \otimes_{\C[t]} \C[t,t^{-1}] \cong
R[t,t^{-1}]$, and $\mathcal{R}^\tree \otimes_{\C[t]} (\C[t]/(t))
\cong R^\tree$.

Since the ring $R$ is already graded, there is a natural grading
of the Rees algebra $\mathcal{R}^\tree$, where the degree of the
indeterminant variable $t$ is defined to be zero, and in general,
the degree of a product $t^m x$, where $x \in R^{(k)} \cap
F_m^\tree$, is equal to $\deg(x) = k$. Now, the specializations
$\mathcal{R}^\tree \otimes_{\C[t]} \C[t]/(t-a)$ (at $t = a$) are
all graded algebras, where $\deg(y \otimes \overline{p(t)}) =
\deg(y)$ for $y \in \mathcal{R}^\tree$. In particular the
associated graded algebra $R^\tree$ is bigraded; one grading comes
from the grading of $R$, and the other from the filtration of $R$.
We have that $\mathrm{Proj} (\mathcal{R}^\tree)$ has a flat
morphism to the affine line $\mathbb{A}^1 = \mathrm{Spec} (\C[t])$
with all fibers  projective varieties, with general fiber
isomorphic to $\mathrm{Gr}_2(\C^n)$ and special fiber $\mathrm{Gr}_2(\C^n)_0^\tree$ at
$t=0$.

\begin{definition}
We shall reserve a special name for the case that the
triangulation is such that all diagonals contain the initial
vertex, forming a fan.  The associated tree has the appearance of
a caterpillar and was called such in \cite{SpeyerSturmfels}. We
shall call this triangulation the ``fan'' and the associated
degeneration the LG--degeneration, since the special fiber in this
case agrees with the special fiber of the degeneration of
$\mathrm{Gr}_2(\C^n)$ given by Lakshmibai and Gonciulea in
\cite{LakshmibaiGonciulea}.
\end{definition}

Now we will see why the special fiber $\mathcal{R} \otimes_{\C[t]}
\C[t]/(t)$ is toric.  This was also proved in
\cite{SpeyerSturmfels} where these degenerations were first
discovered. First we establish a basis for $\C[\mathrm{Gr}_2(\C^n)]$ as a
complex vector space. Suppose that the vertices $1,2,\ldots,n$ of
a multigraph (multiple edges allowed between two vertices) are
drawn on the unit circle, in cyclic clockwise order, and all edges
are drawn as straight line segments (a chord of the circle). Then,
if no two edges cross (it is okay for an edge to have multiplicity
greater than one) we call the graph a Kempe graph, in honor of the
work of A. Kempe in 1894 (see \cite{Kempe}). For an example see
Figure \ref{fig:Kempe_graph}.  In fact two edges $ij$ and $kl$
cross exactly when $i < k < j < l$, assuming that $i < k$.

\begin{figure}[htbp]
\centering
\includegraphics[scale = 0.3]{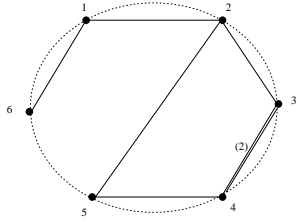}
\caption{ A Kempe graph for $n=6$.}\label{fig:Kempe_graph}
\end{figure}

We assign a monomial $m_G$ to each graph $G$, given by
$$m_G = \prod_{ij \in E(G)} Z_{i,j},$$
where $E(G)$ is the multi-set of edges of $G$. For a graph $G$,
let $\deg(G)$ be the number of edges in $G$, which is one half of
the total sum of valencies of the all the vertices.  Note that
$\deg(G) = \deg(m_G)$. The proof of the proposition and theorem
below appears in \cite{HowardMillsonSnowdenVakil}.
\begin{proposition}
The monomials $m_G$ as $G$ runs over the set of all Kempe graphs
with vertex set $V(G) = \{1,2,\ldots,n\}$ form a $\C$-basis for
$\C[\mathrm{Gr}_2(\C^n)]$. Furthermore, the $m_G$ for which $\deg(G) = k$
and $G$ is a Kempe graph form a basis for the $k$-th graded piece
of $R$.
\end{proposition}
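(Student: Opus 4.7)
The plan is to prove spanning and linear independence of the Kempe monomials in each graded piece $R^{(k)}$ separately; aggregating across degrees then gives the first statement of the proposition. Spanning will come from repeatedly ``uncrossing'' chord pairs via the quadratic Pl\"ucker relations, and linear independence will come from matching $\dim R^{(k)}$ to the combinatorial count of Kempe graphs with $k$ edges.

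For spanning, associate to each degree-$k$ monomial in the $Z_{i,j}$ the chord multigraph $G$ on $\{1,\ldots,n\}$ whose edges are the pairs $\{i,j\}$ taken with the appropriate multiplicities. If $G$ is not Kempe, some pair of chords $\{a,c\}$ and $\{b,d\}$ with $a<b<c<d$ crosses, and the Pl\"ucker relation gives
$$Z_{a,c}Z_{b,d} \;=\; Z_{a,b}Z_{c,d} \;+\; Z_{a,d}Z_{b,c},$$
replacing the crossing pair by the parallel pair $\{ab,cd\}$ and the nested pair $\{ad,bc\}$, neither of which cross. To ensure termination I would use the total crossing number $\mathrm{Cr}(G)$ (with multiplicity) as a monovariant. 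A short case analysis on the position of any third chord $e$ relative to the four points $a<b<c<d$ shows that $e$ has at least as many crossings with $\{ac,bd\}$ as with either $\{ab,cd\}$ or $\{ad,bc\}$; combined with the elimination of the internal crossing of $\{ac,bd\}$ itself, this forces $\mathrm{Cr}$ to strictly decrease at each step. The rewriting therefore terminates, reducing each monomial to a $\C$-linear combination of Kempe monomials of the same degree.

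For linear independence I would compare dimensions. By Borel--Weil, $R^{(k)} = H^0(\mathrm{Gr}_2(\C^n),\mathcal{O}(k))$ is the irreducible $\GL_n$-module with highest weight $(k,k,0,\ldots,0)$, whose dimension is the number of semistandard Young tableaux of shape $(k,k)$ with entries in $\{1,\ldots,n\}$, computable via the hook-content formula. A cleaner route to matching this with the number of Kempe graphs exploits the tree-weight $w^\tree$ introduced earlier in the section: by the four-point condition for tree metrics, the $w^\tree$-leading term of each Pl\"ucker three-term relation is exactly the crossing monomial $Z_{a,c}Z_{b,d}$, so the standard monomials modulo the initial ideal $I_{2,n}^\tree$ are precisely the Kempe monomials. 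Granted flatness of the Rees-algebra degeneration set up above, this yields $\dim R^{(k)} = \#\{\text{Kempe graphs with } k \text{ edges}\}$, and combining with spanning gives the basis.

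The main obstacle is this dimension identity. Spanning is a mechanical termination argument and the flat-degeneration framework is already in place, but one must still verify that the Pl\"ucker relations form a Gr\"obner basis with respect to $w^\tree$ -- equivalently, check Buchberger's criterion on the quadratic S-pairs formed from two crossing monomials, showing that each such overlap reduces to zero modulo $I_{2,n}$ by successive three-term substitutions. Alternatively, the combinatorial identity $\#\{\text{Kempe graphs of size } k\} = \dim S^{(k,k)}\C^n$ can be proved directly by exhibiting an explicit bijection with semistandard tableaux of shape $(k,k)$. Either route demands genuine Gr\"obner-basis or combinatorial input beyond the uncrossing manipulation used for spanning.
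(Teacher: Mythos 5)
Your spanning argument is correct and complete: the case analysis on where the endpoints of a third chord $e$ fall among the four arcs cut out by $a<b<c<d$ does show that $e$ crosses the pair $\{ac,bd\}$ at least as often as it crosses either replacement pair, so the total crossing number strictly decreases and the uncrossing terminates. (For the record, the paper does not prove this proposition in-text; it is quoted from \cite{HowardMillsonSnowdenVakil}.) The genuine gap is in linear independence, and your Route B rests on a false assertion. The $w^\tree$-initial form of $Z_{i,j}Z_{k,l}-Z_{i,k}Z_{j,l}+Z_{i,l}Z_{j,k}$ is \emph{not} the single crossing monomial: as the proof of Theorem \ref{leadingterm} shows, exactly one of the two non-crossing pairings covers the same multiset of edges of $\tree$ as the crossing pairing and therefore has \emph{equal} $w^\tree$-weight, while only the third is strictly lighter. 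Hence the initial ideal with respect to $w^\tree$ alone is a binomial (toric) ideal, not a monomial ideal, and there is no set of ``standard monomials modulo $I_{2,n}^\tree$'' that singles out the Kempe monomials. To salvage this route you must refine $w^\tree$ by a term order breaking every such tie in favor of the crossing monomial and then actually verify Buchberger's criterion for the Pl\"ucker quadrics; you flag this but do not do it. Moreover, reading $\dim R^{(k)}$ off the flat degeneration to $\C[\mathcal{S}_n^\tree]$ would be circular within this paper's logic, since the identification of the special fiber with the Kempe semigroup algebra (the corollary to Theorem \ref{leadingterm}) already presupposes that the $m_G$ are a basis.

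Route A is sound in outline, but the load-bearing identity $\#\{\text{Kempe graphs with }k\text{ edges}\}=\dim S^{(k,k)}\C^n=\#\,\mathrm{SSYT}((k,k),[n])$ is exactly what remains unproved, and it is not a formality: in the classical straightening law it is the \emph{nested} monomial $Z_{i,l}Z_{j,k}$ that is non-standard, whereas here it is the \emph{crossing} monomial $Z_{i,k}Z_{j,l}$ that is discarded, so you are keeping a different two of the three quadratic monomials on each $4$-subset and must still exhibit a bijection (or an equality of generating functions) between non-crossing multigraphs of degree $k$ and semistandard tableaux of shape $(k,k)$. Until either that count or the Gr\"obner verification is supplied, independence --- and hence the proposition --- is not established.
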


\begin{theorem}\label{leadingterm}
Suppose that $G_1$ and $G_2$ are Kempe graphs.  Suppose that
$$m_{G_1} m_{G_2} = \sum_{\text{$G$ is Kempe}} c_G m_G.$$
Then, there is a unique Kempe graph $G^\ast$ such that
\begin{itemize}
\item $c_{G^\ast} = 1$, \item $w^\tree(m_{G^\ast}) =
w^\tree(m_{G_1}) + w^\tree(m_{G_2})$, \item for all $G$, if $c_G
\neq 0$ and $G \neq G^\ast$ then $w^\tree(m_G) <
w^\tree(m_{G^\ast})$.
\end{itemize}
\end{theorem}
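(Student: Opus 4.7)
The plan is to prove the theorem by a weight-controlled straightening procedure: use the Pl\"ucker relations to rewrite $m_{G_1} m_{G_2}$ in the Kempe basis while tracking how the filtration weight $w^\tree$ transforms. Two ingredients drive the argument: (i) the four-point condition for the tree metric $d_{ij} := w^\tree_{i,j}$, and (ii) the planarity of $\tree$, which is dual to a triangulation of a planar convex polygon with vertices labeled in cyclic order.

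First I would analyze the $w^\tree$-initial form of each Pl\"ucker relation. For $i<j<k<l$, the relation reads $Z_{ij}Z_{kl} - Z_{ik}Z_{jl} + Z_{il}Z_{jk} = 0$. The four-point condition for a tree metric asserts that, among the three pairwise sums $d_{ij}+d_{kl}$, $d_{ik}+d_{jl}$, $d_{il}+d_{jk}$, two are equal and the third is strictly smaller; which of the three is smallest is dictated by the induced subtree topology on $\{i,j,k,l\}$. Planarity of $\tree$ rules out the ``crossing'' topology $(ik\mid jl)$ because any internal edge of a planar tree with leaves in cyclic order $1,\ldots,n$ separates the leaves into two cyclic intervals, and no cyclic interval containing $i$ and $k$ but not $j$ can fail to contain $l$. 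Hence the weight $d_{ik}+d_{jl}$ always attains the maximum, and the $w^\tree$-initial form of the Pl\"ucker relation is a binomial
\[
Z_{ik}Z_{jl} \,-\, P(\{ik,jl\}) \;=\; 0,
\]
where $P(\{ik,jl\})$ equals either $Z_{il}Z_{jk}$ or $Z_{ij}Z_{kl}$ according to the induced topology. Call this the \emph{preferred replacement} for the crossing pair.

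Next I would run the straightening. If $G_1 \sqcup G_2$ is already Kempe then $G^* = G_1 \sqcup G_2$ and there is nothing more to do. Otherwise pick any crossing pair $\{ik,jl\}$ in $G_1 \sqcup G_2$ and apply $Z_{ik}Z_{jl} = Z_{ij}Z_{kl} + Z_{il}Z_{jk}$ inside $m_{G_1} m_{G_2}$, producing two monomials whose underlying multigraphs have strictly smaller total crossing number. By the initial-form computation, exactly one summand retains the weight $W := w^\tree(m_{G_1}) + w^\tree(m_{G_2})$ (the preferred branch) while the other has strictly smaller weight. Induction on the crossing number yields the Kempe-basis expansion: every top-weight term arises from a chain of preferred replacements, and all other terms lie strictly below $W$.

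Finally I would establish uniqueness of the top-weight Kempe graph by extremality rather than by a confluence argument. Suppose $G$ and $G'$ were two distinct Kempe multigraphs with the same multidegree as $G_1 \sqcup G_2$, both attaining weight $W$. A standard degree-preserving switching argument produces indices $i<j<k<l$ with $\{ij,kl\} \subseteq E(G) \setminus E(G')$ and $\{il,jk\} \subseteq E(G') \setminus E(G)$ (the two non-crossing partitions of these four vertices). By the initial-form analysis $d_{ij}+d_{kl} \neq d_{il}+d_{jk}$; swapping the smaller-weight pair in whichever of $G,G'$ contains it for the larger-weight pair strictly raises weight, and subsequent uncrossings by preferred replacements preserve weight, producing a Kempe multigraph of weight strictly exceeding $W$---a contradiction. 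Hence $G^*$ is the unique Kempe graph of weight $W$ in the expansion, and it occurs with coefficient $1$. The main obstacle in this plan is the chord-diagram accounting that the replacement $\{ik,jl\} \to \{il,jk\}$ (or $\to \{ij,kl\}$) strictly decreases the total crossing number of the multigraph; this is a bounded combinatorial case analysis based on the position of any third edge relative to the four arcs determined by $i,j,k,l$.
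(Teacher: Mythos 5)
Your first two steps are sound and are essentially the paper's own argument in different language: the paper's observation that exactly one of the two non\nobreakdash-crossing resolutions of a crossing pair covers the same multiset of edges of $\tree$ as the crossing pair (the other missing the $k\ge 1$ common edges and hence losing weight $2k$) is precisely your four-point-condition computation, and the termination of the straightening via strictly decreasing crossing number is the (tacit) content of the paper's ``after sufficiently many Pl\"ucker relations.''

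The gap is in your uniqueness argument. Two problems. First, the ``standard degree-preserving switching argument'' need not produce a four-point switch: the symmetric difference of two multigraphs with equal degree sequences decomposes into alternating closed walks that can have length greater than four. Second, and fatally, ``a Kempe multigraph of weight strictly exceeding $W$'' contradicts nothing, since $W=w^\tree(m_{G_1})+w^\tree(m_{G_2})$ is not in general the maximal weight of a Kempe graph of the given multidegree. The statement your argument would establish --- that a Kempe graph is determined by its multidegree together with its \emph{total} weight --- is false. On the symmetric $6$-leaf tree (cherries $\{1,2\},\{3,4\},\{5,6\}$ around a central trinode, all edges of length one) the Kempe graphs $\{14,23,56\}$ and $\{12,36,45\}$ have the same multidegree $(1,\ldots,1)$ and the same total weight $4+4+2=10$; their symmetric difference is a single alternating $6$-cycle, so no four-point switch exists either. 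The repair is already contained in your setup: in one fixed run of the straightening, at every node of the binary tree of replacements \emph{exactly one} child preserves the weight, so the tree has exactly one leaf of weight $W$, all other leaves have strictly smaller weight, all leaf coefficients are $+1$, and since the Kempe monomials form a basis the expansion has a unique top-weight term with coefficient $1$. (Equivalently, as the paper does, note that the preferred replacement preserves the full induced edge-weighting $w_G$ of $\tree$, not merely its sum, and a Kempe graph is uniquely determined by its admissible weighting by Proposition \ref{admissibleweighting} and the proposition following it; in the example above the two graphs have different weightings, which is why they never both occur at top weight.)
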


\begin{proof}

Suppose that $i_1 j_1$ is an edge of $G_1$ and $i_2 j_2$ is an
edge of $G_2$ such that $i_1 j_1$ and $i_2 j_2$ cross.  Without
loss of generality we can assume that $i_1 < i_2 < j_1 < j_2$. We
have the Pl\"ucker relation $Z_{i_1,j_1}Z_{i_2,j_2} =
Z_{i_1,i_2}Z_{j_1,j_2} + Z_{i_1,j_2}Z_{i_2,j_1}$, and the latter two terms
(viewed as graphs having two edges) are Kempe graphs since neither
pair of edges cross. Denote $G_1 \cdot G_2$ as the graph with edge
set $E(G_1) \coprod E(G_2)$. Let $G_0 = (G_1 \cdot G_2)$ with the
two edges $i_1 j_1$ and $i_2 j_2$ removed. We have,
$$m_{G_1} m_{G_2} = m_{G_0 \cdot i_1 j_1 \cdot i_2 j_2} =
m_{G_0 \cdot i_1 i_2 \cdot j_1 j_2} + m_{G_0 \cdot i_1j_2 \cdot
i_2j_1}.$$ We will show that one of the two monomials on the right
hand side of the above equation has the same $\tree$--weight
($w^\tree$) as does $m_{G_1} m_{G_2}$ and the other term has
strictly smaller weight.  After sufficiently many Pl\"ucker
relations as above are applied we get $m_{G_1} m_{G_2}$ expressed
as a unique integral combination of Kempe graphs with a unique
term of maximal weight equal to the weight of $m_{G_1} m_{G_2}$.

Let $\gamma(i,j)$ denote the shortest path in $\tree$ joining $i$
to $j$.  Note that the paths $\gamma(i_1,j_1)$ and
$\gamma(i_2,j_2)$ must intersect one another since they cross when
drawn as straight line segments in the graph $G_1 \cdot G_2$. Now
consider the two pairs of paths in $\tree$:

\begin{itemize}
\item $\gamma(i_1,i_2)$ and $\gamma(j_1,j_2)$. \item
$\gamma(i_1,j_2)$ and $\gamma(i_2,j_1)$.
\end{itemize}
Exactly one of the above two above pairs of paths cover precisely
the same set of edges in $\tree$ as does the crossing pair
($\gamma(i_1,j_1)$,$\gamma(i_2,j_2)$.   Say for example it is the
pair ($\gamma(i_1,i_2)$, $\gamma(j_1,j_2)$). Then
$w^\tree(Z_{i_1,i_2}Z_{j_1,j_2}) = w^\tree(Z_{i_1,j_1}Z_{i_2,j_2})$.
Furthermore the two paths $\gamma(i_1,i_2)$ and $\gamma(j_1,j_2)$
must meet one another within the tree $\tree$ along some internal
edges $e_1,\ldots,e_k$ (although they are non-crossing when drawn
as straight line segments).  The edges $e_1,\ldots,e_k$ are also
the edges common to the two paths $\gamma(i_1,j_1)$ and
$\gamma(i_2,j_2)$. The third pair of paths ($\gamma(i_1,j_2)$,
$\gamma(i_2,j_1)$) covers the same set of edges as do the first
two pairs of paths, excepting the edges $e_1,\ldots,e_k$ above. In
particular we have that
$$w^\tree(Z_{i_1,i_2}Z_{j_1,j_2}) = w^\tree(Z_{i_1,j_1}Z_{i_2,j_2}) =
w^\tree(Z_{i_1,j_2}Z_{i_2,j_1}) + 2k.$$
\end{proof}

\begin{definition}
Let $G_1 \ast_\tree G_2$ denote the Kempe graph $G^\ast$ from the
above theorem.   Let $\mathcal{S}_n^\tree$ denote the graded commutative
semigroup of Kempe graphs $G$ with binary operation $(G_1,G_2)
\mapsto G_1 \ast_\tree G_2$.  The grading is given by $\deg(G) =
\deg(m_G)$.
\end{definition}

\begin{corollary} (to Theorem \ref{leadingterm})
The special fiber at $t=0$ of the degeneration, $\mathcal{R}
\otimes_{\C[t]} \C[t]/(t)$, is isomorphic to the semigroup algebra
$\C[\mathcal{S}_n^\tree]$.
\end{corollary}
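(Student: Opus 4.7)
The plan is to use the Kempe graph basis from the preceding proposition together with the leading-term analysis of Theorem~\ref{leadingterm} to identify $R^\tree = \bigoplus_m F_m^\tree/F_{m-1}^\tree$ with $\C[\mathcal{S}_n^\tree]$ directly on basis elements. First I would check that the Kempe basis is compatible with the filtration, in the sense that the Kempe monomials $m_G$ with $w^\tree(m_G) \le m$ form a basis for $F_m^\tree$. The containment spanned-by-Kempe $\subset F_m^\tree$ is immediate. For the reverse containment, any monomial $Z_{i_1,j_1}\cdots Z_{i_k,j_k} \in R$ of weight at most $m$ can be expanded in the Kempe basis by iteratively applying a single Pl\"ucker relation to a crossing pair of edges; Theorem~\ref{leadingterm} (applied to each straightening step) shows that every Kempe monomial appearing in the expansion has weight at most $m$. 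Passing to $F_m^\tree / F_{m-1}^\tree$, the Kempe monomials $m_G$ with $w^\tree(m_G) = m$ give a basis. Thus the map
\[
\Phi : \C[\mathcal{S}_n^\tree] \longrightarrow R^\tree, \qquad G \longmapsto [m_G] \in F_{w^\tree(m_G)}^\tree / F_{w^\tree(m_G)-1}^\tree,
\]
is a bijection on basis elements and hence a linear isomorphism, compatible with the Pl\"ucker grading since $\deg(G) = \deg(m_G)$ on both sides.

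Next I would verify that $\Phi$ is multiplicative. Given two Kempe graphs $G_1, G_2$, Theorem~\ref{leadingterm} expands the product in $R$ as
\[
m_{G_1} m_{G_2} = m_{G_1 \ast_\tree G_2} + \sum_{G \ne G_1 \ast_\tree G_2} c_G m_G,
\]
where every $G$ in the sum satisfies $w^\tree(m_G) < w^\tree(m_{G_1}) + w^\tree(m_{G_2})$. Reducing modulo $F_{w^\tree(m_{G_1}) + w^\tree(m_{G_2}) - 1}^\tree$, the sum drops out and we obtain $[m_{G_1}] \cdot [m_{G_2}] = [m_{G_1 \ast_\tree G_2}]$ in $R^\tree$. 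This is exactly the image under $\Phi$ of the semigroup product $G_1 \ast_\tree G_2$, so $\Phi$ is a graded $\C$-algebra isomorphism. Since the special fiber $\mathcal{R}^\tree \otimes_{\C[t]} \C[t]/(t)$ is by construction isomorphic to $R^\tree$, the corollary follows.

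The main obstacle is the filtration-compatibility in the first step: without the uniqueness and coefficient-one statement of Theorem~\ref{leadingterm}, a straightening move could a priori introduce Kempe monomials of weight strictly greater than the starting monomial, and the associated graded would not acquire the semigroup structure on $\mathcal{S}_n^\tree$. Once that filtration behavior is pinned down, the multiplicativity of $\Phi$ and the matching of gradings are essentially formal consequences of the leading-term theorem.
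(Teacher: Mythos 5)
Your proposal is correct and is precisely the argument the paper intends: the paper states this as an immediate corollary of Theorem \ref{leadingterm} (together with the Kempe-basis proposition and the stated identification $\mathcal{R}^\tree \otimes_{\C[t]} \C[t]/(t) \cong R^\tree$), and your write-up simply fills in the implicit details — filtration compatibility of the Kempe basis via the straightening steps, and multiplicativity of the induced map on the associated graded. No gaps.
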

Given any graph $G$, there is an associated weighting $w_G$ of the
tree $\tree$ where the weight assigned to an edge $e$ of $\tree$
is equal to the number of edges $ij$ in $G$ such that the path
$\gamma(i,j)$ in $\tree$ joining $i$ to $j$ passes through $e$. We
denote this weight by $w_G(e)$.  We now determine the image of the
map $G \mapsto w_G$.

\begin{proposition}\label{admissibleweighting}
Given a weighting $w$ of nonnegative integers to the edges of
$\tree$, then $w = w_G$ for some graph $G$ iff for each triple
$e_1, e_2, e_3$ of edges meeting at a common (internal) vertex of
$\tree$ the sum $w(e_1) + w(e_2) + w(e_3)$ is even, and
$w(e_1),w(e_2),w(e_3)$ satisfy the triangle inequalities.
\end{proposition}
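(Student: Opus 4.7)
\medskip

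\noindent\textbf{Proof proposal for Proposition \ref{admissibleweighting}.}

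For the forward direction, fix an internal vertex $v$ of $\tree$ with incident edges $e_1,e_2,e_3$. Removing $v$ splits $\tree$ into three subtrees; let $L_1,L_2,L_3$ be their respective leaf sets. For $G$ a graph on $\{1,\ldots,n\}$, let $a_{pq}$ denote the number of edges of $G$ with one endpoint in $L_p$ and the other in $L_q$ (for $p\neq q$), and let $b_p$ denote the number of edges of $G$ with both endpoints in $L_p$. The path $\gamma(i,j)$ passes through $e_p$ if and only if exactly one endpoint lies in $L_p$, so
\[
w_G(e_1)=a_{12}+a_{13},\qquad w_G(e_2)=a_{12}+a_{23},\qquad w_G(e_3)=a_{13}+a_{23}.
\]
Summing gives $w_G(e_1)+w_G(e_2)+w_G(e_3)=2(a_{12}+a_{13}+a_{23})$, which is even, and $w_G(e_2)+w_G(e_3)-w_G(e_1)=2a_{23}\geq 0$, which (with cyclic permutations) is the triangle inequality.

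For the reverse direction, I would induct on the number of leaves $n$. In the base case $n=3$, the tree has one internal vertex and three leaf edges with weights $w_1,w_2,w_3$; the parity and triangle hypotheses let us solve the system above for nonnegative integers $a_{12},a_{13},a_{23}$, and the graph $G$ with $a_{pq}$ edges between leaves $p$ and $q$ realizes $w$. For the inductive step, choose a ``cherry'': an internal vertex $v$ whose two non-interior neighbors are leaves $i_1,i_2$, attached by edges $e_1,e_2$, and let $e$ be the third edge incident to $v$. Define
\[
a=\tfrac{1}{2}\bigl(w(e_1)+w(e_2)-w(e)\bigr),\quad
b_1=\tfrac{1}{2}\bigl(w(e_1)+w(e)-w(e_2)\bigr),\quad
b_2=\tfrac{1}{2}\bigl(w(e_2)+w(e)-w(e_1)\bigr),
\]
which are nonnegative integers by the hypotheses at $v$. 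Form the smaller trivalent tree $\tree''$ by deleting $i_1$ and $i_2$ and viewing $v$ itself as a new leaf $v'$; all remaining edges carry the same weights as in $\tree$. Since the internal vertices of $\tree''$ are exactly those of $\tree$ other than $v$, with the same local edge data, the parity and triangle conditions still hold for $\tree''$. By induction there is a graph $G''$ on the leaves of $\tree''$ with $w_{G''}=w|_{\tree''}$. In particular the valence of $v'$ in $G''$ equals $w(e)=b_1+b_2$; select any $b_1$ of these edges and redirect their endpoint at $v'$ to $i_1$, redirect the remaining $b_2$ to $i_2$, and finally add $a$ parallel edges between $i_1$ and $i_2$ to produce $G$.

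It remains to verify $w_G=w$. For the two leaf edges this follows from $w_G(e_1)=a+b_1=w(e_1)$ and similarly for $e_2$. For $e$ we have $w_G(e)=b_1+b_2=w(e)$, since the $a$ edges inside the cherry contribute nothing and each redirected edge contributes exactly once. For any other edge $e'$ of $\tree''$, the paths from outside leaves to $i_1$ or $i_2$ in $\tree$ traverse precisely the same edges as the corresponding paths to $v'$ in $\tree''$, so $w_G(e')=w_{G''}(e')=w(e')$. The main obstacle is simply the bookkeeping in this last step, namely verifying that the redistribution of edges at $v'$ does not disturb the weights at interior edges further away; this is essentially the observation that outside the cherry, $\tree$ and $\tree''$ have identical path structure.
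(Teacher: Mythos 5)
Your proof is correct, and while the forward direction coincides with the paper's (your $a_{pq}$ are exactly the paper's $x_{ij}$, counted as edges of $G$ joining the leaf sets of the three branches at $v$ rather than as paths through pairs of neighbors), your converse takes a genuinely different route. The paper argues ``in parallel'': it solves the local system at every internal vertex simultaneously, obtains a tripod graph $G_v$ at each internal vertex, and then glues neighboring tripod graphs along shared edges of $\tree$, using the consistency relation $x_{12}(v)+x_{13}(v)=x_{12}(v')+x_{13}(v')=w(e_1)$ to match up arcs; the actual formation of the global graph is left somewhat informal. You instead induct on the number of leaves by pruning a cherry (a matched pair of leaves in the paper's terminology, whose existence is the content of Lemma \ref{trivalentinduction}), solving the local system only at the pruned vertex, and extending a graph $G''$ on the smaller tree by redirecting the $w(e)=b_1+b_2$ edge-ends at the new leaf $v'$ and adding $a$ parallel edges inside the cherry. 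What your approach buys is a fully explicit verification that $w_G=w$ edge by edge (leaf edges of the cherry, the internal edge $e$, and all edges of $\tree''$, where the path structure is unchanged), at the cost of an induction; the paper's gluing is shorter and constructs all local data at once, but leaves the final bookkeeping implicit. Both rest on the same local solvability $x_{ij}=\tfrac{1}{2}(N_i+N_j-N_k)$, and your argument is complete as written.
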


\begin{proof}
Let a $G$--path mean a path in the tree $\tree$ joining vertices
$i$ to $j$ where $ij$ is an edge of $G$.  Thus $G$-paths are in
bijection with edges of $G$, and are meant to be counted with
multiplicity.  Now fix an internal vertex $v_0$ of $\tree$ with
neighboring vertices $v_1, v_2, v_3$ with connecting respective
edges $e_1, e_2, e_3$.  For each $i,j$ with $1 \leq i < j \leq 3$,
let $x_{ij}$ be the number of $G$-paths passing through each of
$v_i$ and $v_j$.  Thus \begin{itemize} \item $w_G(e_1) = x_{12} +
x_{13}$. \item $w_G(e_2) = x_{12} + x_{23}$. \item $w_G(e_3) =
x_{13} + x_{23}$.
\end{itemize}  Now solving for the $x_{ij}$ we have:
\begin{itemize} \item $x_{12} = \frac{1}{2}(w_G(e_1) + w_G(e_2) - w_G(e_3))$.
\item $x_{13} = \frac{1}{2}(w_G(e_1) + w_G(e_3) - w_G(e_2))$.
\item $x_{23} = \frac{1}{2}(w_G(e_2) + w_G(e_3) - w_G(e_1))$.
\end{itemize} Since the $x_{ij}$ are non-negative we have that
$w_G(e_1)$, $w_G(e_2)$, and $w_G(e_3)$ satisfy the triangle
inequalities. Since the $x_{ij}$ are integers we have that
$w_G(e_1) + w_G(e_2) + w_G(e_3)$ is even.

Conversely, given a weighting of $\tree$ satisfying the above
conditions, we will get non-negative integers $x_{12}(v),
x_{13}(v), x_{23}(v)$ at each internal vertex $v$.  We may thus
define a graph $G_v$ on the three neighboring vertices
$v_1,v_2,v_3$ by setting the multiplicity of edge $ij$ to be
$x_{ij}(v)$. If $v'$ is a neighboring internal vertex, say for
example sharing edge $e_1$ with $v$, then we have $x_{12}(v) +
x_{13}(v) = x_{12}(v') + x_{13}(v') = w(e_1)$.  Hence we may glue
together $G_{v}$ and $G_{v'}$ to form a Kempe graph $G_{v,v'}$ on
the neighboring vertices of $v$ and $v'$ by adjoining the edges
whose respective paths go through the edge $vv'$ of $\tree$.  Note
that the weighting of the edges of the tree pertaining to
$G_{v,v'}$ is equal to $w$. Continuing in this way, we may form a
Kempe graph $G$ on the leaves of $\tree$ such that $w = w_G$, by
gluing together all the $G_v$ for $v$ and internal vertex of
$\tree$.
\end{proof}

\begin{definition}
We shall call a weighting $w$ or $\tree$ an admissible weighting
if $w = w_G$ for some graph $G$.
\end{definition}

\begin{proposition}
For each admissible weighting $w$ of $\tree$, there exists a
unique Kempe graph $G$ such that $w = w_G$.  Furthermore, if $G_1$
and $G_2$ are Kempe graphs, then $w_{G_1} + w_{G_2} = w_{G_1
\ast_\tree G_2}$.
\end{proposition}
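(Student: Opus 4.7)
The plan is to attack the two statements in sequence. For the first (existence and uniqueness of a Kempe representative for each admissible weighting), I separate out existence and uniqueness; for the second (the semigroup homomorphism), I leverage the Pl\"ucker straightening analysis carried out in the proof of Theorem \ref{leadingterm}.

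For existence, Proposition \ref{admissibleweighting} already assembles, from any admissible weighting $w$, a graph $G$ with $w_G = w$ by gluing local triangles $G_v$ along each common edge of $\tree$. The gluing at an internal edge $e$ amounts to choosing a bijection between two ordered multisets of endpoints, one in each cyclically contiguous arc cut off by $e$; there is always a choice producing a non-crossing nested pairing $(a_r, b_{k+1-r})_{r=1}^{k}$ (with $k = w(e)$), and carrying out this nested choice simultaneously at every internal edge gives a Kempe graph. Consistency of the global gluing is guaranteed by the compatibility between the local matching numbers $x_{ij}(v)$ at each internal vertex and the boundary counts on the incident edges of $\tree$.

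For uniqueness, suppose $G_1, G_2$ are Kempe with $w_{G_1} = w_{G_2} = w$. I would induct on the number of internal vertices of $\tree$. At the inductive step, pick an internal edge $e$ of $\tree$; removing it partitions the leaves into two cyclically contiguous arcs $A, B$. For either graph $G_i$ the $w(e)$ chords crossing $e$ have one endpoint in $A$ and one in $B$, and the non-crossing condition forces the unique nested pairing $(a_r,b_{k+1-r})_{r=1}^{k}$. Moreover, the endpoint multiset on the $A$-side is itself determined by $w$: propagating the flow of crossing paths through the subtree $\tree_1$ on the $A$-side using the formulas $x_{12}(v)=\tfrac{1}{2}(w(e_1)+w(e_2)-w(e_3))$, etc., one computes at each leaf $\ell \in A$ the number of chords from $\ell$ to $B$. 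Hence the chords crossing $e$ are the same for $G_1$ and $G_2$. Removing those chords leaves Kempe sub-configurations on $\tree_1$ and $\tree_2$ with adjusted admissible weightings, and the induction hypothesis applied to the two smaller trees gives agreement on the remaining chords.

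For the semigroup homomorphism, the disjoint multigraph union $G_1 \cdot G_2$ tautologically satisfies $w_{G_1 \cdot G_2} = w_{G_1} + w_{G_2}$. Iteratively apply the uncrossing move that preserves the edge-weighting (whose existence at every crossing pair is guaranteed by the case analysis in the proof of Theorem \ref{leadingterm}); tracking the same sequence of moves in the polynomial ring produces the top-weight summand in the Pl\"ucker straightening of $m_{G_1}m_{G_2}$, which Theorem \ref{leadingterm} identifies as $m_{G_1 \ast_\tree G_2}$. Termination follows because at each step the move strictly reduces the number of crossing pairs while preserving $w$, and $w$-preimage is finite. The terminal graph $G'$ is therefore Kempe with $w_{G'}=w_{G_1}+w_{G_2}$ and equals $G_1 \ast_\tree G_2$, giving the homomorphism. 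The main obstacle is the bookkeeping in the uniqueness argument — specifically, verifying that the endpoint multiset on each side of every internal edge $e$ is forced by $w$ alone, independent of the Kempe representative — and once this flow computation is formalized, existence and the homomorphism property follow mechanically from the straightening material already developed.
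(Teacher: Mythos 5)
Your existence argument and your proof of the identity $w_{G_1}+w_{G_2}=w_{G_1\ast_\tree G_2}$ are correct and are essentially the paper's route (glue the unique non-crossing tripod graphs; follow the weight-preserving branch in the straightening of Theorem \ref{leadingterm}). The problem is in the uniqueness step. You reduce uniqueness to the claim that, for an internal edge $e$ separating the leaves into arcs $A$ and $B$, the multiset of $A$-side endpoints of the chords crossing $e$ is forced by $w$ alone via the flow formulas $x_{ij}(v)=\frac{1}{2}(w(e_i)+w(e_j)-w(e_k))$. That claim is false: the numbers $x_{ij}(v)$ record only how many paths turn from one branch to another at each internal vertex, not which leaf a given path started from, so they do not determine the per-leaf crossing numbers of an arbitrary representative of $w$. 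Concretely, take the caterpillar on leaves $1,\dots,5$ with internal vertices $u_1$ (adjacent to leaves $1,2$), $u_2$ (adjacent to leaf $3$), $u_3$ (adjacent to leaves $4,5$), and the admissible weighting $w(e_1)=w(e_2)=w(e_3)=w(e_4)=1$, $w(e_5)=0$, $w(u_1u_2)=2$, $w(u_2u_3)=1$. Both $G=\{13,24\}$ and $G'=\{14,23\}$ have $w_G=w_{G'}=w$, yet the number of chords from leaf $1$ crossing $u_2u_3$ is $0$ for $G$ and $1$ for $G'$. So the endpoint multiset is \emph{not} determined by $w$ alone; it becomes determined only after imposing the non-crossing condition inside the arc $A$ itself, which is essentially the statement being proved, so your argument as structured is circular at this point.

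The repair is to strengthen the inductive statement so that it carries the diagonal along: prove uniqueness of the non-crossing graph on the sub-polygon $P'$ cut off by the diagonal $d$ dual to $e$, where $d$ is treated as an extra boundary side of prescribed valence $w(e)$ (equivalently, treat $e$ as a leaf edge of the subtree $\tree_1$ with leaf weight $w(e)$). The induction hypothesis then pins down the ordered sequence of $A$-side endpoints of the crossing chords, likewise for $B$, and your nested-pairing observation correctly finishes the inductive step. This is in effect what the paper does, working one internal edge at a time: the tripod graphs $G_v$ are determined by $w$, and the non-crossing gluing of path-ends across each internal edge is unique. (Your termination claim for the straightening — that resolving a crossing never increases the number of crossings with a third chord — is true but is also asserted without proof; it deserves a one-line case check on the four arcs cut out by the two crossing chords.)
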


\begin{proof}
Certainly the paths in the tripods of the proof of Proposition
\ref{admissibleweighting} may be drawn so that they are
non-crossing.  Such a non-crossing tripod graph is unique.
In the process of gluing all these tripod graphs together
to form a graph $G$ such that $w_G = w$, it is clear there is only
one way to join the paths so that no two paths are non-crossing.
Now by straightening these non-crossing paths into line segments,
we see that they remain non-crossing and so form a Kempe graph.

A quick inspection of the proof of Theorem \ref{leadingterm}
reveals that the graph $G_1 \ast_\tree G_2$ has weight equal to
the sum $w_{G_1} + w_{G_2}$.
\end{proof}
Note that $w^\tree(G) = \sum_{ij \in E(G)} w_{i,j}^\tree = \sum_{e
\in E(\tree)} w_G(e)$. We define the degree of an admissible weighting
$w^{\tree}$ to be one -half the sum of the weights of the leaf edges (a leaf edge
is an edge incident to a leaf).  Since each edge of the graph $G$ contributes
a weight of one to two leaf edges we have
$$deg(m_G) = deg(G) = deg(w^{\tree}).$$

\begin{definition}
Define $\mathcal{W}_{n}^{\tree}$ to be the graded semigroup of
admissible weightings of the edges of $\tree$.
\end{definition}

\begin{proposition}\label{gradedsemigroupiso}
The map that associates to an element of $\mathcal{S}^\tree_n$ the induced
weighting of the edges of $\tree$ induces an isomorphism
$\Omega_n:\mathcal{S}_{n}^{\tree} \to \mathcal{W}_{n}^{\tree}$.
\end{proposition}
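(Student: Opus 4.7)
The plan is to recognize that all four properties needed for $\Omega_n$ to be an isomorphism of graded semigroups, namely well-definedness, being a semigroup homomorphism, preservation of the grading, and bijectivity, are essentially immediate from the two propositions that precede the statement together with the paragraph defining the grading on $\mathcal{W}_n^{\tree}$. The proof should therefore be a short assembly rather than new work.

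First I would verify well-definedness: for any Kempe graph $G$, the weighting $w_G$ satisfies the triangle inequality and parity condition at each internal vertex of $\tree$ by Proposition \ref{admissibleweighting}, so $\Omega_n(G) = w_G$ does lie in $\mathcal{W}_n^{\tree}$. Next, the equality $w_{G_1 \ast_\tree G_2} = w_{G_1} + w_{G_2}$, which says that $\Omega_n$ is a semigroup homomorphism, is exactly the second assertion of the proposition immediately preceding the statement; no further argument is required. For the grading, I would simply observe that each edge $ij \in E(G)$ contributes weight $1$ to the two leaf edges of $\tree$ incident to $i$ and $j$ respectively, hence one-half the sum of the leaf-edge weights of $w_G$ equals $|E(G)| = \deg(G)$, which is the definition of the degree of an admissible weighting given just before the definition of $\mathcal{W}_n^{\tree}$. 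Thus $\deg(\Omega_n(G)) = \deg(G)$.

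Finally for bijectivity: injectivity is the uniqueness statement of the preceding proposition (the Kempe graph realizing a given admissible weighting is unique), while surjectivity is the existence portion. More concretely, given $w \in \mathcal{W}_n^{\tree}$, the construction in Proposition \ref{admissibleweighting} builds non-crossing tripods at each internal vertex of $\tree$ using the numbers $x_{ij}(v)$ solved from the triangle/parity conditions, and then glues them along internal edges of $\tree$ to produce a Kempe graph $G$ with $w_G = w$; this $G$ is the unique preimage.

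I do not expect any genuine obstacle here: the substantive content lies in Theorem \ref{leadingterm} and Proposition \ref{admissibleweighting}, and the only care needed is bookkeeping to confirm that the grading on $\mathcal{S}_n^{\tree}$ (number of edges of the Kempe graph, equivalently degree of the monomial $m_G$) matches the one chosen on $\mathcal{W}_n^{\tree}$ (half the sum of leaf-edge weights) so that $\Omega_n$ truly is an isomorphism of \emph{graded} semigroups, not merely of semigroups.
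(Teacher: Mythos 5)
Your proposal is correct and is exactly the argument the paper intends: the paper states Proposition \ref{gradedsemigroupiso} without proof, treating it as an immediate assembly of Proposition \ref{admissibleweighting} (well-definedness), the uniqueness-and-existence proposition together with $w_{G_1}+w_{G_2}=w_{G_1\ast_\tree G_2}$ (bijectivity and the homomorphism property), and the displayed identity $\deg(m_G)=\deg(G)=\deg(w^\tree)$ (compatibility of gradings). Your bookkeeping of these four points matches the paper's implicit reasoning, so nothing further is needed.
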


\subsection{Induced toric degeneration of the space $M_\br$ of polygonal linkages}

Here we briefly review the construction of the GIT quotients of
the Grassmannian by the maximal torus $T$ in $\mathrm{SL}(n,\C)$. The GIT quotient 
(or technically, the subspace of closed points in the GIT quotient) 
is homeomorphic to a 
configuration space $M_\br$ of polygonal
linkages in $\R^3$ with prescribed integral side lengths $\br =
(r_1,\ldots,r_n)$. See \cite{KapovichMillson}, Theorem 2.3, or 
\cite{Klyachko} for more
details.

Let $T$ be the torus of diagonal matrices in $\mathrm{SL}(n,\C)$.  There
is a natural action of $T$ on the Grassmannian $\mathrm{Gr}_2(\C^n)$.  The
GIT quotient $\mathrm{Gr}_2(\C^n) \q T$ depends upon the choice of a
$T$-linearized line bundle of $\mathrm{Gr}_2(\C^n)$.  Let $L$ be the line
bundle associated to the Pl\"ucker embedding of $\mathrm{Gr}_2(\C^n)$.
Let $P \subset \mathrm{SL}(n,\C)$ be the parabolic subgroup fixing the point
$e_1 \wedge e_2 \in \mathrm{Gr}_2(\C^n)$.  It is the subgroup of matrices
$[a_{ij}]_{1 \leq i,j \leq n}$ of determinant one, such that
$a_{ij} = 0$ for $i=1,2$ and $j > 2$. We identify $\mathrm{Gr}_2(\C^n)$
with the homogeneous space $\mathrm{SL}(n,\C)/P$,
by identifying $gP \in \mathrm{SL}(n,\C)/P$ with
$g \cdot (e_1 \wedge e_2) \in \mathrm{Gr}_2(\C^n)$.  Let $\varpi_2 =
(1,1,0,\ldots,0) \in \Z^n$ be the second fundamental weight of
$\mathrm{SL}(n,C)$.  Associated to $\varpi_2$ is a character
$\chi : P \to \C^\ast$ given by $\chi([a_{ij}]_{1 \leq i,j \leq n})
= \det
\begin{pmatrix}
a_{11} & a_{12} \\
a_{21} & a_{22}
\end{pmatrix}$.
Following the Borel-Weil construction, we may take the total space
of $L$ to be the product $\mathrm{SL}(n,\C) \times \C$ modulo
the equivalence relation $(g,z) \sim (gp, \chi(p)z)$, for all
$g \in \mathrm{SL}(n,\C)$, $p \in P$, and $z \in \C$. We denote the equivalence
class of $(g,z)$ as above by $[g,z]$.
The bundle map $\pi : L \to \mathrm{Gr}_2(\C^n)$ is given by
$[g,z] \mapsto gP$.  The Plucker coordinate ring of $\mathrm{Gr}_2(\C^n)$
is now $\bigoplus_{N=0}^\infty \Gamma(\mathrm{Gr}_2(\C^n),L^{\otimes N})$, and it is
generated in degree one by the brackets $[i,j]$
which are associated to global sections $s_{ij}$ of $L$, by
$s_{ij}(gP) := [g, g_{i1}g_{j2} - g_{i2}g_{j1}]$, where
$g = [g_{kl}]_{1 \leq k,l \leq n}$.

We suppose that $|\br| = r_1 + \cdots + r_n$ is an even integer.
The line bundle $L^{\otimes |\br|/2}$ of $L$ may be identified
with the product $\mathrm{SL}(n,\C) \times \C$ modulo the relation
$(g,z) \sim (gp, \chi(p)^{\otimes |\br|/2} z)$.
We define an action of $T$ on $L^{\otimes |\br|/2}$ via the character
$\chi_\br$ of $T$, given by
$\chi_\br(t_1,\ldots,t_n) := \prod_{i=1}^n t_i^{r_i}$.
We define the action of $T$ on $L^{\otimes |\br|/2}$ by
$$(t_1,\ldots,t_n) \cdot [g,z] = [\mathbf{t} \cdot g,
\chi_\br(\mathbf{t})z],$$
where $\mathbf{t} = (t_1,\ldots,t_n) \in T$.  We call this the
$\br$--linearization of $L^{\otimes |\br|/2}$.

The space $M_\br$ is homeomorphic to 
$$\mathrm{Gr}_2(\C^n) \q_\br T = \mathrm{Projm} \Big(
\bigoplus_{N=0}^\infty \Gamma(\mathrm{Gr}_2(\C^n), L^{\otimes N |\br|/2})^T
\Big),$$
where $\Gamma(\mathrm{Gr}_2(\C^n), L^{\otimes N |\br|/2})^T$ indicates
the $T$-invariant global sections of $L^{\otimes N |\br|/2}$.
The ring 
$$R_\br = 
\bigoplus_{N=0}^\infty \Gamma(\mathrm{Gr}_2(\C^n), L^{\otimes N |\br|/2})^T$$
is naturally graded by $N$.
The invariant sections of $L^{\otimes N|\br|/2}$ are spanned by
monomials $m = m_G$ over all graphs $G$ having multi-degree $N\br$,
i.e. the valency of vertex $i$ is $r_i$ for each $i$.
The degree of $m_G$ is then $N$, if $G$ has multidegree $N \br$.

Restricting to torus invariants is an exact functor so the flat
degenerations of the Grassmannian described above restrict to flat
degenerations of $\mathrm{Gr}_2(\C^n) \q_\br T$.  Furthermore, the
special fiber of this restricted degeneration is toric, since it
is the $T$-quotient of a toric variety.
Therefore we obtain a flat degeneration of $\mathrm{Gr}_2(\C^n) \q_\br T$ to a
toric variety $(\mathrm{Gr}_2(\C^n) \q_\br T)^\tree$ for each triangulation $\tree$ of the
model $n$-gon.
The associated semigroup
$\mathcal{S}^\tree_\br$ is the set of Kempe graphs having valency a multiple
of $\br$; it is a sub-semigroup of $\mathcal{S}_n^\tree$, however the grading
of $\mathcal{S}^\tree_\br$ is not the same as that of $\mathcal{S}_n^\tree$.  Instead
the degree of a Kempe graph $G \in \mathcal{S}^\tree_\br$ of multidegree 
$N \br$ is $N$, rather than $N |\br|/2$ as it would have in $\mathcal{S}_n^\tree$.
We have that
$$(\mathrm{Gr}_2(\C^n) \q_\br T)^\tree = \mathrm{Projm}(\C[S^\tree_\br]),$$
with $\mathcal{S}^\tree_\br$ graded as mentioned above.

\begin{definition}
Define the  graded semigroup $\mathcal{W}_{\br}^{\tree}$ to be the graded subsemigroup of
$\mathcal{W}^{\tree}_n$ with leaf-edge weights that are integral multiples of $\br$.
\end{definition}
The admissible weightings of the tree $\tree$ relating
to the sub-semigroup $\mathcal{S}^\tree_\br$ must satisfy that the weighting of
the outer edges $e_1,\ldots,e_n$ (the edge
$e_i$ is adjacent to leaf $i$) is some multiple $N$ of $\br$,
meaning that $w(e_i) = Nr_i$ for each $i$,
and the multiple $N$ is the degree of the weighting.
Thus we have
\begin{proposition}\label{gradedsubsemigroupiso}
The isomorphism $\Omega_n$  of Proposition \ref{gradedsemigroupiso} induces an isomorphism
$\Omega_{\br}:\mathcal{S}^{\tree}_{\br} \to \mathcal{W}_{\br}^{\tree}$.
\end{proposition}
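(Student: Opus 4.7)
The plan is to verify that the isomorphism $\Omega_n$ of Proposition \ref{gradedsemigroupiso} restricts to a bijection between the distinguished subsemigroups and that this restriction respects the rescaled grading. The key observation is that for any graph $G$ on the leaves $\{1,\ldots,n\}$ of $\tree$, the weight $w_G(e_i)$ that $G$ assigns to the leaf-edge $e_i$ (the unique edge of $\tree$ incident to leaf $i$) equals the valency of vertex $i$ in $G$: indeed every edge of $G$ incident to leaf $i$ contributes a path in $\tree$ through $e_i$, and no other edge of $G$ uses $e_i$.

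First I would show the set-theoretic matching. If $G \in \mathcal{S}^\tree_\br$ has multidegree $N\br$, then the valency at leaf $i$ is $Nr_i$, hence $w_G(e_i) = Nr_i$ for each $i$, so $\Omega_n(G) = w_G$ lies in $\mathcal{W}^\tree_\br$ with leaf-edge weights $N\br$. Conversely, if $w \in \mathcal{W}^\tree_\br$ has $w(e_i) = Nr_i$, then the unique Kempe graph $G$ with $w_G = w$ supplied by Proposition \ref{admissibleweighting} (and the uniqueness in the previous proposition) has valency $Nr_i$ at leaf $i$, hence multidegree $N\br$, so $G \in \mathcal{S}^\tree_\br$. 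This gives the bijection $\Omega_\br$.

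Next I would check that $\Omega_\br$ is a semigroup homomorphism. This is immediate from the already-proved identity $w_{G_1} + w_{G_2} = w_{G_1 \ast_\tree G_2}$, since $\mathcal{S}^\tree_\br$ and $\mathcal{W}^\tree_\br$ are closed under $\ast_\tree$ and addition respectively (multidegrees and leaf-edge weights add).

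Finally, for the grading, note that a Kempe graph $G$ in $\mathcal{S}^\tree_\br$ of multidegree $N\br$ has degree $N$ by definition of the grading on $\mathcal{S}^\tree_\br$, while the admissible weighting $w_G \in \mathcal{W}^\tree_\br$ with leaf-edge weights $N\br$ has degree $N$ by the definition of $\mathcal{W}^\tree_\br$ just above the proposition. Hence the restriction $\Omega_\br$ is degree-preserving. There is no real obstacle here: the nontrivial content was already encoded in Propositions \ref{admissibleweighting} and \ref{gradedsemigroupiso}, and the proposition is essentially a matter of unwinding the definitions and observing that the valency--leaf-weight correspondence identifies the two subsemigroups under their respective rescaled gradings.
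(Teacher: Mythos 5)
Your proposal is correct and follows exactly the route the paper takes (the paper in fact gives no separate proof, just the observation immediately preceding the statement that the leaf-edge weight $w_G(e_i)$ equals the valency of $G$ at vertex $i$, so multidegree $N\br$ corresponds to leaf-edge weights $N\br$ and both gradings assign degree $N$). Your write-up simply makes explicit the unwinding of definitions that the paper leaves implicit.
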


\section{$\tree$-congruence of polygons and polygonal linkages}\label{KYspace}

In this section we will define an equivalence relation on polygons and polygonal linkages 
which depends on the choice of a trivalent tree $\tree$.  First we will collect some results 
about trivalent trees which will be useful in what follows.

\subsection{Trivalent trees and their decompositions into forests}
Let $\tree$ be a trivalent tree with $n$ leaves which we assume is dual
to a triangulation of $P$.  We will
say a vertex is {\it internal} if it is not a leaf. The triangles in the triangulation of $P$ correspond to the internal vertices of $\tree$.
We will say an edge is a {\it leaf edge} or an {\it outer edge} if it is incident to a leaf. Thus the leaf edges are dual to the edges of $P$.
An edge of $\tree$ which does not border a leaf will be called an
{\it inner edge}. Thus the inner edges of $\tree$ are dual to the
diagonals of the triangulation of $P$.

\begin{definition}
We say two leaves of $\tree$ are a matched pair of leaves if they have a common neighbor.
\end{definition}
The following technical lemma will be very useful for giving inductive proofs.

\begin{lemma}\label{trivalentinduction}
For any trivalent tree $\tree$ it is possible
to find a sequence of subtrees

$$\tree_0 \subset \ldots \subset \tree_{n-3} = \tree$$
such that
\begin{itemize}
\item The tree $\tree_0$ is a tripod.
\item The tree $\tree_i$ can be identified with $\tree_{i-1}$ joined with a tripod along some $e_i$.
\item Each internal edge of $\tree$ appears as some $e_i$.
\end{itemize}
\end{lemma}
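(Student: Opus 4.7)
I will prove the lemma by induction on $n$, the number of leaves of $\tree$.

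The base case is $n=3$, in which case $\tree$ itself is a tripod and the sequence is just $\tree_0 = \tree$; there are no internal edges, so the third condition is vacuous. For the inductive step with $n \geq 4$, the plan is to locate a ``cherry'' in $\tree$ (a matched pair of leaves), prune it off to get a smaller trivalent tree $\tree'$ with $n-1$ leaves, apply the induction hypothesis to $\tree'$, and then reattach the pruned cherry as the final step $\tree_{n-3} = \tree$.

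The key combinatorial input is the existence of a matched pair of leaves in any trivalent tree with at least $4$ leaves. I would establish this by passing to the subtree $\tree^{\mathrm{int}}$ spanned by the internal vertices of $\tree$. Since $n \geq 4$, $\tree$ has $n-2 \geq 2$ internal vertices, so $\tree^{\mathrm{int}}$ has at least one leaf $v$. Because $v$ is internal in $\tree$ it has degree $3$ there, and because $v$ is a leaf of $\tree^{\mathrm{int}}$ exactly one of its three neighbors in $\tree$ is internal; the other two must be leaves of $\tree$, forming a matched pair $\ell_1,\ell_2$ with common neighbor $v$.

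Let $w$ denote the third neighbor of $v$ in $\tree$ (which is internal since $n \geq 4$), and define $\tree'$ to be $\tree$ with the leaves $\ell_1,\ell_2$ and the two edges $v\ell_1, v\ell_2$ removed. Then $v$ becomes a leaf of $\tree'$, the edge $e := vw$ (which was internal in $\tree$) becomes the leaf edge of $\tree'$ incident to $v$, and $\tree'$ is a trivalent tree with $n-1$ leaves. By induction there is a sequence $\tree'_0 \subset \cdots \subset \tree'_{n-4} = \tree'$ satisfying the three conditions, with associated edges $e'_1,\ldots,e'_{n-4}$. I then set $\tree_i := \tree'_i$ and $e_i := e'_i$ for $1 \leq i \leq n-4$, and set $\tree_{n-3} := \tree$ with $e_{n-3} := e$, where the join at stage $n-3$ attaches the tripod with center $v$ and leaves $\ell_1, \ell_2$ along the leaf edge $e$ of $\tree_{n-4} = \tree'$.

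It remains to verify the third condition. The internal edges of $\tree$ are precisely the internal edges of $\tree'$ together with $e$: indeed, $e$ is internal in $\tree$ but became a leaf edge of $\tree'$, while every other edge of $\tree$ incident to two internal vertices remains so in $\tree'$. By the inductive hypothesis, each internal edge of $\tree'$ appears as some $e'_i = e_i$ with $1 \leq i \leq n-4$, and $e$ appears as $e_{n-3}$; this exhausts the internal edges of $\tree$. The only nontrivial step in the argument is the existence of a matched pair, which is the content of the subtree observation above; the rest of the argument is bookkeeping.
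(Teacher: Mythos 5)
Your proof is correct and follows essentially the same route as the paper's: both locate a cherry (matched pair of leaves) by observing that the subtree spanned by the internal vertices has a leaf, then prune the cherry and induct. Your write-up is somewhat more careful about verifying that every internal edge of $\tree$ is accounted for, but the underlying argument is the same.
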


\begin{proof}
Let $\mathcal{T}$ be a trivalent tree.  let $\mathcal{T'}$ be the (not necessarily trivalent) subtree of $\mathcal{T}$ with vertex set the internal vertices of $\mathcal{T}$ and edge set equal to all edges of $\mathcal{T}$ not connected to leaves.  It is easy to see that $\mathcal{T'}$ is also a tree and therefore has a leaf.  Let $n \in \mathcal{T}$ be the vertex corresponding to this leaf, then $n$ is trivalent by definition and therefore must be connected to two leaves in $\tree$.  This shows that $\tree$ must have a vertex $v$ connected to two leaf edges, $v$ is connected to a third edge, $e$,
by splitting $\tree$ along $e$ we obtain a tripod and a new trivalent tree.  The three items above now follow by induction.
\end{proof}
For each tree $\tree$ we choose once and for all an ordering on the
trivalent vertices of $\tree$ as follows.  Pick any matched pair
of leaves and label their trivalent vertex $\tau_{n-2}$
with $n$ the number of leaves of $\tree$. Then,
repeat this proceedure for the tree given by removing $\tau_{n-2}$
and its matched pair. Note that this ordering is not unique.

We now discuss the decompositions  of $\tree$ obtained
by splitting certain internal edges.
We first describe  the decompositions of $P$ which will induce the required
decompositions of $\tree$. For a diagonal $d$ in the triangulation $\tree$, 
we define the operation of splitting $P$ along $d$ by removing a small
tubular neighborhood of $d$ in $P$.  This results in a disjoint union
of two triangulated polytongs $P' \cup P'' = P^{\{d\}}$. 
We may generalize this operation by performing the same operation for any subset $S$ 
of the set of diagonals of $P$ to obtain a union of triangulated polygons $P^S$. 
The dual graph to $P^S$ is a forest $\tree^S$ that may be obtained by removing
a small open interval from each edge corresponding to a diagonal in $S$. 
If we choose $S$ to  be the entire set of diagonals we obtain the decomposition 
$P^D$ of $P$ into triangles and $\tree$ into the forest $\tree^D$ of trinodes.

It will be important in what follows that we have the quotient map
$$\pi^{S}: \tree^S \to \tree$$
that glues together the pairs of vertices that are the boundaries of
the open intervals removed from $\tree$.
We will say two vertices of $\tree^S$ are equivalent if they have the same
image under $\pi^{S}$ and two edges of $\tree^S$ are equivalent
if they contain equivalent vertices.

There are two types of leaf edges of $\tree^S$.
The first type correspond to edges of $P$ (leaf edges of $\tree$) and the second correspond to (split)
diagonals of $P$.  The $n$ edges of $\tree^S$ corresponding to leaf edges of $\tree$ will from now on be referred to as distinguished
edges, we denote this set $dist(\tree^S)$.  For a connected component $C$ of $\tree^S$ we let $dist(C) = dist(\tree^S)\cap C$.
Note that these components uniquely partition the set $dist(\tree^S) = \bigcup_{C_i \subset \tree^S} dist(C_i)$.

\subsection{$\tree$-congruence of polygonal linkages}

Fix a triangulation of the model n-gon with dual tree $\tree$.
Recall $\mathrm{Pol}_ n(\R^3)$, the space of n-gons  in $\R^3$.
A polygon $\mathbf{e} \in \mathrm{Pol}_ n(\R^3)$ comes with a fixed ordering on its edges,
these ordered edges are in bijection with the leaves of the tree $\tree$.
A set of diagonals $S$ corresponds to a set of internal edges of
$\tree$. We know that such a set defines a
unique partition of the edges of $\mathbf{e} $, $E(\mathbf{e} ) = E_1(\mathbf{e} ) \cup \ldots \cup E_m(\mathbf{e} )$
given by grouping together the distinguished edges of $\tree^S$ that lie in a component tree of the  forest $\tree^S$.

\begin{proposition}
For $\mathbf{e}  \in \mathrm{Pol}_ n(\R^3)$, if all diagonals in the set $S$ are zero then
the edge sets $E_i(\mathbf{e} )$ define closed polygons.
\end{proposition}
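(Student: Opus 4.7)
The plan is to prove this by induction on the cardinality of $S$, using the recursive structure of the forest decomposition $\tree^S$.

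For the base case $|S|=0$, the forest $\tree^S$ equals $\tree$, which is connected, so there is a single component whose distinguished edges are all $n$ edges of $\mathbf{e}$. The sum $e_1+\cdots+e_n=0$ is just the closing condition defining $\mathrm{Pol}_n(\R^3)$.

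For the inductive step, I would pick any $d\in S$ and split $P$ along $d$ into two triangulated subpolygons $P'$ and $P''$. Let $A\subset\{1,\ldots,n\}$ index the edges of $P$ lying on the boundary of $P'$, arranged in cyclic order. Regarding the diagonal of $\mathbf{e}$ corresponding to $d$ as a vector $v_d\in\R^3$, the closing condition for the subpolygon $P'$ gives $v_d = \sum_{i\in A} e_i$, and similarly $v_d = -\sum_{i\notin A} e_i$ for $P''$. By the hypothesis that $d$ evaluates to zero on $\mathbf{e}$, we have $v_d=0$, and hence both $\sum_{i\in A}e_i=0$ and $\sum_{i\notin A}e_i=0$. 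In other words, the edges of $\mathbf{e}$ in $P'$ and in $P''$ separately form closed polygons, which I denote $\mathbf{e}'\in\mathrm{Pol}_{|A|}(\R^3)$ and $\mathbf{e}''\in\mathrm{Pol}_{n-|A|}(\R^3)$. Now set $S'=S\cap(\text{diagonals of }P')$ and $S''=S\cap(\text{diagonals of }P'')$; all diagonals in $S'$ (resp.\ $S''$) vanish on $\mathbf{e}'$ (resp.\ $\mathbf{e}''$) since they already vanished on $\mathbf{e}$. Because $|S'|,|S''|<|S|$, the inductive hypothesis applies to each smaller polygon, so the distinguished-edge partitions of $(\tree')^{S'}$ and $(\tree'')^{S''}$ define closed subpolygons.

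The remaining bookkeeping point is that the forest $\tree^S$ is the disjoint union of $(\tree')^{S'}$ and $(\tree'')^{S''}$, so its connected components are exactly the connected components coming from these two smaller forests, and the distinguished-edge partition $dist(\tree^S)=\bigcup_i dist(C_i)$ refines consistently with the partition of the edges of $\mathbf{e}$ into the edges of $\mathbf{e}'$ and $\mathbf{e}''$. Thus each $E_i(\mathbf{e})$ coincides with some $E_j(\mathbf{e}')$ or $E_k(\mathbf{e}'')$, and closedness follows from the inductive hypothesis.

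The main obstacle I anticipate is not the vector sum calculation (which is immediate from the closing condition applied to each subpolygon) but rather the notational verification that splitting $\tree^S$ along a single edge $d\in S$ is compatible with first restricting $\tree$ to $P'$ and $P''$ and then splitting by the remaining diagonals; this is really a claim that ``splitting'' commutes over disjoint subsets of edges of $\tree$, and could alternatively be phrased as a one-step direct argument: for each component $C$ of $\tree^S$, the corresponding subpolygon of $P^S$ has boundary consisting of the edges $e_i$ for $i\in dist(C)$ together with some segments of diagonals in $S$, whose associated vectors in $\mathbf{e}$ vanish by hypothesis, so the closing condition for that subpolygon reduces to $\sum_{i\in dist(C)} e_i=0$.
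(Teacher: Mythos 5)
Your proof is correct. Your primary route (induction on $|S|$, splitting off one diagonal at a time) is more elaborate than what the paper does, and it forces you to confront exactly the bookkeeping issue you flag: after splitting along $d$, the pieces $P'$ and $P''$ are really $(|A|+1)$- and $(n-|A|+1)$-gons whose extra edge is the (zero) diagonal, so one must either drop that edge and check the triangulations restrict correctly, or carry it along as a zero vector; either way the compatibility of ``split along $d$, then along $S\setminus\{d\}$'' with ``split along $S$'' needs a word. The paper avoids all of this by using precisely the one-step argument you give in your last sentence: each component $C_i$ of $\tree^S$ is dual to a subpolygon $P_i$ of $P^S$ whose boundary edges are either edges of $P$ (contributing the vectors in $E_i(\mathbf{e})$) or diagonals in $S$ (contributing zero vectors by hypothesis), so the closing condition for $P_i$ collapses to $\sum_{e\in E_i(\mathbf{e})} e = 0$. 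Since you state that argument in full, you have in effect reproduced the paper's proof; the induction buys nothing here and I would promote your closing remark to the main argument.
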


\begin{proof}
Split the polygon $P$ along $S$ to obtain a union of polygons.
Choose a  polygon $P_i$ in the union. Then $P_i$ is dual to a component $C_i$ of the forest $\tree^S$.
The edges of the  polygon $P_i$  are either edges of the
orginal original polygon (so distinguished leaves of the tree $C_i$)
and hence edges of the Euclidean polygon $\mathbf{e}$
or diagonals of $S$ and hence zero diagonals of the Euclidean
polygon $\mathbf{e}$. We assume that  we have chosen $i$ consistently
with the division of the distinguished edges above whence
the set of distinguished edges of $C_i$ equals
the set $E_i(\mathbf{e})$. Now since $P_i$ closes up the sum of
all the vectors in $\R^3$ associated to the edges of $P_i$
is zero. But this sum is the sum of the vectors in $\R^3$ associated
to the distinguished edges of $C_i$ (that is the elements
in $E_i(\mathbf{e})$) and a set of vectors all of which are zero.
\end{proof}
The following groups will be useful in defining structures on
spaces of polygons.

\begin{definition}
Let $G^{dist(\tree^S)}$ be the group of maps from
the set $dist(\tree^S)$ into a group $G$.  Define
$G^{D(S)}$ to be the subgroup of $G^{dist(\tree^S)}$
of maps which are constant along the distinguished edges
of each component $C$ of $\tree^S$
\end{definition}
Notice that $G^{D(S)}$ naturally splits as a
product of $G$ over the components of $\tree^S$.
Let $G^{D(C)}$ be the component corresponding
to the component $C$. The sets $S$ of zero diagonals define a filtration of the space $\mathrm{Pol}_ n(\R^3)$
where the subspace $\mathrm{Pol}_ n(\R^3)^S$ in the filtration is defined to be the
set of all points $\mathbf{e} $ such that the diagonals in $S$ have zero length.
This in turn defines a decomposition of $\mathrm{Pol}_ n(\R^3)$ into subspaces.
$$\mathrm{Pol}_ n(\R^3)^{[S]} = \mathrm{Pol}_ n(\R^3)^S \setminus \cup_{S \subset J} \mathrm{Pol}_ n(\R^3)^J$$
The subspace $\mathrm{Pol}_ n(\R^3)^{[S]}$ is the collection of all points in $\mathrm{Pol}_ n(\R^3)$
such that exactly the diagonals in $S$ have zero length.  This also
induces a filtration on $M_{\br}$.

\begin{definition}
Define an action of $\mathrm{SO}(3,\R)^{D(S)}$ on $\mathrm{Pol}_ n(\R^3)^{[S]}$ by
letting $\mathrm{SO}(3, \R)^{D(C_i)}$ act diagonally on the edges in $E_i(\mathbf{e} )$.
The equivalence relation given by $\mathrm{SO}(3,\R)^{D(S)}$ orbit type on $\mathrm{Pol}_ n(\R^3)^{[S]}$
fit together to give an equivalence relation $\sim_{\mathcal{T}}$,
which we call $\tree$-congruence, on $\mathrm{Pol}_ n$.
\end{definition}
We may describe the above equivalence relation geometrically as follows.
A polygon $\mathbf{e} \in \mathrm{Pol}_ n(\R^3)^{[S]}$ is a wedge of $|S| +1$ closed polygons $\mathbf{e}_i$
wedged together at certain vertices of the polygon $\mathbf{e}$. Although each $\mathbf{e}_i$
may contain several wedge points, since the vertices of $\mathbf{e}$ are ordered there
will be a first wedge point $v_i$. Then we apply
a rotation $g_i$ about $v_i$ to each $\mathbf{e}_i$ for $1 \leq i \leq |S|+1$ and identify
points in the resulting orbit of $\mathrm{SO}(3,\R)^{D(S)}$.

$\tree$-congruence for $\mathrm{Pol}_ n(\R^3)$ induces an equivalence relation on the space of $n$-gon linkages  $\widetilde{M}_{\br} \subset \mathrm{Pol}_ n(\R^3)$,
which we also call $\tree$-congruence.

\begin{definition}
Define $$V_{\br}^{\tree} = \widetilde{M}_{\br} / \sim_{\tree}$$
\end{definition}
Kamiyama and Yoshida studied the space $V_{\br}^{\tree}$ for the special case
when $\tree$ was the {\it fan} tree.
Note that $V_{\br}^{\tree}$ inherits a filtration by the subspaces $(V_{\br}^{\tree})^{S}$.
Let $(V_{\br}^{\tree})^{[S]} = (V_{\br}^{\tree})^{S} \setminus \cup_{S \subset J} (V_{\br}^{\tree})^J$.
The spaces  $(V_{\br}^{\tree})^{[S]}$ define a decomposition of $V_{\br}^{\tree}$.
We can say more about the pieces of this decomposition.

\begin{theorem}
$(V_{\br}^{\tree})^{[S]}$ is canonically homeomorphic to $\prod_{C_i \subset \tree^S} M_{\br_i}^o$
where $\br_i$ is the subvector of linkage lengths corresponding to the elements in $dist(C_i)$,
and $M_{\br_i}^o$ is the dense open subset of $M_{\br_i}$ corresponding to polygons
with no zero diagonals.
\end{theorem}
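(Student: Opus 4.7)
\medskip

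\noindent\textbf{Proof plan.}
The key observation is that decomposing an $n$-gon in $\widetilde{M}_\br^{[S]}$ into its constituent subpolygons is already a homeomorphism \emph{before} passing to $\tree$-congruence; the $\sim_\tree$ quotient will then just become a product of $\mathrm{SO}(3,\R)$-quotients on the right-hand side. Accordingly, I will first lift the claimed homeomorphism to the level of configuration spaces, then descend.

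First I would define the decomposition map
\[
\Phi:\widetilde{M}_\br^{[S]} \longrightarrow \prod_{C_i \subset \tree^S} \widetilde{M}_{\br_i}^o
\]
by sending $\mathbf{e}$ to the tuple $(\mathbf{e}_i)$ of subpolygons obtained by grouping the edge set $E(\mathbf{e})$ according to the partition $E(\mathbf{e}) = E_1(\mathbf{e}) \cup \cdots \cup E_{|S|+1}(\mathbf{e})$ induced by the components of $\tree^S$. The proposition proved earlier in the section (closure of the $\mathbf{e}_i$ once the diagonals in $S$ vanish) shows that each $\mathbf{e}_i$ is a closed $n_i$-gon with side-length vector $\br_i$. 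The crucial extra input is that $\mathbf{e}\in \widetilde{M}_\br^{[S]}$ means \emph{exactly} the diagonals in $S$ vanish. Every diagonal of $\mathbf{e}_i$ corresponds to an internal edge of the component $C_i$ — hence to an internal edge of $\tree$ not in $S$ — so no diagonal of $\mathbf{e}_i$ vanishes, placing $\mathbf{e}_i$ in $\widetilde{M}_{\br_i}^o$.

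Next I would construct the inverse $\Psi$ explicitly by wedging: given $(\mathbf{e}_i)\in\prod_i\widetilde{M}_{\br_i}^o$, define $\mathbf{e}=\Psi((\mathbf{e}_i))$ by declaring its $j$-th edge to be the edge of $\mathbf{e}_{i(j)}$ labeled by $j$, where $i(j)$ is the unique component of $\tree^S$ containing leaf $j$. Closure of $\mathbf{e}$ follows from closure of each $\mathbf{e}_i$; each diagonal in $S$ vanishes because it is a telescoping sum over a union of closed subpolygons; and no other diagonal vanishes since each $\mathbf{e}_i$ was chosen in $\widetilde{M}_{\br_i}^o$. Both $\Phi$ and $\Psi$ are coordinate reshuffles, so they are continuous and mutually inverse; hence $\Phi$ is a homeomorphism.

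Finally I would descend to the $\sim_\tree$ quotient. By its definition, $\mathrm{SO}(3,\R)^{D(S)}$ factors as $\prod_i \mathrm{SO}(3,\R)^{D(C_i)}$ and the $i$-th factor acts diagonally on the edges in $E_i(\mathbf{e})$. This is exactly the diagonal $\mathrm{SO}(3,\R)$-action on the $i$-th factor of $\prod_i \widetilde{M}_{\br_i}^o$ under $\Phi$, so $\Phi$ is equivariant. Passing to quotients gives the asserted homeomorphism
\[
(V_\br^\tree)^{[S]} \;=\; \widetilde{M}_\br^{[S]}\big/\mathrm{SO}(3,\R)^{D(S)} \;\xrightarrow{\;\overline\Phi\;}\; \prod_{C_i\subset\tree^S} \widetilde{M}_{\br_i}^o\big/\mathrm{SO}(3,\R) \;=\; \prod_{C_i\subset\tree^S} M_{\br_i}^o.
\]
The main obstacle is the bookkeeping in the bulleted verification: one must match (i) the internal edges of $\tree$ in $S$ with the zero diagonals of $\mathbf{e}$, (ii) the internal edges of each $C_i$ with the (necessarily nonzero) diagonals of $\mathbf{e}_i$, and (iii) the distinguished leaves of $C_i$ with the side-lengths $\br_i$. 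Once the earlier combinatorial lemmas about $\tree^S$ are in hand, this becomes routine, and the only real content is the open condition characterizing $\widetilde{M}_\br^{[S]}$ matching the open condition defining the product $\prod_i M_{\br_i}^o$.
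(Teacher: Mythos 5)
Your proof is correct, and the underlying idea --- regrouping the edge vectors according to the components of $\tree^S$ --- is the same as the paper's; but the topological implementation differs enough to be worth noting. The paper first defines the decomposition map on the \emph{closed} stratum, obtaining a continuous bijection $(V_\br^\tree)^{S} \to V_{\br_1}\times\cdots\times V_{\br_k}$ into the product of Kamiyama--Yoshida spaces, concludes it is a homeomorphism by compactness (compact domain, Hausdorff target), and only then restricts to the open locus where exactly the diagonals in $S$ vanish, where each $V_{\br_i}$ coincides with $M_{\br_i}$. You instead work directly on the locally closed stratum $\widetilde{M}_\br^{[S]}$ at the configuration-space level, write down the explicit wedging inverse, and descend through the equivariance of $\Phi$ for $\mathrm{SO}(3,\R)^{D(S)}\cong\prod_i\mathrm{SO}(3,\R)$. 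Your route avoids any appeal to compactness (so it would survive in a non-compact variant of the setup) and makes the inverse completely explicit; the price is two small points you pass over quickly: (i) the identification of $(V_\br^\tree)^{[S]}$, carrying the subspace topology from $V_\br^\tree$, with the quotient $\widetilde{M}_\br^{[S]}/\mathrm{SO}(3,\R)^{D(S)}$ in its quotient topology --- this holds because the stratum is a saturated locally closed set (intersection of a saturated closed and a saturated open set), but it is exactly the kind of point the paper's compactness argument sidesteps; and (ii) the fact that the quotient of a product by a product group is the product of the quotients, which is fine since orbit maps are open. Neither is a gap, just bookkeeping you should make explicit.
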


\begin{proof}
Let us first describe a map $\tilde{F}: M_{\br}^{S} \to V_{\br_1} \times \ldots \times V_{\br_k}$.
The diagonals $S$ define a partition of edges sets $E_i(\mathbf{e})$ for each $\mathbf{e} \in \widetilde{M}_{\br}^{S}$.  By the above proposition, $E_i(\mathbf{e})$ corresponds to a closed polygon.
So we may send a member of the equivalence class of $\mathbf{e}$ to the product of the equivalence classes
defining these closed polygons in $V_{\br_1} \times \ldots \times V_{\br_k}$, with the appropriate
$\br_i$.  This map is certainly onto, and by the definition of $V_{\br_i}$, it factors through
the relation $\sim{\tree}$. Hence, we get a 1-1 and onto continuous function
 $F: (V_{\br}^{\tree})^{S} \to V_{\br_1} \times \ldots \times V_{\br_k}$, which is a homeomorphism
by the compactness of $V_{\br_1} \times \ldots \times V_{\br_k}$.

We may restrict this map to the spaces $(V_{\br}^{\tree})^{[S]}$ which define the induced
decomposition of $V_{\br}^{\tree}$.  Recall that these are the polygons with exactly the $S$
diagonals zero.  We therefore obtain that $(V_{\br}^{\tree})^{[S]}$ is homeomorphic to $M_{\br_1}^o \times \ldots \times M_{\br_k}^o$,
\end{proof}

\begin{remark}
Since the fibers of $\pi : M_\br \to V^\tree_\br$ are sometimes
odd-dimensional, the quotient map $\pi$ cannot be algebraic
even when $\br$ is integral.
\end{remark}

\subsection{$\tree$-congruence of imploded spin-framed polygons}

In this section we introduce a generalization of the $\tree$-congruence
relation by lifting $\tree$-congruence from $\mathrm{Pol}_ n(\R^3)$ to $P_n(\mathrm{SU}(2))$.
In section 3 we constructed a map $F_n: P_n(\mathrm{SU}(2)) \rightarrow \mathrm{Pol}_ n(\R^3)/\mathrm{SO}(3,\R)$. 
Pulling back the decomposition of $\mathrm{Pol}_ n(\R^3)/\mathrm{SO}(3,\R)$ into $\tree$-
congruence classes
 by $F_n$ produces a decomposition of $P_n(\mathrm{SU}(2))$ by the spaces
we will denote $P_n(\mathrm{SU}(2))^{[S]} =F_n^{-1}(\mathrm{Pol}_ n(\R^3)^{[S]})$.

\begin{lemma}\label{P_nStrat}
Elements of $P_n(\mathrm{SU}(2))^{[S]}$ are the spin-framed polygons
in $P_n(\mathrm{SU}(2))$ such that the following equation holds,

$$\sum \lambda_iAd^*_{g_i}(\varpi_1) = 0$$
where the sum is over all $[g_i, \lambda_i\varpi]\in \mathbf{E}_j(\mathbf{e})$, the
edges in the $j$-th partition defined by $S$, for each $j$.
\end{lemma}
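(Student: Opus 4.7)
The plan is to unwind the definition $P_n(\mathrm{SU}(2))^{[S]} = F_n^{-1}(\mathrm{Pol}_n(\R^3)^{[S]})$ and translate the geometric condition on diagonals of the underlying $n$-gon into the stated algebraic condition on the imploded spin-frame data. The key computational ingredient is already available: by Remark \ref{SpinF}, the map $F\circ \phi$ sends $[g_i,\lambda_i\varpi_1]$ to $\lambda_i f(Ad^{\ast}_{g_i}(\varpi_1))$, and $f:\mathfrak{su}(2)^{\ast}\to \R^3$ is a linear isomorphism. Hence under the identification of $\mathfrak{su}(2)^{\ast}$ with $\R^3$, the $i$-th edge vector of the Euclidean $n$-gon $F_n(\mathbf{e})$ is precisely $\lambda_i Ad^{\ast}_{g_i}(\varpi_1)$.

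Next I would recall the combinatorial fact that a diagonal $d$ of the model polygon $P$ separates the edges of $P$ (and hence the edges of the underlying polygon) into two consecutive arcs $A,A'$; the corresponding diagonal vector in $\R^3$ is (up to sign) the partial sum $\sum_{i\in A} e_i = -\sum_{i\in A'} e_i$. Thus vanishing of $d$ is equivalent to vanishing of these partial sums. Splitting $\tree$ along the set $S$ of diagonals and applying this observation repeatedly, the collection of diagonals in $S$ are all zero if and only if for each component $C_j$ of $\tree^S$, the sum $\sum_{i\in E_j(\mathbf{e})} e_i$ of distinguished edge vectors vanishes. Substituting $e_i=\lambda_i f(Ad^{\ast}_{g_i}(\varpi_1))$ and using injectivity of $f$, this is exactly the equation
\[
\sum_{[g_i,\lambda_i\varpi_1]\in E_j(\mathbf{e})} \lambda_i Ad^{\ast}_{g_i}(\varpi_1) = 0
\]
asserted in the lemma.

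Finally, to land in the \emph{open} stratum $P_n(\mathrm{SU}(2))^{[S]}$ rather than the closed stratum $F_n^{-1}(\mathrm{Pol}_n(\R^3)^S)$, I need the additional requirement that every diagonal not in $S$ is nonzero; equivalently, no strictly larger partition of the edges into closed subpolygons occurs. This is automatic from $F_n(\mathbf{e})\in \mathrm{Pol}_n(\R^3)^{[S]}$ and the translation above. Note that the condition is well-posed on $P_n(\mathrm{SU}(2))$ because the left $\mathrm{SU}(2)$-action rotates all the $Ad^{\ast}_{g_i}(\varpi_1)$ simultaneously, so the vanishing of each partial sum is independent of the choice of representative. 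The only mild point to check carefully is this combinatorial matching between the components of $\tree^S$ and the subpolygons of a degenerate $n$-gon with exactly the diagonals in $S$ collapsed, but this is just the content of the construction of the edge-sets $E_j(\mathbf{e})$ from the previous subsection. No symplectic or analytic subtlety is involved, so there is no real obstacle.
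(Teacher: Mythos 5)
Your proof is correct and follows essentially the same route as the paper, whose entire proof is the one-line citation of Theorem \ref{framedngons} and Remark \ref{SpinF}; you have simply spelled out the translation $e_i = \lambda_i\, f(Ad^{\ast}_{g_i}(\varpi_1))$ and the matching of components of $\tree^S$ with closed subpolygons. Your extra remark that one must also require the diagonals outside $S$ to be nonzero to land in the open piece $P_n(\mathrm{SU}(2))^{[S]}$ is a reasonable clarification of a point the paper's statement leaves implicit.
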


\begin{proof}
This follows from Theorem \ref{framedngons} and Remark \ref{SpinF}.
\end{proof}
By the previous lemma we have a decomposition of $\mathbf{e} \in P_n(\mathrm{SU}(2))^{[S]}$ into imploded spin-framed
polygons

$$\mathbf{E} = \mathbf{E}_1(\mathbf{e} ) \cup \ldots \cup \mathbf{E}_{|S|+1}(\mathbf{e} ).$$
We define an action of $\mathrm{SU}(2)^{D(S)}$ on $P_n(\mathrm{SU}(2))^{[S]}$ by
letting $\mathrm{SU}(2)^{D(C)}$ act diagonally on the framed edges associated
to the component $C$ of $\tree^S$.

\begin{definition}
The   $\mathrm{SU}(2)^{D(S)}$-orbits  on $P_n(\mathrm{SU}(2))^{[S]}$
fit together to give an equivalence relation $\sim_{\mathcal{T}}$,
which we again call $\tree$-congruence, on $P_n(\mathrm{SU}(2))$.
\end{definition}

\begin{definition}
$$V_n^{\tree} = P_n(\mathrm{SU}(2))/ \sim_{\mathcal{T}}$$
\end{definition}
Note that this defines a decomposition on $V_n^{\tree}$ into
subspaces $(V_n^{\tree})^{[S]} = P_n(\mathrm{SU}(2))^{[S]}/\mathrm{SU}(2)^{D(S)}$.
We have seen that  $P_n(\mathrm{SU}(2))$ carries an action of $T_{\mathrm{SU}(2)^n}$ given
by the following formula. Let $\mathbf{t} = (t_1, \cdots, t_n) \in T_{\mathrm{SU}(2)^n}$,

$$ \mathbf{t} \circ ([g_1, \lambda_1\varpi_1, \ldots, [g_n, \lambda_n \varpi_1)   =
([g_1t_1, \lambda_1\varpi_1], \ldots, [g_nt_n, \lambda_n\varpi_1).$$
In particular, note that the action of the diagonal $(-1)$ element in $T_{\mathrm{SU}(2)^n}$ is trivial, because
this corresponds to acting on the left by the nontrivial central $\mathrm{SU}(2)$ element.
Since $t_i$ fixes $\varpi_1$ for $1 \leq i \leq n$  this action does not change the image of a point under
$F_n$, hence it fixes each piece of the decomposition.  Also this action commutes
with the $\mathrm{SU}(2)^{D(S)}$-action on the piece $P_n(\mathrm{SU}(2))^{[S]}$, so
the action of $T_{\mathrm{SU}(2)^n}$ must descend to $V_n^{\tree}$.

\section{The toric varieties $P^{\tree}_{n}(\mathrm{SU}(2))$ and $Q^{\tree}_{n}(\mathrm{SU}(2))$}

In this section we will construct the affine toric variety $P^{\tree}_{n}(\mathrm{SU}(2))$ of imploded triangulated $\mathrm{SU}(2)$--framed $n$--gons (without imposing the condition that the side-lengths are $\br$) and its projective quotient
$Q^{\tree}_{n}(\mathrm{SU}(2))$.

We have tried to follow the notation of \cite{HurtubiseJeffrey} when possible. In \cite{HurtubiseJeffrey} the superscript $D$ on $P^D$ refers to a ``pair of pants'' decomposition $D$ of the surface. For us superscript $\tree$ on $P_n^{\tree}(\mathrm{SU}(2))$ refers to the triangulation of the $n$-gon. The connection is the following. Under the correspondence between moduli spaces of $n$-gons and character varieties briefly explained in Remark \ref{moduliofconnections}
(and explained in detail in \S 5 of \cite{KapovichMillson}) the standard triangulation corresponds to the following ``pair of pants'' decomposition of the $n$ times punctured two-sphere. Represent the sphere as the complex plane with a point at infinity. Take the punctures to be the points $1,2,\cdots,n$ on the real line. Draw small circles around the punctures. Now draw n-3 more circles with centers on the $x$-axis, so that the first circle contains the small circles around $1$ and $2$, the next circle contains the circle just drawn and the small circle around 3 and the last circle contains all the previous circles except the small circles around $n-1$ and $n$. The graph dual to the pair of pants decomposition is the tree $\tree$ that is dual to the triangulation.
Furthermore the decomposed tree $\tree^D$ is dual to the pair-of-pants
decomposition of the $n$-punctured sphere obtained by cutting the sphere
apart along the above $2n-3$ circles - we might say that $\tree^D$ is the
pair-of-pants decomposition of $\tree$.
Using this correspondence the reader should be able to relate what follows with \cite{HurtubiseJeffrey} for the case of the $n$-fold punctured sphere.

It will be important in what follows that we have earlier defined the quotient map
$$\pi^{D}: \tree^D \to \tree$$
that glues together the pairs of vertices that are the boundaries of
the open intervals removed from $\tree$.

\subsection{The space $E^{\tree}_n(\mathrm{SU}(2))$ of imploded framed edges}

We define $E^{\tree}_n(\mathrm{SU}(2))$ to be the product of $\mathcal{E}T^*(\mathrm{SU}(2))$
over the edges of $\tree^D$, hence an element $\mathbf{T} \in
E^{\tree}_n(\mathrm{SU}(2))$  is a map
from the $3(n-2)$ edges of $\tree^D$ into $\mathcal{E}T^*(\mathrm{SU}(2))$ or equivalently
an assignment of an element of $\mathcal{E}T^*(\mathrm{SU}(2))$ to each
each of the $3(n-2)$ edges of the triangles $\tau_i,1 \leq n-2$ in the triangulation
of $P$.  It will be important later to note that there is a forgetful
map that restricts $\mathbf{T}$ to the distinguished edges of $\tree^D$ to obtain
an element $\mathbf{E}$ of $E_n(\mathrm{SU}(2))$.

It will be convenient to represent
the resulting product $\mathcal{E}T^*\mathrm{SU}(2)^{3n-6} \cong (\C^2)^{3n-6}$ by a
$2$ by $3n-6$ matrix. To do this we will linearly order the $3n-6$ edges by
first ordering the $n-2$ triangles (tripods) and then ordering the $3$ edges for
each triangle (tripod). Now use the lexicographic ordering on pairs $(triangle,edge)$
($(tripod,edge)$). The point is that in the matrix $A^{\tree}$ below the $(\C^2)$'s belonging
to the same triangle (tripod) appear consecutively. Let $(\tau_i, j)$ denote the
$j$-th edge of the $i$-th tripod.

We will use the following notation for the entries in the matrix $A^{\tree}$ to indicate
that each successive group of $3$ columns of $A^{\tree}$ belongs to a triangle in the
triangulation. 
$$A^{\tree} =
\begin{pmatrix}
z_1(\tau_1) & z_2(\tau_1) & z_3(\tau_1) & z_1(\tau_2) & z_2(\tau_2) & z_3(\tau_2) & \cdots & z_1(\tau_{n-2}) & z_2(\tau_{n-2}) & z_3(\tau_{n-2}) \\
w_1(\tau_1) & w_2(\tau_1) & w_3(\tau_1) & w_1(\tau_2) & w_2(\tau_2) & w_3(\tau_2) & \cdots & w_1(\tau_{n-2}) & w_2(\tau_{n-2}) & w_3(\tau_{n-2})
\end{pmatrix}$$
In what follows we will abbreviate $\mathcal{E}T^*(\mathrm{SU}(2))^{3n-6}$
to $E^{\tree}_n(\mathrm{SU}(2))$.

\subsection{The space $X^{\tree}_n(\mathrm{SU}(2))$ of imploded framed triangles}

The action of the group $\mathrm{SU}(2)^{3n-6}$ on $\mathcal{E}T^*(\mathrm{SU}(2))^{3n-6}$ is then
represented by acting on the columns of $A^{\tree}$. We will represent elements $\bg$ of
$\mathrm{SU}(2)^{3n-6}$ as $3n-6$-tuples

$$\mathbf{g} = (f_1(\tau_1),f_2(\tau_1),f_3(\tau_1)|\cdots|f_1(\tau_{n-2}),,f_2(\tau_{n-2}),
f_3(\tau_{n-2})).$$
We let $\mathrm{SU}(2)^{n-2}$ denote the ``diagonal'' subgroup of $\mathrm{SU}(2)^{3n-6}$ defined
by the condition that for each triangle $T_i$ (tripod $\tau_i$) we have

$$f_1(\tau_i) = f_2(\tau_i) = f_3(\tau_i).$$
We will regard $\mathrm{SU}(2)^{n-2}$ as the space of mappings $\mathbf{f}$ from the tripods in $\tree^D$ to $\mathrm{SU}(2)$.

\begin{definition}
We then define the space $X^{\tree}_n(\mathrm{SU}(2))$ as the symplectic quotient
$$ X^{\tree}_n(\mathrm{SU}(2)) = \mathrm{SU}(2)^{n-2}\ql (\mathcal{E}T^*\mathrm{SU}(2))^{3n-6}.$$
\end{definition}

Thus $X^{\tree}_n(\mathrm{SU}(2))$ is obtained by taking the symplectic quotient of each
triple of edges belonging to one of the triangles $T_i, 1 \leq i \leq n-2$, by
the group $\mathrm{SU}(2)^{n-2}$. The resulting space $X_n(\mathrm{SU}(2))$ is clearly the product of $n-2$
copies of $(P_3(\mathrm{SU}(2)))^{n-2} \cong (\bigwedge^{2}(\C^3))^{n-2}$.\\

We will often denote an element of $X^{\tree}_n(\mathrm{SU}(2))$ by $\mathbf{T}=([T_1],[T_2],\cdots,[T_{n-2}])$
where $[T_i]$, the $i$-th triangle denotes the equivalence class of the matrix

$A_{\tau_i} =
\begin{pmatrix}
z_1(\tau_i) & z_2(\tau_i) & z_3(\tau_i)\\
w_1(\tau_i) & w_2(\tau_i) & w_3(\tau_i)
\end{pmatrix}$
such that the momentum image of $A_i$ under the momentum map for $\mathrm{SU}(2)$ is zero
(equivalently the rows are orthogonal with the same length).

\subsection{The affine toric variety $P_n^{\tree}(\mathrm{SU}(2))$}\label{mastertoricspace}

In equation (\ref{P3}) we have seen that
$$P_3(\mathrm{SU}(2)) \cong \mathrm{\bigwedge} \space^2(\C^3).$$
Since the space of imploded triangles is the product of
$n-2$ copies of $P_3(\mathrm{SU}(2))$ we see that
$X^{\tree}_n(\mathrm{SU}(2))$ is the affine space obtained by taking the product of $n-2$ copies
of $\bigwedge^2(\mathbb{C}^3)$. It has an action of a $3n-6$ torus $\mathbb{T}$ induced
by the right actions of the torus $\mathbb{T} = T _{\mathrm{SU}(2)}^{3n-6}$ on the $3n-6$ copies
of $\mathcal{E}T^*(\mathrm{SU}(2))$. In terms of our matrices $A^{\tree}$ this 
amounts to scaling the
columns of $A^{\tree}$ (right multiplication of $A^{\tree}$ by $\mathbb{T}$).
However note that the  entry of an element of $\mathbb{T}$ corresponding to
the edge $e$ of $\tree^D$
scales the column of $A^{\tree}$ corresponding to $e$ by its {\it inverse}.
We will use the notation $\underline{\mathbb{T}}$ to indicate the complexification of 
$\mathbb{T}$.  Similarly for any compact group $G$ that appears in this paper, 
the notation $\underline{G}$ indicates the complexification of $G$.

Let $\mathbb{T}_e$, $\T_d$, $\T_d^-$ be as in the introduction.
Now we glue the diagonals of $P$ back together by taking the
symplectic quotient by $\mathbb{T}_d^-$ at level $0$.

\begin{definition}
$$P_n^{\tree}(\mathrm{SU}(2)) = X_n^{\tree}(\mathrm{SU}(2)//\mathbb{T}_d^- = (\mathrm{\bigwedge} \space^2(\C^3))^{n-2}//\mathbb{T}_d^-.$$
\end{definition}

\subsection{The space $P_n^{\tree}(\mathrm{SO}(3,\R))$}\label{orthogonalanalogues}
The spaces $E_n^{\tree}(\mathrm{SU}(2)), X_n^{\tree}(\mathrm{SU}(2))$ and $P_n^{\tree}(\mathrm{SU}(2))$ all have analogues when $\mathrm{SU}(2))$ is replaced by
$\mathrm{SO}(3,\R)$ which are quotients
by a finite group of the corresponding spaces for $\mathrm{SU}(2)$. There
also analogues of the tori $\mathbb{T},\mathbb{T}_e$ and $\mathbb{T}_d^-$
for $\mathrm{SO}(3,\R)$ which we will denote $\mathbb{T}(\mathrm{SO}(3,\R)),\mathbb{T}_e(\mathrm{SO}(3,\R))$ and $\mathbb{T}_d^-(\mathrm{SO}(3,\R))$ respectively which are quotients of the
corresponding tori for $\mathrm{SU}(2)$.  We leave the details to the reader.

\subsection{$P_n^{\tree}(\mathrm{SU}(2))$ as a GIT quotient}\label{firsttorusquotient}
Since affine GIT quotients coincide with symplectic quotients, see  \cite{KempfNess} and  \cite{Schwarz},
Theorem 4.2,  we may also obtain $P_n^{\tree}(\mathrm{SU}(2))$ as the GIT quotient of $(\bigwedge^2(\C^3))^{n-2}$
by the complex torus $\underline{\T}^d$. For each triangle $T_k$ (tripod $\tau_k$) we have a corresponding
$\bigwedge^2(\C^3)$ with Pl\"ucker coordinates $Z_{ij}(\tau_k), 1 \leq i,j \leq 3$. The coordinate
ring of the affine variety $P_n^{\tree}(\mathrm{SU}(2))$ will be the ring of invariants
$\C[(Z_{ij}(\tau_k))]^{\underline{\T}^-_d}$. This ring of invariants will be spanned by the ring of
invariant {\em monomials} which we now determine. There is a technical problem
here. We need to know that we have chosen the correctly normalized
momentum map for the action $\mathbb{T}^-_d$. But by Theorem \ref{twistandshift}
the correct normalization is the one that is homogeneous linear in the
squares of the norms of the coordinates which is the one we have used here.

\subsection{The semigroup $\mathcal{P}^{\tree}_n$}

As a geometric quotient of affine space by a torus the space
$P_n^{\tree}(\mathrm{SU}(2))$ is an affine toric variety. We now analyze the
affine coordinate ring of $P_n^{\tree}(\mathrm{SU}(2))$.   In what follows we label the leaf of the tripod
$\tau_i$ incident to the edge $(\tau_i,k)$ by $k$ - we will do this only when the tripod of which
$k$ is a vertex is clearly indicated. We leave the proof of the following lemma to the reader.

\begin{lemma}
The monomial $f(Z)=
\prod_{i=1}^{n-2}Z_{12}(\tau_i)^{x_{12}(\tau_i)}Z_{13}(\tau_i)^{x_{13}(\tau_i)}
Z_{23}(\tau_i)^{x_{23}(\tau_i)}$ is $\underline{\T}_d^-$--invariant if and only if the
exponents $\mathbf{x} = (x_{jk}(\tau_i))$ satisfy the system of equations

$$x_{k,m}(\tau_i) + x_{k,l}(\tau_i) = x_{l,k}(\tau_j) + x_{l,m}(\tau_j), \quad \text{for } \ (\tau_i, k) \ \text{identified to } (\tau_j, l) \ \text{in $\tree$}.$$
\end{lemma}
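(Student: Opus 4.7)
The plan is a direct character computation: identify the $\mathbb{T}$-weight of each Pl\"ucker coordinate $Z_{jk}(\tau_i)$, restrict to the antidiagonal subtorus $\mathbb{T}_d^-$, and read off when the weight of the monomial $f(Z)$ vanishes on each generating circle.

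First I would record the weights. Let $\chi_{(\tau_i,k)}$ denote the standard character of the edge-$(\tau_i,k)$ factor of $\mathbb{T}$. By the convention in \S\ref{firsttorusquotient}, $\chi_{(\tau_i,k)}$ acts on the $(\tau_i,k)$-column of $A^{\tree}$ by its inverse, so $Z_{kl}(\tau_i)=\det(\text{columns } k, l \text{ of } A_{\tau_i})$ transforms by the character $-\chi_{(\tau_i,k)}-\chi_{(\tau_i,l)}$. Summing contributions from the monomial $f(Z)$, its weight on $T_{(\tau_i,k)}$ is $-\bigl(x_{k,l}(\tau_i)+x_{k,m}(\tau_i)\bigr)$, where $\{k,l,m\}=\{1,2,3\}$.

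Next I would describe $\mathbb{T}_d^-\hookrightarrow\mathbb{T}$. For each internal edge of $\tree$, namely each pair $(\tau_i,k)\leftrightarrow(\tau_j,l)$ glued by $\pi^D$, the antidiagonal circle $S^1_d$ is the image of $s\mapsto(s,s^{-1})$ in $T_{(\tau_i,k)}\times T_{(\tau_j,l)}$ and is trivial on every other factor; these circles generate $\mathbb{T}_d^-$. Pulling back the two $\mathbb{T}$-weights above to $S^1_d$, and denoting by $l,m$ (resp.\ $k,m$) the two remaining labels of $\tau_i$ (resp.\ $\tau_j$) in agreement with the notation of the lemma, the $S^1_d$-weight of $f(Z)$ equals
\[
-\bigl(x_{k,l}(\tau_i)+x_{k,m}(\tau_i)\bigr)+\bigl(x_{l,k}(\tau_j)+x_{l,m}(\tau_j)\bigr),
\]
and its vanishing is exactly the equation stated in the lemma. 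Since $f(Z)$ is a $\underline{\mathbb{T}}$-eigenvector, invariance under all of $\mathbb{T}_d^-$ is equivalent to $S^1_d$-invariance for every diagonal $d$, completing the equivalence.

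There is no substantial obstacle; the only subtlety is the sign bookkeeping, where the column-inversion in the $\mathbb{T}$-action and the antidiagonal embedding conspire to produce a plain equality (rather than a signed relation) of the exponent sums across each glued internal edge of $\tree$.
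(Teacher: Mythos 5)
Your proof is correct, and since the paper leaves this lemma to the reader, your direct weight computation is exactly the intended argument: the column-inversion convention gives $Z_{kl}(\tau_i)$ weight $-\chi_{(\tau_i,k)}-\chi_{(\tau_i,l)}$, and restricting the monomial's weight to each antidiagonal circle yields precisely the stated equality of exponent sums. The sign bookkeeping you flag is indeed the only delicate point, and you handle it correctly.
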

Before stating a corollary of the lemma we need a definition.

\begin{definition}
Let $\mathcal{P}^{\tree}_n$ be the subset of $\mathbf{x} \in (\N^3)^{(n-2)}$ satisfying the equations
in the above lemma.  Clearly $\mathcal{P}^{\tree}_n$ is a semigroup under addition.
Let $\C[\mathcal{P}_n^{\tree}]$ denote the associated semigroup algebra.
\end{definition}

\begin{corollary}
The affine coordinate ring of
$P_n^{\tree}(\mathrm{SU}(2))$ is isomorphic to the semigroup ring $\C[\mathcal{P}_n^{\tree}]$.
\end{corollary}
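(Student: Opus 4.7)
The plan is to unpack the GIT description of $P_n^{\tree}(\mathrm{SU}(2))$ given in \S\ref{firsttorusquotient} and observe that the preceding lemma is essentially already the statement at the level of monomials; the corollary then follows by taking $\C$-linear combinations.

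First I would recall that, by the discussion in \S\ref{firsttorusquotient}, $P_n^{\tree}(\mathrm{SU}(2))$ is the affine GIT quotient of $(\bigwedge^2(\C^3))^{n-2}$ by the complex torus $\underline{\T}_d^-$, so its affine coordinate ring is the invariant ring
$$\C\bigl[(\mathrm{\bigwedge}\space^2(\C^3))^{n-2}\bigr]^{\underline{\T}_d^-}.$$
Next I would note that each factor $\bigwedge^2(\C^3)$ is three-dimensional (the Pl\"ucker relations are vacuous in this rank), so the affine coordinate ring of $(\bigwedge^2(\C^3))^{n-2}$ is the polynomial ring in the $3(n-2)$ variables $Z_{jk}(\tau_i)$, $1\le i\le n-2$, $1\le j<k\le 3$.

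Then I would use that $\underline{\T}_d^-$ acts diagonally on these coordinates: each Pl\"ucker coordinate $Z_{jk}(\tau_i)$ is an eigenvector, since it depends only on the pair of columns of $A_{\tau_i}$ indexed by $j,k$, and the torus acts by scaling columns. Consequently each monomial in the $Z_{jk}(\tau_i)$ is a $\underline{\T}_d^-$-eigenvector, and the invariant ring is spanned as a $\C$-vector space by those monomials whose weight is trivial. The preceding lemma states precisely that the weight-zero condition on the monomial $\prod_i Z_{12}(\tau_i)^{x_{12}(\tau_i)}Z_{13}(\tau_i)^{x_{13}(\tau_i)}Z_{23}(\tau_i)^{x_{23}(\tau_i)}$ is equivalent to the exponent vector $\mathbf{x}=(x_{jk}(\tau_i))$ lying in $\mathcal{P}^{\tree}_n$.

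Finally I would observe that multiplication of monomials corresponds to addition of exponent vectors, and that $\mathcal{P}^{\tree}_n$ is closed under addition (it is the intersection of $(\N^3)^{n-2}$ with the kernel of a linear map, so a subsemigroup). Hence the map sending the basis element $[\mathbf{x}]\in\C[\mathcal{P}^{\tree}_n]$ to the corresponding monomial is a $\C$-algebra isomorphism onto the invariant ring, proving the corollary. There is no real obstacle here — the content lies entirely in the lemma; the corollary is the translation from ``invariant monomials'' to ``semigroup algebra'' together with the remark that $(\bigwedge^2(\C^3))^{n-2}$ is honest affine space so no Pl\"ucker relations intervene.
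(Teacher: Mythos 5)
Your proposal is correct and follows exactly the route the paper intends: the corollary is stated without separate proof precisely because, as you say, the content is in the GIT description of $P_n^{\tree}(\mathrm{SU}(2))$ from \S\ref{firsttorusquotient} together with the preceding lemma, the coordinate ring of $(\bigwedge^2(\C^3))^{n-2}$ being an honest polynomial ring in the $Z_{jk}(\tau_i)$ on which $\underline{\T}_d^-$ acts diagonally. Your additional remarks (monomials are eigenvectors, invariants are spanned by invariant monomials, multiplication corresponds to addition in $\mathcal{P}_n^{\tree}$) correctly fill in the routine details the paper leaves implicit.
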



Now we will relate the semigroup $\mathcal{P}^{\tree}_n$ and the monomials of the lemma to graphs on vertices of the decomposed tree $\tree^D$. Monomials in the Pl\"ucker coordinates $Z_{ij}(\tau)$ for $\tau$ fixed  correspond to graphs on the vertices $i,j,k$  of the tripod $\tau$ as follows. We associate to  the exponent $x_{ij}(\tau)$ of $Z_{ij}(\tau)$    the graph consisting of $x_{ij}(\tau)$ arcs joining the vertex  $i$ of $\tau$ to the vertex $j$. Each triple $\{x_{ij}(\tau)\}$ for $\tau$ fixed determines a graph on the leaves of the tripod $\tau$.
Thus each element $\mathbf{x} \in \mathcal{P}_n^{\tree}$ corresponds to a collection of $n-2$ graphs,
each on 3 vertices, one for each tripod in  $\tree^D$. Also each such element
$\mathbf{x}$  corresponds to a product of monomials attached to tripods. This leads to a bijective correspondence
between monomials and graphs.

\begin{figure}[htbp]
\centering
\includegraphics[scale = 0.4]{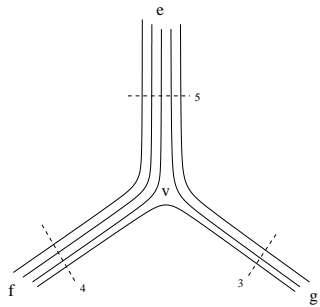}
\caption{A single tripod $\tau$ with vertex $v$, with $x_{ef}(\tau) = 3$, $x_{eg}(\tau)=2$, $x_{fg}(\tau)=1$.  Hence $x_{ef}(\tau)+x_{eg}(\tau)=5$, $x_{ef}(\tau)+x_{fg}(\tau)=4$, $x_{eg}(\tau)+x_{fg}(\tau)=3$. }
\label{fig:tripod}
\end{figure}
The condition

$$x_{km}(\tau_i) + x_{k\ell}(\tau_i) = x_{\ell,k}(\tau_j) + x_{\ell,m}(\tau_j)$$
specifies that the number of arcs associated to the identified  edges $(\tau_i, k)$ of
the tripod $\tau_i$, and $(\tau_j,\ell)$ of the tripod $\tau_j$ must agree.  Later, this will allow
us to glue these ``local'' graphs to obtain a ``global'' graph
on $n$ vertices.  In the next section we will associate a weighting of $\tree$ to the $\{x_{ij}(\tau_k)\}$.

\subsection{The projective toric variety $Q_n^{\tree}(\mathrm{SU}(2))$}

We define $Q_n^{\tree}(\mathrm{SU}(2))$, a projectivization of $P_n^{\tree}(\mathrm{SU}(2))$,
which will be isomorphic to $\mathrm{Gr}_2(\C^n)^\tree_0$. The coordinate ring of the affine
toric variety $P_n^{\tree}(\mathrm{SU}(2))$ is the semigroup algebra $\C[\mathcal{P}_n^{\tree}]$.
Hence to define the  $\C^*$ action required
for projectivization, it suffices to define a grading on $\mathcal{P}^{\tree}_n$.
First, for $\mathbf{x} \in P^{\tree}$ we define the associated weight of the
edge $(\tau,i)$  of the tripod $\tau$ in
$\tree^D$.
let $w_{(\tau,i)}(\mathbf{x} ) = x_{ij}(\tau) + x_{ik}(\tau)$.
We define the degree of an element $\mathbf{x} \in \mathcal{P}_n^{\tree}$  by

$$degree(\mathbf{x}) = \frac{1}{2} \sum_{(\tau,i) \in dist(\tree^D)} w_{(\tau, i)}(\mathbf{x})$$
where the sum is over all $(\tau, i)$ which represent leaf edges of $\tree$.

\begin{remark}
We will see below that the under the isomorphism from the semigroup $\mathcal{P}_n^{\tree}$
to the semigroups of admissible weightings $\mathcal{W}_n^{\tree}$  the number $w_{(\tau,i)}(\mathbf{x})$ will be the weight assigned to the edge $(\tau,i)$
of the tree $\tree$. Thus the degree of $\mathbf{x}$ is then half the sum of the weights of the leaf edges.
\end{remark}

\begin{definition}
Give $\mathcal{P}^{\tree}_n$ the grading defined above, then we define
$$Q_n^{\tree}(\mathrm{SU}(2)) = \mathrm{Projm}(\C[\mathcal{P}_n^{\tree}])$$
\end{definition}
Note that as defined $Q_n^{\tree}(\mathrm{SU}(2))$ is a GIT $\C^*$-quotient of $P_n^{\tree}(\mathrm{SU}(2))$.

\section{The toric varieties $Q^{\tree}_n(\mathrm{SU}(2))$ and $\mathrm{Gr}_2(\C^n)^{\tree}_0$ are isomorphic}

In this section we will prove the first statement of Theorem \ref{firstauxiliarytheorem} namely that the toric variety $Q_n^{\tree}(\mathrm{SU}(2))$ constructed in
the previous section is the isomorphic to $\mathrm{Gr}_2(\C^n)^{\tree}_0$ by proving
Proposition \label{threeisomorphicsemigroups} below.

Recall that in Section 4 it was shown that the two semigroups $\mathcal{S}^{\tree}_n$ and $\mathcal{W}^{\tree}_n$ are isomorphic.
$\mathcal{S}^{\tree}_n$ is the semigroup of Kempe Graphs on $n = |edge(\tree)|$ vertices, with the product $\ast_{\tree}$.  $\mathcal{W}^{\tree}_n$ is the semigroup of admissible weightings on $\tree$ under addition.  Recall that an admissible weighting is an integer weight satisfying the triangle inequalities about each internal vertex of $\tree$, along with the condition that the sum of the weights about each internal node be even. It was also shown in section 4 that the semigroup algebra of $\mathcal{S}_n^{\tree} \cong \mathcal{W}_n^{\tree}$ is isomorphic to the coordinate ring of $\mathrm{Gr}_2(\C^n)_0^{\tree}$.

\begin{proposition}\label{twoisomorphicsemigroups}
The graded semigroups $\mathcal{W}^{\tree}_n$ and $\mathcal{P}^{\tree}_n$
are isomorphic.
\end{proposition}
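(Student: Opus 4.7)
The plan is to write down the obvious inverse maps directly from the formulas that already appear in the hint-remark and in the proof of Proposition \ref{admissibleweighting}. Define $\Phi : \mathcal{P}_n^{\tree} \to \mathcal{W}_n^{\tree}$ on the tripod $\tau$ with edges labeled $1,2,3$ by
$$\Phi(\mathbf{x})\bigl((\tau,i)\bigr) \;=\; x_{ij}(\tau) + x_{ik}(\tau), \qquad \{i,j,k\}=\{1,2,3\}.$$
The defining equations of $\mathcal{P}_n^{\tree}$, namely $x_{km}(\tau_i)+x_{k\ell}(\tau_i) = x_{\ell k}(\tau_j)+x_{\ell m}(\tau_j)$ whenever $(\tau_i,k)$ and $(\tau_j,\ell)$ are identified in $\tree$, say exactly that the numbers $\Phi(\mathbf{x})((\tau,i))$ and $\Phi(\mathbf{x})((\tau',i'))$ agree across every glued edge, so $\Phi(\mathbf{x})$ descends to a well-defined integer weighting of the edges of $\tree$.

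Next I will verify that $\Phi(\mathbf{x})$ lies in $\mathcal{W}_n^{\tree}$ by checking the two conditions of Proposition \ref{admissibleweighting} locally at each tripod. The sum of the three weights around $\tau$ is
$$w_1 + w_2 + w_3 \;=\; 2\bigl(x_{12}(\tau)+x_{13}(\tau)+x_{23}(\tau)\bigr),$$
which is even, and each triangle inequality, say $w_1 \leq w_2 + w_3$, reduces to $0 \le 2x_{23}(\tau)$, which holds by nonnegativity of the $x_{ij}(\tau)$. Since both $\mathcal{P}_n^{\tree}$ and $\mathcal{W}_n^{\tree}$ carry their semigroup structure by componentwise addition, $\Phi$ is a semigroup homomorphism.

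For the inverse, define $\Psi : \mathcal{W}_n^{\tree} \to \mathcal{P}_n^{\tree}$ by the local formulas
$$x_{ij}(\tau) \;=\; \tfrac{1}{2}\bigl(w((\tau,i)) + w((\tau,j)) - w((\tau,k))\bigr),$$
exactly as in the proof of Proposition \ref{admissibleweighting}. Admissibility guarantees that each $x_{ij}(\tau)$ is a nonnegative integer: the parity condition around the tripod makes it integral, and the triangle inequalities make it nonnegative. The gluing relation for $\mathcal{P}_n^{\tree}$ holds automatically for $\Psi(w)$ because both sides of the relation at an identified edge $(\tau_i,k)=(\tau_j,\ell)$ simply compute the common value $w$ of the corresponding edge of $\tree$. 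A direct substitution shows $\Phi\circ\Psi=\mathrm{id}$ and $\Psi\circ\Phi=\mathrm{id}$.

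Finally I will check compatibility with the gradings. The degree of $\mathbf{x}\in\mathcal{P}_n^{\tree}$ was defined to be $\tfrac{1}{2}\sum_{(\tau,i)\in\mathrm{dist}(\tree^D)} w_{(\tau,i)}(\mathbf{x})$, and under $\Phi$ the quantity $w_{(\tau,i)}(\mathbf{x})$ is precisely the weight $\Phi(\mathbf{x})$ assigns to the corresponding leaf edge of $\tree$; this is exactly the degree of an admissible weighting defined just before Definition \ref{gradedsemigroupiso}'s domain. There is no serious obstacle here: the whole statement is essentially local linear algebra on the three edges of each tripod, glued across the inner edges of $\tree$. The only point requiring mild care is the parity bookkeeping when inverting the $2\times 2$ tripod-level system, which is handled once by the admissibility condition and then propagates globally through the gluing identifications.
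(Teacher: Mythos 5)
Your proposal is correct and follows essentially the same route as the paper's proof: both solve the tripod-level linear system $x_{ij}(\tau)=\tfrac{1}{2}(w_i+w_j-w_k)$ in each direction, observe that admissibility (parity plus triangle inequalities) is exactly equivalent to nonnegative integrality of the $x_{ij}(\tau)$, and note that the gluing relation defining $\mathcal{P}_n^{\tree}$ matches the well-definedness of the edge weighting across identified edges. Your write-up is if anything slightly more explicit about the degree comparison, which the paper leaves to the reader.
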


\begin{proof}
In all that follows we deal with tripods, so we give the unique leaf edge incident to the leaf labelled $i$ the label $i$, and vice-versa.
With this in mind, $X_{ij}$ can be thought of as giving the number of arcs in a graph on the
leaves of a tripod $Y$ between the $i$-th and $j$-th leaves.  The number $N_i$ is a natural number
assigned to the $i$-th edge of a tripod $Y$ obtained by counting the number of arcs which have unique
path in $Y$ containing the $i$-th edge.
The elements of both  $\mathcal{P}^{\tree}_n$ and $\mathcal{W}^{\tree}_n$ both associate triples of
integers to each tripod of ${\tree^D}$ with certain ``gluing conditions''. Let us consider a single tripod $Y \in \tree^D$.  A triple of numbers $N_1, N_2, N_3$ is an admissible weighting of $Y$ if and only if there are integers $X_{ij}$, $i, j \in \{1, 2, 3\}$ such that $X_{ij} + X_{ik} = N_i$.  To see this simply note that the equations
$$X_{ij} = \frac{N_i + N_j - N_k}{2}$$
have natural solutions if and only if $(N_1, N_2, N_3)$ is admissible.
Therefore we may define a map from $\mathcal{W}^{\tree}_n$ to $\mathcal{P}^{\tree}_n$ by solving for $x_{ij}(\tau)$, with an obvious inverse given by solving for the weighting on the edge $(\tau, i)$.  To see that these maps are well defined, note that the gluing condition
$$x_{km}(\tau_i) + x_{k\ell}(\tau_i) = x_{\ell,k}(\tau_j) + x_{\ell,m}(\tau_j), \quad \text{for } (\tau_i, k) \text{ identified to } (\tau_j, \ell) \text{$\in \tree$}.$$
is exactly equivalent to the weights on $(\tau_i, n)$ and $(\tau_j, l)$ being equal when these tuples represent the same diagonal in $\tree$.  Since both semigroup operations are defined by adding integers, and since both $\Phi$ and its inverse are linear functions over each trinode, these maps are semigroup isomorphisms.
Finally, note that the grading on $\mathcal{P}^{\tree}_n$ was chosen specifically to match the grading on $\mathcal{S}^{\tree}_n$, we leave
direct verification of this to the reader.
\end{proof}

\subsubsection{An explicit description of the ring isomorphism}
 The semigroup $\mathcal{S}^{\tree}_n$ is generated as a graded semigroup by the elements corresponding to
graphs with exactly one edge.  The element corresponding to the graph with one edge, between the $i$ and $j$ vertices corresponds to the Pl\"ucker coordinate $Z_{ij}$.  Recall that the unique path in the tree $\tree$ joining
the leafs $i$ and $j$ has been denoted $\gamma(ij)$. The path $\gamma(ij)$
gives rise to a sequence of edges of the forest $\tree^D$ (where we pass from
one tripod to the next by passing from an edge $(\tau_i, k)$ to an equivalent
edge $(\tau_j,\ell)$. We let $Z_{\gamma(ij)}$ be the corresponding
product of Pl\"ucker coordinates $Z_{st}(\tau_k)$.  Thus
$$Z_{\gamma(ij)} = \prod Z_{st}(\tau)$$
where the $(\tau, s)$ and $(\tau, t)$ are the edges in the unique path
defined by the path $\gamma(i,j)$ corresponding to $Z_{ij}$ in $\tree$.
Note that $Z_{\gamma(ij)}$ is a $\underline{\T}_d^-$-invariant monomial
in the homogeneous coordinate ring of $(\bigwedge^2(\C^3))^{n-2}$ -
moreover it is a generator of the ring of invariants.
The isomorphism of toric rings $\Phi$ from the homogeneous
coordinate ring of $\mathrm{Gr}_2(\C^n)^{\tree}_0$ to $P^{\tree}_n(\mathrm{SU}(2)$ is then given on generators
by
$$\Phi(Z_{ij}) = Z_{\gamma(ij)}.$$
\begin{remark}
It is important to see that the degree assigned to $Z_{\gamma(ij)} = \prod Z_{st}(\tau)$ by the isomorphism $\Phi$ (namely {\it one}) is
different from that given by counting the $Z_{st}(\tau)$ in the product formula
for $Z_{\gamma(i,j)}$. The latter
count is in fact the Speyer-Sturmfels weight $w^{\tree}_{i,j}$ of the Pl\"ucker coordinate $Z_{ij}$.
\end{remark}

\begin{figure}[htbp]
\centering
\includegraphics[scale = 0.3]{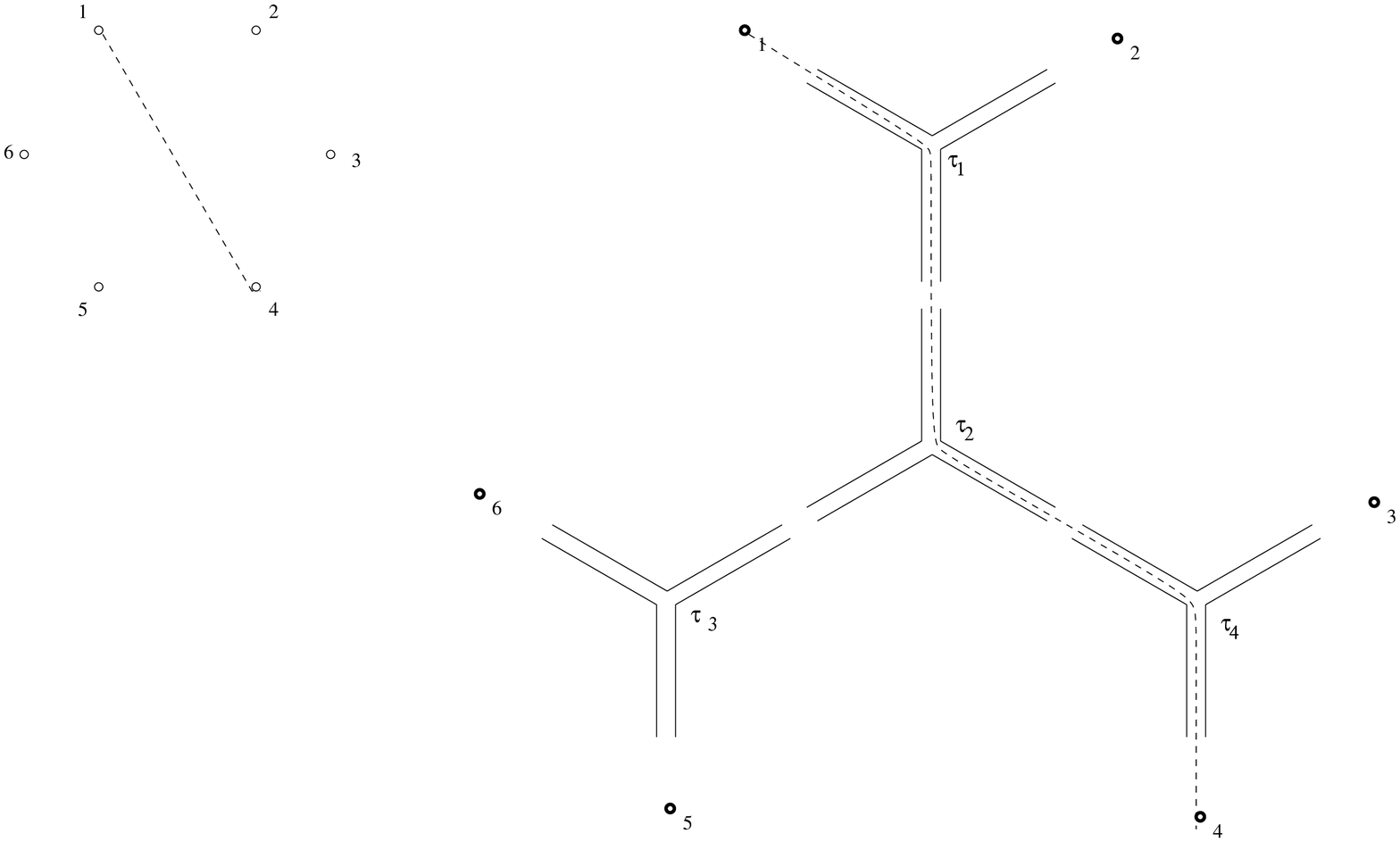}
\caption{This illustrates $\Phi(Z_{14}) = Z_{\gamma(14)} = Z_{13}(\tau_1)Z_{12}(\tau_2)Z_{13}(\tau_4)$ in 
the case $n=6$ with the symmetric tree.}\label{fig:tripod2}
\end{figure}

\subsubsection{The grading circle action on $P_n^{\tree}(\mathrm{SU}(2))$}

We verify the second statement of Theorem \ref{firstauxiliarytheorem}.
We have seen that the action of $\lambda \in \C^*$ that gives the grading
scales each $Z_{\gamma(i,j)}$ by $\lambda$.
Clearly this action is induced by the action on the matrix $A^{\tree}$
that scales each row corresponding to a leaf edge by $\sqrt{\lambda}$,
in other words by the action of the element $\mathbf{t}_e((\sqrt{\lambda})^{-1})$ as claimed in
the second statement of Theorem \ref{firstauxiliarytheorem}.
We conclude by explaining why the actions of $\mathbf{t}_e(\sqrt{\lambda})$
and $\mathbf{t}_e(-\sqrt{\lambda})$ coincide. It suffices to prove that
$\mathbf{t}_e(-1)$ acts trivially. But since the operation
that scales all rows of $A^{\tree}$ by $-1$ is induced by
an element of $\mathrm{SU}(2)^{n-2}$ it acts trivially on $X_n^{\tree}(\mathrm{SU}(2))$
and hence on $P_n^{\tree}(\mathrm{SU}(2))$. But also the operation of scaling all
the rows of $A^{\tree}$ that belong to nonleaf edges of $\tree^D$
is induced by an element of $\underline{\T}_d^{-}$ hence  this
operation too is trivial on $P_n^{\tree}(\mathrm{SU}(2))$. But $\mathbf{t}_e(-1)$
is the composition of the two operations just proved to be trivial.

\section{The spaces $W^{\tree}_n$ and $Q_n^{\tree}(\mathrm{SU}(2))$ are homeomorphic.}\label{homeomorphismVtoP}

We now give the proof of Theorem \ref{firstmaintheorem}.
We first note that we have identified an ordered subset of edges of $\tree^D$
with the leaf edges of $\tree$ (equivalently the edges of $P$). This
identification gives an isomorphism   $\rho:T_{\mathrm{SU}(2)^n} \to \mathbb{T}_e$.

\subsection{A homeomorphism of affine varieties}\label{secondtorusquotient}
We will  prove the following.

\begin{theorem}\label{noncompact}
The spaces $V^{\tree}_n$ and $P^{\tree}_n(\mathrm{SU}(2))$ are equivariantly homeomorphic with respect to $\rho$.
\end{theorem}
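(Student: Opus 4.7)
The plan is to construct the homeomorphism $\Psi_n^{\tree} : V_n^{\tree} \to P_n^{\tree}(\mathrm{SU}(2))$ explicitly, as suggested by item (3) of Theorem \ref{firstauxiliarytheorem}: the map creates imploded spin-frames along the diagonals of the triangulation. Given an imploded spin-framed $n$-gon $\mathbf{F} = ((F_i,e_i))_{i=1}^n$ representing a class in $P_n(\mathrm{SU}(2))$ and lying in some stratum $P_n(\mathrm{SU}(2))^{[S]}$, I would first break the polygon into the triangles $\tau_1,\ldots,\tau_{n-2}$ of $\tree$. For each diagonal $d$ of $\tree$ not in $S$ the corresponding partial sum $v_d \in \R^3$ is nonzero, and I would select any imploded spin-frame $[G_d,\lambda_d \varpi_1] \in \mathcal{E}T^*(\mathrm{SU}(2))$ with $F\circ\phi([G_d,\lambda_d\varpi_1]) = v_d$ using Remark \ref{SpinF}. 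For $d \in S$ the diagonal is zero, and any representative of the cone point works. Assembling these data on the $3(n-2)$ edges of $\tree^D$ yields a point in $E_n^{\tree}(\mathrm{SU}(2))$ whose underlying triangles are closed in $\R^3$; passing to $X_n^{\tree}(\mathrm{SU}(2))$ and then to $P_n^{\tree}(\mathrm{SU}(2))$ gives $\Psi_n^{\tree}([\mathbf{F}])$.

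The core of the proof is to verify that $\Psi_n^{\tree}$ is well-defined, and this is where the quotients on both sides align. By Lemma \ref{circlebundle}, the freedom in the choice of spin-frame $G_d$ on a nonzero diagonal $d$ is exactly a right $T_{\mathrm{SU}(2)}$ action. Because $d$ occurs as two edges of $\tree^D$ with opposite orientations (one in each of the two triangles sharing $d$), to preserve the closing conditions of the two triangles the frames on the two sides must transform by inverse elements of $T_{\mathrm{SU}(2)}$. This ambiguity is precisely the anti-diagonal subtorus $\T_d^- \subset \T_d$ defined in the introduction, so modding out by $\T_d^-$ on the target removes it. Likewise, the $\mathrm{SU}(2)$-quotient defining $P_n(\mathrm{SU}(2))$ corresponds to the diagonal $\mathrm{SU}(2)^{n-2}$-action on the triangles (one copy per triangle, all set equal under the global $\mathrm{SU}(2)$), which is exactly the action used to form $X_n^{\tree}(\mathrm{SU}(2))$.

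The treatment of zero diagonals is the main obstacle and is precisely where tree-congruence is used. On the stratum $P_n(\mathrm{SU}(2))^{[S]}$ the construction requires arbitrary choices for each diagonal $d \in S$: different choices of $G_d$ act by the full $\mathrm{SU}(2)$ on the frame sitting on $d$, and through the triangle gluing this propagates as the $\mathrm{SU}(2)^{D(S)}$ action on the connected components of $\tree^S$. By the definition of $\sim_{\tree}$ from \S \ref{KYspace}, this is exactly the equivalence relation we have quotiented by to form $V_n^{\tree}$; thus $\Psi_n^{\tree}$ descends to a well-defined map on $V_n^{\tree}$, and the descended map is injective. Surjectivity is immediate, since for any $\mathbf{T} \in X_n^{\tree}(\mathrm{SU}(2))$ one can recover an imploded spin-framed polygon by reading off the frames on the distinguished edges $\mathrm{dist}(\tree^D)$.

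For the remaining analytic content, I would prove continuity of $\Psi_n^{\tree}$ and of its inverse by comparing the stratifications: the map from $P_n(\mathrm{SU}(2))^{[S]}$ into the preimage of the corresponding stratum of $P_n^{\tree}(\mathrm{SU}(2))$ is continuous because the imploded-cotangent identification $\mathcal{E}T^*(\mathrm{SU}(2)) \cong \C^2$ is a homeomorphism and the Hopf construction $F$ of \S \ref{maptoR3} is continuous. Continuity of the inverse follows from the same facts together with the properness of $F_n$ established in Theorem \ref{framedngons}. Finally, equivariance with respect to $\rho$ is immediate: the action of $T_{\mathrm{SU}(2)^n}$ on $V_n^{\tree}$ rotates the frames on the $n$ distinguished edges of the polygon, and $\rho$ identifies this precisely with the residual $\mathbb{T}_e \subset \mathbb{T}/\mathbb{T}_d^-$ action on $P_n^{\tree}(\mathrm{SU}(2))$ that scales the columns of $A^{\tree}$ indexed by $\mathrm{dist}(\tree^D)$.
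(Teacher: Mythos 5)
Your construction is in outline the same as the paper's: you create imploded spin-frames along the diagonals, and you match the two sources of ambiguity (the $T_{\mathrm{SU}(2)}$-freedom on a nonzero diagonal, propagated to the antidiagonal torus $\T_d^-$, and the full $\mathrm{SU}(2)^{D(S)}$-freedom on the components cut out by the zero diagonals) against the quotients defining $P_n^{\tree}(\mathrm{SU}(2))$ and $V_n^{\tree}$ respectively. The paper organizes this via a ``$\varrho$-flip'' normalization ($g_{(\tau_j,\ell)}=g_{(\tau_i,k)}\varrho$ on equivalent edges) and an inductive extension lemma whose uniqueness clause is what actually proves your claimed injectivity; your sketch of well-definedness is the right idea but those computations are the substance of the argument, not a formality.

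The genuine gap is continuity. Your map $\Psi_n^{\tree}$ is defined by making choices of frames stratum by stratum, and showing each restriction $P_n(\mathrm{SU}(2))^{[S]}\to P_n^{\tree}(\mathrm{SU}(2))$ is continuous does not show the assembled map is continuous where strata meet; a stratum-wise continuous bijection between stratified spaces can easily fail to be a homeomorphism, and the Hopf-bundle obstruction prevents any continuous global choice of frames. Your appeal to ``properness of $F_n$ established in Theorem \ref{framedngons}'' does not help: that theorem asserts no properness statement, and in any case the relevant map is not $F_n$. What is actually needed -- and what the paper supplies -- is that the \emph{easy} direction $\Phi_n^{\tree}$ (restriction to the leaf edges) is continuous and \emph{proper}, hence closed, so that its inverse $\Psi_n^{\tree}$ is automatically continuous. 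The properness is not free either: one must bound the lengths of all diagonal columns of $A^{\tree}$ in terms of the edge lengths, which the paper does by the triangle inequality applied inductively along the tree (Lemma \ref{trivalentinduction}). Without this step, or some substitute for it, your argument establishes only a continuous bijection in one direction, not a homeomorphism.
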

We will use the symbol
$\Psi^{\tree}_n$ to denote the above equivariant homeomorphism
and $\Phi^{\tree}_n$ to denote its inverse.
By the results of subsection \ref{firsttorusquotient} it suffices to
construct a $\rho$-equivariant homeomorphism (and its inverse) from $V^{\tree}_n$
to the above symplectic quotient which we will continue to denote
$P^{\tree}_n(\mathrm{SU}(2))$.  
We first construct the  $\rho$-equivariant map 
$\Phi^{\tree}_n : P_n^{\tree}(\mathrm{SU}(2)) \to V^{\tree}_n$.
There is a simple idea behind this map. We have an inclusion of
the leaf edges of $\tree$ into the edges of $\tree^D$.
Now an element of $E_n^{\tree}(\mathrm{SU}(2))$ is a map $\mathbf{T}$ from the edges
of $\tree^D$ into $\C^2$. The map $\Phi^{\tree}_n$ is induced by the map
$\widetilde{\Phi}^{\tree}_n:E_n^{\tree}(SU(2)) \to E_n(\mathrm{SU}(2))$  that
restricts $\mathbf{T}$ to the leaf edges of $\tree$.  However we need
to verify that the image of an element of momentum level zero
for $\mathrm{SU}(2)^{n-2} \times \mathbb{T}_d^-$ has $\mathrm{SU}(2)$--momentum level zero,
and that the induced map of zero momentum levels descends  to the required quotients.
For the rest of this discussion, let $\textbf{T} = (F_1(\tau_1), \dots, F_{n-2}(\tau_{n-2}))$ be an 
element of $E_n^{\mathcal{T}}(\mathrm{SU}(2))$.  Each $F_i$ is an
imploded spin framing of the edges of the triangle $\tau_i$.
This means that
$$F_i = [(g_1(i), \lambda_1(i)\varpi_1), (g_2(i), \lambda_2(i)\varpi_1), (g_3(i), \lambda_3(i)\varpi_1)]$$
such that
$$\lambda_1(i)g_1(i)\varpi_1 + \lambda_2(i)g_2(i)\varpi_1 + \lambda_3(i)g_3(i)\varpi_1 = 0$$
We will henceforth denote $T_i = F_i(\tau_i)$, thus the symbol
$T_i$ stands for a triangle together with an imploded spin-frame on its
edges.

\subsubsection{$\varrho$ flips and normalized framings}\label{normalizedframings}

Let $\varrho = \begin{pmatrix} 0 & 1 \\ -1 & 0\end{pmatrix}$.  Note that if
$t \in \mathrm{SU}(2)$ fixes $\varpi_1$ by conjugation, then $t$ is diagonal and
$\varrho t \varrho^{-1} = t^{-1}$. Furthermore, $\varrho \varpi_1 \varrho^{-1} = - \varpi_1$.
Let $[[T_1],[T_2],\ldots,[T_{n-2}]$ be an element of $P^{\tree}_n(\mathrm{SU}(2))$.

\begin{lemma}\label{rhoflip}
Suppose that the $k$-th edge of $\tau_i$ is identified with the $\ell$-th edge
of $\tau_j$ in $\tree$ and that the edge $(\tau_j, \ell)$ comes after
the edge $(\tau_i, k)$ in the above ordering of edges of $\tree^D$. Then using the left actions of $\mathrm{SU}(2)$ we may arrange
that

$$g_{(\tau_j,\ell)} = g_{(\tau_i,k)}\varrho.$$
\end{lemma}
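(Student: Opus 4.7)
The plan is to use the left $\mathrm{SU}(2)^{n-2}$-action on representatives in $E^{\tree}_n(\mathrm{SU}(2))$ --- the factor indexed by $\tau_i$ acts diagonally on the three imploded spin-frames of $\tau_i$ --- and recurse on tripod index. For the single identified pair $(\tau_i,k) \sim (\tau_j,\ell)$ considered in the statement, with $i < j$, acting on the frames of $\tau_j$ alone by
$$h_j \;=\; g_{(\tau_i,k)}\,\varrho\,g_{(\tau_j,\ell)}^{-1}$$
replaces $g_{(\tau_j,\ell)}$ with $h_j g_{(\tau_j,\ell)} = g_{(\tau_i,k)}\varrho$, as desired. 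Note that $h_j \in \mathrm{SU}(2)$ since $\det \varrho = 1$; the diagonal action preserves the triangle-closure condition on $\tau_j$; and the $\T_d^-$-momentum-zero condition --- a statement purely about the $\lambda$'s --- is untouched. In the degenerate case where the common $\lambda$-value on the diagonal vanishes (forced by $\T_d^-$-momentum-zero to have $\lambda_{(\tau_i,k)} = \lambda_{(\tau_j,\ell)}$), the frames $g_{(\tau_i,k)}, g_{(\tau_j,\ell)}$ are ambiguous up to the implosion identification $(F,0)\sim(F',0)$, and the normalization is tautological.

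To arrange the condition simultaneously along every identified pair (which is what the subsequent construction will require), I would first establish that the chosen labeling $\tau_1 < \tau_2 < \cdots < \tau_{n-2}$ endows the inner tree $\tree'$ --- with vertex set $\{\tau_1,\ldots,\tau_{n-2}\}$ and edges the inner edges of $\tree$ (i.e., the diagonals of the triangulation) --- with a rooted-tree structure at $\tau_1$, in the sense that for each $j \geq 2$ the vertex $\tau_j$ has a unique neighbor $p(j) \in \{\tau_1,\ldots,\tau_{j-1}\}$ in $\tree'$. This follows from the construction of the labeling: when $\tau_j$ is labeled (after $\tau_{n-2},\ldots,\tau_{j+1}$ and their matched leaves have been peeled off), it is by definition a matched-pair vertex of the residual tree, hence a leaf of $\tree'$ restricted to $\{\tau_1,\ldots,\tau_j\}$, so exactly one of its inner-tree edges meets a lower-indexed tripod.

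Given the parent map $p$, the simultaneous normalization is obtained by recursion: set $h_1 = e$, and for $j = 2,\ldots,n-2$, letting $(\tau_{p(j)},k)$ and $(\tau_j,\ell)$ be the identified pair across the parent edge, define
$$h_j \;=\; h_{p(j)}\, g_{(\tau_{p(j)},k)}\,\varrho\,g_{(\tau_j,\ell)}^{-1}.$$
After acting by $(h_1,\ldots,h_{n-2})$, every inner edge of $\tree$ satisfies the desired relation $g_{(\tau_j,\ell)} = g_{(\tau_i,k)}\varrho$. No conflict arises because $h_j$ modifies only the frames on $\tau_j$ and, by the rooted-tree property, the only edge of $\tau_j$ shared with a lower-indexed tripod is the parent edge, which has just been normalized; the remaining two edges of $\tau_j$ (leaf edges of $\tree$ or edges to higher-indexed children) are still free for subsequent steps. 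The main obstacle is really the combinatorial claim about the rooted-tree structure induced by the peeling order; once that is in place the recursion is mechanical.
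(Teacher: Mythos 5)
Your argument for the single identified pair is exactly the paper's proof: act on all of $T_j$ by $g_{(\tau_i,k)}\varrho\,g_{(\tau_j,\ell)}^{-1}$, noting that the zero-diagonal case is vacuous because implosion identifies all frames there. The additional material on simultaneous normalization is not needed for this lemma but correctly anticipates the paper's Lemma \ref{norm}, and your rooted-tree/peeling recursion is the same induction the paper carries out via Lemma \ref{trivalentinduction}.
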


\begin{proof}
Suppose we have

$$[T_i] = [(a_1, \ell_1 \varpi_1),(a_2, \ell_2\varpi_1), (h, d \varpi_1)],$$
If the diagonal defined by $(\tau_i, \ell)$ and $(\tau_j, k)$ has length $0$
there is nothing to prove since any two frames are equivalent. Suppose then
that this diagonal is not $0$. Then by applying $g =h \varrho (h')^{-1}$ to $T_{j}$,
we get

$$g \cdot T_{j} = [(h \varrho, d \varpi_1),(g a_3,\ell_3 \varpi_1),(g a_4, \ell_4 \varpi_1)],$$
which is equivalent to $T_{j}$.  Hence the consecutive diagonal frames (the third
frame of $T_{i}$ and the first frame of $g \cdot T_{i+1}$) are now related
by right multiplication by $\varrho$.
\end{proof}
We shall henceforth choose representatives in $E_n^{\tree}(\mathrm{SU}(2))$ for
elements of $P^{\tree}_n(\mathrm{SU}(2))$ so that nonzero frames associated to equivalent edges
satisfy this ``$\varrho$ flip condition''.  If  the frame associated to
one edge of a pair of equivalent edges is zero then we require  that
the frame associated to the other edge of the pair is also zero.  We say 
such elements of $P_n^{\tree}(\mathrm{SU}(2))$ are {\it normalized}.
Note that the definition of normalized depends on the ordering
of the triangles $T_i$.

It is important to note that if an element $A=([T_1],[T_2],\cdots,[T_{n-2}])$ is normalized
then so is $t\cdot A$ for any $t \in \mathbb{T}_d^-$. This is because $h\varrho t^{-1} = ht\varrho$. We leave the details
to the reader. Thus we may speak of a $\T_d^-$-equivalence class as being $normalized$.
We will define $\Phi_n^{\tree}$ on such normalized elements.

\begin{lemma}\label{norm}
For any $\textbf{T} \in  E_n^{\mathcal{T}}(\mathrm{SU}(2))$ there exist $\mathbf{f} \in \mathrm{SU}(2)^{n-2}$ and  a normalized $\textbf{T}' \in  E_n^{\mathcal{T}}(\mathrm{SU}(2))$ such that
$$\mathbf{f} \mathbf{T} = \mathbf{T}'.$$
\end{lemma}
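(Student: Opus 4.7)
The plan is to build the element $\mathbf{f} = (f_1,\ldots,f_{n-2}) \in \mathrm{SU}(2)^{n-2}$ one tripod at a time, processing tripods in an order for which each new tripod is attached to the already-processed subtree by a single edge. This lets us satisfy the normalization condition on one new equivalent edge-pair per step, without disturbing previously arranged conditions.

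First I would set up the combinatorics. By Lemma \ref{trivalentinduction} (relabelling if necessary), fix an ordering $\sigma_1,\sigma_2,\ldots,\sigma_{n-2}$ of the tripods of $\tree^D$ such that $\sigma_1$ is the ``base'' tripod and, for each $k \geq 2$, the tripod $\sigma_k$ is attached to the subforest $\{\sigma_1,\ldots,\sigma_{k-1}\}$ by exactly one edge of $\tree$: say the edge $(\sigma_k,\ell_k)$ of $\sigma_k$ is identified with the edge $(\sigma_{j(k)},m_k)$ of some earlier tripod $\sigma_{j(k)}$, $j(k)<k$. This partitions the $n-3$ diagonal-gluing conditions (one per internal edge of $\tree$) into $n-3$ tasks indexed by $k=2,\ldots,n-2$.

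Next I would run the induction. Take $f_1 = e$, since $\sigma_1$ has no gluing constraint with any earlier tripod. Inductively, suppose $f_1,\ldots,f_{k-1}$ have been chosen so that $(f_1 T_1,\ldots,f_{k-1}T_{k-1})$ satisfies the $\varrho$-flip normalization on every equivalent edge-pair lying entirely in $\{\sigma_1,\ldots,\sigma_{k-1}\}$. The remaining constraint to impose at step $k$ is the single pair $\bigl((\sigma_{j(k)},m_k),(\sigma_k,\ell_k)\bigr)$. Applying Lemma \ref{rhoflip} with $\tau_i = \sigma_{j(k)}$ and $\tau_j = \sigma_k$, I obtain $f_k \in \mathrm{SU}(2)$ (explicitly $f_k = h\varrho (h')^{-1}$, where $h$ is the framing element of $f_{j(k)}T_{j(k)}$ on edge $(\sigma_{j(k)},m_k)$ and $h'$ the framing element of $T_k$ on edge $(\sigma_k,\ell_k)$) such that $f_k T_k$ satisfies the $\varrho$-flip condition on this pair. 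The crucial point is locality: left multiplication by $f_k$ affects only the three framings of $\sigma_k$, so every $\varrho$-flip relation already established on $\{\sigma_1,\ldots,\sigma_{k-1}\}$ is preserved. The other two edges of $\sigma_k$ are either distinguished leaf edges of $\tree$ (no constraint) or will be identified with edges of some $\sigma_r$ with $r>k$, to be handled later by choosing $f_r$.

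The one mild obstacle is the case where the shared edge has zero framing (i.e. the corresponding diagonal is zero). Lemma \ref{rhoflip} presupposes nonzero length when forming $h\varrho(h')^{-1}$; but the implosion equivalence $(F_1,0)\sim(F_2,0)$ makes the $\varrho$-flip condition vacuous on any pair of zero-length framings, so one may take $f_k=e$ on such steps. For this to yield a normalized element we need the two equivalent edges to have equal length parameters, which is automatic once we remember that $\mathbf{T}$ is a representative of a point of $P_n^{\tree}(\mathrm{SU}(2))$: the $\mathbb{T}_d^-$-momentum-zero condition imposed in the definition of $P_n^{\tree}(\mathrm{SU}(2))$ forces the two length parameters along each diagonal to agree. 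With that observation the induction closes and $\mathbf{T}' := \mathbf{f}\mathbf{T}$ is a normalized element.
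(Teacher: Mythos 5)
Your proof is correct and follows essentially the same route as the paper's: trivalent induction (Lemma \ref{trivalentinduction}) to attach one tripod at a time along a single shared edge, the $\varrho$-flip of Lemma \ref{rhoflip} with $f_k = h\varrho(h')^{-1}$ acting only on the new tripod's factor, and the locality observation that this preserves all previously arranged conditions. Your explicit handling of the zero-length diagonals (and the remark that equality of the two length parameters along a diagonal comes from the $\mathbb{T}_d^-$-momentum-zero condition, which the statement's ``any $\mathbf{T}$'' quietly presupposes) is a point the paper leaves implicit, but it does not change the argument.
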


\begin{proof}
Let $\tree' \subset \tree$ be connected, and let $Y \subset \tree$
be a tripod which shares exactly one edge, $d$ with $\tree'$.  Let $(\tau, i)$
and $(\tau', j)$ be the edges of $\tree^D$ which map to $d$.
Suppose $\textbf{T} \in E_n^{\tree}(\mathrm{SU}(2))$ is normalized
at all diagonals corresponding to internal edges of $\tree'$.
Let $\mathbf{f} \in \mathrm{SU}(2)^{n-2}$ be the element
which is $g_{(\tau, i)}\varrho g_{(\tau', j)}^{-1}$ on
the $\tau'$-th factor, and the identity elsewhere.
Then $\textbf{T}' = \mathbf{f}\textbf{T}$ is normalized
at all diagonals corresponding to internal edges of $\tree' \cup Y$.
By lemma \ref{trivalentinduction} we can always find a sequence
of connected trees $\tree_i \subset \tree_{i+1}$
with $Y_i = \tree_{i+1}^D \setminus \tree_i^D$ a tripod
such that each internal edge of $\tree$ appears
as an edge shared by the images of $Y_i$ and $\tree_i$
in $\tree$ for some $i$.  The lemma now follows by induction.
\end{proof}

\subsubsection{Definition of the maps $\Phi_n^{\tree}$ and $\Psi_n^{\tree}$}

Given a normalized element $\textbf{T} \in E^{\tree}_n(\mathrm{SU}(2))$
define $\widetilde{\Phi}_n^{\tree}(\textbf{T})$ to be the element in $E_n(\mathrm{SU}(2))$ given by projecting on the components
$(g_{(\tau_i, j)}, \lambda_{(\tau_i, j)}\varpi_1) \in \textbf{T}$ such that $(\tau_i, j) \in \tree^D$
maps to a leaf edge in $\tree$ under $\pi_{\tree}$.

\begin{lemma}
If $\textbf{T} \in \mu_{\mathrm{SU}(2)^{n-2}}^{-1}(0) \cap \mu_{\T_d^-}^{-1}(0)$
and is normalized then $\widetilde{\Phi}_n^{\tree}(\textbf{T}) \in
\widetilde{P}_n(\mathrm{SU}(2))$, that is the polygon in $\R^3$ associated to $\widetilde{\Phi}_n^{\tree}(\textbf{T})$ closes up.
\end{lemma}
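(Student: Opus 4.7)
The plan is to prove the closure condition $\sum F \circ \phi(g_i, \lambda_i \varpi_1) = 0$ (sum over the distinguished edges of $\tree^D$) by summing the tripod-level closure relations over all internal vertices and showing that the contributions from the inner edges cancel in pairs, by virtue of the $\varrho$-flip normalization together with the $\T_d^-$-level zero condition.

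First I would use the hypothesis $\mu_{\mathrm{SU}(2)^{n-2}}(\mathbf{T}) = 0$ one tripod at a time. For each $\tau_i$, the $i$-th component of this momentum condition is exactly the statement (via Lemmas \ref{levelzero} and \ref{factoroftwo}, Remark \ref{SpinF}, and the computation in Theorem \ref{framedngons}) that the three vectors $\lambda_k(i) f(Ad^*_{g_k(i)}(\varpi_1)) \in \R^3$ attached to the three edges of $\tau_i$ sum to zero. Summing these closure relations over $i=1,\ldots,n-2$ gives
\begin{equation*}
\sum_{i=1}^{n-2} \sum_{k=1}^{3} \lambda_k(i)\, f\bigl(Ad^*_{g_k(i)}(\varpi_1)\bigr) = 0.
\end{equation*}
This sum splits into a sum over the $n$ edges of $\tree^D$ mapping to leaves of $\tree$ (which is precisely the closure sum for $\widetilde{\Phi}_n^{\tree}(\mathbf{T})$) and a sum over the pairs of edges of $\tree^D$ that get identified under $\pi^D$ to inner edges of $\tree$.

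The key step is to show that the latter sum vanishes pair by pair. Fix a pair of equivalent edges $(\tau_i, k) \sim (\tau_j, \ell)$ in $\tree^D$. The normalization of $\mathbf{T}$ (Lemma \ref{rhoflip}) guarantees that $g_{(\tau_j, \ell)} = g_{(\tau_i, k)}\, \varrho$, while the $\T_d^-$-level-zero condition applied to the antidiagonal circle associated to this diagonal forces the Hamiltonians $f_{\alpha^\vee}$ on the two sides to coincide, i.e. $\lambda_{(\tau_i, k)} = \lambda_{(\tau_j, \ell)} =: \lambda$. Since $\varrho \varpi_1 \varrho^{-1} = -\varpi_1$ (this is the observation preceding Lemma \ref{rhoflip}), we get
\begin{equation*}
\lambda\, f\bigl(Ad^*_{g_{(\tau_j,\ell)}}(\varpi_1)\bigr) \;=\; \lambda\, f\bigl(Ad^*_{g_{(\tau_i,k)}} Ad^*_{\varrho}(\varpi_1)\bigr) \;=\; -\lambda\, f\bigl(Ad^*_{g_{(\tau_i,k)}}(\varpi_1)\bigr),
\end{equation*}
so the two contributions cancel. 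Summing over all diagonal pairs cancels the entire inner-edge contribution, leaving
\begin{equation*}
\sum_{(\tau, k)\in \mathrm{dist}(\tree^D)} \lambda_{(\tau,k)}\, f\bigl(Ad^*_{g_{(\tau,k)}}(\varpi_1)\bigr) = 0,
\end{equation*}
which is the desired closure condition, so $\widetilde{\Phi}_n^{\tree}(\mathbf{T}) \in \widetilde{P}_n(\mathrm{SU}(2))$.

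The only subtle point, and hence the main potential obstacle, is the degenerate case in which some $\lambda$ at an inner edge is zero: here the framing $g$ is not well-defined at that edge, but the normalization convention adopted after Lemma \ref{rhoflip} specifies that if one edge in an equivalent pair has $\lambda = 0$ then so does its partner, and hence both contribute $0$ to the sum and the cancellation is automatic. With that convention understood, the argument above goes through uniformly.
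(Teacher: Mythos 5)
Your proposal is correct and follows essentially the same argument as the paper: sum the per-tripod closure relations coming from $\mu_{\mathrm{SU}(2)^{n-2}}^{-1}(0)$ and cancel the contributions of each pair of identified inner edges using $g_{(\tau_j,\ell)} = g_{(\tau_i,k)}\varrho$, $\varrho\varpi_1\varrho^{-1}=-\varpi_1$, and the equality of the two $\lambda$'s from the $\T_d^-$-level-zero condition. You have merely made explicit two points the paper's terse proof leaves implicit (the role of $\mu_{\T_d^-}^{-1}(0)$ in matching the $\lambda$'s, and the degenerate $\lambda=0$ case).
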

\begin{proof}
First  observe   that $\lambda g\varpi_1 = - \lambda g\varrho\varpi_1$, Now
because each triangle closes up the sum of $g \lambda \varpi_1$ over all edges of $\tree^D$ is zero. But by the observation just above the sum over pairs of equivalent edges of $\tree^D$
cancel. Hence the sum over leaf edges of $\tree$
is zero as required.
\end{proof}

\begin{lemma}\label{WDPhi}
$\widetilde{\Phi}_n^{\tree} $ induces a well-defined map on the quotient
$$\widehat{\Phi}_n^{\tree}: \mathrm{SU}(2)^{n-2} \ql (\mu_{\mathrm{SU}(2)^{n-2}}^{-1}(0) \cap \mu_{\T_d^-}^{-1}(0)) \to V_n^{\tree}$$
and a well-defined map
$$\Phi_n^{\tree}:P_n^{\mathcal{T}}(\mathrm{SU}(2)) \to V_n^{\tree}.$$
\end{lemma}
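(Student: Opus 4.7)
The map $\widetilde{\Phi}_n^{\tree}$ is defined only on normalized elements of $E_n^{\tree}(\mathrm{SU}(2))$, and by Lemma \ref{norm} every $\mathrm{SU}(2)^{n-2}$-orbit in $\mu_{\mathrm{SU}(2)^{n-2}}^{-1}(0)\cap\mu_{\T_d^-}^{-1}(0)$ contains such a representative. The preceding lemma shows the image lies in $\widetilde{P}_n(\mathrm{SU}(2))$. The remaining task is (i) to verify that any two normalized representatives of the same $\mathrm{SU}(2)^{n-2}$-orbit project to $\sim_{\tree}$-equivalent polygons, and (ii) to verify that the resulting map is invariant under the further $\T_d^-$-quotient defining $P_n^{\tree}(\mathrm{SU}(2))$.

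For (i), the plan is to identify the stabilizer of the normalization condition inside $\mathrm{SU}(2)^{n-2}$, working stratum by stratum according to the set $S$ of vanishing diagonals. At each nonzero diagonal shared by tripods $\tau_i$ and $\tau_j$ via $(\tau_i,k)\sim(\tau_j,\ell)$, the class $[g,\lambda\varpi_1]\in\mathcal{E}T^*(\mathrm{SU}(2))$ has a unique representative whenever $\lambda>0$, so the $\varrho$-flip $g_{(\tau_j,\ell)}=g_{(\tau_i,k)}\varrho$ is a strict equality in $\mathrm{SU}(2)$. Writing the analogous equation for $\mathbf{f}\cdot\mathbf{T}$ and substituting immediately forces $f_i=f_j$. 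At a diagonal in $S$ both frames represent the apex $[e,0]$, so the normalization is automatic and no relation between $f_i$ and $f_j$ is imposed. Hence the admissible $\mathbf{f}$ form a copy of $\mathrm{SU}(2)^{|S|+1}$, with one factor for each connected component of the forest $\tree^S$; its induced action on the leaf-edge data coincides with the diagonal action of $\mathrm{SU}(2)^{D(S)}$ defining $\sim_{\tree}$ on $\widetilde{P}_n(\mathrm{SU}(2))^{[S]}$.

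For (ii), I will check that $\T_d^-$ preserves strict normalization and leaves the leaf-edge data untouched. The latter holds because $\T_d^-$ acts only on the frames attached to the internal edges of $\tree^D$ (those identified in pairs at diagonals), never on leaf-edge frames. For the former, the action of $t\in\T_d^-$ at a nonzero diagonal sends $(g_i,g_j)\mapsto(g_i t,g_j t^{-1})$, and the identity $\varrho t^{-1}=t\varrho$ for diagonal $t\in T_{\mathrm{SU}(2)}$ (which is immediate from $\varrho t\varrho^{-1}=t^{-1}$) shows that the $\varrho$-flip condition is self-consistent after the action; at a zero diagonal the condition is vacuous.

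The main technical point is the identification of the normalization stabilizer with $\mathrm{SU}(2)^{D(S)}$. The subtlety is that the $\varrho$-flip is a strict equation in $\mathrm{SU}(2)$, not an equation modulo the right $T_{\mathrm{SU}(2)}$-action on frames; it is precisely this strictness that collapses what would otherwise be an $n$-dimensional ``bending'' freedom in $\mathbf{f}$ down to the $3(|S|+1)$-dimensional freedom of $\mathrm{SU}(2)^{D(S)}$. Once this is set up, Lemma \ref{trivalentinduction} organizes the verification as an induction across the tripods of $\tree^D$, matching admissible $\mathbf{f}$'s with the $\tree$-congruence action edge by edge.
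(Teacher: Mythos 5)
Your plan is correct and follows essentially the same route as the paper: the strict $\varrho$-flip equality at each nonzero diagonal forces $f_i=f_j$ across that diagonal, so the admissible $\mathbf{f}$ are exactly those constant on components of $\tree^{S}$, i.e.\ elements of $\mathrm{SU}(2)^{D(S)}$ realizing $\tree$-congruence on the leaf data; and $\T_d^-$-invariance follows since that torus preserves normalization (via $\varrho t^{-1}=t\varrho$) and does not touch the leaf-edge frames. This is precisely the paper's argument.
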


\begin{proof}
To define $\widehat{\Phi}_n^{\tree}$ we need only show that for any pair of  normalized $\textbf{T}$ and $\textbf{T}'$ in $\mu_{\mathrm{SU}(2)^{n-2}}^{-1}(0) \cap \mu_{\T_d^-}^{-1}(0)$ and $\mathbf{f} = (f_1,\cdots, f_{n-2}) \in G$  such that $\mathbf{f} \textbf{T}=  \textbf{T}'$
we have
$$\widetilde{\Phi}^{\tree}_n(\textbf{T}) =  \widetilde{\Phi}_n^{\tree}(\textbf{T}')$$
in $V_n^{\tree}$.

The assumption $\mathbf{f} \textbf{T}=  \textbf{T}'$ is equivalent to

$$(f_ig_{(\tau_i, j)}, \lambda_{(\tau_i, j)}\varpi_1) = (g'_{(\tau_i, j)}, \lambda'_{(\tau_i, j)}\varpi_1)$$
for each $j \in \{1, 2, 3\}$ and $1 \leq i \leq n-2$.  Hence $\lambda_{(\tau_i, j)} = \lambda'_{(\tau_i, j)}$, and in particular a (pair of) diagonals of $\textbf{T}$ vanishes if and only if the same diagonals vanish for $\textbf{T}'$
that is

$$S(\mathbf{T}) = S(\mathbf{T}').$$
Hence the forests $\tree^{S(\mathbf{T})}$ and $\tree^{S(\mathbf{T}')}$ coincide.
We must show that the images of $\widetilde{\Phi}_n^{\tree}(\mathbf{T})$ and $\widetilde{\Phi}_n^{\tree}(\mathbf{T}')$ in $V_n^{\tree}$ coincide, which amounts to proving that the $\mathbf{f}$ is constant on any connected component in the
forest $\tree^{S(\mathbf{T})}$ (recall that we think of $\mathbf{f}$ as a function from
the trivalent vertices of $\tree$ to $\mathrm{SU}(2)$).
Clearly it suffices to prove that if $(\tau_i, j)$ and $(\tau_h, k)$
are equivalent  then

$$f_h = f_i.$$
Suppose the imploded spin framings on the two edges are
respectively
$(g_{(\tau_i, j)}, \lambda_{(\tau_i, j)}\varpi_1)$ and
$(g_{(\tau_h, k)}, \lambda_{(\tau_h, k)}\varpi_1)$.
We have equations

$$f_ig_{(\tau_i,j)} = g_{(\tau_i,j)}'$$
$$f_hg_{(\tau_h,k)} = g_{(\tau_h,k)}'$$
and because we have assumed both $\textbf{T}$ and $\textbf{T}'$ are normalized we have

$$g_{(\tau_h,k)} = g_{(\tau_i,j)}\varrho,$$
$$g_{(\tau_h,k)}' = g_{(\tau_i,j)}'\varrho.$$
We can rearrange these equations to get

$$f_ig_{(\tau_h,k)}\varrho^{-1} = g_{(\tau_h,k)}'\varrho^{-1}.$$
We cancel $\varrho^{-1}$ to obtain $f_ig_{(\tau_h,k)} = g_{(\tau_h,k)}'$.
But from above $f_hg_{(\tau_h,k)} = g_{(\tau_h,k)}'$
This proves the first statement of the lemma.
It is clear that $\widehat{\Phi}_n^{\tree}$ is constant on $\T_d^-$ orbits, so it descends to give $\Phi_n^{\tree}$.
\end{proof}
This shows that $\Phi_n^{\tree}$ is well-defined and continuous
because of the continuity of $\widetilde{\Phi}_n^{\tree}.$ 

\begin{lemma}
$\Phi_n^{\tree} $ is proper (hence closed).
\end{lemma}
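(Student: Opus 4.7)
The plan is to show that for any compact set $K \subseteq V_n^{\tree}$, the preimage $(\Phi_n^{\tree})^{-1}(K)$ is compact in $P_n^{\tree}(\mathrm{SU}(2))$. Closedness then follows from the standard fact that a continuous proper map between locally compact Hausdorff spaces is closed.

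First I would observe that each of the $n$ leaf edge-length functions $\|e_i\|$ descends continuously from $\widetilde{P}_n(\mathrm{SU}(2))$ to $V_n^{\tree}$, since $\tree$-congruence acts on edges by $\mathrm{SO}(3,\R)$-rotations that preserve vector norms. Consequently these functions are uniformly bounded by some $M > 0$ on the compact set $K$. Given any $[\mathbf{T}] \in (\Phi_n^{\tree})^{-1}(K)$, Lemma \ref{norm} lets me pick a normalized representative $\mathbf{T}$ inside $\mu_{\mathrm{SU}(2)^{n-2}}^{-1}(0) \cap \mu_{\T_d^-}^{-1}(0) \subset E_n^{\tree}(\mathrm{SU}(2))$. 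By Theorem \ref{framedngons}(3) the $\lambda$-values of $\mathbf{T}$ on the leaf edges of $\tree^D$ equal (up to the factor $2$) the corresponding edge-lengths of the polygon $\Phi_n^{\tree}([\mathbf{T}])$, and are therefore bounded by $2M$.

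The core step will be to propagate this bound from leaf edges of $\tree$ to all $3(n-2)$ edges of $\tree^D$. I would argue by induction on the number of internal vertices of $\tree$, invoking the fact (implicit in the proof of Lemma \ref{trivalentinduction}) that every trivalent tree with at least four leaves contains a ``leaf-tripod,'' meaning an internal vertex with two leaf neighbors in $\tree$. At such a tripod, the $\mathrm{SU}(2)$-momentum-zero condition makes the three associated vectors in $\R^3$ sum to zero; hence the remaining (internal) edge has length at most the sum of the two bounded leaf-edge lengths, so at most $2M$. Removing this leaf-tripod produces a smaller trivalent tree whose new leaf set consists of the original leaves outside the removed tripod together with the newly exposed diagonal edge, and all these lengths are bounded. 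Iterating yields a uniform bound $C=C(n, M)$ on the $\lambda$-value of every edge of $\tree^D$.

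Finally, the identification $\mathcal{E} T^\ast(\mathrm{SU}(2)) \cong \C^2$ of Lemma \ref{implodedcotangentbundle} sends $[g, \lambda\varpi_1]$ to $(z, w)$ with $|z|^2+|w|^2 = 2\lambda$, so the uniform bound on $\lambda$-values traps every normalized representative inside a fixed closed ball of $E_n^{\tree}(\mathrm{SU}(2)) \cong \C^{6(n-2)}$, which is compact. The continuous quotient map $E_n^{\tree}(\mathrm{SU}(2)) \to P_n^{\tree}(\mathrm{SU}(2))$ carries this compact set onto a compact set containing $(\Phi_n^{\tree})^{-1}(K)$, and the latter is closed in $P_n^{\tree}(\mathrm{SU}(2))$ by continuity of $\Phi_n^{\tree}$ from Lemma \ref{WDPhi}; so it is compact. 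The main obstacle is the leaf-tripod induction: one must verify that removing a leaf-tripod preserves the trivalent-tree structure of the residual graph and that at each stage the two edges used in the triangle inequality are among those already bounded, so that the induction hypothesis actually applies.
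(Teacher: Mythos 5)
Your proposal is correct and follows essentially the same route as the paper: bound the leaf-edge lengths on the compact set $K$, propagate the bound to all edges of $\tree^D$ via the triangle inequalities at each tripod (the $\mathrm{SU}(2)^{n-2}$ momentum-zero condition) together with the trivalent induction of Lemma \ref{trivalentinduction} and the $\T_d^-$ momentum-zero matching of diagonal pairs, and conclude that the preimage is a closed subset of the image of a fixed compact ball in $E_n^{\tree}(\mathrm{SU}(2))$. Your version merely spells out the leaf-tripod induction and the constant-tracking in more detail than the paper does.
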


\begin{proof}
Suppose $K$ is a compact subset of $P_n(\mathrm{SU}(2))$. Then the lengths of
the columns of any matrix $A$ (so the edge-lengths of the corresponding $n$-gon) representing an element of $K$ are uniformly
bounded by a constant $C$. We may reinterpret the above uniform bound as a bound
on all edge lengths  of all triangles in the given triangulation of $P$ that are
also edges of $P$.
But we may assume all our matrices $A$ are
in the zero level sets of  the momentum maps for $\mathrm{SU}(2)^{n-2}$ and
$\mathbb{T}_d^-$.  It follows by Lemma \ref{trivalentinduction} and the triangle inequalities that the lengths of all columns of any matrix
$A^{\tree}$ representing any element in $P_n^{\tree}(\mathrm{SU}(2))$ in the inverse image
of $K$ are bounded by $N(\tree)C$ where $N(\tree)$ is a positive integer
depending only on the tree $\tree$. Then $(\Phi_n^{\tree})^{-1}(K)$
is contained in the image of a subset of  $E_n^{\tree}(\mathrm{SU}(2))$ homeomorphic to the product of $3(n-2)$ copies of the ball
of radius $N(\tree)C$ in $\C^2$
\end{proof}
We now construct the  map $\Psi_n^{\tree} : V_n^{\tree} \to P_n^{\mathcal{T}}(\mathrm{SU}(2))$ inverse to $\Phi_n^{\tree}$.
We first define $\widetilde{\Psi}_n^{\tree}$ on $E_n(\mathrm{SU}(2))$ then verify that the resulting map descends to $V_n^{\tree}$.
We will need to be able to add a diagonal frame on a triangle with two already framed edges.

\begin{lemma}\label{framing}
Suppose that two sides of an imploded spin-framed triangle $(g_1, \lambda_1 \varpi_1)$, $(g_2, \lambda_2 \varpi_1)$ are given.  Then we can find $g_3 \in \mathrm{SU}(2)$ and $\lambda_3 \in \R$ so that
$(g_1, \lambda_1 \varpi_1)$, $(g_2, \lambda_2 \varpi_1)$ $(g_3, \lambda_3 \varpi_1)$ is an imploded spin-framed triangle.  Precisely we may solve
\begin{equation}\label{closingequation}
\lambda_1g_1\varpi_1 + \lambda_2g_2\varpi_1 + \lambda_3g_3\varpi_1 = 0.
\end{equation}
Moreover $\lambda_3$ is uniquely determined by $\lambda_1$ and $\lambda_2$ and if
$\lambda_3 \neq 0$ any two choices of $g_3$ for given $g_1$ and $g_2$ are related by right multiplication of $T_{\mathrm{SU}(2)}$.
\end{lemma}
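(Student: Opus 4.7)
The plan is to reduce the closing equation (\ref{closingequation}) to a vector equation in $\R^3$ via the isomorphism $f:\mathcal{H}_2^0 \to \R^3$ established in \S\ref{maptoR3}, and then exploit the transitivity of the coadjoint action of $\mathrm{SU}(2)$ on coadjoint orbits. Writing $v_i = \lambda_i f(Ad^*_{g_i}\varpi_1) \in \R^3$, the closing equation becomes $v_1 + v_2 + v_3 = 0$. Since $\mathrm{SU}(2)$ acts on $f(\mathfrak{su}(2)^*) = \R^3$ through the double cover $\rho:\mathrm{SU}(2) \to \mathrm{SO}(3,\R)$ by rotations, each $f(Ad^*_{g_i}\varpi_1)$ lies on the sphere of radius $\|f(\varpi_1)\| = 1/2$ centered at the origin (see Remark \ref{SpinF}). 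Consequently $\|v_i\| = \lambda_i/2$.

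First I would force $v_3 = -(v_1 + v_2)$, which is the only possibility. Since we must have $\lambda_3 \in \R_{\geq 0}$ (recall $\Delta$ is a ray in $\mathfrak{t}$) and $\|v_3\| = \lambda_3/2$, the value
\[
\lambda_3 = 2\|v_1+v_2\|
\]
is uniquely determined by the given data $(g_1,\lambda_1,g_2,\lambda_2)$; in particular when $\lambda_1,\lambda_2$ vary while $g_1,g_2$ are fixed this gives the claimed uniqueness of $\lambda_3$.

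Next I would produce $g_3$. If $\lambda_3 = 0$ then any $g_3 \in \mathrm{SU}(2)$ solves the equation, and the equivalence relation $\sim$ defining $\mathcal{E}T^*(\mathrm{SU}(2))$ identifies all the resulting classes $[g_3,0]$, so the choice is immaterial. If $\lambda_3 > 0$, then $v_3/\lambda_3$ lies on the sphere of radius $1/2$, which is precisely the coadjoint orbit of $\varpi_1$ (identified with $f(\varpi_1)$). By transitivity of $\mathrm{SU}(2)$ on this orbit (inherited from transitivity of $\mathrm{SO}(3,\R)$ on spheres in $\R^3$ together with surjectivity of $\rho$), we can choose $g_3 \in \mathrm{SU}(2)$ with $f(Ad^*_{g_3}\varpi_1) = v_3/\lambda_3$, which gives $\lambda_3 Ad^*_{g_3}\varpi_1 = -\lambda_1 Ad^*_{g_1}\varpi_1 - \lambda_2 Ad^*_{g_2}\varpi_1$ as desired.

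Finally, for the uniqueness of $g_3$ up to right $T_{\mathrm{SU}(2)}$ multiplication, observe that the isotropy group of $\varpi_1$ for the coadjoint action of $\mathrm{SU}(2)$ is exactly $T_{\mathrm{SU}(2)}$ (the Cartan subgroup of diagonal matrices fixes $\varpi_1$, and no larger subgroup does since $\varpi_1$ is a regular element of $\mathfrak{t}^*$). Thus given $g_3, g_3'$ solving the same equation with $\lambda_3 > 0$, we have $Ad^*_{g_3}\varpi_1 = Ad^*_{g_3'}\varpi_1$, so $g_3^{-1}g_3' \in T_{\mathrm{SU}(2)}$, i.e.\ $g_3' = g_3 t$ for some $t \in T_{\mathrm{SU}(2)}$. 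There is no real obstacle here beyond keeping the identifications and normalizations straight; the content of the lemma is essentially the elementary fact that in $\R^3$ a triangle is determined up to rotation about its last edge by its first two edges.
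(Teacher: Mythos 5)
Your proof is correct and follows essentially the same route as the paper's: both reduce the closing equation to a vector equation in $\R^3$, obtain $\lambda_3$ from the norm of $-(v_1+v_2)$, get existence of $g_3$ from transitivity of $\mathrm{SU}(2)$ on the coadjoint orbit (the paper phrases this via the Hopf fibration $g \mapsto g\varpi_1$), and get uniqueness up to right multiplication by $T_{\mathrm{SU}(2)}$ from the stabilizer of $\varpi_1$. The only cosmetic difference is that the paper runs a case analysis on which of $\lambda_1,\lambda_2$ vanish and makes some explicit choices (e.g.\ $g_3 = g_1\varrho$), whereas you treat all cases uniformly.
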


\begin{proof}
If both $\lambda_1$ and $\lambda_2$ are zero then we take $\lambda_3$ to be zero and $g_3 = 1$. Suppose then that exactly one of $\lambda_1$ and $\lambda_2$ is nonzero.  Without loss of generality, assume $\lambda_1$ is nonzero. Put $\lambda_3 = \lambda_1$ and choose $g_3 = g_1\varrho$.  Hence we have
$$\lambda_1g_1\varpi_1 = -\lambda_3g_3\varpi_1.$$
Note that the map $S^3 \to S^2$ given by $g \to g\varpi_1$ is the Hopf fibration and consequently we may solve the equation $g\varpi_1 = -g_1\varpi_1$ locally in a neighborhood of $g_1$, in particular we can solve this equation in a neighborhood of $(g_1, 1)$ of $\mathrm{SU}(2) \times \mathrm{SU}(2)$.
The number $\lambda_3 \geq 0$ is uniquely determined  but $g_3$ is defined only up to right multiplication by an element of $\T$.
Now assume that both $\lambda_1$ and $\lambda_2$ are nonzero.  First assume that the sum $\lambda_1g_1\varpi_1 +\lambda_2g_2\varpi_1$ is zero, then we are forced to take $\lambda_3 = 0$ and we may choose any framing data we wish, so we choose $g_3 = 1$. Assume then that $\lambda_1g_1\varpi_1 + \lambda_2g_2\varpi_1$ is nonzero. We choose $g_3 \in \mathrm{SU}(2)$ and $\lambda_3$ so that $$\lambda_1g_1\varpi_1 + \lambda_2g_2\varpi_1 = -\lambda_3g_3\varpi_1.$$
Again, $\lambda_3 \neq 0$ is uniquely determined, and $g_3$ is defined up to right multiplication by an element of $\T$. We define the required framed triangle $T$ by
$$T = (g_1, \lambda_1 \varpi_1), (g_2, \lambda_2 \varpi_1), (g_3, \lambda_3 \varpi_1)$$
\end{proof}
We make use of Lemma \ref{framing} to extend any framing of
the edges of $P$ to an element of $E_n^{\tree}(\mathrm{SU}(2))$. The resulting object will be a normalized element
$\mathbf{T} \in \mu_{\mathrm{SU}(2)^{n-2}}^{-1}(0) \cap \mu_{\T_d^-}^{-1}(0) \subset  E_n^{\mathcal{T}}(\mathrm{SU}(2))$.

\begin{lemma}\label{extend}
Suppose we are given an imploded spin framing $\mathbf{E}$ of the edges of the model convex $n$-gon $P$ which is of momentum level zero for the action
of $\mathrm{SU}(2)$. Then we may extend the framing by choosing
imploded spin-frames on the diagonals and an enumeration of the triangles
of $P$ so that the resulting element $\mathbf{T} \in E_n^{\tree}(\mathrm{SU}(2))$
is
\begin{enumerate}
\item normalized
\item of momentum level zero for $\mathrm{SU}(2)^{n-2}$
\item of momentum level zero for $\mathbb{T}_d ^-$.
\end{enumerate}

Moreover any two such extensions are equivalent under the action of
the torus $\mathbb{T}_d^-$.

\end{lemma}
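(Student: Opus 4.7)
The plan is to establish existence by induction on $n$ and the uniqueness statement by a direct fiber-based argument. For the base case $n=3$ the tree $\tree$ is a single tripod with no diagonals, so $\mathbf{T}=\mathbf{E}$ already satisfies condition~(2) (the $\mathrm{SU}(2)$-momentum vanishing is the closing hypothesis on $\mathbf{E}$) while (1) and (3) are vacuous.

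For the inductive step I will use the paper's ordering convention to single out the last tripod $\tau_{n-2}$, which carries a matched pair of leaves $i, i+1$; its third edge corresponds to a diagonal $d$ of $P$. Feeding the known boundary frames $(g_i, \lambda_i \varpi_1)$ and $(g_{i+1}, \lambda_{i+1} \varpi_1)$ into Lemma~\ref{framing} produces $g^*\in\mathrm{SU}(2)$ and $\lambda^*\ge 0$ satisfying the closing equation~\eqref{closingequation}, and I install $(g^*, \lambda^*\varpi_1)$ as the frame on the $\tau_{n-2}$-side of $d$. Next, form the reduced $(n-1)$-gon $P'$ by amalgamating the consecutive edges $i, i+1$ into a single edge identified with $d$, whose dual tree is $\tree'=\tree\setminus\tau_{n-2}$; the new leaf of $\tree'$, representing the side of $d$ opposite $\tau_{n-2}$, is framed by $(g^*\varrho^{-1}, \lambda^*\varpi_1)$. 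Using $\mathrm{Ad}^*(\varrho^{-1})\varpi_1=-\varpi_1$ together with the tripod closing at $\tau_{n-2}$ and the $\mathrm{SU}(2)$-momentum vanishing of $\mathbf{E}$, the boundary framing $\mathbf{E}'$ of $P'$ so obtained again has $\mathrm{SU}(2)$-momentum zero, so the inductive hypothesis supplies $\mathbf{T}'\in E_{n-1}^{\tree'}(\mathrm{SU}(2))$ satisfying (1)--(3). Gluing $\mathbf{T}'$ with the tripod framing on $\tau_{n-2}$ yields the required $\mathbf{T}$: conditions (1) and (3) at diagonals other than $d$ and condition (2) at tripods other than $\tau_{n-2}$ are all inherited from $\mathbf{T}'$, (2) at $\tau_{n-2}$ is the Lemma~\ref{framing} closing, (3) at $d$ holds because both sides carry the common value $\lambda^*$, and normalization at $d$ holds by construction of the $\varrho$-flipped frame.

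For the moreover statement I will compare two extensions $\mathbf{T}, \tilde{\mathbf{T}}$ diagonal by diagonal. At each diagonal $d$, condition~(3) forces the two edges of $\tree^D$ over $d$ to share a common $\lambda$; moreover this common $\lambda$ equals the Euclidean length of the unique vector in $\R^3$ obtained by summing edge-vectors of $\mathbf{E}$ over one of the two sub-polygons cut out by $d$ (using the closings forced by condition~(2)), so both $\lambda$ and the diagonal vector itself are intrinsic to $\mathbf{E}$ and therefore agree between $\mathbf{T}$ and $\tilde{\mathbf{T}}$. The Hopf-fiber description underlying Lemma~\ref{phasefactor} then says the two frames on the later side of $d$ differ by right multiplication by some $t\in T_{\mathrm{SU}(2)}$; because both extensions are normalized, the frames on the opposite (earlier) side must differ by $\varrho^{-1} t \varrho = t^{-1}$, which is precisely the antidiagonal $S^1$ action in the two-torus attached to $d$. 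Assembling over all $n-3$ diagonals yields the required element of $\T_d^-$ carrying $\mathbf{T}$ to $\tilde{\mathbf{T}}$.

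The main obstacle is keeping the conventions straight: the identity $\varrho^{-1} t \varrho = t^{-1}$ for $t \in T_{\mathrm{SU}(2)}$ is exactly what converts the $T_{\mathrm{SU}(2)}$-ambiguity of Lemma~\ref{framing} into the antidiagonal structure $\T_d^-$ rather than the full diagonal $\T_d$, and this must be tracked carefully while inducting through the forest $\tree^D$. A secondary point is handling degenerate diagonals where $\lambda^*=0$: by the definition of the implosion the class $[g, 0\cdot\varpi_1]$ is independent of $g$, so the choice of $g^*$ is irrelevant, the $\T_d^-$-orbit over $d$ collapses to a point, and both existence and uniqueness become automatic at such diagonals.
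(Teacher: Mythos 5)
Your proposal is correct and follows essentially the same route as the paper: existence by induction on $n$, peeling off a tripod at a matched pair of leaves, invoking Lemma~\ref{framing} to frame the new diagonal and $\varrho$-flipping the frame passed to the reduced $(n-1)$-gon, with the degenerate $\lambda=0$ case dispatched by the implosion identification. Your uniqueness argument is organized globally (diagonal by diagonal, using that conditions (1)--(2) pin down each diagonal vector from $\mathbf{E}$) rather than inductively as in the paper, but it rests on the identical mechanism --- the $T_{\mathrm{SU}(2)}$-ambiguity in Lemma~\ref{framing} converted by $\varrho^{-1}t\varrho=t^{-1}$ into the antidiagonal $(t,t^{-1})$ --- so it is a reorganization rather than a different proof.
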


\begin{proof}
We prove the existence of the framing by induction on $n$. We take as base
case $n=3$, here there is nothing to prove.

Let $P$ be a model convex $n$-gon  with an imploded spin framing on
its edges. Define $P_{i_1} =P$.
Choose a triangle $T$ in the triangulation of $P$ which shares two edges with $P$.
Hence two sides of $T$ are framed. Let $e$ be the remaining side (a diagonal
of $P$). Suppose $e$ has length $\lambda$. Define $T_{i_1} = T$.
Apply Lemma \ref{framing} to frame the third side $e$ such that the resulting
framing is of momentum zero level for $\mathrm{SU}(2)$. Split $T$ off from $P$
to obtain an $n-1$--gon $P_{i_2}$. Suppose the framing on $e$
is $[g, \lambda \varpi_1]$.  Give the edge $e'$ of $P_{i_2}$ which is not yet
framed the framing $[g \varrho, \lambda \varpi_1]$. We obtain the existence
part of the lemma by induction.

Now we prove uniqueness. Suppose that $\mathbf{T}'$ is another extension of $\mathbf{E}$, and
that $\mathbf{T}'$ assigns the frame $[h, \lambda \varpi_1]$ to $e$ with $h \neq g$. 
Then again by Lemma \ref{framing} either $\lambda = 0$ or there exists an element $t \in T_{\mathrm{SU}(2)}$ such that
$h = gt$. In case $\lambda \neq 0$ the frame on $e'$ is necessarily 
$[gt \varrho, \lambda \varpi_1] = [ g \varrho t^{-1}, \lambda \varpi_1]$. Thus the new frames
on the pair of equivalent edges of $\tree^D$ are
$[gt ,\lambda \varpi_1]$ followed by $[g\varrho t^{-1},\lambda \varpi_1]$.
Note that the framing of $P_{i_2}$ obtained by restriction of $\mathbf{T}'$ 
satisfies the three properties in the statement above, and also it is an extension of the framing of its boundary
induced by $\mathbf{T}'$. Hence for the case $\lambda \neq 0$
the induction step of the uniqueness part of the lemma is completed.
In case $\lambda = 0$ then we may represent both frames by
$[I ,\lambda \varpi_1]$. But since $\lambda = 0$ we have (by definition of implosion) for any $t \in
T_{\mathrm{SU}(2)}$
$$[I ,\lambda \varpi_1] = [t ,\lambda \varpi_1] = [t^{-1} ,\lambda \varpi_1].$$
This completes the induction step in the uniqueness part of the lemma.
\end{proof}

By the above lemma we obtain a well-defined map

$$\widetilde{\Psi}_n^{\tree}: E_n(\mathrm{SU}(2)) \to (\mu_{\mathrm{SU}(2)^{n-2}}^{-1}(0) \cap \mu_{\mathbf{T}_d^-}^{-1}(0))\q \T_d^-.$$
We prove this map descends to $V_n^{\tree}$. We let
$\widehat{\Psi}_n^{\tree}$ denote the map obtained from $\widetilde{\Psi}_n^{\tree}$
by postcomposing it with projection to $P_n^{\tree}(\mathrm{SU}(2))$.
Let us note the following obvious refinement of Lemma \ref{extend}.
Suppose $P_1$ is a subpolygon of $P$ which meets $P$ along
diagonals of length zero in the realization of
$P$ given by the framing of the edges. Then given any extension of
the framing of $P$ to all diagonals of $P_1$ we may find an extension of
the framing of $P$ to all diagonals of $P$ agreeing with the given one on $P_1$.

\begin{lemma}
The map $\widehat{\Psi}_n^{\tree}$ descends to
$V_n^{\tree}$.
\end{lemma}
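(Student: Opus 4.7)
The plan is to show that if $\mathbf{E} \in \widetilde{P}_n(\mathrm{SU}(2))$ lies in the stratum with vanishing diagonal set $S$ and $\mathbf{E}' = h \cdot \mathbf{E}$ for some $h = (h_C)_C \in \mathrm{SU}(2)^{D(S)}$, then $\widehat{\Psi}_n^{\tree}(\mathbf{E}) = \widehat{\Psi}_n^{\tree}(\mathbf{E}')$ in $P_n^{\tree}(\mathrm{SU}(2))$. The special case $S = \emptyset$, where $\mathrm{SU}(2)^{D(\emptyset)}$ is a single diagonally acting copy of $\mathrm{SU}(2)$, simultaneously handles the descent from $E_n(\mathrm{SU}(2))$ through $P_n(\mathrm{SU}(2))$; the general case then yields factorization through $V_n^{\tree} = P_n(\mathrm{SU}(2))/\sim_\tree$.

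The central construction is a lift of $h$. First I would apply Lemma \ref{extend} to choose a normalized extension $\mathbf{T} \in E_n^{\tree}(\mathrm{SU}(2))$ of $\mathbf{E}$ lying in the common zero level of $\mathrm{SU}(2)^{n-2}$ and $\T_d^-$. Since each tripod of $\tree^D$ belongs to a unique component of $\tree^S$, I can then define an element $\tilde h \in \mathrm{SU}(2)^{n-2}$ by setting $\tilde h(\tau) = h_C$ whenever $\tau$ lies in the component $C$. The main verification is that $\tilde h \cdot \mathbf{T}$ is again a normalized extension, this time of $\mathbf{E}'$, in the sense of Lemma \ref{extend}. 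Its restriction to a distinguished edge of $\tree^D$ belonging to $C$ is obtained from the corresponding frame of $\mathbf{E}$ by left multiplication by $h_C$, reproducing $\mathbf{E}'$ there. Left translation by $\tilde h$ commutes with the right $\T_d^-$ action and preserves norms, so both zero-momentum conditions survive. For the $\varrho$-flip condition on a pair of equivalent inner edges $(\tau_i,k) \sim (\tau_j,\ell)$ coming from a diagonal $d$: if $d \notin S$ then $\tau_i$ and $\tau_j$ lie in the same component of $\tree^S$, so the same $h_C$ left-multiplies both frames and the relation $g_{(\tau_j,\ell)} = g_{(\tau_i,k)}\varrho$ is preserved; if $d \in S$ then the shared $\lambda$ on both frames vanishes, and by the implosion equivalence both frames represent the single point $[e,0]$ of $\mathcal{E}T^*(\mathrm{SU}(2))$, so the normalization condition holds trivially.

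The conclusion is then $\widetilde{\Psi}_n^{\tree}(\mathbf{E}') = [\tilde h \cdot \mathbf{T}]$ in $(\mu_{\mathrm{SU}(2)^{n-2}}^{-1}(0) \cap \mu_{\T_d^-}^{-1}(0)) \q \T_d^-$, and since $\tilde h \in \mathrm{SU}(2)^{n-2}$ acts trivially on the further quotient $P_n^{\tree}(\mathrm{SU}(2)) = X_n^{\tree}(\mathrm{SU}(2)) \q \T_d^-$, we obtain $\widehat{\Psi}_n^{\tree}(\mathbf{E}') = [\mathbf{T}] = \widehat{\Psi}_n^{\tree}(\mathbf{E})$. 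The hardest part of the argument is the normalization check on an $S$-edge, where the lift $\tilde h$ genuinely acts by distinct elements $h_C \neq h_{C'}$ on the two paired frames; what rescues the argument is the implosion collapse $[g,0] = [g',0]$ in $\mathcal{E}T^*(\mathrm{SU}(2))$, which is precisely the degeneration that makes the Kamiyama--Yoshida equivalence relation compatible with the $\mathrm{SU}(2)^{n-2}$ symplectic quotient built into $X_n^{\tree}(\mathrm{SU}(2))$.
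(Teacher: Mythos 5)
Your proposal is correct and follows essentially the same route as the paper: both arguments lift the component-wise $\mathrm{SU}(2)^{D(S)}$ action to an element of $\mathrm{SU}(2)^{n-2}$ that is constant on the trinodes of each component of $\tree^{S}$, check that the resulting extension of $\mathbf{E}'$ is still a normalized zero-momentum extension, and then invoke the uniqueness clause of the extension lemma together with the triviality of the $\mathrm{SU}(2)^{n-2}$ action on the quotient. The only difference is cosmetic: you treat all components of $\mathrm{SU}(2)^{D(S)}$ at once and spell out the normalization check on zero diagonals (which the paper leaves to the reader), while the paper reduces by transitivity to one component at a time.
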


\begin{proof}
We first verify that $\widehat{\Psi}_n^{\tree}$ descends to $P_n(\mathrm{SU}(2))$.
In fact we will show it is equivariant under $\mathrm{SU}(2)$ where the action
of $\mathrm{SU}(2)$ on $E_n^{\tree}(\mathrm{SU}(2))$ is by the diagonal in $\mathrm{SU}(2)^{n-2}$.
Let $f \in \mathrm{SU}(2)$. Let $\mathbf{E}$ be a framing of the
edges of $P$ and $\mathbf{T}$ be the extension of Lemma \ref{extend}.
Apply Lemma \ref{extend} to extend $f \mathbf{F}$ to an
element $\mathbf{T}'$ of $E^{\tree}_n(\mathrm{SU}(2))$
so that this extension also satisfies the three properties in
the statement of Lemma \ref{extend}.  We wish to prove that the resulting element
of  $E_n^{\tree}(\mathrm{SU}(2))$ is in the
$\mathbb{T}_d^-$-orbit of $f\mathbf{T}$. But since any two extensions satisfying the three properties of Lemma \ref{extend} are equivalent under $\mathbb{T}_d^-$
it suffices to prove that the image of {\it any one} of the above extensions is   
equal to $f \mathbf{T}$. But there is an
extension that is obviously equivalent to $f \mathbf{T}$ namely
$f \mathbf{T}$ itself.  

It remains to check that $\widehat{\Psi}_n^{\tree}$ descends to $V_n^{\tree}$.
Let $\mathbf{E}$ and $\mathbf{T}$ be as above. Let $\mathbf{E}'$ be another framing
of the edges of $P$ in the same $\tree$-congruence class as
$\mathbf{E}$. We have to prove that
$$\widehat{\Psi}_n^{\tree}(\mathbf{E}) = \widehat{\Psi}_n^{\tree}(\mathbf{E}').$$
By transitivity of the $\tree$-congruence relation it suffices to prove this
for $\mathbf{E}'$.

Let $C_1$ be a component of
$\tree^{S(\mathbf{T})}$. We may assume that $C_1$ contains leaf edges.  
Apply an element $f \in \mathrm{SU}(2)$ to the
frames of the leaf edges of $\tree_1$ to obtain a new framing $\mathbf{E}'$
of the edges of $P$. We now prove
$$\widehat{\Psi}_n^{\tree}(\mathbf{E}) = \widehat{\Psi}_n^{\tree}(\mathbf{E}')$$
for this choice of $\mathbf{E}'$.
Apply Lemma \ref{extend} to  $\mathbf{E}'$ to obtain an element $\mathbf{T}'$
of  $E_n^{\tree}(\mathrm{SU}(2))$ such that the three properties
in Lemma \ref{extend} are satisfied.  We must prove that $\mathbf{T}$
and $\mathbf{T}'$  have the same image in $P_n^{\tree}(\mathrm{SU}(2))$.
As before in the paragraph above it suffices to find one extension
$\mathbf{T}'$ of  $\mathbf{E}'$ which has the same image as $\mathbf{T}$.
We now construct such an extension $\mathbf{T}''$
The subtree $C_1$ corresponds to a subpolygon $P'$ of $P$ such that
the edges of $P'$ that correspond to diagonals of $P$  have zero length.  Define
$\mathbf{T}''$ by defining the frame on an edge $e$ of $\tree^D$ that
is in $C_1$ to be $f$ applied to the value of $\mathbf{T}$ on $e$
and if $e$ is not in  $C_1$ then define the value of $\mathbf{T}''$
on that edge to be the value of $\mathbf{T}$ on that edge.
We leave to the reader that $\mathbf{T}''$ satisfies the three properties
of Lemma \ref{extend}. Now we construct an element $\tilde{f}  \in \mathrm{SU}(2)^{n-2}$
such that
$$\mathbf{T}'' = \tilde{f}\mathbf{T}.$$
Recall that an element of $\mathrm{SU}(2)^{n-2}$ may be regarded as a function
from the trivalent nodes of $\tree$ to $\mathrm{SU}(2)$.
We define $\tilde{f}$ to be the function that is $I$ if the trivalent node
is not in $C_1$, and is $f$ if the trinode is in $C_1$. Since an edge $e$ of $\tree$ is in $C_1$
if and only if the trivalent node incident to $e$ we do indeed have
$$\mathbf{T}'' = \tilde{f}\mathbf{T}.$$
\end{proof}

Because $\Psi_n^{\tree}$ is the inverse of a closed map we have

\begin{lemma}
$\Psi_n^{\tree}$ is continuous.
\end{lemma}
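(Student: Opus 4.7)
The plan is to deduce the continuity of $\Psi_n^{\tree}$ from the point-set-topology fact that a continuous closed bijection has a continuous inverse. All the hard work has already been done: the preceding lemmas establish that $\Phi_n^{\tree}$ is continuous and proper, and Lemma \ref{extend} plus Lemma \ref{WDPhi} together show that $\Phi_n^{\tree}$ and $\Psi_n^{\tree}$ are mutually inverse. So the proof reduces to a formal three-line deduction.

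First I would verify the bijectivity. By construction $\Phi_n^{\tree}$ forgets all information living on the inner edges of $\tree^D$ and keeps only the framing of the leaf edges, while $\Psi_n^{\tree}$ is defined by extending a leaf-edge framing across the inner edges via Lemma \ref{framing} and then $\varrho$-flipping to normalize across identifications. Restricting after extending clearly returns the original leaf-edge data, so $\Phi_n^{\tree}\circ \Psi_n^{\tree} = \mathrm{id}$ on $V_n^{\tree}$. In the other direction, given a normalized representative in $\mu_{\mathrm{SU}(2)^{n-2}}^{-1}(0) \cap \mu_{\T_d^-}^{-1}(0)$, restricting to the leaf edges and then reconstructing produces a new normalized extension of the original leaf framing, and Lemma \ref{extend} guarantees that any two such extensions are equivalent under $\T_d^-$. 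Since the quotient by $\T_d^-$ is built into the definition of $P_n^{\tree}(\mathrm{SU}(2))$, this shows $\Psi_n^{\tree}\circ \Phi_n^{\tree} = \mathrm{id}$.

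Next I would upgrade properness to closedness. Both $P_n^{\tree}(\mathrm{SU}(2))$ (an affine toric variety) and $V_n^{\tree}$ (a quotient of a closed subset of $(\C^2)^n$ by a continuous action of a compact group) are Hausdorff and locally compact, so the standard implication ``proper $\Rightarrow$ closed'' applies to $\Phi_n^{\tree}$. Consequently, for any closed subset $C \subset P_n^{\tree}(\mathrm{SU}(2))$,
\[
(\Psi_n^{\tree})^{-1}(C) \;=\; \Phi_n^{\tree}(C),
\]
which is closed in $V_n^{\tree}$. This is precisely the continuity of $\Psi_n^{\tree}$.

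I do not expect any real obstacle; the only step that requires a moment's thought is confirming that the two constructions are honest inverses at the level of equivalence classes, and this is exactly the uniqueness statement embedded in Lemma \ref{extend}. Everything else is formal.
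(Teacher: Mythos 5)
Your proof is correct and is essentially the paper's own argument: the paper likewise deduces continuity of $\Psi_n^{\tree}$ in one line from the fact that it is the inverse of the closed (because proper and continuous) map $\Phi_n^{\tree}$, with the mutual-inverse verification supplied by the surrounding lemmas. Your additional checks of bijectivity and of the proper-implies-closed step only make explicit what the paper leaves implicit.
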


Finally we have

\begin{lemma}
 $\Phi^{\tree}_n$ and $\Psi^{\tree}_n$ are inverses to one another
\end{lemma}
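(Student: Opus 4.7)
The plan is to verify the two compositions separately, using the uniqueness clause of Lemma \ref{extend} as the essential ingredient; once the definitions are unpacked the argument is almost formal.

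First I would check that $\Phi_n^{\tree}\circ\Psi_n^{\tree}$ is the identity on $V_n^{\tree}$. Given a class $[\mathbf{E}]\in V_n^{\tree}$ represented by $\mathbf{E}\in E_n(\mathrm{SU}(2))$ (a framing of the leaf edges of $\tree$), the map $\widetilde{\Psi}_n^{\tree}$ produces, by Lemma \ref{extend}, a normalized element $\mathbf{T}\in E_n^{\tree}(\mathrm{SU}(2))$ lying in $\mu_{\mathrm{SU}(2)^{n-2}}^{-1}(0)\cap\mu_{\T_d^-}^{-1}(0)$ whose restriction to the distinguished (leaf) edges of $\tree^D$ is exactly $\mathbf{E}$. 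Applying $\widetilde{\Phi}_n^{\tree}$ projects $\mathbf{T}$ back onto the distinguished edges, recovering $\mathbf{E}$ on the nose. Passing to the appropriate quotients this shows $\Phi_n^{\tree}(\Psi_n^{\tree}([\mathbf{E}]))=[\mathbf{E}]$.

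Next I would verify $\Psi_n^{\tree}\circ\Phi_n^{\tree}=\mathrm{id}_{P_n^{\tree}(\mathrm{SU}(2))}$. Given $[\mathbf{T}]\in P_n^{\tree}(\mathrm{SU}(2))$, Lemma \ref{norm} lets me choose a normalized representative $\mathbf{T}$ lying in the zero level set of both $\mu_{\mathrm{SU}(2)^{n-2}}$ and $\mu_{\T_d^-}$. Then $\Phi_n^{\tree}([\mathbf{T}])$ is the $\tree$-congruence class of $\mathbf{E}:=\widetilde{\Phi}_n^{\tree}(\mathbf{T})$, the restriction of $\mathbf{T}$ to the leaves. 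Applying $\Psi_n^{\tree}$ produces the class of some extension $\mathbf{T}'$ of $\mathbf{E}$ that is normalized and satisfies the two momentum-zero conditions. Now the key observation is that $\mathbf{T}$ itself \emph{is} precisely such an extension of its own leaf restriction $\mathbf{E}$. By the uniqueness statement in Lemma \ref{extend}, any two such extensions differ by the action of $\mathbb{T}_d^-$, so $\mathbf{T}$ and $\mathbf{T}'$ represent the same point in $P_n^{\tree}(\mathrm{SU}(2))=(\mu_{\mathrm{SU}(2)^{n-2}}^{-1}(0)\cap\mu_{\T_d^-}^{-1}(0))\q\T_d^-$.

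The only subtlety to watch is well-definedness under the various choices: for the first composition, a different representative $\mathbf{E}'\sim_{\tree}\mathbf{E}$ gives a different extension, but by the previous lemma $\widehat{\Psi}_n^{\tree}$ is constant on $\tree$-congruence classes; for the second composition, a different normalization of $\mathbf{T}$ differs from $\mathbf{T}$ by an element of $\mathrm{SU}(2)^{n-2}$ and again the argument goes through because we have passed to that quotient. I expect no real obstacle here: the content is entirely contained in Lemma \ref{extend}, and the proof is essentially a formal unwinding of the definitions together with its uniqueness clause.
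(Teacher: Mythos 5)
Your argument is correct and matches the paper's proof: the composition $\Phi_n^{\tree}\circ\Psi_n^{\tree}$ is the identity because neither map alters the framing of the leaf edges, and $\Psi_n^{\tree}\circ\Phi_n^{\tree}$ is the identity because both $\mathbf{T}$ and its image are extensions of the same leaf framing satisfying the three properties of Lemma \ref{extend}, hence lie in the same $\mathbb{T}_d^-$-orbit by the uniqueness clause. No gaps.
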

\begin{proof}
It is obvious that $\Phi^{\tree}_n \circ \Psi^{\tree}_n = I_{P_n(\mathrm{SU}(2))}$.
This is because neither of the two map changes the framing of the edges of $P$.
Now let $\mathbf{T}$ be a normalized element of  $\mathrm{SU}(2)^{n-2} \times \mathbb{T}_d^-$-momentum level zero in $E_n^{\tree}(\mathrm{SU}(2))$. Let $\mathbf{F}$ be the restriction
of $\mathbf{T}$ to the edges of $P$. Thus $\mathbf{T}$ is an extension
of $\mathbf{F}$ satisfying the three properties of Lemma \ref{extend}. But
$\Psi^{\tree}_n \circ \Phi^{\tree}_n(\mathbf{T})$ also has restriction
to $P$ given by $\mathbf{F}$ and also satisfies the three properties of
Lemma \ref{extend}. Hence $\mathbf{T}$ and $\Psi^{\tree}_n \circ \Phi^{\tree}_n(\mathbf{T})$ are in the same $\mathbb{T}_d^-$-orbit and
hence their images in $P_n^{\tree}(\mathrm{SU}(2))$ coincide.
\end{proof}
It is clear from  the definition of $\Phi^{\tree}_n$ that this map intertwines the actions of
$T_{\mathrm{SU}(2)^n}$ and $\T_e$.
This proves Theorem \ref{noncompact}.

\subsection{Pulling back Hamiltonian functions from $P_n^{\tree}(\mathrm{SU}(2))$ to $V_n^{\tree}$}

In the previous subsection we constructed a homeomorphism
$\Psi_n^{\tree}:V_n^{\tree} \to P_n^{\tree}(\mathrm{SU}(2))$ with inverse
$\Phi_n^{\tree}$. The torus $\mathbb{T}$ acts on $P_n^{\tree}(\mathrm{SU}(2))$
such that the Hamiltonian for the circle factor corresponding to the edge $(\tau_i,j)$ of $\tree^D$ is given by $f_{\tau_i,j}(A) = (1/2)|z_j(\tau_i)|^2 + |w_j(\tau_i)|^2$.  In spin-framed coordinates this is $f_{\tau_i,j}(A) = \lambda_{(\tau_i, j)}$.
We will compute the pullback of these functions to $V_n^{\tree}$ via $\Psi_n^{\tree}$.

\begin{definition}
Let $v_{(\tau, i)}^{\tree}: E_n^{\tree}(\mathrm{SU}(2)) \to \mathfrak{su}(2)^*$
by the composition of the projection on the $(\tau, i)$-th factor
of $E_n^{\tree}(\mathrm{SU}(2))$ with the function $h$ from subsection \ref{frommatricestopolygons}.
Let $v_e: E_n(\mathrm{SU}(2)) \to \mathfrak{su}(2)^*$ be the composition of the projection
on the $e$-th factor of $E_n(\mathrm{SU}(2))$ with the function $h$ from subsection \ref{frommatricestopolygons}.
\end{definition}

For a distinguished edge $(\tau, i)$ of $\tree^D$ let $e(\tau, i)$
be associated edge of the model convex planar $n$--gon.  Then
we have the following identity

$$(\tilde{\Phi}_n^{\tree})^*(v_{e(\tau, i)}) = v_{(\tau, i)}^{\tree}.$$

\subsubsection{Pulling back distinguished edge Hamiltonian functions}

In what follows
we will need to compute the pull-back of $f_{(\tau_i,j)}$ to $V_n^{\tree}$
for those distinguished edges  $(\tau_i,j)$.  Let $f$ be the map from subsection
\ref{frommatricestopolygons}, then we have

$$f_{\tau_i, j}(\mathbf{E}) = 2 \| f \circ v_{(\tau_i, j)}^{\tree}(\mathbf{T}) \|$$
for any normalized $\mathbf{T} \in \mu_{\mathrm{SU}(2)^{n-2} \times \T_d^-}^{-1}(0)$ which maps to $\mathbf{E}$.

\begin{proposition}\label{pullback}
For $A \in V_n^{\tree}$ we have
$$(\Psi^{\tree}_n)^* f_{\tau_i,j}(A) = 2 \|f \circ v_{e(\tau_i, j)}(\tilde{A})\|.$$
where $\tilde{A} \in E_n(\mathrm{SU}(2))$ maps to $A$.
\end{proposition}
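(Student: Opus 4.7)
The plan is to unwind the definition of $\Psi_n^{\tree}$ and combine two basic identities: the expression of $f_{\tau_i,j}$ in terms of the norm of the Hopf-type map $F$, and the restriction identity $(\tilde{\Phi}_n^{\tree})^*(v_{e(\tau,i)}) = v_{(\tau,i)}^{\tree}$ along distinguished edges that was noted just before the proposition. First I would choose a representative. Given $A \in V_n^{\tree}$, represent it by an imploded spin-framing $\mathbf{E} \in E_n(\mathrm{SU}(2))$ of the edges of $P$ (at $\mathrm{SU}(2)$-momentum level zero), and use Lemma \ref{extend} to produce a normalized extension $\mathbf{T} \in E_n^{\tree}(\mathrm{SU}(2))$ of joint momentum level zero for $\mathrm{SU}(2)^{n-2}$ and $\mathbb{T}_d^-$. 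By construction, $[\mathbf{T}] = \Psi_n^{\tree}(A)$ in $P_n^{\tree}(\mathrm{SU}(2))$, and the restriction of $\mathbf{T}$ to the distinguished edges of $\tree^D$ coincides with $\mathbf{E}$.

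Next I would compute the two sides independently. On $P_n^{\tree}(\mathrm{SU}(2))$, the Hamiltonian $f_{\tau_i,j}$ for the circle corresponding to the edge $(\tau_i,j)$ is $f_{\tau_i,j}(\mathbf{T}) = \tfrac{1}{2}(|z_j(\tau_i)|^2+|w_j(\tau_i)|^2)$. Lemma \ref{factoroftwo} gives $\|F(z,w)\| = \tfrac{1}{4}(|z|^2+|w|^2)$, and since $F = f\circ h$, we have
\[
f_{\tau_i,j}(\mathbf{T}) \;=\; 2\|F(z_j(\tau_i),w_j(\tau_i))\| \;=\; 2\|f\circ v_{(\tau_i,j)}^{\tree}(\mathbf{T})\|.
\]
This is exactly the identity recorded immediately before the proposition. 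When $(\tau_i,j)$ is distinguished, the formula $(\tilde{\Phi}_n^{\tree})^*(v_{e(\tau,i)}) = v_{(\tau,i)}^{\tree}$ yields $v_{(\tau_i,j)}^{\tree}(\mathbf{T}) = v_{e(\tau_i,j)}(\mathbf{E})$. Setting $\tilde A = \mathbf{E}$, we obtain
\[
(\Psi_n^{\tree})^* f_{\tau_i,j}(A) \;=\; f_{\tau_i,j}(\Psi_n^{\tree}(A)) \;=\; 2\|f\circ v_{e(\tau_i,j)}(\tilde A)\|,
\]
which is the asserted formula for this particular choice of lift.

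Finally I would argue that the right-hand side is independent of the lift $\tilde A \in E_n(\mathrm{SU}(2))$ representing $A$, so the identity holds for \emph{every} such $\tilde A$. Any two lifts of $A$ differ by a sequence of operations: the diagonal $\mathrm{SU}(2)$-action (passing from $E_n(\mathrm{SU}(2))$ to $P_n(\mathrm{SU}(2))$), the residual $T_{\mathrm{SU}(2)^n}$-action on imploded frames (which fixes the underlying vectors in $\R^3$ by Lemma \ref{framedvectors}), and the $\tree$-congruence moves (the $\mathrm{SU}(2)^{D(S)}$-action on the corresponding stratum $\widetilde P_n(\mathrm{SU}(2))^{[S]}$). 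In each case $f\circ v_{e(\tau_i,j)}$ is either unchanged or transformed by an $\mathrm{SO}(3,\R)$ element through the equivariance of $F$ (Lemma \ref{equivariance}) acting on a single edge vector, and $\mathrm{SO}(3,\R)$ preserves the Euclidean norm on $\R^3$. Hence $\|f\circ v_{e(\tau_i,j)}(\tilde A)\|$ is well-defined on $V_n^{\tree}$.

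The main obstacle is largely bookkeeping rather than substance: one must verify that the lift $\mathbf{T}$ produced by Lemma \ref{extend} actually has restriction to distinguished edges equal to $\mathbf{E}$ (which is immediate from the construction), and that the various group actions being quotiented — diagonal $\mathrm{SU}(2)$, $T_{\mathrm{SU}(2)^n}$ on frames, and the $\mathrm{SU}(2)^{D(S)}$ of $\tree$-congruence — all preserve the scalar $\|f\circ v_{e}\|$ on each edge. Since this norm is exactly the length of the corresponding edge vector in $\R^3$ and these group actions all act by rigid motions (or trivially on vectors), the required invariance is automatic, and the proof is complete.
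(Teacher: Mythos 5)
Your argument is correct and follows essentially the same route as the paper: both reduce the claim to the identity $f_{\tau_i,j}(\mathbf{E}) = 2\|f\circ v^{\tree}_{(\tau_i,j)}(\mathbf{T})\|$ for a normalized lift together with the restriction identity $(\tilde{\Phi}_n^{\tree})^*(v_{e(\tau,i)}) = v^{\tree}_{(\tau,i)}$ along distinguished edges. Your added paragraph checking that $\|f\circ v_{e(\tau_i,j)}\|$ is independent of the chosen lift $\tilde{A}$ is a point the paper leaves implicit, but it does not change the method.
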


\begin{proof}
The proposition follows from

$$(\Psi^{\tree}_n)^*(f_{\tau_i, j})(A) = f_{\tau_i, j}(\Psi^{\tree}_n(A)) = 
2 \| f \circ (\Psi^{\tree}_n)^*(v_{(\tau_i, j)}^{\tree})(\tilde{A}) \| = 
2 \| f \circ v_{e(\tau_i, j)}(\tilde{A}) \|.$$
\end{proof}

\begin{remark}
Note that $\| f \circ v_{e(\tau_i, j)}(\tilde{A}) \|$ is the length
of the $e(\tau_i, j)$ edge of $F_n(A)$.
\end{remark}

\subsubsection{Pullback of the internal edge Hamiltonian functions}

We also need to compute the $\Psi_n^{\tree}$ pullbacks
of the Hamiltonian functions $f_{(\tau_j, i)}$ for $(\tau_j, i)$
an internal edge of $\tree$, this will be important when we
wish to indentify the Hamiltonian flows of these pullbacks
with the bending flows. Note that $(\tau_j, i)$ corresponds
to a diagonal $d(\tau_j, i)$ in a model $n$-gon $P$.

\begin{lemma}
Let $\tree' \subset \tree$ be a connected subtree
such that every leaf of $\tree'$ except one, say $(\tau, k)$, is a
leaf of $\tree$. Then for any element $\mathbf{E} \in P_n^{\tree}(\mathrm{SU}(2))$
we can compute

$$f_{\tau, k}(\mathbf{E}) = \| \sum_{(\tau_i, j) \in dist(\tree')} v_{(\tau_i, j)}^{\tree}(\mathbf{T}) \|$$

For $\mathbf{T}$ a normalized element of $\mu_{\mathrm{SU}(2)^{n-2}\times\T_d^-}^{-1}(0)$ which
maps to $\mathbf{E}$.
\end{lemma}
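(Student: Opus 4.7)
My plan is to derive this identity from the closing condition of each individual triangle in $\tree'$, using the $\varrho$-flip normalization (Lemma \ref{rhoflip}) to cancel contributions along every interior edge of $\tree'$. First, unwinding the definitions one verifies
$$v_{(\tau_i, j)}^{\tree}([g, \lambda \varpi_1]) \;=\; \lambda\, \mathrm{Ad}^*_g(\varpi_1),$$
and the hypothesis $\mathbf{T} \in \mu_{\mathrm{SU}(2)^{n-2}}^{-1}(0)$ together with Lemma \ref{P_nStrat} gives, for every tripod $\tau_i$ of $\tree^D$, the triangle closing identity
$$\sum_{j=1}^{3} v_{(\tau_i, j)}^{\tree}(\mathbf{T}) \;=\; 0 \qquad \text{in } \mathfrak{su}(2)^*.$$

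Next I would sum this identity over all tripods $\tau_i$ whose corresponding triangle lies inside $\tree'$. In the resulting sum, every edge of $\tree^D$ incident to such a tripod occurs; those interior to $\tree'$ -- meaning pairs $(\tau_i, j)$, $(\tau_{i'}, j')$ corresponding under $\pi^D$ to one and the same diagonal of $P$ with both tripods $\tau_i, \tau_{i'}$ inside $\tree'$ -- appear twice, while the boundary (leaf) edges of $\tree'$ appear exactly once. For each interior pair the normalization of $\mathbf{T}$ forces $g_{(\tau_{i'}, j')} = g_{(\tau_i, j)}\varrho$ and $\lambda_{(\tau_{i'}, j')} = \lambda_{(\tau_i, j)}$; since $\varrho\, \varpi_1\, \varrho^{-1} = -\varpi_1$ one obtains
$$v_{(\tau_{i'}, j')}^{\tree}(\mathbf{T}) \;=\; -\, v_{(\tau_i, j)}^{\tree}(\mathbf{T}),$$
so every interior pair cancels. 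What survives is precisely the sum over the boundary edges of $\tree'$, which by hypothesis is the disjoint union of $dist(\tree')$ with the single exceptional edge $(\tau, k)$.

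The collapsed identity therefore reads
$$v_{(\tau, k)}^{\tree}(\mathbf{T}) \;+\; \sum_{(\tau_i, j) \in dist(\tree')} v_{(\tau_i, j)}^{\tree}(\mathbf{T}) \;=\; 0.$$
Taking norms, using the $\mathrm{SU}(2)$-invariance of the norm on $\mathfrak{su}(2)^*$, and invoking Lemma \ref{momentummap} and Corollary \ref{SpinLength} to identify $\|v_{(\tau, k)}^{\tree}(\mathbf{T})\|$ with $\lambda_{(\tau, k)} = f_{\tau, k}(\mathbf{E})$ in the normalization fixed earlier in the paper, yields the claimed formula. The one substantive step is the $\varrho$-flip cancellation, which is the algebraic reflection of the geometric fact that the length of a diagonal of a closed polygon equals the norm of the sum of edges on either side of the cut; the rest is bookkeeping, easily organized by the inductive decomposition of $\tree^D$ provided by Lemma \ref{trivalentinduction}.
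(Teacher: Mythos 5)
Your proof is correct and follows essentially the same route as the paper's: the paper simply asserts that the normalization and closing conditions make the distinguished edges of $\tree'$ together with the $(\tau,k)$ edge form a closed polygon, which is exactly what your tripod-by-tripod summation with $\varrho$-flip cancellation establishes in detail. The only cosmetic difference is that you make explicit the pairwise cancellation $v_{(\tau_{i'},j')}^{\tree}(\mathbf{T}) = -v_{(\tau_i,j)}^{\tree}(\mathbf{T})$ along interior edges (which the paper proves elsewhere, in showing $\widetilde{\Phi}_n^{\tree}(\mathbf{T})$ closes up), and you inherit the same normalization conventions for $\|\varpi_1\|$ that the paper uses to equate $f_{\tau,k}$ with $\lambda_{(\tau,k)}$.
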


\begin{proof}
First note that we compute $f_{\tau, k}$ by
taking half the length of the $(\tau, k)$ edge for
any normalized element $\mathbf{T} \in \mu_{\mathrm{SU}(2)^{n-2}\times\T_d^-}^{-1}(0)$
which maps to $\mathbf{E}$, this is equal to $\lambda_{(\tau, k)}$.
By the normalized condition and the closing condition imposed
on $\mathbf{T}$, the leaf edge of $\tree'$ form a closed polygon,
which implies that $\lambda_{(\tau, k)} =  \| \sum_{(\tau_i, j) \in dist(\tree')} \lambda_{(\tau_i, j)}(\mathbf{T}) Ad^*_{g_{(\tau_i,j)}(\mathbf{T})}(\varpi_1) \|$.
\end{proof}

The previous proposition allows us to conclude the following
theorem.

\begin{theorem}\label{diagonalpullback}
Let $d(\tau_j, i)(A)$ be the associated diagonal in
$F_n(A)$.

$$(\Psi_n^{\tree})^*(f_{(\tau_j, i)})(A) =  2\| d(\tau_j, i)(A)\|$$

\end{theorem}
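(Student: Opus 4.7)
The plan is to reduce the computation to the lemma immediately preceding the theorem, by cutting $\tree$ along the internal edge corresponding to $(\tau_j, i)$. Concretely, the edge $(\tau_j, i)$ of $\tree^D$ and its paired edge $(\tau_l, m)$ project to a single internal edge $\overline e$ of $\tree$, dual to the diagonal $d(\tau_j, i)$ of the reference polygon $P$. Removing $\overline e$ from $\tree$ disconnects it into two subtrees $\tree_1, \tree_2$, with $\tau_j \in \tree_1$. The subtree $\tree_1$ then satisfies the hypothesis of the preceding lemma: every leaf of $\tree_1$ is a leaf of $\tree$ except the single new leaf created by the cut, which is precisely the edge $(\tau_j, i)$ of $\tree^D$.

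Fix $A \in V_n^{\tree}$ and set $\mathbf{E} = \Psi_n^{\tree}(A) \in P_n^{\tree}(\mathrm{SU}(2))$; let $\mathbf{T}$ be any normalized lift of $\mathbf{E}$ to $E_n^{\tree}(\mathrm{SU}(2))$ and let $\tilde A \in E_n(\mathrm{SU}(2))$ be the distinguished-edge restriction of $\mathbf{T}$ (so that $\tilde A$ represents $A$). The preceding lemma, applied with $\tree' = \tree_1$ and $(\tau, k) = (\tau_j, i)$, yields
$$(\Psi_n^{\tree})^* f_{\tau_j, i}(A) \;=\; f_{\tau_j, i}(\mathbf{E}) \;=\; \Big\| \sum_{(\tau_s, t) \in \mathrm{dist}(\tree_1)} v_{(\tau_s, t)}^{\tree}(\mathbf{T}) \Big\|.$$
Using the identity $v_{(\tau_s, t)}^{\tree}(\mathbf{T}) = v_{e(\tau_s, t)}(\tilde A)$ valid on distinguished edges, the sum on the right-hand side becomes the sum of $v_e(\tilde A)$ over exactly those edges $e$ of $P$ lying on the $\tree_1$-side of the diagonal $d(\tau_j, i)$.

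Next I apply the map $f$ of \S \ref{frommatricestopolygons} to convert the statement to a genuinely Euclidean one. Then $f(v_e(\tilde A))$ is the $\R^3$ edge-vector of the $e$-th edge of the polygon $F_n(A)$, and the sum of these vectors over the $\tree_1$-side is, by closure of the sub-polygon cut off by $d(\tau_j, i)$, equal up to sign to the diagonal vector $d(\tau_j, i)(A)$. Consequently the $\R^3$-norm of the sum is $\|d(\tau_j, i)(A)\|$. Finally, converting from the natural $\mathfrak{su}(2)^*$-norm (in which $\|\varpi_1\|=1$) to the $\R^3$-norm (in which $\|f(\varpi_1)\| = 1/2$) introduces precisely the factor of $2$ already encountered in Proposition \ref{pullback}. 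Combining these steps gives $(\Psi_n^{\tree})^* f_{\tau_j, i}(A) = 2\|d(\tau_j, i)(A)\|$, as required.

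The hard part will be largely bookkeeping rather than conceptual: one must verify that $\tree_1$ really satisfies the hypotheses of the preceding lemma (so that the momentum-level-zero conditions on $\mathbf{T}$ genuinely enforce closure of the sub-polygon through $d(\tau_j, i)$), and one must track the normalization factor between the two natural norms on $\mathfrak{su}(2)^* \cong \R^3$ consistently, exactly as was done in the proofs of Proposition \ref{pullback} and the preceding lemma.
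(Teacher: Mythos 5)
Your proposal is correct and follows essentially the same route as the paper: apply the preceding lemma to the subtree cut off by the internal edge $(\tau_j,i)$, transfer the sum over distinguished edges to $\tilde A$ via the identity $(\tilde\Phi_n^{\tree})^*(v_{e(\tau,i)}) = v_{(\tau,i)}^{\tree}$, push forward by $f$ to $\R^3$ (which produces the factor of $2$), and identify the resulting sum of edge vectors with the diagonal vector by closure of the sub-polygon. Your added care about why $\tree_1$ satisfies the lemma's hypothesis and about the norm normalization is consistent with, and slightly more explicit than, the paper's three-line computation.
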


\begin{proof}
We have the following equation

$$(\Psi_n^{\tree})^*(f_{(\tau_j, i)})(A) = \| \sum_{(\tau_i, j) \in dist(\tree')} v_{(\tau_i, j)}^{\tree}(\tilde{\Psi}_n^{\tree}(\tilde{A})) \|.$$
The right hand side of this equation is equal to

$$2\| \sum_{(\tau_i, j) \in dist(\tree')} f \circ v_{e(\tau_i, j)}(\tilde{A}) \|.$$
This last expression is in turn equal to $2\|d(\tau_i, j)(A)\|$.
\end{proof}

This, along with Proposition \ref{pullback}, proves Theorem \ref{stratification}.

\subsection{The homeomorphism of projective varieties}\label{thirdtorusquotient}
In this subsection we will descend the homeomorphisms of affine varieties $\Phi_n^{\tree}$
and $\Psi_n^{\tree}$ to their projective quotients completing the proof
of Theorem \ref{firstauxiliarytheorem} and hence the proof of
Theorem \ref{firstmaintheorem}.
Recall that we defined  elements $\mathbf{t}(\lambda) \in T_{\mathrm{SU}(2)^n}$
and $\mathbf{t}_e(\lambda) \in \mathbb{T}_e$ for $\lambda \in S^1$. We note
$$\rho(\mathbf{t}(\lambda)) = \mathbf{t}_e(\lambda).$$
Since $\Psi^{\tree}_n$
is equivariant we have
\begin{equation}\label{degreeequivariance}
\Psi^{\tree}_n \circ \mathbf{t}(\lambda)  = \mathbf{t}_e(\lambda) \circ
\Psi^{\tree}_n.
\end{equation}

\begin{proposition}
The map $\Psi^{\tree}_n$ induces a homeomorphism from $W_n^{\tree}$
to the symplectic quotient of $P_n^{\tree}(\mathrm{SU}(2))$ by the grading
circle action.
\end{proposition}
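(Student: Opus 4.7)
The plan is to promote the homeomorphism $\Psi_n^{\tree}:V_n^{\tree}\to P_n^{\tree}(\mathrm{SU}(2))$ of Theorem \ref{noncompact} to the quotients on each side, using the equivariance identity \eqref{degreeequivariance}. Since $\rho:T_{\mathrm{SU}(2)^n}\to\mathbb{T}_e$ carries the diagonal circle $\mathbf{t}(\lambda)$ to $\mathbf{t}_e(\lambda)$, the map $\Psi_n^{\tree}$ intertwines the diagonal $S^1$-action on $V_n^{\tree}$ (inherited from the right $T_{\mathrm{SU}(2)^n}$-action on $\widetilde{P}_n(\mathrm{SU}(2))$) with the action of $\mathbf{t}_e(\lambda)$ on $P_n^{\tree}(\mathrm{SU}(2))$. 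By Theorem \ref{firstauxiliarytheorem}(2) (cf. Theorem \ref{polygonGrasspreciseversion}(3)) the latter, up to the standard square-root identification, is precisely the grading circle action, and the symplectic/GIT quotient of $P_n^{\tree}(\mathrm{SU}(2))$ by this action is $Q_n^{\tree}(\mathrm{SU}(2))$.

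The first step is to identify $W_n^{\tree}$ as a quotient of $V_n^{\tree}$ by the diagonal circle in $T_{\mathrm{SU}(2)^n}$ (restricted to the perimeter-$1$ level set). Because the equivalence relation $\sim_{\tree}$ is defined by left $\mathrm{SU}(2)^{D(S)}$-actions that commute with the right $T_{\mathrm{SU}(2)^n}$-action on frames, and because the perimeter function depends only on the $\lambda_i$ (hence is invariant under both $\mathrm{SU}(2)^{D(S)}$ and $T_{\mathrm{SU}(2)^n}$), the two quotient operations commute: dividing $\widetilde{P}_n(\mathrm{SU}(2))^{\mathrm{perim}=1}$ first by the diagonal $S^1$ and then by $\sim_{\tree}$ gives the same space as dividing in the opposite order. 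The first order of operations produces $Q_n(\mathrm{SU}(2))/\!\sim_{\tree}\,=\,W_n^{\tree}$ by definition; the second order of operations produces $V_n^{\tree,\,\mathrm{perim}=1}/S^1$.

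The second step is then immediate from the equivariance \eqref{degreeequivariance}: $\Psi_n^{\tree}$ carries the perimeter function on $V_n^{\tree}$ to the grading-circle momentum (half sum of $\|C_i\|^2$) on $P_n^{\tree}(\mathrm{SU}(2))$, and intertwines the two circle actions, so it descends to a continuous bijection from $W_n^{\tree}=V_n^{\tree,\,\mathrm{perim}=1}/S^1$ onto the symplectic quotient of $P_n^{\tree}(\mathrm{SU}(2))$ by the grading circle. Both spaces are compact Hausdorff (the source because $Q_n(\mathrm{SU}(2))$ is compact and $\sim_{\tree}$ is a closed equivalence relation, the target because it is a projective toric variety), so this continuous bijection is automatically a homeomorphism.

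The main technical point to be careful about is the factor-of-two (really, the square-root) arising from the comparison between the grading $\C^*$ on $\mathrm{Aff}\mathrm{Gr}_2(\C^n)$ and the action of $\mathbf{t}_e(\sqrt{\lambda}^{-1})$ on $P_n^{\tree}(\mathrm{SU}(2))$ established in Theorem \ref{firstauxiliarytheorem}(2); the sign ambiguity in the square root is harmless because $\mathbf{t}_e(-1)$ acts trivially on $P_n^{\tree}(\mathrm{SU}(2))$ (as noted at the end of Section 6). One also needs to match the level at which the reduction is performed on each side — on the Grassmannian side the perimeter condition corresponds to $\|Z\wedge W\|^2=$ const via the corollary preceding the proof of Theorem \ref{polygonGrasspreciseversion} — but this has already been worked out in the proof of that theorem, and simply transports through $\Psi_n^{\tree}$.
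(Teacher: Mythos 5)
Your proposal is correct and follows essentially the same route as the paper: the paper's proof also reduces the statement to the equivariance identity (\ref{degreeequivariance}) together with the identification of the grading circle action with $\mathbf{t}_e(\sqrt{\lambda}^{-1})$, substituting $(\sqrt{\lambda})^{-1}$ for $\lambda$. Your additional details — that the $\sim_{\tree}$ and diagonal-circle quotients commute so that $W_n^{\tree}$ really is $V_n^{\tree,\,\mathrm{perim}=1}/S^1$, the compactness argument for upgrading the continuous bijection to a homeomorphism, and the harmlessness of the sign of the square root — are all points the paper leaves implicit, and they are handled correctly.
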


\begin{proof}
It suffices to prove that  the pull-back by $\Psi_n^{\tree}$ of the grading circle action on $P_n^{\tree}(\mathrm{SU}(2))$
is the grading circle action on $V_n^{\tree}$.
This follows
by replacing $\lambda$
by $(\sqrt{\lambda})^{-1}$ in equation (\ref{degreeequivariance}).
\end{proof}

\subsection{The symplectic and GIT quotients coincide}\label{quotientscoincide}
We complete the proof of Theorems \ref{firstauxiliarytheorem}
and \ref{firstmaintheorem} by proving

\begin{proposition}
The symplectic quotient of $P_n^{\tree}(\mathrm{SU}(2)$ by the grading circle
action coincides with the GIT quotient of $P_n^{\tree}(\mathrm{SU}(2))$ by the
grading $\C^{\ast}$-action linearized by acting by the identity on the
fiber of the trivial complex line bundle over $P_n^{\tree}(\mathrm{SU}(2))$.
\end{proposition}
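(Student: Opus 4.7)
The plan is to apply the Kempf--Ness theorem after lifting the grading action all the way back to affine space. Recall from \S\ref{firsttorusquotient} that $P_n^{\tree}(\mathrm{SU}(2))$ is the symplectic (resp.\ GIT) quotient at level $0$ of the affine space $X_n^{\tree}(\mathrm{SU}(2)) = (\bigwedge^2(\C^3))^{n-2}$ by the torus $\T_d^-$ (resp.\ its complexification $\underline{\T}_d^-$), and these two quotients coincide by \cite{KempfNess} and Theorem 4.2 of \cite{Schwarz}. By the second statement of Theorem \ref{firstauxiliarytheorem}, the grading $\C^*$ action on $P_n^{\tree}(\mathrm{SU}(2))$ is induced from the action of $\mathbf{t}_e(\sqrt{\lambda}^{-1}) \in \underline{\T}_e$ on $X_n^{\tree}(\mathrm{SU}(2))$, which commutes with $\underline{\T}_d^-$.

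First I would invoke the standard compatibility of iterated quotients: since the grading circle (respectively $\C^*$) commutes with $\T_d^-$ (respectively $\underline{\T}_d^-$), the symplectic (respectively GIT) quotient of $P_n^{\tree}(\mathrm{SU}(2))$ by the grading circle (respectively $\C^*$) coincides with the symplectic (respectively GIT) quotient of the affine space $X_n^{\tree}(\mathrm{SU}(2))$ by the enlarged torus $\T_d^- \times S^1_{\mathrm{grad}}$ (respectively $\underline{\T}_d^- \times \C^*$), with the obvious combined level (respectively combined linearization). This reduces the problem to a single torus quotient of affine space.

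Next I would apply the Kempf--Ness theorem in this enlarged setting: for a reductive group (in particular a complex torus) acting linearly on affine space with a chosen linearization of the trivial line bundle, the GIT quotient is homeomorphic to the symplectic quotient at the level determined by the linearization. The linearization in which $\C^*$ acts by the identity on the fiber corresponds to the character $\lambda \mapsto \lambda$, which under the Kempf--Ness correspondence is paired with a strictly positive value $c > 0$ of the momentum map for the maximal compact $S^1_{\mathrm{grad}}$. Since this circle acts with strictly positive weight on every generator $Z_{\gamma(ij)}$ of the coordinate ring of $P_n^{\tree}(\mathrm{SU}(2))$, its momentum map (computed from Lemma \ref{momentummap}, after the reparametrization $\lambda \mapsto \sqrt{\lambda}^{-1}$) is a positive-definite quadratic function, so the prescribed positive level is attained and the resulting symplectic quotient is a smooth projective variety outside the toric singularities, homeomorphic to the GIT quotient $Q_n^{\tree}(\mathrm{SU}(2))$.

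The main obstacle will be the bookkeeping: carefully matching the positive level $c$ of the momentum map for $S^1_{\mathrm{grad}}$ with the chosen linearization (identity on the fiber of the trivial bundle) under the Kempf--Ness correspondence, and verifying that the vertex of the cone $P_n^{\tree}(\mathrm{SU}(2))$ is indeed the unique unstable point and that every other orbit meets the level set. Both of these are forced by the positivity of the weights of $S^1_{\mathrm{grad}}$ on the generators $Z_{\gamma(ij)}$, but the argument requires carefully tracking the factor of $\sqrt{\lambda}^{-1}$ introduced in Theorem \ref{firstauxiliarytheorem} so that signs and multiplicities match.
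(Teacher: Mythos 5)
Your overall strategy --- reduce by stages to a single torus acting on the affine space $(\bigwedge^2(\C^3))^{n-2}$ and then compare the symplectic and GIT quotients there --- is exactly the route the paper takes. The gap is in the second step. The classical Kempf--Ness theorem (as in \cite{KempfNess} and Theorem 4.2 of \cite{Schwarz}) identifies the GIT quotient of affine space with the symplectic quotient \emph{at level zero} for the \emph{untwisted} linearization. Here the linearization is twisted by a nontrivial character (the identity character of the grading $\C^*$), so the relevant symplectic level is nonzero, and on a noncompact space the identification of the GIT quotient with a nonzero-level symplectic quotient is \emph{not} automatic: one needs Sjamaar's theorem (Theorem 2.18 of \cite{Sjamaar}, packaged in this paper as Theorem \ref{twistandshift}), whose hypothesis is that the momentum map of the compact torus be \emph{proper}. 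You never verify properness, and the positivity argument you offer in its place does not supply it: the Hamiltonian of the grading circle on $(\bigwedge^2(\C^3))^{n-2}$ is (up to a constant) the sum of $\|C\|^2$ over the columns corresponding to \emph{leaf} edges of $\tree^D$ only, so it is positive semi-definite but not positive definite on the affine space upstairs, and the momentum map of $S^1_{\mathrm{grad}}$ alone is not proper. (Your claim that the grading acts with positive weight on the generators $Z_{\gamma(ij)}$ is a statement about the quotient ring, not about the upstairs momentum map where the Kempf--Ness/Sjamaar comparison actually takes place.)

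The paper closes this gap with Proposition \ref{propernesstheorem} and its corollary: properness holds not for the grading circle alone but for the product $S^1_{\mathrm{grad}} \times \T_d^-$, and the proof is genuinely geometric --- bounded leaf-edge lengths plus bounded differences across identified diagonal edges force all edge lengths to be bounded, via the triangle inequality in each tripod and trivalent induction (Lemma \ref{trivalentinduction}). A second point you defer as ``bookkeeping'' is in fact the content of the paper's appendix: the momentum map of a torus is only defined up to an additive constant, and one must pin down the normalization determined by the chosen lift of the action to the total space of the line bundle (equation (\ref{normalization})) before Lemma \ref{twistshift} lets you conclude that twisting by $\chi$ shifts the level by $\dot\chi$; this is also where one sees that the $\T_d^-$ component of the level is $0$ while the grading component is $1$. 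Your proposal would be complete once you (i) replace the bare appeal to Kempf--Ness by Theorem \ref{twistandshift}, (ii) prove properness of the momentum map for $S^1_{\mathrm{grad}} \times \T_d^-$ along the lines above, and (iii) fix the normalization of the momentum map relative to the linearization.
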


We are forced to give an indirect argument because $P_n^{\tree}(\mathrm{SU}(2)$
is not smooth and there do not seem to be theorems asserting
the isomorphism of symplectic and GIT quotients of nonsmooth spaces.

We first note that the above GIT quotient is canonically isomorphic  to the GIT quotient of $(\bigwedge^2(\C^3))^{n-2}$ by the product group
$\C^{\ast} \times \underline{\mathbb{T}}_d^-$ where the first factor acts by
the grading action linearized by the identity action on the fiber $\C$
of the trivial complex line bundle over $(\bigwedge^2(\C^3))^{n-2}$.
This follows because we can take the ring of invariants for a product
group acting on a ring $R$ by first taking the invariants of one factor and then taking the invariants of the resulting ring by the second factor.
The corresponding quotient by stages for symplectic quotients is
also true but slightly harder. It is proved in Theorem 4.1 of \cite{SjamaarLerman}.

Thus it remains to prove that the symplectic quotient of $(\bigwedge^2(\C^3))^{n-2}$ by the product of the grading $S^1$-action
and $\mathbb{T}_d^-$ is isomorphic to
the GIT quotient of $(\bigwedge^2(\C^3))^{n-2}$ by the product of
the grading $\C^{\ast}$ action and the complexified torus 
$\underline{\mathbb{T}}_d^-$ where the first factor
acts by the grading action and the product is linearized by acting by
the identity map applied to the projection on the first factor.
This follows immediately from Theorem \ref{twistandshift}
once we establish that the momentum map for the product
of the grading $S^1$-action
and $\mathbb{T}_d^-$ is proper.
We now prove that the momentum map for the product of the grading circle
action and $\mathbb{T}_d^-$ is proper. In fact we prove a different result
that turns out to be equivalent to the one we need here because we will need this different result below.

\begin{proposition}\label{propernesstheorem}
The momentum mapping $\mu : (\bigwedge^2 \C^3)^{n-2} \to \R^{2n-3}$
for the action of $\mathbb{T}_e \times \mathbb{T}_d^-$ is proper.
\end{proposition}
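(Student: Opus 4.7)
The plan is to reduce properness of $\mu$ to a tree-combinatorial bounding argument, exploiting the triangle inequalities built into each factor $\bigwedge^2(\C^3) = \mathrm{SU}(2) \ql (\C^2)^3$. On a single tripod factor the $T^3$-momentum map $\bigwedge^2(\C^3) \to D_3$ sending $[A_\tau]$ to its three squared column half-norms $\lambda_j(\tau) = \tfrac{1}{2}(|z_j(\tau)|^2 + |w_j(\tau)|^2)$ is proper: a bound on the $\lambda_j(\tau)$ bounds the Frobenius norm of any representative in $\mu_{\mathrm{SU}(2)}^{-1}(0) \subset (\C^2)^3$, and quotienting by the compact group $\mathrm{SU}(2)$ preserves properness. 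Taking products, the full $\mathbb{T}$-momentum map $(\bigwedge^2(\C^3))^{n-2} \to D_3^{n-2}$ is proper.

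Our $\mu$ factors through this proper map as a linear projection $D_3^{n-2} \to \R^{2n-3}$ that records the $n$ distinguished-edge values $\lambda_{(\tau,k)}$ (the $\mathbb{T}_e$-part) together with the $n-3$ differences $\lambda_{(\tau_i,k)} - \lambda_{(\tau_j,\ell)}$ over pairs of identified edges in $\tree^D$ (the $\mathbb{T}_d^-$-part), so it suffices to show this projection has bounded fibres when intersected with $D_3^{n-2}$. To do this I would fix any internal vertex $r$ of $\tree$ as a root, orient each internal edge toward $r$ so that every non-root internal vertex $v$ has a parent edge and two child edges, and process the internal vertices in post-order. The inductive claim is that after $v$ is processed, all three column half-norms of the tripod at $v$ are bounded. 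When processing $v$, each child-edge column is already bounded --- either because the child is a leaf of $\tree$ (controlled by the $\mathbb{T}_e$-bound) or because the child is an already-processed internal vertex whose side of the shared edge is bounded by induction, with the $\mathbb{T}_d^-$-bound transferring the bound to $v$'s side. The triangle inequality in $D_3$ on the tripod at $v$ then bounds the parent-edge column. At the root $r$, all three incident edges lead to leaves or already-processed vertices, so all three columns are bounded directly, and the induction terminates.

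The main obstacle I anticipate is arranging the bookkeeping so that at every tripod at least two of the three columns are bounded before the triangle inequality is invoked; this is the reason for rooting at an internal vertex and using post-order rather than attempting to propagate bounds purely outward from the leaves, which might leave an interior tripod with only one of its three columns bounded. Once all $3(n-2)$ column half-norms are bounded, the $\mu$-preimage is bounded in $D_3^{n-2}$, hence compact in $(\bigwedge^2(\C^3))^{n-2}$ by properness of the full $\mathbb{T}$-momentum map.
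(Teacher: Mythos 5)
Your proof is correct and follows essentially the same route as the paper: reduce properness to showing that bounds on the distinguished-edge values $\lambda_{(\tau,k)}$ and on the differences across identified internal edges force bounds on all $3(n-2)$ values, then propagate bounds through the tree using the triangle inequality in each tripod (two bounded sides bound the third) together with the $\mathbb{T}_d^-$ difference bound to cross internal edges. Your rooted post-order traversal is just a repackaging of the paper's trivalent induction (Lemma \ref{trivalentinduction}), and your explicit observation that bounding all column half-norms yields compactness of the preimage (via properness of the full $\mathbb{T}$-momentum map and compactness of $\mathrm{SU}(2)$) is a small completeness point the paper leaves implicit.
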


\begin{proof}
Earlier we identified $\bigwedge^2 \C^3$ with the space of
framed triangles $P_3(\mathrm{SU}(2))$. Let
$(\ldots, [g_{(\tau_i, j)}, \lambda_{(\tau_i, j)}\varpi_1], \ldots)$
be an element of $P_3(\mathrm{SU}(2))^{n-2}$.
Under this identification we have that
$\mu^{-1}_{\T_e}(\ldots, [g_{(\tau_i, j)}, \lambda_{(\tau_i, j)}\varpi_1], \ldots)$
is the vector of elements $\lambda_{(\tau_i, j)}$ where $(\tau_i, j)$
is a distinguished edge of $\tree^D$.  Similarly
$\mu_{\T_d^-}(\ldots, [g_{(\tau_i, j)}, \lambda_{(\tau_i, j)}\varpi_1], \ldots)$
is the vector of elements $\lambda_{(\tau_i, j)} - \lambda_{(\tau_k, \ell)}$
where $(\tau_i, j)$ and $(\tau_k, \ell)$ represent the same internal edge of $\tree$.
In order to show that $\mu_{\T_e \times \T_d^-}$ is a proper map, we must show
that if all $\lambda_{(\tau_i, j)}$ are bounded for $(\tau_i, j)$ distinguished,
and all differences $\lambda_{(\tau_i, j)} - \lambda_{(\tau_k, \ell)}$ are bounded
for all $(\tau_i, j)$ and $(\tau_k, \ell)$ which represent the same internal edge of $\tree$,
then all $\lambda_{(\tau_i, j)}$ are bounded.

Let $([g_1, \lambda_1\varpi_1], [g_2, \lambda_2\varpi_1], [g_3, \lambda_3\varpi_1])$
be an element of $P_3(\mathrm{SU}(2))$.  Since
$$\sum_{i = 1}^3 \lambda_i Ad_{g_i}^*(\varpi_1) = 0,$$
the $\lambda_i$ are side-lengths of a closed triangle, hence if two of the three are bounded, so is the third.
Furthermore, if $(\tau_i, j)$ and $(\tau_k, \ell)$ represent the same internal edge of $\tree$,
and $\lambda_{(\tau_i, j)}$ and the difference $\lambda_{(\tau_i, j)} - \lambda_{(\tau_k, \ell)}$ are bounded,
then $\lambda_{(\tau_k, \ell)}$ is bounded.  The proposition now follows from
trivalent induction, see lemma \ref{trivalentinduction}.
\end{proof}

\begin{corollary}
The momentum mapping $\mu : (\bigwedge^2 \C^3)^{n-2} \to \R^{2n-3}$
for the action of $S^1\times \mathbb{T}_d^-$ is proper.
Here $S^1$ acts by the grading circle action.
\end{corollary}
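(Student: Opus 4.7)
The plan is to reduce the corollary to Proposition \ref{propernesstheorem} by exhibiting $\mu_{S^1\times\mathbb{T}_d^-}$ as a projection of $\mu_{\mathbb{T}_e\times\mathbb{T}_d^-}$ that remains proper thanks to positivity of the $\mathbb{T}_e$-components.

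First I would identify the grading circle with a subcircle of $\mathbb{T}_e$. By the discussion of the grading circle action preceding Theorem \ref{firstauxiliarytheorem}, and again in the subsection on the grading circle action on $P_n^{\tree}(\mathrm{SU}(2))$, the grading $S^1$ acts on $(\bigwedge^2(\C^3))^{n-2}$ through $\mathbf{t}_e(\sqrt{\lambda}^{-1})$ for $\lambda\in S^1$, i.e.\ by scaling every column of $A^{\tree}$ corresponding to a distinguished edge of $\tree^D$ by a common unit scalar. At the Lie algebra level this realizes the grading circle as the diagonal subcircle of $\mathbb{T}_e$ (up to a harmless factor of two from the square root).

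Next I would write down the momentum map. By Lemma \ref{momentummap}, the Hamiltonian for the circle factor of $\mathbb{T}_e$ indexed by a distinguished edge $(\tau_i,j)$ is, up to a universal positive constant, equal to the length $\lambda_{(\tau_i,j)}$ appearing in the representative $[g_{(\tau_i,j)},\lambda_{(\tau_i,j)}\varpi_1]$. Pulling back through the diagonal inclusion $S^1\hookrightarrow\mathbb{T}_e$, the Hamiltonian for the grading circle becomes, up to a nonzero constant,
$$\mu_{S^1}(A)=\sum_{(\tau_i,j)\in dist(\tree^D)}\lambda_{(\tau_i,j)}(A),$$
the total perimeter of the $n$ distinguished edges. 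Consequently $\mu_{S^1\times\mathbb{T}_d^-}=\pi\circ\mu_{\mathbb{T}_e\times\mathbb{T}_d^-}$, where $\pi\colon\R^n\times\R^{n-3}\to\R\times\R^{n-3}$ sends $(\lambda_1,\dots,\lambda_n,v)$ to $(\sum_i\lambda_i,\,v)$.

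Finally I would deduce properness. The image of $\mu_{\mathbb{T}_e}$ lies in the closed non-negative orthant $\R_{\geq 0}^{\,n}$, since each coordinate is a non-negative multiple of $|z|^2+|w|^2$. If $K\subset\R\times\R^{n-3}$ is compact, then the set $\pi^{-1}(K)\cap(\R_{\geq 0}^{\,n}\times\R^{n-3})$ is compact: boundedness of $\sum_i\lambda_i$ combined with $\lambda_i\geq 0$ forces each $\lambda_i$ to be bounded. Hence
$$\mu_{S^1\times\mathbb{T}_d^-}^{-1}(K)=\mu_{\mathbb{T}_e\times\mathbb{T}_d^-}^{-1}\bigl(\pi^{-1}(K)\cap(\R_{\geq 0}^{\,n}\times\R^{n-3})\bigr)$$
is compact by Proposition \ref{propernesstheorem}. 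The argument is essentially a single dominance inequality on momentum maps, so there is no substantive obstacle; the only care required is in matching the grading circle with its image in $\mathbb{T}_e$ and confirming positivity of the $\mathbb{T}_e$-Hamiltonians, both of which are supplied by earlier parts of the paper.
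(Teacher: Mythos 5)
Your proof is correct and is essentially the paper's own argument, written out in more detail: the paper likewise observes that the grading-circle Hamiltonian is (half) the sum of the non-negative $\mathbb{T}_e$-Hamiltonians, so boundedness of the sum forces boundedness of each summand, and properness then follows from Proposition \ref{propernesstheorem}. The extra care you take in identifying the grading circle inside $\mathbb{T}_e$ and phrasing the reduction via the projection $\pi$ is a faithful elaboration, not a different route.
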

\begin{proof}
The Hamiltonian for the grading circle action is the half the sum of the
Hamiltonians for the circle $\mathbb{T}_e$ factors of $\mathbb{T}_e$.
But note that each of the summands is bounded if and only if the sum
is (since they have the same sign).
\end{proof}

There is one more technical point. We claim that the GIT
quotient coincides with the symplectic quotient at momentum level
$(1,0)$. We will prove first that the level for the
$\mathbb{T}_d^{-1}$ is zero. But this follows
because  the torus  $\mathbb{T}_d^{-1}$ acts linearly on
$\bigwedge^{n-2}(\C^3)$ and trivially on the fiber whereas the grading
circle action is twisted by the identity action on the fiber.

\section{The space $V_{\br}^{\tree}$ is homeomorphic to the
toric variety $(M_{\br})^{\tree}_0$}
In this section we will prove Theorem \ref{secondmaintheorem}
by proving Theorem \ref{secondauxiliarytheorem}.

\subsection{The toric varieties $(M_{\br})^{\tree}_0$ and $P^{\tree}_{\br}(\mathrm{SU}(2))$ are isomorphic}
In this subsection we prove the first statement of Theorem \ref{secondauxiliarytheorem}.
By definition the toric variety $P^{\tree}_{\br}(\mathrm{SU}(2))$ is obtained
as
the GIT quotient of $P^{\tree}_{n}(\mathrm{SU}(2))$ by $\underline{\T}_e$
using the linearization given by the character
$$\chi_{\br}(\mathbf{t}(\mathbf{\lambda})) = \lambda_1^{r_1} \cdots \lambda_n^{r_n}.$$
Here $\mathbf{t}(\mathbf{\lambda})$ denotes the element of $\underline{\T}_e$
corresponding to $\mathbf{\lambda} = (\lambda_1,\cdots,\lambda_n) \in (\C^{\ast})^n$.
We recall that the graded ring
of $(M_{\br})^{\tree}_0$ is the semigroup ring $\C[\mathcal{S}^{\tree}_{\br}]$ where
$\mathcal{S}^{\tree}_{\br}$ is the graded subsemigroup of $\mathcal{S}^{\tree}_n$ defined by taking
graphs with valence $k\br$ for positive integers $k$.
Define the subsemigroup $\mathcal{P}^{\tree}_{\br}$ of $\mathcal{P}^{\tree}_n$
to be the inverse image of  $\mathcal{W}_{\br}^{\tree}$ under the isomorphism
from $\mathcal{P}^{\tree}_n$ to $\mathcal{W}^{\tree}_n$ under the
isomorphism of Proposition \ref{twoisomorphicsemigroups}.

\begin{lemma}
\
The $\underline{\T}_d^-$-invariant monomial
$$f(Z)=
\prod_{i=1}^{n-2}Z_{12}(\tau_i)^{x_{12}(\tau_i)}Z_{13}(\tau_i)^{x_{13}(\tau_i)}
Z_{23}(\tau_i)^{x_{23}(\tau_i)}$$
is $\underline{\T}_e$--invariant for the twist
$\chi_{\br}^p$ if and only if the
exponents $\{x_{jk}(\tau_i)\}$ satisfy the system of equations
$$x_{k,m}(\tau) + x_{k,\ell}(\tau_i) = p r_{(\tau_i,k)}, \quad \text{for all leaf edges} \ (\tau_i, k) \ \text{where} \ k,\ell,m = 1,2,3.$$
By Proposition \ref{twoisomorphicsemigroups} we may rewrite the
above equation as a condition on the leaf weights
$$w_{(\tau_i,k)}^{\tree} = p  r_{(\tau_i,k)}, \ (\tau_i, k) \in \mathcal{L}.$$
\end{lemma}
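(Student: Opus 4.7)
The plan is to unwind how $\underline{\T}_e$ acts on the Plücker generators of the coordinate ring of $(\bigwedge^2\C^3)^{n-2}$ and then collect powers of the coordinates on $\underline{\T}_e$ in the transformation rule for $f(Z)$.

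First I would recall from subsection \ref{mastertoricspace} that $\underline{\T}$ acts on the matrix $A^{\tree}$ by inverse scaling of columns, and that $\underline{\T}_e \subset \underline{\T}$ is the subtorus whose components on inner edges of $\tree^D$ are trivial. Writing $\mathbf{t} = \mathbf{t}(\lambda)$ and identifying the columns of $A^{\tree}$ belonging to tripod $\tau_i$ with the three vertices $1,2,3$ of $\tau_i$, the Plücker bracket $Z_{jk}(\tau_i)$—a $2\times 2$ determinant in columns $j$ and $k$ of $\tau_i$—transforms by the character $\lambda_{(\tau_i,j)}\lambda_{(\tau_i,k)}$, with the convention $\lambda_{(\tau_i,s)} = 1$ whenever $(\tau_i,s)$ is not a leaf edge of $\tree^D$ (i.e.\ whenever $(\tau_i,s)\notin\mathcal{L}$). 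This convention is what encodes the distinction between $\underline{\T}$ and $\underline{\T}_e$.

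Next I would simply compute the total scaling of $f(Z)$ under $\mathbf{t}$:
$$\mathbf{t} \cdot f(Z) \;=\; \Bigg(\prod_{i=1}^{n-2}\prod_{1\le j<k\le 3}\big(\lambda_{(\tau_i,j)}\lambda_{(\tau_i,k)}\big)^{x_{jk}(\tau_i)}\Bigg)\, f(Z),$$
and reorganize the product by collecting, for each leaf edge $(\tau_i,k)\in\mathcal{L}$, the exponent of $\lambda_{(\tau_i,k)}$. Since vertex $k$ of $\tau_i$ appears in exactly two brackets, namely $Z_{k\ell}(\tau_i)$ and $Z_{km}(\tau_i)$ where $\{k,\ell,m\}=\{1,2,3\}$, that exponent equals $x_{k,\ell}(\tau_i)+x_{k,m}(\tau_i)$. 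Brackets $Z_{jk}(\tau_i)$ both of whose vertices are internal contribute $1$ by the convention above, so they drop out.

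The condition that $f(Z)$ is $\underline{\T}_e$-semi-invariant of weight $\chi_{\br}^p$ is then
$$\prod_{(\tau_i,k)\in\mathcal{L}} \lambda_{(\tau_i,k)}^{x_{k,\ell}(\tau_i)+x_{k,m}(\tau_i)} \;=\; \chi_{\br}^p(\mathbf{t}) \;=\; \prod_{(\tau_i,k)\in\mathcal{L}} \lambda_{(\tau_i,k)}^{p\,r_{(\tau_i,k)}},$$
and since the $\lambda_{(\tau_i,k)}$ for $(\tau_i,k)\in\mathcal{L}$ are independent coordinates on $\underline{\T}_e$, comparing exponents one at a time yields the stated system $x_{k,m}(\tau_i)+x_{k,\ell}(\tau_i)=p\,r_{(\tau_i,k)}$. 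The reformulation in terms of weights is then immediate from the definition $w^{\tree}_{(\tau,i)}(\mathbf{x})=x_{ij}(\tau)+x_{ik}(\tau)$ recorded in subsection 6.6. There is no substantive obstacle; the only care needed is in the sign/convention bookkeeping at the first step, which I have handled by the convention that $\lambda_e=1$ for $e$ an inner edge of $\tree^D$.
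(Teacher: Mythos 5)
Your proposal is correct and is essentially the paper's own argument: the paper's proof simply records the combined transformation law $\mathbf{t}(\mathbf{\lambda})\circ f(Z)=\prod_{(\tau_i,k)\in\mathcal{L}}\lambda_{(\tau_i,k)}^{[p r_{(\tau_i,k)}-(x_{k,m}(\tau_i)+x_{k,\ell}(\tau_i))]}f(Z)$ and reads off the vanishing of each exponent, which is exactly your computation of the character of $f(Z)$ (vertex $k$ of $\tau_i$ lying in the two brackets $Z_{k\ell}(\tau_i)$, $Z_{km}(\tau_i)$) matched against $\chi_{\br}^p$. The only difference is presentational—you separate the monomial's character from the twist before comparing exponents—and your caveat about the inverse-scaling convention is the right one to flag.
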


\begin{proof}
The lemma follows from the formula for $\mathbf{t}(\mathbf{\lambda})$
acting on $f$, namely
$$\mathbf{t}(\mathbf{\lambda}) \circ f(Z) = \prod_{(\tau_i, k) \in \mathcal{L}}
\lambda_{(\tau_i,k)}^{[p r_{(\tau_i,k)} - (x_{k,m}(\tau_i) + x_{k,\ell}(\tau_i))]} f(Z).$$
Here $\lambda_{(\tau_i,k)}$ is the coordinate of $\underline{\T}_e$ corresponding to the
leaf edge $(\tau_i,k)$ and $k,\ell,m = 1,2,3$.
\end{proof}

\begin{remark}
The  set of $\underline{\T}_e$-invariant monomials in $Z_{ij}(\tau)$ {\it for the twist by $\chi_{\br}^p$}
is the subset of invariants {\it of degree $p$} (the $p$-th graded piece of the
associate semigroup of lattice points).
\end{remark}

As an immediate consequence we have
\begin{corollary}\label{rthsemigroup}
The toric variety $P_{\br}^{\tree}(\mathrm{SU}(2))$ is the projective toric variety
associated to the graded semigroup $\mathcal{P}^{\tree}_{\br}$.
\end{corollary}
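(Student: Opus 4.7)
The plan is to read off the corollary as a direct consequence of the preceding lemma together with the standard description of GIT quotients by tori with character linearization. By construction, $P_{\br}^{\tree}(\mathrm{SU}(2))$ is the GIT quotient of the affine toric variety $P_n^{\tree}(\mathrm{SU}(2))$ by $\underline{\T}_e$ with linearization on the trivial line bundle given by the character $\chi_{\br}$. For any reductive group acting on an affine variety with a trivial line bundle twisted by a character $\chi$, the GIT quotient is $\mathrm{Proj}$ of the graded algebra of semi-invariants; thus
$$P_{\br}^{\tree}(\mathrm{SU}(2)) \;=\; \mathrm{Proj}\Bigl(\bigoplus_{p\geq 0} \C[P_n^{\tree}(\mathrm{SU}(2))]^{\underline{\T}_e,\,\chi_{\br}^p}\Bigr).$$

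Next I would combine this with the identification of the affine coordinate ring of $P_n^{\tree}(\mathrm{SU}(2))$ as the semigroup algebra $\C[\mathcal{P}_n^{\tree}]$ (established in \S\ref{mastertoricspace}). This has a natural $\underline{\T}_e$-weight decomposition indexed by the tuple of leaf weights $\bigl(w^{\tree}_{(\tau_i,k)}(\mathbf{x})\bigr)_{(\tau_i,k)\in\mathrm{dist}(\tree^D)}$. The preceding lemma identifies the monomial $f(Z)$ indexed by $\mathbf{x}\in\mathcal{P}_n^{\tree}$ as a semi-invariant of weight $\chi_{\br}^p$ precisely when $w^{\tree}_{(\tau_i,k)}(\mathbf{x}) = p\, r_{(\tau_i,k)}$ for every leaf $(\tau_i,k)$. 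But this is exactly the defining condition for $\mathbf{x}$ to lie in the degree-$p$ part of $\mathcal{P}^{\tree}_{\br}$ (using the grading stipulated in the definition of $\mathcal{P}^{\tree}_{\br}$, which assigns degree $p$ to a weighting whose leaf-edge weights are $p\br$).

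Putting these two identifications together, the graded ring appearing inside $\mathrm{Proj}$ is literally $\C[\mathcal{P}^{\tree}_{\br}]$, whence
$$P_{\br}^{\tree}(\mathrm{SU}(2)) \;=\; \mathrm{Proj}\bigl(\C[\mathcal{P}^{\tree}_{\br}]\bigr),$$
which by definition is the projective toric variety associated to the graded semigroup $\mathcal{P}^{\tree}_{\br}$.

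The argument is essentially bookkeeping once the preceding lemma is in hand; the only subtlety worth flagging is the compatibility of gradings. The grading on $\C[\mathcal{P}^{\tree}_{\br}]$ coming from the GIT construction is the character-twist exponent $p$, while $\mathcal{P}^{\tree}_{\br}$ is given a grading distinct from the one it would inherit as a subset of $\mathcal{P}_n^{\tree}$ (where the degree would be half the sum of leaf weights $= p\,|\br|/2$). These agree, after reindexing, with the relationship between $\mathcal{S}_{\br}^{\tree}$ and $\mathcal{S}_n^{\tree}$ described in the discussion preceding Proposition \ref{gradedsubsemigroupiso}, and this is the only nontrivial check; once it is observed, no further work is needed.
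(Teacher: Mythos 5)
Your proposal is correct and follows exactly the route the paper intends: the corollary is stated there as an immediate consequence of the preceding lemma (together with the remark identifying the $\chi_{\br}^p$-semi-invariants with the degree-$p$ graded piece), and your argument simply spells out that bookkeeping, including the grading compatibility the paper notes in that remark. No gaps.
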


Now by Proposition \ref{gradedsubsemigroupiso} we have
an isomorphism of graded semigroups  
$\Omega_{\br}: \mathcal{S}_{\br}^{\tree} \to \mathcal{W}_{\br}^{\tree}$.  
Since by definition we have an isomorphism
of graded semigroups $\mathcal{P}^{\tree}_{\br} \cong \mathcal{W}_{\br}^{\tree}$
we obtain the required isomorphism of graded semigroups
$\mathcal{S}^{\tree}_{\br} \cong \mathcal{P}^{\tree}_{\br}$
and we have proved the desired isomorphism of projective toric varieties.

\subsection{$V_{\br}^{\tree}$ is homeomorphic to $P^{\tree}_{\br}(\mathrm{SU}(2))$}
In this section we will prove the second statement of Theorem \ref{secondauxiliarytheorem}.

\begin{proposition}
$V_{\br}^{\tree}$ is homeomorphic to the symplectic quotient
of $P^{\tree}_n(\mathrm{SU}(2))$ by $\mathbb{T}_e$ at level $\br$.
\end{proposition}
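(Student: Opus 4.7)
The plan is to deduce this homeomorphism from the $\rho$-equivariant homeomorphism $\Psi_n^{\tree}: V_n^{\tree} \to P_n^{\tree}(\mathrm{SU}(2))$ of Theorem \ref{noncompact} by restricting to an appropriate momentum level set and descending to the quotients on both sides. Since Theorem \ref{noncompact} gives $\Psi_n^{\tree}$ as equivariant with respect to the isomorphism $\rho: T_{\mathrm{SU}(2)^n} \to \mathbb{T}_e$, it is enough to verify two points: (i) the pullback of $\mu_{\mathbb{T}_e}^{-1}(\br)$ under $\Psi_n^{\tree}$ is the ``fixed side-length'' subspace of $V_n^{\tree}$, and (ii) taking the $T_{\mathrm{SU}(2)^n}$-quotient of that subspace recovers $V_{\br}^{\tree}$.

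For (i), I would apply Proposition \ref{pullback}: the Hamiltonian $f_{\tau_i,j}$ of the circle factor in $\mathbb{T}_e$ attached to the distinguished edge corresponding to the $i$-th side of the model polygon pulls back under $\Psi_n^{\tree}$ to twice the length of that edge of the underlying polygon $F_n(A)$. Hence, modulo the uniform factor of two arising from Lemma \ref{factoroftwo}, the preimage under $\Psi_n^{\tree}$ of $\mu_{\mathbb{T}_e}^{-1}(\br)$ is exactly the subspace $V_n^{\tree}(\br) \subset V_n^{\tree}$ consisting of those $\tree$-congruence classes of imploded spin-framed $n$-gons whose underlying polygon has side-lengths $\br$.

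For (ii), I would use the fiber description of Theorem \ref{framedngons}: the fiber of $F_n$ over a Euclidean $n$-gon is the set of imploded spin-framings of its edges, on which $T_{\mathrm{SU}(2)^n}$ acts transitively. This structure survives passage to $\tree$-congruence classes because the $\mathrm{SU}(2)^{D(S)}$-action defining $\sim_{\tree}$ on each stratum $P_n(\mathrm{SU}(2))^{[S]}$ commutes with $T_{\mathrm{SU}(2)^n}$ and does not alter the underlying (unframed) polygon. Quotienting $V_n^{\tree}(\br)$ by $T_{\mathrm{SU}(2)^n}$ therefore eliminates the framings and yields $M_{\br}/{\sim_{\tree}}$, which coincides with $\widetilde{M}_{\br}/{\sim_{\tree}} = V_{\br}^{\tree}$ since the $S = \emptyset$ stratum of $\sim_{\tree}$ already contains the diagonal $\mathrm{SO}(3,\R)$-action.

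The main technical obstacle is promoting the resulting set-theoretic bijection to a genuine homeomorphism onto the symplectic quotient $P_n^{\tree}(\mathrm{SU}(2)) \cq_{\br} \mathbb{T}_e$. Continuity in both directions will follow from the universal property of (stratified symplectic) quotients applied to the equivariant homeomorphism $\Psi_n^{\tree}$ and its inverse $\Phi_n^{\tree}$. The subtlest step is verifying that the induced continuous bijection is closed; this is where I expect to invoke the properness of $\mu_{\mathbb{T}_e}: P_n^{\tree}(\mathrm{SU}(2)) \to \R^n$ established in the proof of Proposition \ref{propernesstheorem}. Properness guarantees that $P_n^{\tree}(\mathrm{SU}(2)) \cq_{\br} \mathbb{T}_e$ is compact and Hausdorff, so that a continuous bijection from the compact space $V_{\br}^{\tree}$ is automatically a homeomorphism.
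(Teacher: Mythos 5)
Your proposal follows essentially the same route as the paper: the paper's proof also consists of invoking Proposition \ref{pullback} to identify $(\Psi_n^{\tree})^*\mu_{\T_e}$ with the side-length functions, so that $\Psi_n^{\tree}$ restricts to a homeomorphism from the classes with side-lengths $\br$ onto $\mu_{\T_e}^{-1}(\br)$, and then using $\rho$-equivariance to descend to the quotients. Your additional remarks on why the $T_{\mathrm{SU}(2)^n}$-quotient of the fixed-side-length locus is $V_{\br}^{\tree}$ and on compactness are elaborations of steps the paper leaves implicit, not a different argument.
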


\begin{proof}
Recall that in Proposition \ref{pullback} we proved the following formula.
Let  $\mathbf{E} \in V_n^{\tree}$ then

$$(\Psi^{\tree}_n)^* f_{\tau_i,j}(A) =  2\|f \circ v_{e(\tau_i, j)}(\tilde{A})\| = \| v_{e(\tau_i, j)}(\tilde{A}) \|.$$

Where $e(\tau_i, j)$ is the leaf edge of $\tree$
corresponding to the leaf edge $(\tau_i,j)$ of the decomposed tree $\tree^D$.
Thus $\Psi_n^{\tree}$ induces a homeomorphism between the $\tree$-congruence classes
of imploded spin-framed $n$-gons with side-lengths $\br$ and the $\br$-th level set for
the momentum map of $\mathbb{T}_e$. But also $\Psi_n^{\tree}$ is $\rho$-equivariant,
hence the above bijection descends to a homeomorphism from the $T_{\mathrm{SU}(2)^n}$-quotient of the imploded spin-framed $n$-gons
to the  $\mathbb{T}_e$ symplectic quotient at level $\br$.
\end{proof}

It remains to prove that the symplectic quotient of $P_n^{\tree}(\mathrm{SU}(2))$
at level $\br$ coincides with the GIT quotient for the action on the
trivial bundle using the twist by $\chi_{\br}$.

\subsubsection{The symplectic quotient coincides with the GIT quotient}
\label{fourthtorusquotient}
Let $L$ be the trivial complex line bundle over $\bigwedge^{n-2}(\C^3)$.
Then $L$ descends to the trivial complex  line bundle $\overline{L}$
over $P^{\tree}_n(\mathrm{SU}(2))$. The torus
$\underline{\T}_e$ acts on $\overline{L}$ by twisting by
the character $\chi_{\br}$.
We  need to prove that the GIT quotient of
$P^{\tree}_n(\mathrm{SU}(2))$ by $\underline{\T}_e$ with linearization the above
action on $\overline{L}$ is homeomorphic to the symplectic quotient by the maximal compact subgroup of $\underline{\T}_e$ at level $\br$.
The argument is almost  the same as  that of subsection \ref{quotientscoincide}.
In particular we use reduction in stages to reduce to the
corresponding problem for the quotients of $\bigwedge^{n-2}(\C^3)$ by
the product torus $\underline{\T}_e \times \underline{\T}_d^{-}$.
Here we twist the action by the character that is $\chi_{\br}$ on $\underline{\T}_e$ and trivial on
$\underline{\T}_d^{-}$.
Since the momentum map for the action by $\underline{\T}_e \times \underline{\T}_d^{-}$ 
is proper by Proposition \ref{propernesstheorem}
the equality of quotients follows by Theorem \ref{twistandshift}.
The momentum level for the product group is then $(\br,0)$.

\section{The residual action of $\T / (\T_e \times \T_d^-)$ and bending flows}\label{bendingflowsection}

Now we will relate the action of $\T / (\T_e \times \T_d^-)$ on $P_n^{\tree}(\mathrm{SU}(2))$ and $P_\br^\tree(\mathrm{SU}(2))$ to bending flows.

\subsection{A complement to $\mathbb{T}_d^-$ in $\mathbb{T}_d$}
In this subsection we will define a complement $\mathbb{T}_d^+$
to $\mathbb{T}_d^-$ in $\mathbb{T}_d$. It will be more convenient
to work with this complement rather than the quotient
$\mathbb{T}_d/ \mathbb{T}_d^-$.

We recall that an element of $\mathbb{T}$ corresponds to a function
$f$ from the edges of $\tree^D$ to the circle. The subgroup $\mathbb{T}_d$
corresponds to the functions with value the identity on the distinguished
edges of $\tree^D$. The nondistinguished edges of $\tree^d$ occur in
equivalent pairs. The subtorus $\mathbb{T}_d^-$ of $\mathbb{T}_d$
consists of those $f$ whose values on equivalent pairs of
nondistinguished edges are inverse to each other.

In order to construct the complement $\mathbb{T}_d^+$ we need to choose
an edge from each distinguished pair. To do this in a systematic way
we use the ordering of the trinodes of $\tree^D$ induced by
the trivalent induction construction, see Lemma \ref{trivalentinduction}. Suppose the edges
$(\tau, i)$ and $(\tau', j)$ are equivalent. The two edges occur
in different trinodes. We relabel the edges by ${\epsilon}^+$ and
${\epsilon}^-$ by labeling the edge that comes in the first trinode
by (the superscript) plus and the edge that comes in a later trinode
by minus. Thus every nondistinguished edge is either a plus edge
or a minus edge. We now define the subtorus $\mathbb{T}_d^+$ as the subtorus
of $\mathbb{T}_d$ consisting of those functions that take value the identity on 
all nondistinguished minus edges. It is clear that $\mathbb{T}_d^+$ is the required complement.
It is also clear that $\mathbb{T}_d^+$ is a complement to
$\mathbb{T}_e \times \mathbb{T}_d^-$ in $\mathbb{T}$. We point out
here that the choice of where to place the identity
in the definition of this complement is irrelevant with respect
to the Hamiltonian functions - see below.

\subsection{The action of $\T_d^+$ coincides with bending} \hfill

We now prove the following theorem.
\begin{theorem}\label{noncompact2}
\hfill
\begin{enumerate}
\item The homeomorphism $\Psi^{\tree}_n$ intertwines the bending flows
on $V_n^{\tree}$ with the action of $\T_d^+$ on $P_n^{\tree}(\mathrm{SU}(2))$. \item The homeomorphism $\Psi^{\tree}_{\br}$ intertwines the bending flows
on $V_{\br}^{\tree}(\mathrm{SU}(2))$ with the action of $\T_d^+$ on $P_{\br}^{\tree}(\mathrm{SU}(2))$.
\end{enumerate}
\end{theorem}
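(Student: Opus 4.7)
The plan is to prove part (1) by a direct computation on normalized representatives from Section \ref{normalizedframings}, and to deduce part (2) by observing that both actions descend compatibly under the symplectic reduction by $\T_e$. Recall that each diagonal $d$ of the triangulation corresponds to a pair of equivalent edges $(\epsilon^+,\epsilon^-)$ of $\tree^D$, with $\epsilon^+$ lying in the earlier trinode under our fixed ordering, and that the circle factor of $\T_d^+$ associated to $d$ acts on $\mathcal{E}T^*(\mathrm{SU}(2))^{3n-6}$ by right multiplication on the $\epsilon^+$ component and trivially on every other component. I would first examine how a one-parameter subgroup $t=\exp(s\alpha^\vee)\in T_{\mathrm{SU}(2)}$, viewed inside the $d$-circle of $\T_d^+$, moves a normalized representative $\mathbf{T}\in E_n^\tree(\mathrm{SU}(2))$, then pass to $V_n^\tree$ via $\Phi_n^\tree$ and match the outcome with the bending along $d$.

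With $\mathbf{T}$ normalized, the frame at $\epsilon^+$ is $[g,\lambda\varpi_1]$ and at $\epsilon^-$ is $[g\varrho,\lambda\varpi_1]$. Acting by $t$ at $\epsilon^+$ changes the former to $[gt,\lambda\varpi_1]$; since $t$ is diagonal it commutes with $\varpi_1$, so the underlying vector $\lambda\,Ad^*(g)(\varpi_1)\in\R^3$ is unchanged and the closing condition at the trinode containing $\epsilon^+$ survives, but the $\varrho$-flip at $d$ is broken. To re-normalize I would apply $\mathbf{f}\in\mathrm{SU}(2)^{n-2}$ equal to $gtg^{-1}$ on every trinode of the subforest $\tree_-$ of $\tree^D$ on the $\epsilon^-$ side of $d$, and equal to the identity on every trinode of the complementary subforest $\tree_+$. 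The key verification is that $\mathbf{T}'':=\mathbf{f}\cdot\mathbf{T}'$ is again normalized: at $d$ the new frame at $\epsilon^-$ is $[gtg^{-1}\cdot g\varrho,\lambda\varpi_1]=[gt\varrho,\lambda\varpi_1]$, matching $gt$ at $\epsilon^+$; the closing at each trinode of $\tree_-$ is preserved because a common left multiplication by $gtg^{-1}$ rotates all three of its edge vectors by the same coadjoint element; and at every internal edge of $\tree_-$ both equivalent ends pick up the same left factor, so their $\varrho$-flip persists.

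Applying $\widetilde{\Phi}_n^\tree$ to $\mathbf{T}''$ then yields the spin-framed polygon whose frames on edges of $P$ coming from $\tree_+$ are unchanged and whose frames on edges of $P$ coming from $\tree_-$ are left-multiplied by $gtg^{-1}$. Via the Hopf map $F$ of Section \ref{maptoR3} and the equivariance of Lemma \ref{equivariance}, this presents the $\R^3$-polygon whose $\tree_+$-half is fixed while its $\tree_-$-half is rotated by $\rho(gtg^{-1})\in\mathrm{SO}(3,\R)$. Because $\rho(t)$ rotates about $f(\varpi_1)=\tfrac12\epsilon_1$, the element $\rho(gtg^{-1})$ rotates about $\rho(g)\epsilon_1$, which is a positive multiple of $f(Ad^*(g)(\varpi_1))$ and hence points along the diagonal vector $d$ itself. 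This is exactly the bending flow along $d$ as defined in the introduction. Since bending flows along distinct diagonals of $\tree$ commute, the $\T_d^+$-circles commute, and each identification is one of one-parameter subgroups, the identification extends to the full torus, establishing~(1).

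Part~(2) is then immediate: the $\T_d^+$-action commutes with $\T_e$ so descends to $P_\br^\tree(\mathrm{SU}(2))$; the bending flows preserve edge lengths so descend to $V_\br^\tree$; and $\Psi_\br^\tree$ is by construction the restriction of $\Psi_n^\tree$ to the $\br$-level of $\mu_{\T_e}$, so the intertwining descends. The main obstacle is the renormalization step in the second paragraph: one must check that the support of $\mathbf{f}$ is exactly $\tree_-$, that $\mathbf{f}$ restores the $\varrho$-flip at $d$ without violating any $\varrho$-flip at an internal edge of $\tree_-$, and that the closing condition at every trinode of $\tree_-$ (in particular the one adjacent to $d$) survives. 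All three checks reduce to the two algebraic identities $\varrho t\varrho^{-1}=t^{-1}$ and $Ad^*(t)(\varpi_1)=\varpi_1$, and together they localize the effect of $t$ to a rotation of the $\tree_-$-half of the polygon about the axis $d$.
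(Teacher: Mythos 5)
Your proposal is correct and follows essentially the same route as the paper: act by the $d$-circle of $\T_d^+$ on the $\epsilon^+$ component of a normalized representative, restore the $\varrho$-flip by left-multiplying the entire $\epsilon^-$ side by a conjugate of $t$, and identify that conjugate (via the equivariance of the Hopf map) as a rotation about the diagonal, with part (2) following by descent under the $\T_e$-reduction. The only cosmetic difference is that you conjugate $t$ by $g_{\epsilon^+}$ where the paper uses $g_{\epsilon^-}$, which merely reverses the orientation of the rotation axis and does not affect the conclusion.
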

\begin{proof}

Let $\mathbf{T} \in P_n^{\tree}(\mathrm{SU}(2))$ and $\mathbf{t} \in \T_d^+$.
Let $d$ be a diagonal of the triangulated $n$-gon $P$ and suppose
the triangles $T_i$ and $T_j$ share the diagonal $d$. Suppose
$d$ divides $P$ into two polygons $P'$ and $P''$ with
$T \in P'$ and $T' \in P''$. Let $\tau$ and $\tau'$
be the trinodes associated to $T$ and $T'$ respectively,
and let $[g_{(\tau, i)}, \lambda_{(\tau, i)}\varpi_1]$ and
$[g_{(\tau', j)}, \lambda_{(\tau', j)}\varpi_1]$ be spin-framed
representatives of the edges of $\mathbf{T}$ corresponding to
the diagonal $d$.  Suppose $\epsilon(d)^- = (\tau', j)$.

Since the representatives are normalized we have
$$g_{(\tau, i)} = g_{(\tau', j)} \varrho.$$
Let $t$ be the $\epsilon(d)^+$-th component of $\mathbf{t}$ (the
$\epsilon(d)^-$-th component is $1$). Under the action
of $\mathbf{t}$ the edge $[g_{(\tau, i)}, \lambda_{(\tau, i)}\varpi_1]$
becomes $[g_{(\tau, i)}t,\lambda_{(\tau, i)}\varpi_1]$ and all other
components are unchanged.  Thus the resulting
element of $E_n^{\tree}(\mathrm{SU}(2))$ is no longer normalized.

To normalize this element we multiply all the frames on the edges and diagonals
of $P''$ by $g_{(\tau', j)} t g_{(\tau', j)}^{-1}$. Note that the image of $h=g_{(\tau', j)} t g_{(\tau', j)}^{-1} \in \mathrm{SU}(2)$.
is a rotation about the oriented line in $\R^3$ corresponding to
$Ad^{\ast}h(\varpi_1) \in \mathfrak{su}(2)^{\ast} \cong \R^3$ which is  the diagonal corresponding to $d$ of
the Euclidean $n$-gon $\mathbf{e}$ underlying the imploded
framed $n$-gon $\mathbf{T}$. Thus the Euclidean $n$-gon underlying
the imploded framed $n$-gon $\mathbf{T}$ is bent along the
diagonal $d$. Furthermore the frames  of all the edges
and diagonals of $P''$  are transformed by applying the element of the one-parameter group 
$g_{(\tau', j)} t g_{(\tau', j)}^{-1}$ in $\mathrm{SU}(2)$ which covers
rotation along the diagonal. Applying $\Phi_n^{\tree}$ we forget the
frames along the diagonals but the frames on the edges transform
in the same way as before.  This amounts to bending the framed 
$n$-gon $\Phi_n^{\tree}(\mathbf{T})$ along the diagonal $d$.

The first statement implies the second statement because the
bending flows on $V_{\br}^{\tree}$ are descended from
those on $V_{n}^{\tree}$, and the action of
$\mathbb{T}_d^+$ on $P_{\br}^{\tree}(\mathrm{SU}(2))$ is descended from the action
on $P_{n}^{\tree}(\mathrm{SU}(2))$
\end{proof}

\subsection{The Hamiltonians for the residual action}
It remains to prove that the action of $\mathbb{T}_d^+$ is Hamiltonian
with the given Hamiltonians.


We claim that $\T_d^+$ preserves the orbit-type stratification.
Since $\T = (\T_e \times \T_d^-) \times \T_d^+$ is abelian,
the isotropy subgroup $(\T_e \times \T_d^-)_x \subset \T_e \times \T_d^-$ of
$x \in P_\br^\tree(\mathrm{SU}(2))$ is equal to the isotropy
group $(\T_e \times \T_d^-)_{t \cdot x}$ of $t \cdot x$, for any $t \in \T$; in particular this is true for
$t \in \T_d^+$.  Although the space $P_\br^\tree(\mathrm{SU}(2))$ is possibly
singular, we can work in a given symplectic stratum.
Thus it makes sense to say that $\T_d^+$ acts in a Hamiltonian
fashion (on each individual stratum), and we may identify
the torus action on the whole space
by the Hamiltonians of the $S^1$ factors.
These Hamiltonians are smooth in the sense
of \cite{SjamaarLerman}, since these functions are obtained by
restricting $\T_e \times \T_d^-$ invariant continuous functions on
$(\bigwedge^2 \C^3)^{n-2}$ to individual strata.

Each $\C^2 \subset E_n^\tree(\mathrm{SU}(2))$ is indexed by
an edge of $\tree^D$.
Given an internal edge $\epsilon$,
the factor $(S^1)_{\epsilon}$ of $\T_d^+$
has Hamiltonian function
$\|(z,w)_{\epsilon^-}\|^2/2 = \|(z,w)_{\epsilon^+}\|^2/2$
on any given orbit-type stratum.
The $(z,w)_{\epsilon^-}$ corresponds to the diagonal
$d_{\epsilon}$ of the associated $n$-gon in
$\R^3$.
By Lemma \ref{diagonalpullback} and Lemma \ref{SpinLength}
we have $\|d_{\epsilon}\| = \frac{1}{4} \|(z,w)_{\epsilon^-}\|^2$,
and so $\|d_{\epsilon}\|$ is
\emph{one-half} that of
the Hamiltonian for the $\epsilon$-th factor $(S^1)_\epsilon$ of
$\T_d^+$.
In what follows we will need the following elementary lemma.
\begin{lemma}
Suppose $A: S^1 \times X \to X$ is a Hamiltonian action of
the circle on a stratified symplectic space $X$. Suppose the action
is generated by the Hamiltonian potential $f$ and that the element
$-1 \in S^1$ acts trivially. Let $\overline{A}$ be the induced action
of the quotient circle $\overline{S^1} = S^1/\{\pm 1\}$.
Then the Hamiltonian potential for the $\overline{A}$ action
is $\frac{f}{2}$.
\end{lemma}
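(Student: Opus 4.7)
The proof plan is to reduce to a single symplectic stratum and then appeal to the standard fact that rescaling the circle generator by $2$ rescales the Hamiltonian by $\tfrac{1}{2}$. By hypothesis $A$ is a stratified Hamiltonian action in the sense of \cite{SjamaarLerman}, which means that $S^1$ preserves the stratification and that the restriction of $A$ to each stratum $X_\alpha$ is an honest Hamiltonian circle action on the symplectic manifold $X_\alpha$ with potential $f|_{X_\alpha}$. Since $\{\pm 1\}$ acts trivially on all of $X$, the action descends stratum-by-stratum to a smooth symplectic action of $\overline{S^1}$ on each $X_\alpha$, and the question reduces to showing that on each stratum the Hamiltonian potential of this descended action is $\tfrac{1}{2} f|_{X_\alpha}$; gluing across strata is automatic because the identity $\overline{f} = f/2$ is a global relation of continuous functions on $X$.

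On a fixed stratum, the plan is to make the double cover explicit. Identify $\overline{S^1}$ with $S^1$ via the squaring map $\pi : S^1 \to S^1$, $z \mapsto z^2$, whose kernel is exactly $\{\pm 1\}$; then the descended action is characterized by $\overline{A}(z^2, x) = A(z, x)$. Differentiating at the identity, $d\pi$ acts on the common Lie algebra $\R$ as multiplication by $2$, so if $\xi_X$ denotes the fundamental vector field on $X_\alpha$ generated by the standard generator of $S^1$ under $A$, then the fundamental vector field generated by the same standard generator of $\overline{S^1}$ under $\overline{A}$ is $\tfrac{1}{2}\xi_X$.

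Plugging this into the defining equation for a Hamiltonian potential gives
\[
d\overline{f} \;=\; \iota_{\frac{1}{2}\xi_X}\omega \;=\; \tfrac{1}{2}\,\iota_{\xi_X}\omega \;=\; \tfrac{1}{2}\,df
\]
on each stratum, so $\overline{f} = \tfrac{1}{2} f$ up to an additive locally constant function. Normalizing the potentials in the standard way (for instance, insisting they vanish at a common fixed point of the $S^1$ action, which exists since the zero $n$-gon is fixed by the entire torus in every application we care about), the additive constant vanishes and the identity holds on the nose.

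The only conceptual subtlety is the passage between stratified symplectic language and the genuine differential geometry used in the middle step, but this is built into the Sjamaar--Lerman framework: a smooth Hamiltonian potential for a torus action on $X$ is by definition a continuous function whose restriction to each stratum is smooth and Hamiltonian there. Thus verifying the relation $\overline{f} = f/2$ stratumwise is exactly what is required, and there is no additional obstruction; the main (and only) content is the factor of $2$ coming from $d\pi$, which is precisely the source of the factor of $\tfrac{1}{2}$ in the statement.
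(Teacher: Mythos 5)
Your proof is correct and follows essentially the same route as the paper's (one-line) argument: the paper simply records the identity $\overline{A}(\exp(\sqrt{-1}t),x)=A(\exp(\sqrt{-1}t/2),x)$, from which the halving of the fundamental vector field, and hence of the Hamiltonian, is immediate. Your version merely spells out the stratum-by-stratum reduction and the normalization of the additive constant, both of which are consistent with the Sjamaar--Lerman framework the paper is working in.
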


\begin{proof}
We have
$$\overline{A}(\exp{\sqrt{-1}t},x) = A(\exp{\sqrt{-1}t/2},x).$$
\end{proof}

The bending flow torus $$T_{bend} = \prod_{\text{$d$ diagonal}} (S^1)_d,$$
acting on $P_\br^\tree(\mathrm{SU}(2))$, is such that
the $d(\epsilon)$-th factor has Hamiltonian $\|d_\epsilon\|$.
Therefore, the $\T_d^+$ action factors through the action of $T_{bend}$ via
a two to one cover on each component of $\T_d^+$.  Thus we may conclude
that $T_{bend}$ and $\T_d^+(\mathrm{SO}(3,\R))$ coincide.

\begin{theorem}
Let $\T_d^+(\mathrm{SO}(3,\R)) = \T_d^+ / (\Z/2\Z)^{n-3}$ be the image of $\T_d^+$ in $\mathrm{SO}(3,\R)^{n-3}$ under the
surjection $\mathrm{SU}(2) \to \mathrm{SO}(3,\R)$.  The action of $\T_d^+(\mathrm{SO}(3,\R))$ on $P_\br^\tree(\mathrm{SU}(2))$
coincides with the bending flows. Precisely the action is Hamiltonian
and the circle factor
corresponding to $d(\epsilon)$ has Hamiltonian potential $\|d_{\epsilon}\|$.
\end{theorem}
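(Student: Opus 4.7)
The plan is to assemble the theorem from pieces already in place: Theorem \ref{noncompact2} identifies the $\T_d^+$-action on $P_\br^\tree(\mathrm{SU}(2))$ with the bending flows on $V_\br^\tree$ geometrically, so what remains is to compute the Hamiltonian potentials of the individual circle factors of $\T_d^+$ and show they match $\|d_\epsilon\|$ after passing to $\T_d^+(\mathrm{SO}(3,\R))$. I would set this up in three stages: first establish that $\T_d^+$ acts in a stratified Hamiltonian sense; second, read off the potential on each stratum from the formulas already derived for the full torus $\T$ acting on $(\bigwedge^2\C^3)^{n-2}$; third, pass from the $\mathrm{SU}(2)$-level potential to the $\mathrm{SO}(3,\R)$-level potential using the elementary quotient-circle lemma.

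For the first stage, since $P_\br^\tree(\mathrm{SU}(2))$ is obtained from $(\bigwedge^2\C^3)^{n-2}$ by symplectic reduction at $(0,\br,0)$ by $\T_e\times\T_d^-$, Sjamaar--Lerman gives a stratification by orbit type. Because $\T = \T_e\times\T_d^-\times\T_d^+$ is abelian the isotropy groups for $\T_e\times\T_d^-$ are preserved by $\T_d^+$, so $\T_d^+$ acts on each stratum and the action is Hamiltonian there, with smooth potentials obtained by restriction of $(\T_e\times\T_d^-)$-invariant continuous functions on $(\bigwedge^2\C^3)^{n-2}$. For the $\epsilon$-th circle factor $(S^1)_\epsilon$ of $\T_d^+$ the relevant coordinate $(z,w)_{\epsilon^-}$ (or equivalently $(z,w)_{\epsilon^+}$, since on the $\T_d^-$-level set the two norms coincide) yields Hamiltonian $\tfrac12(|z_{\epsilon^-}|^2+|w_{\epsilon^-}|^2)$ by Lemma \ref{momentummap}.

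Second, I would invoke the already-established identification of this Hamiltonian with polygon data. By Lemma \ref{factoroftwo} (or equivalently Corollary \ref{SpinLength}) the quantity $\tfrac14(|z|^2+|w|^2)$ is the Euclidean length of the vector $F(z,w)\in\R^3$, and Theorem \ref{diagonalpullback} identifies this with the length of the diagonal $d_\epsilon$ of the underlying $n$-gon. Hence on each stratum the potential for $(S^1)_\epsilon$ equals $2\|d_\epsilon\|$, which is exactly twice the desired value.

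Finally, the factor of two is absorbed by observing that the element $-I\in\mathrm{SU}(2)$ acting diagonally is contained in $\T_d^+$ (indeed, each individual antipodal element $-1\in (S^1)_\epsilon$ acts trivially, since this corresponds to left multiplication by a central element of $\mathrm{SU}(2)$ on the spin-frames of the component past the diagonal and leaves the imploded spin-frames invariant). Thus the action factors through the two-fold cover $(S^1)_\epsilon\to(S^1)_\epsilon/\{\pm1\}$, and the elementary lemma just proved (if $-1$ acts trivially the quotient circle has potential $f/2$) converts the potential $2\|d_\epsilon\|$ into $\|d_\epsilon\|$ on $\T_d^+(\mathrm{SO}(3,\R))$. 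Combined with Theorem \ref{noncompact2}, which is what actually makes the group action \emph{bend} the $n$-gon along $d_\epsilon$, this gives the theorem. The main obstacle, as usual in this subject, is the stratified Hamiltonian bookkeeping: one must be careful that the ``Hamiltonian'' picked out on each stratum glues to a well-defined continuous function on $P_\br^\tree(\mathrm{SU}(2))$ giving the bending flow globally, and that the trivially-acting $\{\pm1\}$ on each factor is correctly identified so that the normalization of the potential passes through the implosion unscathed.
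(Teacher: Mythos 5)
Your route is the same as the paper's: set up the stratified Hamiltonian framework for $\T_d^+$, compute the potential of $(S^1)_\epsilon$ as $\tfrac12\|(z,w)_{\epsilon^-}\|^2 = 2\|d_\epsilon\|$ via Corollary \ref{SpinLength} and Theorem \ref{diagonalpullback}, and then halve it by the quotient-circle lemma. The one genuine gap is your justification that $-1 \in (S^1)_\epsilon$ acts trivially. You claim this element ``corresponds to left multiplication by a central element of $\mathrm{SU}(2)$ on the spin-frames of the component past the diagonal and leaves the imploded spin-frames invariant.'' It does not: left multiplication by $-I$ sends $[g,\lambda\varpi_1]$ to $[-g,\lambda\varpi_1]$, which for $\lambda\neq 0$ is a \emph{different} point of $\mathcal{E}T^*(\mathrm{SU}(2))\cong\C^2$ (the column is negated), and applying $-I$ only to the frames of the subpolygon $P''$ is not the diagonal central action on the whole polygon, so it is not absorbed by the $\mathrm{SU}(2)^{n-2}$-quotient. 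Indeed $-1\in(S^1)_\epsilon$ does \emph{not} act trivially on $P_n^{\tree}(\mathrm{SU}(2))$: in Pl\"ucker coordinates it multiplies the generator $Z_{\gamma(ab)}$ by $-1$ whenever the path $\gamma(ab)$ crosses the diagonal $d_\epsilon$. Triviality is special to $P_\br^{\tree}(\mathrm{SU}(2))$, where the additional quotient by $\T_e$ is available: modulo the trivially acting $\mathrm{SU}(2)^{n-2}$ and $\T_d^-$, negating the $\epsilon^-$ column equals negating the remaining columns of the triangles of $P''$, and an induction down the tree reduces this to negating leaf-edge columns, i.e.\ to an element of $\T_e$, which dies in the $\T_e$-quotient.

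The paper sidesteps this computation entirely: since the bending circle $(S^1)_{d_\epsilon}\subset T_{bend}$ is already known to act on $P_\br^{\tree}(\mathrm{SU}(2))$ as a $2\pi$-periodic Hamiltonian flow with potential $\|d_\epsilon\|$, and the potential of $(S^1)_\epsilon$ is exactly $2\|d_\epsilon\|$, the flow of $(S^1)_\epsilon$ is the bending flow at double speed on each stratum, hence $\pi$-periodic, hence factors through $t\mapsto t^2$; the elementary lemma then gives the quotient circle the potential $\|d_\epsilon\|$ and the coincidence of actions follows from the coincidence of Hamiltonians stratum by stratum. Either repair (the $\T_e$-absorption argument or the Hamiltonian-doubling argument) closes your gap, but as written the step is false and the theorem's restriction to $P_\br^{\tree}(\mathrm{SU}(2))$ rather than $P_n^{\tree}(\mathrm{SU}(2))$ is exactly the point your justification misses.
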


\begin{proof}
Let $\pi : \T_d^+ \to \T_d^+(\mathrm{SO}(3,\R))$ be the quotient map.
Let $\epsilon$ be any internal edge of $\tree$. The action of
$(S^1)_\epsilon \subset \T_d^+$ on $P_\br^\tree(\mathrm{SU}(2))$
factors through the action of $\pi((S^1)_\epsilon) \in \T_d^+(\mathrm{SO}(3,\R))$,
through the map $t \mapsto t^2$.  Consequently by the lemma
above the Hamiltonian function
of $(S^1)_\epsilon \subset \T_d^+$ is twice that of the Hamiltonian
of $\pi((S^1)_\epsilon) \subset \T_d^+(\mathrm{SO}(3,\R))$.
Hence the Hamiltonian for $\pi((S^1)_\epsilon) \subset \T_d^+(\mathrm{SO}(3,\R))$ is
$\|d_{\epsilon}\|$, as it is for $(S^1)_{d_{\epsilon}} \subset T_{bend}$.
Since the Hamiltonian functions
of $T_d^+(\mathrm{SO}(3,\R))$ and $T_{bend}$ coincide, their actions must coincide on
$P_\br^\tree(\mathrm{SU}(2))$.
\end{proof}

\section{Appendix : symplectic and GIT quotients of affine space}\
The results in this appendix were obtained with the aid of W. Goldman.

\subsection{Fiber twists and normalizing the momentum map}
\label{fibertwists}
The goal of this appendix is to prove Theorem \ref{twistandshift} below -
we match the level for the symplectic quotient with the
twist used to define the linearization in forming the GIT quotient.
In four places in the paper,
namely subsections \ref{firsttorusquotient},\  
\ref{secondtorusquotient},\ 
\ref{thirdtorusquotient} and subsubsection \ref{fourthtorusquotient} we applied the results of \cite{Sjamaar} to deduce
that for a torus acting on affine space linearized by acting
on the trivial line bundle by a character $\chi$ the GIT
quotient is homeomorphic to the symplectic quotient at level
the derivative of $\chi$ at the identity provided the momentum map was proper.
However there is a technical problem 
about the normalization  of the momentum
map chosen for the
action of the torus 
(there is an indeterminancy of an  additive
constant vector) 
The correct normalization of
the momentum map  must depend on  the action of the torus on the total space of the line
bundle. It is given in formula (2.3), page 116 of \cite{Sjamaar} which
we now state for the convenience of the reader. 

Let  $p:E \to M$ be a Hermitian line bundle $L$. Let $G$ be a compact group
acting on $E$ by automorphisms of the Hermitian structure. 
Let $\xi \in \mathfrak{g}, e \in E$ and $m=p(e)$.   In what follows
$\xi_E$ denotes the vector field on $E$ induced by $\xi$,
$\xi_M^{hor}$ denotes the horizontal lift of the vector
field $\xi_M$ induced on $M$ and $\nu_E$ denotes the canonical vertical
vector field (induced by the $\mathrm{U}(1)$ action). The connection and curvature
forms take values in the Lie algebra $\mathfrak{u}(1) = \sqrt{-1} \R$
of $\mathrm{U}(1)$. We can now state the formula from \cite{Sjamaar}:
\begin{equation} \label{normalization}
\xi_E(e)  = \xi_M^{hor}(e)  + 2\pi <\mu(m),\xi>  \nu_E.
\end{equation}

\begin{definition}
We will say a momentum map satisfying equation (\ref{normalization})
is {\em normalized} relative to the linearization (action of $G$
on the total space of the bundle).
\end{definition}

\begin{remark}
We can check the conventions involved in equation (\ref{normalization})
by applying the connection form $\theta$ to both sides to obtain
$$\theta(\xi_E(e)) = 2\pi \sqrt{-1}<\mu(m),\xi>.$$ 
Applying $d$ to each side and Cartan's formula we find that 
$<\mu(m),\xi>$ is a Hamiltonian potential for $\xi_M $ if and only if
the symplectic form $\omega$ and the connection form $\theta$ are related
by
$$\omega = - \frac{1}{2\pi \sqrt{-1}} d \theta.$$
\end{remark}

Suppose now we twist the action of the torus $G$ on the total
space of the line bundle by scaling each fiber by a fixed character
$\chi$. This changes the invariant sections and hence changes the
GIT quotient. Note that the differential of $\chi$ at the identity
of $G$ is an element $\dot{\chi}$ of $\mathfrak{g}^*$. Thus we could
change the momentum map $\mu$  by adding $\dot{\chi}$ and obtain a
new momentum map. The following lemma is an immediate consequence
of equation (\ref{normalization}).

\begin{lemma}\label{twistshift}
Suppose we twist the action of $G$ by a character $\chi$. Then the
normalized momentum map for the new action is obtained by adding
$\dot{\chi}$.
\end{lemma}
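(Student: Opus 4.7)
The strategy is to compute how the fundamental vector field on the total space $E$ changes when the $G$-action is twisted by the character $\chi$, and then read off the required shift in the momentum map by comparing with the normalization formula (\ref{normalization}).

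First I would write the twisted action as $g \cdot_\chi e = \chi(g)\,(g \cdot e)$, where $\chi(g)\in \mathrm{U}(1)$ acts on $E$ by fiberwise scaling. Since this scaling is vertical, it fixes the base, so the $G$-action on $M$ is unchanged by the twist. Differentiating at $t = 0$ and applying the Leibniz rule, the new fundamental vector field $\xi_E^{\chi}$ differs from $\xi_E$ only by a vertical contribution generated by the infinitesimal character value $\dot\chi(\xi)\in \mathfrak{u}(1)\cong \sqrt{-1}\,\R$. Expressing this vertical piece in terms of the canonical vertical field $\nu_E$ with the same normalization as in (\ref{normalization}) gives the identity
$$\xi_E^{\chi}(e) \;=\; \xi_E(e) \;+\; 2\pi\,\langle \dot\chi,\xi\rangle\,\nu_E,$$
where $\langle \dot\chi,\xi\rangle$ denotes the real pairing of $\dot\chi\in\mathfrak{g}^*$ (obtained from $\dot\chi\colon\mathfrak{g}\to\mathfrak{u}(1)$ via the same identification $\mathfrak{u}(1)\cong\R$ implicit in (\ref{normalization})) with $\xi$.

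Next, I would apply (\ref{normalization}) to both the original linearization (with momentum map $\mu$) and the twisted one (with its normalized momentum map $\mu^{\chi}$):
$$\xi_E(e) \;=\; \xi_M^{\mathrm{hor}}(e) + 2\pi\langle\mu(m),\xi\rangle\,\nu_E, \qquad \xi_E^{\chi}(e) \;=\; \xi_M^{\mathrm{hor}}(e) + 2\pi\langle\mu^{\chi}(m),\xi\rangle\,\nu_E.$$
Subtracting these two expressions and substituting the vertical shift computed above yields
$$2\pi\bigl\langle \mu^{\chi}(m) - \mu(m),\,\xi\bigr\rangle\,\nu_E \;=\; 2\pi\,\langle\dot\chi,\xi\rangle\,\nu_E$$
for every $\xi\in\mathfrak{g}$ and every $e\in E$ over $m$. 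Since $\nu_E$ is nowhere zero, this forces $\mu^{\chi}(m) = \mu(m) + \dot\chi$ pointwise, which is exactly the claim.

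The only delicate point is the bookkeeping of conventions: one must align the normalization of $\nu_E$, the factor of $2\pi$ appearing in (\ref{normalization}), and the identification $\mathfrak{u}(1)\cong\sqrt{-1}\,\R\cong\R$ used to view $\dot\chi$ as an element of $\mathfrak{g}^*$, so that the vertical shift produced by the twist matches the Hamiltonian pairing on the right-hand side of (\ref{normalization}). Once these conventions are fixed consistently (which is already done by the remark preceding the lemma), the lemma reduces to the one-line infinitesimal computation above.
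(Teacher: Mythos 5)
Your proposal is correct and is exactly the computation the paper has in mind: the paper simply declares the lemma "an immediate consequence of equation (\ref{normalization})," and your argument — that the twist adds the vertical term $2\pi\langle\dot\chi,\xi\rangle\nu_E$ to the fundamental vector field, which by comparison with (\ref{normalization}) forces $\mu^{\chi}=\mu+\dot\chi$ — is the fleshed-out version of that one-line claim. No gaps; your caveat about aligning the $2\pi$ and $\mathfrak{u}(1)\cong\R$ conventions is the only subtlety, and it is handled consistently with the paper's setup.
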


We now restrict to the case of a torus $T$ acting linearly on a
symplectic vector space $V,\omega$. We assume we have chosen a
$T$-invariant complex structure $J$ on $V$ so that $\omega$ is 
of type $(1,1)$ for $J$ (this means $J$ is an isometry of $\omega$) and 
the symmetric form $B$ given by $B(v,v) = \omega(v,Jv)$
is positive definite.
We let $W$ be the subspace of $V \otimes \C$ of type $(1,0)$ vectors,
that is $W=\{ v -\sqrt{-1} Jv: v \in V\}$. We define a 
positive-definite Hermitian form $H$ on $W$ by 
$$H(v_1 - \sqrt{-1}Jv_1, v_2 - \sqrt{-1}Jv_2)= B(v_1,v_2) - \sqrt{-1}
\omega(v_1,v_2).$$
We will abbreviate $\sqrt{H(v,v)}$ to $\|v\|$ in what follows. We define
a symplectic form $A$ on $W$ by $A(w_1,w_2) = - \Im H(w_1,w_2)$.
We note that the map $w \to \Re w$ is a symplectomorphism
from $W,A$ to $V,\omega$.

Suppose that we have chosen an $H$-orthonormal basis for $W$ so we
have identified
$$W \cong \C^n.$$
We let $T_0 \cong (S^1)^n$ be the  compact torus
consisting of the diagonal matrices with unit length elements on
the diagonal and $\underline{T}_0$ be the complexification of $T_0$.  We let $\mathfrak{t}_0^*
\cong \R^n$ be the dual of the Lie algebra of $T_0$.  We define
$\mu_0:W \to \mathfrak{t}_0 ^*$ by
$$\mu_0((z_1,\cdots,z_n)) = (-\frac{|z_1|^2}{2},\cdots,-\frac{|z_n|^2}{2})$$
Then $\mu_0$ is a momentum map for the Hamiltonian action of $T_0$
on $W$.
We note that  all possible momentum maps
are obtained from $\mu_0$ by adding a vector $\mathbf{c} = (c_1,\cdots,c_n)
\in \mathfrak{t}_0^{\ast}$.  Thus $\mu_0$ is the unique momentum map
vanishing at the origin of $W$.  

Let $E= W \times \C$ be total space of the trivial line bundle $L$ over $W$. We first describe the Hermitian holomorphic structure on $L$.
We give $L$ a holomorphic  structure by requiring that the nowhere vanishing section $s_0$ of $E$ defined by
$$s_0(w) = (w,1)$$
is holomorphic. 
Thus if $U$ is an open subset of $W$ and $s$ is a local section over $U$
then $s$ is holomorphic if and only if the function $f$ on $U$ defined by
$$s = f s_0|U$$
is holomorphic.

We  define a Hermitian  structure on $L$ by defining
$$||s_0||(w) = \exp{(-\frac{\pi}{2}\|w\|^2)}.$$ 
Hence the section $\sigma_0$ given by
$$\sigma_0(w) = \exp{(\frac{\pi}{2}\|w\|^2)} s_0(w)$$
has unit length  at every point.
We note that
$$ - \frac{1}{2\pi \sqrt{-1}}\  \overline{\partial}\partial \ \log ||s_0||^2 = \sum _{i=1} ^{n} dx_i \wedge dy_i$$
in agreement with \cite{Sjamaar}, pg. 115.
Let $E_0$ denote the principal circle bundle of unit length vectors in 
$E$.

Our goal is to describe the lift of the action of $T_0$ to $E$
so that the normalized momentum map corresponding to this lifted
action is $\mu_0$. We note there is a distinguished lift given by
$$t \circ(w,z) = (tw,z), t \in T_0, w \in W, z \in \C.$$
By definition this lift leaves invariant the holomorphic section $s_0$
but it also leaves invariant the unit length section $\sigma_0$
because the function $\exp{(-\frac{\pi}{2}\|w\|^2)}$ is invariant
under $\mathrm{U}(n)$ and hence under $T_0$.  Hence $T_0$ leaves fixed the Hermitian structure on $E$
and  $\underline{T}_0$ leaves the holomorphic section $s_0$ fixed.
We will call such lifts {\it untwisted} and we will say the linearization
consisting of the trivial bundle together with the previous action
is untwisted. We will now prove
\begin{proposition}\label{untwistedstandard}
The normalized momentum map of $T_0$ corresponding to the untwisted linearization
is $\mu_0$.
\end{proposition}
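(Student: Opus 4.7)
The plan is to verify equation (\ref{normalization}) pointwise at elements of the unit circle bundle of the form $e = \sigma_0(w)$, and then to note that the untwisted lift commutes with fiberwise $U(1)$-multiplication, so the identity extends to all of $E_0$. The key observation making this easy is that $\sigma_0$ is $T_0$-invariant: since $\|w\|^2$ is preserved by $T_0$, one computes directly $t\cdot\sigma_0(w) = (tw, e^{\pi\|tw\|^2/2}) = \sigma_0(tw)$. Consequently the infinitesimal generator at $\sigma_0(w)$ is simply $\xi_E(\sigma_0(w)) = (d\sigma_0)_w(\xi_M(w))$, and the vertical part of $\xi_E$ is completely determined by the pullback $\sigma_0^{\ast}\theta$ of the Chern connection form.

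First I would compute $\sigma_0^{\ast}\theta$ explicitly. From $\nabla s_0 = (\partial \log\|s_0\|^2)\otimes s_0 = -\pi(\sum \bar{z}_i\, dz_i)\otimes s_0$ and the Leibniz rule applied to $\sigma_0 = e^{\pi\|w\|^2/2} s_0$, one obtains
\[
\theta_0 := \sigma_0^{\ast}\theta = \tfrac{\pi}{2}\sum(z_i\, d\bar{z}_i - \bar{z}_i\, dz_i) = -i\pi\sum(x_i\, dy_i - y_i\, dx_i).
\]
As a consistency check $-\frac{1}{2\pi i} d\theta_0 = \sum dx_i\wedge dy_i = \omega$, matching the curvature formula already recorded in the excerpt. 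For $\xi=(a_1,\ldots,a_n)\in\mathfrak{t}_0$, the fundamental field is $\xi_M(w) = \sum a_k(-y_k\partial_{x_k} + x_k\partial_{y_k})$, and a direct evaluation gives $\theta_0(\xi_M(w)) = -i\pi\sum a_k |w_k|^2$.

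Finally I would convert this into the coefficient of $\nu_E$. Since $\nu_E$ is the generator corresponding to $i\in\mathfrak{u}(1)$, a vertical tangent vector whose connection form value is $\eta \in i\R$ equals $(-i\eta)\nu_E$. Hence the vertical part of $\xi_E(\sigma_0(w))$ is $-i\theta_0(\xi_M(w))\,\nu_E = -\pi\sum a_k|w_k|^2\,\nu_E$. Comparing with the normalization formula $\xi_E(e) = \xi_M^{\rm hor}(e) + 2\pi\langle\mu(m),\xi\rangle\nu_E$, we need $\langle\mu(w),\xi\rangle = -\tfrac{1}{2}\sum a_k|w_k|^2$, which is exactly the pairing with $\mu_0(w) = (-|w_1|^2/2,\ldots,-|w_n|^2/2)$.

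The computation is essentially bookkeeping; the only point that requires care is consistently tracking factors of $2\pi$ and the identification $\mathfrak{u}(1)\cong i\R \leftrightarrow \R\cdot\nu_E$ so that the sign and normalization in (\ref{normalization}) come out correctly. Once that is done, since the identity holds for every $\xi$, uniqueness of the $\nu_E$-coefficient forces the normalized momentum map to be $\mu_0$.
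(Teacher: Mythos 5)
Your argument is correct and follows essentially the same route as the paper: both proofs hinge on the $T_0$-invariance of the unit section $\sigma_0$, the explicit formula for the Chern connection form in the $\sigma_0$-trivialization (your $\sigma_0^{\ast}\theta = -\sqrt{-1}\pi\sum(x_i\,dy_i - y_i\,dx_i)$ is exactly the paper's Lemma on the connection form, derived there by uniqueness of the Hermitian connection with curvature $-2\pi\sqrt{-1}\omega$ and here by the Leibniz rule), and a comparison of vertical components in equation (\ref{normalization}). The only cosmetic difference is that the paper extracts the vertical discrepancy by computing the horizontal lift of $\xi_M$ in the coordinates $(x,y,\psi)$ on $E_0$, whereas you evaluate $\sigma_0^{\ast}\theta$ on $\xi_M$ directly; the signs and the factor of $2\pi$ all check out.
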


The proposition will be a consequence of the next lemma and corollary.

We will compute $\mu_0$ in the trivialization of $E_0$ given by $\sigma_0$.
We note that since $\sigma_0$ is invariant under $T_0$ the untwisted lift of the compact torus above (relative
to the trivialization by $s_0$) remains untwisted in the trivialization
by $\sigma_0$.  We let $\psi$ denote the coordinate in the fiber circle
of $E_0$ so $\nu_{E_0} = \partial / \partial \psi$. Hence if $z_i = x_i + \sqrt{-1}y_i, 1 \leq i \leq n$, then
$x_1,y_1,\cdots,x_n,y_n, \psi$ are coordinates in $E_0$.

\begin{lemma}\label{connectionform}
The canonical connection on $E$ is given by
$$\nabla \sigma_0 = - \pi \sqrt{-1} (\sum_{i=1}^n (x_i dy_i - y_i dx_i) \otimes \sigma_0.$$
Equivalently in the above coordinates the connection form $\theta$ of the canonical connection is given by
$$\theta = \sqrt{-1} d\psi - \pi \sqrt{-1} (\sum_{i=1}^n (x_i dy_i - y_i dx_i).$$
\end{lemma}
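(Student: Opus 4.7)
The plan is to invoke the defining property of the Chern connection on a Hermitian holomorphic line bundle, namely that in any holomorphic local frame $s$ the connection 1-form equals $\partial \log h$, where $h = \|s\|^2$. Once this is fixed, both displayed formulas in the lemma reduce to a direct computation, with two bookkeeping steps.

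First I would apply the formula to the holomorphic frame $s_0$, for which $\log\|s_0\|^2 = -\pi\sum_i z_i \bar z_i$. A straightforward differentiation gives
$$\nabla s_0 \;=\; -\pi \sum_i \bar z_i \, dz_i \otimes s_0 \;=\; -\pi \sum_i \bigl[(x_i dx_i + y_i dy_i) + \sqrt{-1}\,(x_i dy_i - y_i dx_i)\bigr] \otimes s_0.$$
Next, applying the Leibniz rule to $\sigma_0 = \exp(\tfrac{\pi}{2}\|w\|^2)\, s_0$, the differential of the real scalar factor contributes $\pi\sum_i (x_i dx_i + y_i dy_i)\otimes\sigma_0$, which exactly cancels the real part of $\nabla s_0$ above, leaving only the purely imaginary piece. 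This gives the first stated equation. A useful consistency check at this stage is that the surviving expression is $\mathfrak{u}(1)$-valued, as it must be since $\sigma_0$ has constant unit length.

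Finally, to pass from $\nabla \sigma_0$ to the global connection 1-form $\theta$ on the unit circle bundle $E_0$, I would use the standard identification: in the trivialization $E_0 \cong W \times \mathrm{U}(1)$ given by $(w,\psi)\mapsto e^{\sqrt{-1}\psi}\sigma_0(w)$, the form $\theta$ decomposes as the vertical Maurer--Cartan form $\sqrt{-1}\, d\psi$ plus the pullback of the $\mathfrak{u}(1)$-valued 1-form appearing in $\nabla\sigma_0$. Substituting the result of the previous step gives the second display. There is no serious obstacle; the only subtlety is to keep the sign and factor-of-$2\pi$ consistent with the curvature normalization $\omega = -(2\pi\sqrt{-1})^{-1} d\theta$ recalled in the remark immediately preceding the lemma, which is precisely what pins down the $\pi$ in the exponent of $\|s_0\|$ and hence the $\pi$ coefficients in both displayed formulas.
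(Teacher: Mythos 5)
Your proposal is correct and is essentially the argument the paper intends: both rest on the characterization of the canonical (Chern) connection by compatibility with the Hermitian metric and the holomorphic frame $s_0$, the paper verifying that the stated formula satisfies $\nabla_{\partial/\partial \overline{z}_j} s_0 = 0$ with curvature $-2\pi\sqrt{-1}\,\omega$, while you derive the same formula forward from $\partial \log \|s_0\|^2$ and transfer it to the unitary frame $\sigma_0$ by the Leibniz rule. The computations agree, including the sign and normalization conventions fixed by $\omega = -\tfrac{1}{2\pi\sqrt{-1}}\,d\theta$.
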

\begin{proof}
The reader will verify that $\nabla$ satisfies
$$ \nabla_{\partial / \partial \overline{z}_j} s_0 =0, 1 \leq j \leq n, $$
and has curvature $- 2\pi \sqrt{-1}\omega$.  Hence $\nabla$ is the unique Hermitian connection with
curvature $d \theta = - 2\pi \sqrt{-1}\omega$.
\end{proof}

We then have the  following corollary.

\begin{corollary}
The horizontal lift of the vector field $x_i \partial/\partial y_i -
y_i \partial/\partial x_i$ is $x_i \partial/\partial y_i - y_i
\partial/\partial x_i + \pi (x_i^2 + y_i^2)\partial / \partial \psi$.
\end{corollary}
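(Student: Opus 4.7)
The plan is to verify the corollary directly from the characterization of the horizontal lift: given a vector field $X$ on $W$, its horizontal lift $\widetilde{X}$ to $E_0$ is the unique vector field that (i) projects to $X$ under the bundle projection $p : E_0 \to W$, and (ii) is annihilated by the connection form $\theta$ computed in Lemma \ref{connectionform}.

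Write $X_i = x_i \partial/\partial y_i - y_i \partial/\partial x_i$ and let $\widetilde{X}_i := X_i + \pi(x_i^2+y_i^2)\,\partial/\partial\psi$. The first property is immediate: the added term lies in the vertical subspace $\ker dp = \R \cdot \partial/\partial\psi$, so $p_\ast \widetilde{X}_i = X_i$. For the second property, I would evaluate the three pieces of $\theta$ on $\widetilde{X}_i$ using the basic pairings $d\psi(\partial/\partial\psi) = 1$, $dy_j(X_i) = \delta_{ij}x_i$, and $dx_j(X_i) = -\delta_{ij}y_i$. The $d\psi$ term contributes $\sqrt{-1}\,\pi(x_i^2+y_i^2)$, while the sum $-\pi\sqrt{-1}\sum_j(x_j\,dy_j - y_j\,dx_j)$ evaluated on $\widetilde{X}_i$ collapses to $-\pi\sqrt{-1}(x_i \cdot x_i - y_i \cdot (-y_i)) = -\pi\sqrt{-1}(x_i^2+y_i^2)$. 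The two terms cancel, so $\theta(\widetilde{X}_i) = 0$.

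The verification is a direct computation, and there is no real obstacle; the only thing to be careful about is the sign bookkeeping in the second step, since the vector field $X_i$ contains minus signs and the connection form has a leading $-\pi\sqrt{-1}$. Once the signs are tracked correctly, the cancellation is exact, which establishes the corollary. This preparation is exactly what will be needed in the proof of the subsequent Theorem \ref{twistandshift}, where the horizontal lift feeds into the normalization formula \eqref{normalization} to pin down the momentum map associated to a given linearization.
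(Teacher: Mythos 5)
Your proposal is correct and is essentially the paper's argument: both reduce to the explicit formula for $\theta$ from Lemma \ref{connectionform} and an elementary pairing computation (the paper lifts $\partial/\partial x_i$ and $\partial/\partial y_i$ separately and invokes $C^\infty$-linearity of horizontal lifting, while you verify the two defining properties of the lift directly on the candidate field — the same calculation organized slightly differently). The sign bookkeeping you describe checks out, so no changes are needed.
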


\begin{proof}
By the formula for $\theta$ we see that the horizontal lift of
$\partial/\partial x_i$ is $\partial/\partial x_i - \pi y_i \partial / \partial \psi
$ and the horizontal lift of $\partial/\partial y_i$ is
$\partial/\partial y_i +  \pi x_i \partial / \partial \psi$. Since the operation
of taking horizontal lifts is linear over the functions the corollary
follows.
\end{proof}

Proposition \ref{untwistedstandard} follows from the corollary and  equation (\ref{normalization}).

Now suppose $T$ is a compact torus with complexification $\underline{T}$ and $T$ acts on $W$ through a
representation 
$\rho:T \to T_0$. Assume further that we linearize the action 
of $\underline{T}$ on $W$ by the untwisted linearization.
It is  clear from equation (\ref{normalization}) that we obtain the {\it normalized} momentum
map $\mu_T$ for $T$ by restricting the {\it normalized} momentum map for $T_0$. More precisely
let $\rho^{\ast} : \mathfrak{t}_0^{\ast} \to \mathfrak{t}^{\ast}$
be the induced map on dual spaces.
Then we have
$$\mu_T = \rho^{\ast} \circ \mu_0.$$
Thus the normalized momentum map corresponding to the untwisted linearization
of the linear action of $T$ is  homogeneous linear  in the squares of the $|z_i|$'s.
We now obtain the desired result in this appendix by
applying Lemma \ref{twistshift} and Theorem 2.18 (page 122) of \cite{Sjamaar}.
 
\begin{theorem}\label{twistandshift}
Let $\underline{T}$ be a complex torus with maximal compact  subtorus $T$ and let $\chi$ be a character of $\mathbb{T}$.
Then the  GIT quotient for a linear action of  $\underline{T}$ on a complex vector space $W$ with  linearization given by $W \times \C$ and the action  
$$t \circ (w,z) = (tw, \chi(t) z)$$
is homeomorphic to the symplectic quotient by 
$T$ obtained using the momentum
map $(\mu_T)_0 + \dot{\chi}$ if the momentum map for the action of $T$
is proper. Here $(\mu_T)_0$ is the momentum map that vanishes at the origin
of $W$.
\end{theorem}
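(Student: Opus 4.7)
The strategy is to reduce the statement to Sjamaar's general identification of GIT and symplectic quotients (Theorem 2.18 of \cite{Sjamaar}), where the only nontrivial work is to identify which momentum map is the \emph{normalized} one associated to the given linearization. That identification is precisely what Proposition \ref{untwistedstandard} and Lemma \ref{twistshift} were set up to do.

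First I would verify that the hypotheses of Sjamaar's theorem are satisfied. The trivial bundle $W\times \C$ with the lifted action $t\circ(w,z) = (tw,\chi(t)z)$ is a $T$-equivariant Hermitian line bundle, and since $W$ is affine and the linearization comes from a character, the usual positivity/ampleness conditions for the Kempf-Ness correspondence hold. Properness of the momentum map is assumed. Sjamaar's theorem then asserts that the set-theoretic GIT quotient is homeomorphic to $\mu^{-1}(0)/T$, where $\mu$ is the momentum map \emph{normalized} with respect to the chosen linearization in the sense of equation (\ref{normalization}).

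Next I would identify this normalized $\mu$ explicitly. Factoring the action through $\rho : T \to T_0$ and taking an $H$-orthonormal basis of $W$, Proposition \ref{untwistedstandard} tells us that for the \emph{untwisted} lift $t\circ(w,z)=(tw,z)$ the normalized momentum map on $W$ is exactly $(\mu_T)_0 = \rho^{\ast}\circ \mu_0$, the unique momentum map vanishing at the origin. Now passing from the untwisted to the twisted linearization is precisely fiberwise scaling by the character $\chi$, so Lemma \ref{twistshift} applies and shifts the normalized momentum map by $\dot{\chi}$. Therefore the normalized momentum map for the twisted linearization is $(\mu_T)_0+\dot{\chi}$, and $\mu^{-1}(0) = ((\mu_T)_0+\dot{\chi})^{-1}(0)$, which is exactly the zero level set defining the symplectic quotient in the statement. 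Since adding a constant vector does not affect properness, Sjamaar's hypothesis still holds, and the conclusion follows.

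The only real obstacle is bookkeeping of conventions: the sign in $\mu_0((z_1,\dots,z_n)) = -\tfrac{1}{2}(|z_1|^2,\dots,|z_n|^2)$, the factor of $2\pi$ in equation (\ref{normalization}), and the orientation of the connection form all have to line up so that Proposition \ref{untwistedstandard} truly gives the momentum map vanishing at the origin, as opposed to some other shift. These are fixed once and for all in subsection \ref{fibertwists}, and once they are fixed the proof is entirely formal: apply Proposition \ref{untwistedstandard}, apply Lemma \ref{twistshift}, apply Sjamaar's Theorem~2.18, and the three steps compose to give the claim.
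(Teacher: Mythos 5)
Your proposal is correct and follows essentially the same route as the paper: the paper's proof consists exactly of establishing Proposition \ref{untwistedstandard} (so that the untwisted linearization corresponds to the momentum map $(\mu_T)_0 = \rho^{\ast}\circ\mu_0$ vanishing at the origin), applying Lemma \ref{twistshift} to shift by $\dot{\chi}$, and then invoking Theorem 2.18 of \cite{Sjamaar} under the properness hypothesis. Your remark that the substance lies in pinning down the normalization conventions is precisely the point of subsection \ref{fibertwists}.
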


\bigskip

Benjamin Howard:
Department of Mathematics,
University of Michigan,
Ann Arbor, MI 48109, USA,
howardbj@umich.edu

\smallskip

John Millson:
Department of Mathematics,
University of Maryland,
College Park, MD 20742, USA,
jjm@math.umd.edu

\smallskip

Christopher Manon: Department of Mathematics, University of
Maryland, College Park, MD 20742, USA, manonc@math.umd.edu

\end{document}